\documentclass[12pt, oneside]{amsart}

%%% USAR ACENTOS %%%
\usepackage[utf8]{inputenc}
\usepackage[T1]{fontenc}

%%% AMS PACKAGES %%%
\usepackage{amssymb}
\usepackage{amsthm}
\usepackage{amsmath}
\usepackage[only,Yup]{stmaryrd} %%% PARA O FIBRADO DE MOLINO
\usepackage{mathrsfs}    %%% PARA PSEUDOGRUPOS
\usepackage[new]{old-arrows}   %%% PARA SETAS DE INJEÇÕES
\usepackage{comment}
\usepackage{soul}
\usepackage{cancel}

%%% CONFIGURAR MARGENS %%%
\usepackage[a4paper,left=2cm,top=3cm,right=2cm,bottom=2cm]{geometry}

%%% DIAGRAMAS COMUTATIVOS %%% (e.g. \xymatrix{
% & \widetilde{U} \ar@(ul,ur)[]^{\Gamma} \ar[dr]^-{\phi} \ar[dl] & \\
%\widetilde{U}/\Gamma \ar[rr]^\cong & & U} )
\usepackage[all,cmtip]{xy}

%%% ENUMRAÇÕES %%% (e.g. \begin{enumerate}[(1)])
\usepackage{enumerate}

%%% INDENTAR OS PRIMEIROS PARÁGRAFOS %%%
\usepackage{indentfirst}

%%% GERAR REFERÊNCIAS CRUZADAS COM LINKS %%%
\usepackage{hyperref}
\hypersetup{colorlinks=true,allcolors=black}  %%% COR DO LINK

%%% DASHLINES EM MATRIZES %%%
%\usepackage{arydshln}

%%% FIGURAS %%%
\usepackage{pstricks}
\usepackage{graphicx}

%%% COLUNAS %%% (e.g. \begin{multicols}{2})
\usepackage{multicol}

\hyphenation{Birk-häu-ser Ma-the-ma-ti-sche Zeit-schrift}

\pagestyle{plain}

%%% OPERADORES $$$
\DeclareMathOperator{\diam}{diam}
\DeclareMathOperator{\codim}{codim}

\DeclareMathOperator{\ric}{Ric}

%%% ALINHAR OS : COM O = EM := %%%
\mathchardef\ordinarycolon\mathcode`\:
\mathcode`\:=\string"8000
\begingroup \catcode`\:=\active
  \gdef:{\mathrel{\mathop\ordinarycolon}}
\endgroup

\begin{document}

\title{Transverse geometry of Lorentzian foliations with applications to Lorentzian orbifolds}

\author{Francisco C.~Caramello Jr.}
\address{Departamento de Matemática, Universidade Federal de Santa Catarina, R. Eng. Agr. Andrei Cristian Ferreira, 88040-900, Florianópolis - SC, Brazil}
\email{francisco.caramello@ufsc.br}

\author{H. A. Puel Martins}
\address{Departamento de Matemática, Universidade Federal de Santa Catarina, R. Eng. Agr. Andrei Cristian Ferreira, 88040-900, Florianópolis - SC, Brazil}
\email{henrique.martins@posgrad.ufsc.br}

\author{Ivan P. Costa e Silva}
\address{Departamento de Matemática, Universidade Federal de Santa Catarina, R. Eng. Agr. Andrei Cristian Ferreira, 88040-900, Florianópolis - SC, Brazil}
\email{pontual.ivan@ufsc.br}

\subjclass[2020]{53C12, 53C50}

%%% AMBIENTES %%%
\newenvironment{proofoutline}{\proof[Proof outline]}{\endproof}
\newenvironment{proofcomment}{\proof[Comment on the proof]}{\endproof}
\theoremstyle{definition}
\newtheorem{example}{Example}[section]
\newtheorem{definition}[example]{Definition}
\newtheorem{remark}[example]{Remark}
\theoremstyle{plain}
\newtheorem{proposition}[example]{Proposition}
\newtheorem{theorem}[example]{Theorem}
\newtheorem{lemma}[example]{Lemma}
\newtheorem{corollary}[example]{Corollary}
\newtheorem{claim}[example]{Claim}
\newtheorem{conjecture}[example]{Conjecture}
\newtheorem{thmx}{Theorem}
\renewcommand{\thethmx}{\Alph{thmx}} % "letter-numbered" theorems
\newtheorem{corx}[thmx]{Corollary}
\renewcommand{\thecorx}{\Alph{corx}} % "letter-numbered" corollaries

%%% MACROS %%%
\newcommand{\dif}[0]{\mathrm{d}}
\newcommand{\od}[2]{\frac{\dif #1}{\dif #2}}
\newcommand{\pd}[2]{\frac{\partial #1}{\partial #2}}
\newcommand{\dcov}[2]{\frac{\nabla #1}{\dif #2}}
\newcommand{\proin}[2]{\left\langle #1, #2 \right\rangle}
\newcommand{\f}[0]{\mathcal{F}}
\newcommand{\g}[0]{\mathcal{G}}
\newcommand{\metric}{\ensuremath{\mathrm{g}}}
\newcommand{\qcd}{\begin{flushright} $\Box$ \end{flushright}}
\begin{abstract}

We prove a transverse diameter theorem in the context of Lorentzian foliations, which can be interpreted as a Hawking--Penrose-type singularity theorem for timelike geodesics transverse to the foliation. In order to develop the necessary machinery we introduce and study a novel causality structure on the leaf space via the transverse Lorentzian geometry on the foliated manifold. We describe the initial rungs of a transverse causal ladder and relate them to their standard counterparts on an underlying foliated spacetime. We show how these results can be interpreted as doing Lorentzian (and more generally semi-Riemannian) geometry on low-regularity spaces that can be realized as leaf spaces of foliations. Accordingly, we discuss how all of these concepts and results apply to Lorentzian orbifolds, insofar as these can be seen as leaf spaces of a specific class of Lorentzian foliations. In particular, we derive an associated Lorentzian timelike diameter theorem on orbifolds. 
\end{abstract}

\maketitle
\setcounter{tocdepth}{1}
\tableofcontents

\section{Introduction}

Low-regularity analogues of concepts and results in Lorentzian geometry have attracted considerable interest for the better part of the last decade. This has taken place either via the analysis of geometric objects of low differentiability on an otherwise smooth (at least $C^2$) background manifold, or even by relaxing the smoothness requirement on the underlying manifold itself. Some highlights of the former trend include the causality of spacetimes with $C^0$ metric \cite{chrusciellowcausal1,chrusciellowcausal2}, Penrose and Hawking singularity theorems for $C^{1,1}$-metrics \cite{c11metric1,c11metric2,c11metric3}, event horizons of low regularity and applications \cite{area}, and a causality theory for cone structures on manifolds \cite{conefields1,conefields2}. The study of Lorentz-geometric features in the absence of differentiability and even of a manifold structure has been pursued at an especially energetic pace in the theory of Lorentzian length spaces (see, e.g., \cite{length1,length2,length3} and references therein). These extensions have been prompted in part by required applications in analytical and/or physical contexts where assuming smoothness may be too restrictive \cite{analysis1,analysis2}, or simply by the challenge of developing Lorentzian analogues of concepts first considered in the context of (synthetic) Riemannian geometry, such as limits of Gromov-Hausdorff convergence or synthetic bounds on the Ricci tensor \cite{mondino,ms}. In a more speculative vein, physicists have long entertained the idea that the manifold structure of spacetime in classical geometric theories of gravity should cease to apply in the context of a quantum gravity theory, and to put that notion on a sound basis provides yet another motivation for these recent developments.  

Of course, the idea of extending geometric notions initially defined in a smooth setting to situations in which such smoothness is relaxed or modified is not new, and indeed has been a recurring theme in many areas of geometry. For instance, the synthetic metric geometry of Alexandrov spaces arising as Gromov-Hausdorff limits of Riemannian manifolds that inspire their Lorentzian counterparts indicated above has been an already established research area for many decades (see. e.g. \cite{burago,harvey} and references therein), while the algebraic varieties and schemes, which are the stock-in-trade of algebraic geometry and as such nearly as old as the field itself, are often quite singular from a differential-geometric perspective \cite{alggeom}. Even within Lorentzian geometry there have been a number of earlier attempts to develop abstract aspects of causality theory that might dispense with the need for an underlying manifold structure (see, e.g., \cite{harris,causalspaces}).

Another very well-known ``irregular'' geometric structure especially pertinent for us here is given by \textit{orbifolds}. These are a fruitful generalization of the notion of manifold introduced by I. Satake \cite{satake} in 1956\footnote{Satake named them \textit{V-manifolds}. The dissemination of the term \textit{orbifold} is due to W. Thurston \cite{thurston}.}. Orbifolds appear naturally in many areas of mathematics as orbit spaces of locally free Lie group actions. They also play a prominent role in the theory of 3-manifolds, as quotients of Seifert fibrations. In physics they arise, for instance, from symplectic reductions in Hamiltonian mechanics, and as phase spaces after one formally quotients infinite-dimensional spaces of fields by groups of symmetries of a system, such as gauge transformations in Yang-Mills theory and diffeomorphisms in general relativity \cite{emmrich}. They are also particularly important in string theory, or in field theories with a finite group of global symmetries \cite{adem,chenruan,strings1}. Orbifolds have a rich geometric theory which includes intrinsically defined notions such as semi-Riemannian metrics, geodesics and curvature with many analogues of standard results, for instance an orbifold version of the Bonnet-Myers' finite diameter theorem (see, e.g., \cite{caramello3} for a didactic introduction with a wealth of references and a list of such geometric results).

Now, the low-regularity geometric generalizations mentioned above have often entailed the development of clever strategies and new specific techniques. While some of these are fairly natural, there are others arguably less so. Thus, for example, in the theory of Lorentzian length spaces the definition of geodesics may not coincide with the standard one on manifolds when these notions intersect, and the generalization of a convex normal neighborhood is given via the rather technical new notion of ``localizability'' (conf. Definition 3.16 in \cite{length3}). The mere definition of ``smooth function'' in the orbifold sense is rather involved (see, e.g., section 1.4 of \cite{caramello3}). That entails frequent hurdles in each attempt to come up with low-regularity generalizations of well-known results.   

However, there is a rather natural means of systematically applying standard, high-regularity differential-geometric techniques to low-regularity spaces, provided these that can be obtained as \textit{leaf spaces} of (regular) foliations on a manifold. Leaf spaces form a rather broad class of geometrically interesting, low-regularity topological spaces that includes all orbifolds (conf. Prop. 4.1.4 in \cite{caramello3}).  Leaf spaces can have a notoriously ``bad'' structure from a differential-geometric standpoint, often with a non-Hausdorff topology. Indeed, in the preface of his celebrated book on noncommutative geometry \cite{connes} A. Connes elects these spaces as one of his examples of topological spaces where standard geometric and analytic techniques break down and his particular brand of ``algebraic desingularization'' applies. However, in foliation theory it has long been acknowledged that the structure of these spaces can alternatively be studied by using fairly standard differential-geometric techniques, using the so-called \textit{transverse geometry} of the foliation. This ``geometric desingularization'' method will underscore our approach here. 

One general goal of this work is to introduce and illustrate this ``transverse-geometry-equals-leaf-space-geometry'' philosophy to an audience of mathematical relativists and Lorentzian geometers interested in low-regularity geometry, as well as foliation theorists with an interest in physical applications of their \textit{m\'{e}tier}, via a systematic study of the transverse geometry of the so-called \textit{Lorentzian foliations} and their leaf spaces. We shall see that any spacetime, and more generally all the so-called \text{Lorentzian orbifolds} can be realized as leaf spaces of Lorentzian foliations in a natural way. However, let us hasten to emphasize that this is \textit{not} a review article, and while we will recall a number of concepts and results which are well-known in the foliation theory community --- we shall indicate clearly when that is the case --- we also explore here a number of notions and concepts that to the best of our knowledge are new. 

We also have two more specific goals here. The first is to initiate the development of the elements of a (to our knowledge entirely novel) \textit{causality theory on leaf spaces} by means of systematic study of transversal causal notions on Lorentzian foliations. It is very important to distinguish at the outset these \textit{Lorentzian foliations} from the more mundane notion of foliation in spacetimes. Although these notions intersect, they should not be not be conflated, and relating them appropriately will take up a sizable part of our efforts here.  We relate the transverse causal notions with standard one and also include some --- though not all --- steps on a \textit{transverse causal ladder} analogous to that of spacetimes, and their most basic features. It will be clear to the knowledgeable reader that there is much room for development along these two tiers, which we nevertheless defer to future work. 

Another goal, which will also provide a concrete application illustrating the broad concepts and results outlined above, is to discuss and prove the following novel \textit{transverse timelike diameter theorem}:

\begin{thmx}\label{teointrotransversediameter}
Let $(M,\mathcal{F}, g_\intercal)$ be a codimension $q$ transversely globally hyperbolic Lorentzian foliation with compact leaves and such that the leaf space $M/\f$ is Hausdorff. If the \emph{transverse Ricci tensor} obeys the inequality $\ric_\intercal\geq (q-1)C>0$ on unit transversely timelike vectors for some constant $C>0$, then the \emph{the transverse timelike diameter} satisfies
$$\diam_\intercal(M,\f, g_\intercal))\leq\frac{\pi}{\sqrt{C}}.$$
\end{thmx}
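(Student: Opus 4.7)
The plan is to adapt, almost line-by-line, the proof of the classical Hawking--Beem--Ehrlich timelike diameter theorem to the transverse setting, relying on the transverse causal and geodesic machinery developed earlier in the paper. I argue by contradiction and assume there exist two leaves $L_0, L_1 \in M/\f$ whose transverse timelike separation $d_\intercal(L_0,L_1)$ exceeds $\pi/\sqrt{C}$.

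First I would invoke transverse global hyperbolicity, together with the compactness of the leaves and the Hausdorff assumption on $M/\f$, to produce an analogue of the Avez--Seifert theorem: a maximizing transversely timelike geodesic $\gamma \colon [0,a] \to M$, parametrized by transverse arclength, with endpoints on $L_0$ and $L_1$ and transverse length $L_\intercal(\gamma) = d_\intercal(L_0,L_1) > \pi/\sqrt{C}$. Compactness of the leaves is what permits the limit-curve construction to converge at the level of the leaf space, while Hausdorffness prevents the pathologies that would otherwise obstruct attainment of the supremum. This step essentially promotes the transverse version of global hyperbolicity established earlier in the paper to a transverse limit curve theorem.

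The analytic core of the proof is a Myers-type second variation argument in the transverse setting. Along $\gamma$ one builds $q-1$ transversely spacelike parallel fields $E_1,\ldots,E_{q-1}$ that are $g_\intercal$-orthonormal and transversely orthogonal to $\dot\gamma$, and one tests the transverse index form $I_\intercal$ with the variational fields $V_i(t) = \sin(\pi t/a)\, E_i(t)$. The curvature term in $I_\intercal$ is controlled by the transverse Ricci tensor along $\gamma$, and the hypothesis $\ric_\intercal \geq (q-1)C$ on unit transversely timelike vectors, combined with $a>\pi/\sqrt{C}$, yields the key sign-reversed inequality
\[
\sum_{i=1}^{q-1} I_\intercal(V_i,V_i) \;>\; 0.
\]
Since $\gamma$ is a maximizer of transverse length, the transverse index form must be negative semi-definite on compactly supported variations; the inequality above therefore exhibits a transverse causal variation of $\gamma$ with strictly greater transverse length, contradicting the maximality of $\gamma$.

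The main obstacle, and the piece of the argument most in need of care, is setting up the transverse variational calculus so that this computation really takes place on the leaf space rather than on $M$. In particular, transverse Jacobi fields must be handled as sections of the normal bundle $TM/T\f$ along $\gamma$ modulo leafwise terms, the endpoint contributions to $I_\intercal$ must vanish at the endpoint \emph{leaves} rather than at points of $M$, and the holonomy along $\gamma$ must be compatible with the variation. The holonomy invariance built into the very definition of a Lorentzian foliation is what makes the transverse Levi-Civita connection, curvature, length functional and index form descend to $M/\f$; once that formalism is in place --- which I expect the earlier sections of the paper to supply via the Bott connection and the study of transverse geodesics --- the Myers calculation above is essentially a transcription of the smooth Lorentzian case.
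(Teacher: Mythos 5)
Your overall strategy (contradiction via a Myers-type second variation along a maximizer) is in spirit what every proof of a diameter theorem ends up doing, but the route you sketch is not the one the paper takes, and there is a real gap in your version as it stands: you presuppose a package of \emph{transverse} variational tools --- a transverse Avez--Seifert theorem producing a transversely timelike geodesic on the leaf space, a transverse index form $I_\intercal$, transversely parallel fields and transverse Jacobi fields along such a geodesic --- none of which is developed anywhere in the paper. Sections \ref{section: SR foliations}--\ref{section: Lorentzfol2} give you the basic Levi-Civita connection and transverse curvature on $\nu(\f)$, the transverse length $\ell_\intercal$, and a transverse causal ladder, but they do not give you transverse geodesics, a transverse exponential map, a second variation formula, or a limit curve/maximizer theorem at the level of $M/\f$. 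You flag this yourself as ``the piece of the argument most in need of care,'' but you do not resolve it, and it is not something the earlier sections supply. So the proof, as proposed, has a hole exactly where the nontrivial work lives.

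The paper avoids all of that with a short but decisive trick. Starting from a transversely timelike curve $\alpha$ with $\ell_\intercal(\alpha)>\pi/\sqrt{C}$, one chooses a distribution $\mathcal{D}\supset\mathbb{R}\alpha'$ with $TM=\mathcal{D}\oplus T\f$, builds a bundle-like Lorentzian metric $g$ whose horizontal distribution is $\mathcal{D}$, and observes that $\alpha$ becomes $g$-horizontal, $g$-timelike, and of $g$-length equal to $\ell_\intercal(\alpha)$ (Prop.~\ref{prop: tocomparelengths}(3)). Theorem~\ref{teo: global hyperbolic does happen} then gives global hyperbolicity of $(M,g)$, so the \emph{classical} Avez--Seifert/maximality machinery in the spacetime $(M,g)$ applies (with the compact leaves being future causally complete) to produce a maximizing $g$-timelike geodesic from $L_{\alpha(a)}$; by Prop.~\ref{bundlelike equivalences}(iii) it is $g$-horizontal. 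The focal-point input is Theorem~\ref{theorem: transverse focal points}, whose proof reduces to a genuine semi-Riemannian submersion on a tubular neighborhood and invokes O'Neill's correspondence between focal points on $M$ and conjugate points on the local leaf space together with the classical Lorentzian Myers/conjugate-point estimate. In other words, the paper never runs a Myers calculation with a transverse index form; it arranges for the computation to be a standard one in $(M,g)$ (and, via the submersion, in a local quotient), with the transverse Ricci hypothesis plugged in through Remark~\ref{remark:basicLCconnectiononquotients}. If you want to salvage your outline, you would either have to develop the transverse variational theory from scratch, or, more economically, adopt the paper's device of choosing $g$ adapted to the given long curve and then running standard Lorentzian geometry on $(M,g)$.
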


Notice that Theorem \ref{teointrotransversediameter} restricts to the usual diameter theorem when $\f$ is the trivial foliation by points of $M$. Also, since any orbifold can be realized as the leaf space of a foliation, it has an immediate consequence for the so-called \textit{Lorentzian orbifolds}:

\begin{corx}
Let $(\mathcal{O}, g)$ be an $n$-dimensional globally hyperbolic Lorentzian orbifold. If $\ric_\mathcal{O}\geq (n-1)C>0$ for unit timelike vectors, then the \emph{timelike diameter} satisfies
$$\diam(\mathcal{O}, g)\leq\frac{\pi}{\sqrt{C}}.$$
\end{corx}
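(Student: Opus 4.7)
The plan is to deduce this corollary directly from Theorem \ref{teointrotransversediameter} by realizing the Lorentzian orbifold $(\mathcal{O}, g)$ as the leaf space of a suitable Lorentzian foliation. The strategy exploits the principle, already invoked in the introduction and developed in the body of the paper, that every $n$-dimensional orbifold arises as the leaf space $M/\f$ of a regular foliation on a manifold $M$ with compact leaves, Hausdorff leaf space, and codimension $q=n$. A canonical realization comes from taking $M$ to be an appropriate orthonormal frame bundle of $\mathcal{O}$, with $\f$ the orbit foliation of the natural compact structure group action; since isotropies of frames under this action are finite in the orbifold setting, the orbits are compact embedded submanifolds and $\f$ is regular.

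The next step is to transfer the Lorentzian data. The orbifold metric $g$ lifts naturally to a holonomy-invariant transverse Lorentzian metric $g_\intercal$ on the normal bundle of $\f$, making $(M, \f, g_\intercal)$ a Lorentzian foliation of codimension $n$. Under this dictionary, transversely timelike vectors and curves on $M$ descend to, respectively, timelike vectors and curves on $\mathcal{O}$, with lengths and geodesicity preserved. Once this correspondence is in place, the hypotheses of the corollary translate term-for-term into those of Theorem \ref{teointrotransversediameter}: global hyperbolicity of $(\mathcal{O}, g)$ becomes transverse global hyperbolicity of $(M, \f, g_\intercal)$; the orbifold Ricci bound $\ric_\mathcal{O}\geq (n-1)C$ on unit timelike vectors translates to $\ric_\intercal\geq (q-1)C$ on unit transversely timelike vectors (with $q=n$); and the transverse timelike diameter of the foliation agrees with the timelike diameter of the orbifold. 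Theorem \ref{teointrotransversediameter} then yields the desired estimate $\diam(\mathcal{O}, g) = \diam_\intercal(M, \f, g_\intercal)\leq \pi/\sqrt{C}$.

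The main obstacle is establishing this dictionary rigorously. Most of the correspondence is nearly tautological for the chosen foliation model, since the transverse geometry of a Lorentzian foliation with compact leaves and Hausdorff leaf space is by design modelled on that of a Lorentzian orbifold; the matching of the chains of transverse and orbifold causality conditions (culminating in global hyperbolicity) amounts to unpacking the definitions side by side. The more delicate point is the identification of the intrinsic orbifold Ricci tensor $\ric_\mathcal{O}$ with the transverse Ricci tensor $\ric_\intercal$, which requires relating the canonical transverse connection associated to $g_\intercal$ (an appropriate analogue of the Bott connection) with the Levi-Civita-type connection of $(\mathcal{O},g)$, and then checking that their Ricci curvatures coincide on horizontal vectors. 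With this identification in hand, the corollary is an immediate consequence of Theorem \ref{teointrotransversediameter}.
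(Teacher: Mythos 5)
Your proposal takes essentially the same route as the paper: realize $(\mathcal{O},g)$ as the leaf space of a homogeneous Lorentzian foliation $(M,\f,g_\intercal)$ with compact leaves via the (orthonormal) frame bundle construction (Proposition \ref{proposition: every orbifold is an isometric quotient}), lift timelike/causal curves through $\pi_\f$ with equal transverse length so all causal notions correspond (this is exactly the content of Proposition \ref{keyfinal}), identify $\ric_\mathcal{O}$ with $\ric_\intercal$ on horizontal directions (Remark \ref{remark:basicLCconnectiononquotients}), and then invoke Theorem \ref{teointrotransversediameter}. The points you flag as ``delicate'' — the causality dictionary and the Ricci identification — are precisely what the paper spells out in Proposition \ref{keyfinal} and Remark \ref{remark:basicLCconnectiononquotients}, so your plan is correct and coincides with the paper's argument.
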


Just as in the standard timelike diameter theorem on spacetimes \cite[Thm. 11.9]{beem} the finiteness of the timelike diameter should be interpreted --- unlike its analogue on complete Riemannian manifolds --- as a (Hawking--Penrose type) \textit{singularity theorem} for orbifolds, since it entails that all timelike geodesics are incomplete.

Still in the same vein, the techniques developed here also lead to a generalized timelike diameter theorem which, in contrast with its standard version that requires controlling the Ricci curvature of $M$ on timelike vectors, involves a bound on the transverse Ricci curvature of some Lorentzian foliation on $M$. Precisely:

\begin{thmx}\label{theointrodiameter}
Let $(M, g)$ be a globally hyperbolic spacetime. Let $\mathcal{F}$ be a codimension $q\geq2$ foliation on $M$ whose leaves are all future causally complete, spacelike submanifolds with respect to $g$, and such that $g$ is bundle-like with respect to $\f$. Suppose that $\ric_\intercal\geq (q-1)C>0$ on $g_\intercal$-unit transversely timelike vectors, for the associated transverse metric $g_\intercal$. Then
$$\diam(M, g)\leq\frac{\pi}{\sqrt{C}}.$$
\end{thmx}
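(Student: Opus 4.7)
The plan is to reduce Theorem \ref{theointrodiameter} to the transverse diameter estimate (Theorem \ref{teointrotransversediameter}) via a pointwise length comparison that exploits the bundle-like condition and the fact that the leaves are spacelike. Concretely, I will show that $\diam(M,g) \leq \diam_\intercal(M,\mathcal{F},g_\intercal)$ and then invoke the transverse timelike diameter theorem (or its variant adapted to the present hypotheses) to bound the right-hand side by $\pi/\sqrt{C}$.

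For the length comparison, at each $p \in M$ the bundle-like metric together with the assumption of spacelike leaves produces a $g$-orthogonal splitting $T_pM = T_p\mathcal{F}\oplus \nu_p\mathcal{F}$ in which $g|_{T_p\mathcal{F}}$ is positive definite and $(\nu_p\mathcal{F}, g|_{\nu_p\mathcal{F}})$ is isometric to $(\nu_p\mathcal{F}, g_\intercal|_p)$, which is Lorentzian of signature $(1,q-1)$. For any $g$-timelike vector $v = v^T + v^\perp$, orthogonality yields
\[
g(v,v) = g(v^T, v^T) + g_\intercal(v^\perp, v^\perp),
\]
and since $g(v^T,v^T)\geq 0$ while $g(v,v)<0$, the normal component $v^\perp$ is transversely timelike and $-g(v,v) \leq -g_\intercal(v^\perp, v^\perp)$. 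Integrating along any $g$-timelike curve $\gamma$ gives $L_g(\gamma) \leq L_\intercal(\pi\circ\gamma)$, where the right-hand side is the transverse length of the leaf-space projection of $\gamma$, regarded as a transversely timelike curve. Taking suprema first over timelike curves between an arbitrary pair of points of $M$ and then over all such pairs yields $\diam(M,g) \leq \diam_\intercal(M,\mathcal{F},g_\intercal)$.

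It remains to bound the transverse diameter by $\pi/\sqrt{C}$ using the transverse timelike diameter theorem developed in the paper. The main obstacle is matching hypotheses: Theorem \ref{teointrotransversediameter} is stated with compact leaves and Hausdorff leaf space, whereas Theorem \ref{theointrodiameter} only assumes future causally complete spacelike leaves. Overcoming this requires, first, deducing transverse global hyperbolicity of $(M,\mathcal{F},g_\intercal)$ from the global hyperbolicity of $(M,g)$, the bundle-like condition, and future causal completeness of the leaves---presumably a statement established earlier in the paper---and, second, verifying that the transverse Myers/Bonnet argument, which exploits the bound $\ric_\intercal \geq (q-1)C$ with $q\geq 2$, still goes through when \emph{compact leaves} is replaced by \emph{future causally complete leaves}, since the latter should still furnish the transverse-geodesic completeness needed to exceed the conjugate-point bound. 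Hausdorffness of $M/\mathcal{F}$ is not required downstream, since the length comparison above is performed at the manifold level.
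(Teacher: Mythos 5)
Your length comparison $\diam(M,g)\leq\diam_\intercal(M,\mathcal{F},g_\intercal)$ is correct (it is Proposition \ref{prop: tocomparelengths}(3) in the paper together with taking suprema), but the reduction to Theorem~\ref{teointrotransversediameter} breaks down. That theorem needs transverse global hyperbolicity, compact leaves, and a Hausdorff leaf space, and you propose to obtain transverse global hyperbolicity as a consequence of the global hyperbolicity of $(M,g)$ plus the bundle-like and FCC assumptions. That implication is false, and the paper points this out explicitly: Remark~\ref{rmk: cute fact} exhibits a globally hyperbolic spacetime with a bundle-like Lorentzian metric for which the associated Lorentzian foliation is transversely totally vicious, hence not even transversely chronological. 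For such foliations $\diam_\intercal=+\infty$, so $\diam(M,g)\leq\diam_\intercal$ gives no information. The only implication the paper does establish between the two notions of global hyperbolicity is the reverse one (Theorem~\ref{teo: global hyperbolic does happen}: transverse global hyperbolicity, plus Hausdorff leaf space and compact leaves, gives global hyperbolicity of $(M,g)$), and this is precisely what is used to deduce Theorem~\ref{teointrotransversediameter} from the machinery underlying Theorem~\ref{theointrodiameter}, not the other way around.

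The paper's proof of Theorem~\ref{theointrodiameter} does not pass through the transverse diameter at all. Assuming a $g$-timelike curve $\alpha$ with $\ell_g(\alpha)>\pi/\sqrt{C}$, global hyperbolicity of $(M,g)$ together with the future causal completeness of the spacelike leaf $L_{\alpha(a)}$ yields a $g$-length-maximizing timelike geodesic from $L_{\alpha(a)}$ to $\alpha(b)$ with $\ell_g(\gamma)\geq\ell_g(\alpha)$. Because $\gamma$ maximizes from a spacelike submanifold, it is $g$-orthogonal to $L_{\alpha(a)}$ at its initial point, and then Proposition~\ref{bundlelike equivalences}(iii) (the horizontal-geodesic characterization of bundle-like metrics) forces $\gamma$ to be everywhere $g$-horizontal, so $\ell_g(\gamma)=\ell_\intercal(\gamma)$. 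Theorem~\ref{theorem: transverse focal points} then produces a focal point of $L_{\alpha(a)}$ along $\gamma$, contradicting maximality. To repair your argument you would need to replace the appeal to Theorem~\ref{teointrotransversediameter} by a direct application of Theorem~\ref{theorem: transverse focal points} to this maximizing horizontal geodesic, which is exactly what the paper does.
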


Again, the standard timelike diameter theorem also corresponds to the case that $\f$ is the trivial foliation by points in Theorem \ref{theointrodiameter}. However, as we discuss in Section \ref{section: constrasting ricci and transverse ricci}, in the nontrivial cases the hypothesis on $\ric_\intercal$ is independent of bounds on $\ric_g$.

The paper is organized as follows. Section \ref{section:preliminaries} gives a relatively self-contained description of the main concepts and results on foliations, including subsections on their transverse geometry and orbifolds. We also give many illustrative basic examples. This material is fairly standard and well-known to foliation theorists, and those familiar with it may skim over it just to get familiar with our notation. 

Section \ref{section: SR foliations} briefly reviews the so-called \textit{semi-Riemannian foliations}. Though perhaps not as well-known in this generality even in the foliation community, we believe the main notions there are easily accessible to most geometers, and so we have endeavored to keep it as brief as possible and refer to the specialized literature for further details. 

Sections \ref{section: Lorentz fol1} and \ref{section: Lorentzfol2} lay out a detailed description of Lorentzian foliations, including a novel causality theory on leaf space and a sketch of some rungs of the corresponding causal ladder. It has been long recognized by Lorentzian geometers that the causal and chronological relations can be studied abstractly, without an explicit reference to an underlying manifold. Accordingly, we directly introduce these relations on leaf spaces and study them in the spirit of Kronheimer and Penrose's \textit{causal spaces} introduced in Ref. \cite{causalspaces}, but we also provide corresponding differential-geometric transverse notions, in order to further illustrate how they work nicely in tandem with the intrinsically defined structures on the leaf space. The development here is intended to have interesting features both for Lorentzian geometers and foliation theorists.

Finally, in the final section \ref{mainsec} we discuss and prove the transverse timelike diameter theorem and its corollary for Lorentzian orbifolds stated above, as well as Theorem \ref{theointrodiameter}. 

\section{Preliminaries}\label{section:preliminaries}

Throughout this paper $M$ always denotes a connected (real, Hausdorff second-countable) $n$-dimensional manifold with $n\geq 2$ and we assume for simplicity that $M$ and all geometric objects defined thereon and elsewhere are smooth (i.e., of class $\mathcal{C}^\infty$) unless otherwise stated. 

\subsection{Metrics on vector bundles}\label{subsection: bundles}
We begin by recalling a few elementary structures on vector bundles, largely to establish terminology. Let $\pi_E:E\rightarrow M$ be a rank $k$ smooth (real) vector bundle over $M$ and let $s\in \mathbb{N}$. We denote as $\Gamma(E)$ the $C^\infty(M)$-module of smooth sections of the vector bundle $E$, but adopt the standard special notation $\mathfrak{X}(M):= \Gamma(TM)$ for the tangent bundle $TM$.

%By a $(0,s)$-type {\em (covariant) tensor field on $E$} we mean a map $T$ that assigns to each $p \in M$ an $\mathbb{R}$-multilinear map $T_x: E_x^s \rightarrow \mathbb{R}$ on the fiber $E_x := \pi_E^{-1}(x)$ over $x$. As usual, we say that such a $(0,s)$-type tensor field $T$ is {\em smooth} if for any local frame of smooth sections $s_1, \ldots, s_k: U\subset M \rightarrow E$ defined on an open set $U$ the real-valued functions $T(s_{i_1}, \ldots, s_{i_s})$ are all smooth. Again, when we refer to tensor fields on a given vector bundle $E$ below we shall assume they all are smooth unless otherwise stated. 

%Clearly, any smooth $(0,s)$-type tensor field $T$ can be equivalently defined as a $C^\infty(M)$-multilinear map $T:\Gamma(E)^s \rightarrow C^\infty(M)$, and the set of all tensor fields of a fixed type has a natural, pointwisely defined structure of a $C^\infty(M)$-module. 

Now, let $\mu \in \{0,1,\ldots,k\}$. A {\em semi-Riemannian metric (tensor) of index $\mu$} on $E$ is a $(0,2)$-type tensor field $\sigma$ that associates to each $x \in M$ a symmetric nondegenerate bilinear form $\sigma_x: E_x\times E_x \rightarrow \mathbb{R}$ of index $\mu$ on $E_x$. If $\mu=0$ [resp. $\mu=1$ with $k\geq 2$], then the semi-Riemannian metric $\sigma$ is said to be {\it Riemannian} [resp. {\it Lorentzian}]. Of course, the usual definition of semi-Riemannian [resp. Riemannian, Lorentzian] metric {\it on the manifold $M$} is recovered by just taking $E=TM$.

\subsection{Basics of foliation theory}\label{subsection: foliations}

Given $p\in\mathbb{Z}$ with $0\leq p \leq n$, a \textit{(regular\footnote{Here, ``regular'' refers to the fact that all leaves have the same dimension, as opposed to the so-called {\em singular} foliations, where this requirement is dropped. We deal exclusively with regular foliations in this paper.}) $p$-dimensional foliation} on (or of) $M$ is a partition $\f$ of $M$ into $p$-dimensional, connected, immersed submanifolds, the \textit{leaves} (of $\f$), such that, for each leaf $L\in\f$ and each $x\in L$, there are smooth vector fields $X_1,\ldots,X_p\in\mathfrak{X}(M)$ everywhere tangent to the leaves whose values at $x$ form a basis for $T_xL$. The smooth distribution given by the tangent spaces of the leaves is denoted by $T\f$, and the leaf containing $x$ by $L_x$. We say that a vector field $V\in \mathfrak{X}(U)$ is \textit{tangent} (to $\f$) or \textit{vertical} (with respect to $\f$) if $V_x \in L_x$ for all $x\in U$. We denote by $\mathfrak{X}(\f|_{U})$ the $C^\infty(U)$-submodule of $\mathfrak{X}(U)$ of vertical vector fields, and $\mathfrak{X}(\f):=\mathfrak{X}(\f|_{M})$. The number $q=n-p$ is called the \textit{codimension} of $\f$. Following standard usage it will be referred to more often than the dimension of the leaves when discussing foliations.

There are several alternative albeit equivalent definitions of foliations (see for instance \cite[Section 1.2]{mrcun}), among which the following one will be most convenient for us here. For a fixed $q\in \{1,\ldots,n-1\}$, consider an open cover $\{U_i\}_{i\in I}$ of $M$, together with submersions $\pi_i:U_i\to S_i\subset\mathbb{R}^q$, and diffeomorphisms $\gamma_{ij}:\pi_j(U_i\cap U_j)\to\pi_i(U_i\cap U_j)$ satisfying
$$\gamma_{ij}\circ\pi_j|_{U_i\cap U_j}=\pi_i|_{U_i\cap U_j}$$
for all $i,j\in I$. The connected components of the fibers $\pi_i^{-1}(\overline{x})$ are called \textit{plaques}. It is not difficult, using the local form of submersions, to verify that the plaques glue together to form connected immersed submanifolds of dimension $p=n-q$, thus recovering the original definition of a (codimension $q$) foliation $\f$ as a partition, as described above. The collection $(U_i,\pi_i,\gamma_{ij})$ is called a \textit{(Haefliger) cocycle} for the foliation $\f$. In this context, each $U_i$ is a \textit{simple open set} for $\f$, while the submersion $\pi_i:U_i \to \mathbb{R}^q$ is said to {\it locally define} $\f$. It is common to identify $S_i$ with submanifolds of $M$ transverse to the leaves, given by images of sections for the submersions $\pi_i$. Via this identification, $S_\gamma:=\bigsqcup_i S_i$ is called a \textit{total transversal} for $\f$.

Let us recall a few elementary examples of foliations. 
\begin{example}[Submersions $\&$ Bundles]\label{exe: simple submersions and bundles}
   The simplest kind of foliation arises from submersions. Specifically, let $\pi:M^n\rightarrow B^q$ be any submersion with $n>q$. Then, the collection of all the connected components of the fibers $\pi^{-1}(y)$ for $y\in \pi(M)$ define the leaves of a codimension $q$ foliation of $M$. $M$ itself is a simple open set in the language introduced above. The leaves are of course not diffeomorphic to each other in general, but that happens in the important particular case when for some connected manifold $F$, $(M,B,\pi,F)$ is a fiber bundle with total space $M$, base $B$, bundle projection $\pi$ and fiber $F$. In this case $\pi$ is an onto submersion, and $\forall y \in B$ the fiber $\pi^{-1}(y)$ if diffeomorphic to $F$. 
\end{example}
%\begin{example}[Reeb foliation]\label{exe: reeb}
%\textcolor{red}{COLOCAR AQUI A FOLHEAÇÃO DE REEB COMO EXEMPLO DE UMA QUE NÃO VEM DE SUBMERSÃO.} \textcolor{orange}{alternativamente, para economizar, podemos citar o Exemplo \ref{example:kronecker} ilustrar isso, ao invés de introduzir Reeb}
%\end{example}
\begin{example}[Homogeneous foliations]\label{exe: foliated actions}
Let $\mu:G\times M\to M$ be a smooth left action of a Lie group $G$ on $M$. Given any $x\in M$, let $\mu_x: g\in G \mapsto \mu(g,x) \in M$ be the associated \textit{orbit map}, whose image is the orbit $Gx =\mu_x(G)$. The stabilizer group of $x$ is then $G_x:=\mu_x^{-1}(x)$. As it is well-known, $G_x$ is a closed Lie subgroup of $G$, and the map $G/G_x\to M$ induced by the orbit map is an injective immersion whose image is precisely $Gx$ (see, for instance, \cite[Proposition 3.14]{alex}). In other words, each orbit $Gx$ is an immersed submanifold of $M$ of dimension $\text{dim}(G)-\text{dim}(G_x)$. Hence, if  $\dim(G_x)$ is a constant function (of $x$), then the connected components of the orbits partition $M$ into immersed submanifolds all of the same dimension, defining a foliation $\f_G$. To see this, let $E\subset T_eG$ (with $e$ denoting the identity of $G$) be any vector subspace such that $E\oplus T_eG_x = T_e G$. Then recall that for each $v\in E$, one has an associated fundamental vector field $V^\#\in\mathfrak{X}(M)$ induced by the action. Since $T_x(Gx)=\dif (\mu_x)_{e}(E)$, it follows that the $V^\#$  generate $T\f_G$. Entirely analogous considerations apply to smooth right actions of $G$ on $M$. A foliation $\f_G$ given by any (left or right) action of a Lie group $G$ with these properties is usually called \textit{homogeneous}.
\end{example}

\begin{example}[Integrable distributions] 
Perhaps the best known way in which foliations arise is via integrable distributions. Recall that a {\it $p$-dimensional distribution} on $M$ for $p \in \{1,\ldots, n-1\}$ is a rank-$p$ vector subbundle $D$ of the tangent bundle $TM$. $D$ is said to be {\it involutive} if the $C^{\infty}(M)$-submodule $\Gamma(D)$ of its sections is also a Lie subalgebra of $\mathfrak{X}(M)$. The (global) Frobenius' theorem \cite[Theorem 19.12]{Lee} then establishes that a $p$-dimensional distribution $D\subset TM$ is involutive if and only if there exists a (unique) $p$-dimensional foliation $\f$ of $M$ such that $T\f =D$. In this context, the leaf $L_x$ at $x \in M$ is said to be the {\it (maximal) integral submanifold} of $D$ through $x$ and $D$ is said to be {\it integrable}. In particular, any 1-dimensional distribution on $M$ - provided it exists - is integrable.
\end{example}

A useful source of elaborate examples is the following standard construction \cite{haefliger4} (see also \cite[Ch. 5]{camacho}). 

\begin{example}[Suspensions]\label{example: suspensions}
Fix connected manifolds $B^p$ and $S^q$, a base point $x_0 \in B$, and let $h:\pi_1(B,x_0)\to\mathrm{Diff}(S)$ be a group homomorphism. Denote by $\rho:\hat{B}\to B$ the canonical projection of the universal covering of $B$, and consider $\tilde{M}:=\hat{B}\times S$. The fibers of the projection onto the second factor determine [conf. Example \ref{exe: simple submersions and bundles}] a codimension $q$ foliation $\tilde{\f}$ of $\tilde{M}$. Let $\pi_1(B,x_0)$ act (on the right) on $\tilde{M}$ by 
$$(\hat{b},s)\!\cdot\![\gamma]=\left(\hat{b}\!\cdot\![\gamma],h\left([\gamma]^{-1}\right)(s)\right),$$
where $\hat{b}\!\cdot\![\gamma]$ is the image of $\hat{b}$ under the deck transformation determined by $[\gamma]\in \pi_1(B,x_0)$. This action is free and properly discontinuous; hence $M=\tilde{M}/\pi_1(B,x_0)$ is an $n$-manifold (with $n=p+q$) and the orbit projection $\pi:\tilde{M}\to M$ is a smooth covering map \cite[p. 28]{molino}. Moreover, one verifies that $\tau:M\to B$ given by $\tau(\pi(\tilde{b},t))=\rho(\tilde{b})$, is the projection of a fiber bundle with total space $M$, base $B$, fiber $S$ and structural group $h(\pi_1(B,x_0))$. The foliation $\tilde{\f}$ is invariant under the $\pi_1(B,x_0)$-action (i.e., the action of each element sends leaves to leaves), hence projects through $\pi$ to a foliation $\f$ of $M$, with $\codim(\f)=q=\dim(S)$. The foliation $\f$ is said to be given by the \textit{suspension of the homomorphism} $h$. An important particular use of suspensions is when we fix a diffeomorphism $\phi\in \mathrm{Diff}(S)$, and take $B=\mathbb{S}^1$ and $h(m) = \phi^m$ for all $m\in \mathbb{Z}\simeq \pi_1(\mathbb{S}^1)$.
\end{example}
\begin{example} [Pullbacks]\label{exe: pullbacks}
    Foliations can be pulled back by smooth maps, provided they are transverse to the leaves. More precisely, suppose $\f$ is a foliation of $M$ and let $f:N\to M$ be a smooth map which is transverse to each leaf of $\f$, i.e.
    $$df_x(T_xN) + T_{f(x)}\f = T_{f(x)}M, \quad \forall x\in N. $$ Then, if $(U_i,\pi_i,\gamma_{ij})$ is a cocycle for $\f$, one obtains a cocycle $(V_i,\pi_i', \gamma_{ij})$, where $V_i=f^{-1}(U_i)$ and $\pi_i'=\pi_i\circ f|_{V_i}$, defining a foliation $f^*(\f)$ on $N$. It is clear that $Tf^*(\f)=(\dif f)^{-1}(T\f)$ and $\codim(f^*(\f))=\codim(\f)$.
\end{example}

For the rest of this subsection, we fix a codimension $q$ foliation $\mathcal{F}$ on $M$, and sometimes we shall abbreviate these data via the pair $(M,\f)$ (called a {\it foliated manifold}). We can naturally define an equivalence relation on $M$ whereby two points are equivalent if and only if they are on the same leaf of $\f$. The quotient of $M$ by this equivalence relation is denoted by $M/\f$ and called {\it leaf space}. \textit{Here and hereafter the leaf space is always understood to be endowed with the quotient topology}, although we may on occasion repeat this for emphasis. Leaf spaces may be rather unwieldy from a differential-geometric standpoint, as the next quintessential example shows.

\begin{example}\label{example:kronecker}
Consider the flat torus $\mathbb{T}^2=\mathbb{R}^2/\mathbb{Z}^2$. For each $\lambda\in(0,+\infty)$, consider the smooth $\mathbb{R}$-action
$$
\begin{array}{rcl}
\mathbb{R}\times\mathbb{T}^2& \longrightarrow &\mathbb{T}^2\\
(t,[x,y]) & \longmapsto & [x+t,y+\lambda t]
\end{array}.$$
It is clear that $\dim(\mathbb{R}_{[x,y]})\equiv 0$. The corresponding homogeneous foliation $\f(\lambda)$ is usually called a \textit{$\lambda$-Kronecker foliation}. It is not difficult to verify that when $\lambda$ is irrational, each leaf is dense in $\mathbb{T}^2$. The quotient topology on the leaf space, hence, is the chaotic topology $\{\emptyset,\mathbb{T}^2/\f(\lambda)\}$. However, when $\lambda\in\mathbb{Q}$ the structure of the leaf space is completely different: all leaves are closed and the leaf space is naturally a manifold, diffeomorphic to $\mathbb{S}^1$.
\end{example}

An important ingredient in foliation theory is to try and understand the structure of the leaf space in a given situation. A very helpful fact is the following \cite[Theorem III.1]{camacho}. 
\begin{proposition}\label{prop: proj is open}
  For any foliation $\f$ on $M$, the standard projection $\pi_{\f}:M\rightarrow M/\f$ is an open map.  
\end{proposition}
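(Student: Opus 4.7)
The plan is to unwind the definition of the quotient topology on $M/\f$: the map $\pi_\f$ is open if and only if, for every open $U\subseteq M$, the \emph{saturation} $\widehat{U}:=\pi_\f^{-1}(\pi_\f(U))=\bigcup\{L\in\f : L\cap U\neq\emptyset\}$ is open in $M$. So the whole task reduces to showing that saturations of open sets are again open.

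The local engine of the argument will be the Haefliger cocycle description recalled just above the statement. On a simple open set $U_i$ with defining submersion $\pi_i:U_i\to S_i\subseteq\mathbb{R}^q$, any open $A\subseteq U_i$ has open image $\pi_i(A)$, since submersions are open maps; thus $\pi_i^{-1}(\pi_i(A))$ is open in $U_i$, hence in $M$. By construction it is exactly the union of the plaques of $U_i$ meeting $A$, and since each plaque lies in a single leaf which also meets $A$, we obtain $\pi_i^{-1}(\pi_i(A))\subseteq\widehat{A}$; in particular $\pi_i^{-1}(\pi_i(A))\subseteq\widehat{U}$ as soon as $A\subseteq U$.

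To globalize, given $y\in\widehat{U}$ I would pick $x\in U$ on the same leaf $L$ as $y$ and use path-connectedness of $L$ to choose a path $\gamma:[0,1]\to L$ with $\gamma(0)=x$ and $\gamma(1)=y$. By compactness of $\gamma([0,1])$, cover the path by simple open sets $U_1,\ldots,U_k$ and choose a partition $0=t_0<\cdots<t_k=1$ fine enough that each subarc $\gamma([t_{j-1},t_j])$ sits in a single plaque of $U_j$. Define inductively $A_1:=U\cap U_1$, $B_j:=\pi_j^{-1}(\pi_j(A_j))$, and $A_{j+1}:=B_j\cap U_{j+1}$. By induction, using the local fact above, each $B_j$ is an open subset of $M$ contained in $\widehat{U}$ which contains the plaque of $U_j$ through $\gamma(t_{j-1})$ and hence also $\gamma(t_j)$. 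At $j=k$ this produces an open neighborhood of $y$ inside $\widehat{U}$, proving that $\widehat{U}$ is open.

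The main obstacle I anticipate is a bookkeeping one: the cover $\{U_j\}$ of $\gamma([0,1])$ and the partition of $[0,1]$ must be chosen in tandem, so that each subarc $\gamma([t_{j-1},t_j])$ lies in a single plaque of its $U_j$ and successive plaques overlap at the marked points; only then does the iterative step truly ``transport'' an open neighborhood from the plaque of $\gamma(t_{j-1})$ in $U_j$ to that of $\gamma(t_j)$ in $U_{j+1}$. Compactness of the path image together with the freedom to shrink the $U_j$ handles this; once those choices are fixed, each step of the induction is automatic from the local openness of the plaque-saturation in a simple open set.
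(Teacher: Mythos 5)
The paper itself does not prove this statement; it cites \cite[Theorem III.1]{camacho}. Your proposal follows the classical plaque-chain strategy from that literature, and its overall architecture --- reduce to openness of saturations, establish a local statement on a simple chart, then induct along a leafwise path --- is the right one. However, there is a genuine gap in your local step as written. You assert that on a simple open set $U_i$ the set $\pi_i^{-1}(\pi_i(A))$ is ``exactly the union of the plaques of $U_i$ meeting $A$'' and hence sits inside $\widehat{A}$. Both assertions require the fibers of $\pi_i$ to be connected, i.e., each fiber to be a single plaque. The paper's definition of a Haefliger cocycle does not guarantee this: the $\pi_i$ are merely submersions locally defining $\f$, and a \emph{plaque} is by definition a \emph{connected component} of a possibly disconnected fiber. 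If some fiber $\pi_i^{-1}(\overline{x})$ has a component $P$ meeting $A$ and another component $P'$ disjoint from $A$, then $P'\subset\pi_i^{-1}(\pi_i(A))$, yet $P$ and $P'$ may lie in distinct leaves, so nothing forces $P'\subset\widehat{A}$. The same problem propagates to your inductive step $B_j\subseteq\widehat{U}$: a point of $B_j$ shares a $\pi_j$-fiber with a point of $A_j\subset\widehat{U}$, but need not share a leaf with it.

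The patch is standard and makes your sketch correct: before running the induction, shrink the $U_j$ along $\gamma$ so that each $\pi_j$ has connected fibers (a \emph{foliated box}). Such a chart exists around every point by the local normal form of a submersion; this is exactly the local model $D^{n-q}\times D^q$ with $\pi_2$-fibers that the paper invokes elsewhere (cf.\ Lemma~\ref{lemma: global trivialize}). With connected fibers, each fiber \emph{is} a plaque, your identification of $\pi_j^{-1}(\pi_j(A_j))$ with the plaque-saturation is correct, $B_j\subseteq\widehat{U}$ follows, and the induction goes through. The bookkeeping obstacle you anticipate (choosing the cover of $\gamma([0,1])$ and the partition in tandem) is real but is routine via a Lebesgue-number argument; the point you actually need to add is the connectedness of the local fibers.
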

We also have good information about the leaf space of a submersion, as the next result shows. (Although we believe it is quite well-known, we could not find a detailed proof anywhere, and thus we include it here.)
\begin{proposition}\label{prop: submersion leaf space}
 Let $\pi:M\rightarrow B$ be an onto submersion. The following statements hold for the foliation $\f$ whose leaves are the connected components of the fibers $\pi^{-1}(b)$ for each $b\in B$.
 \begin{itemize}
     \item[1)] The uniquely defined map $\overline{\pi}: M/\f \rightarrow B$ that makes the diagram 
     $$
     \xymatrix{
 & M  \ar[dr]^-{\pi} \ar[dl]_{\pi_{\f}} & \\
M/\f \ar[rr]^{\overline{\pi}} & & B}
$$
commutative is a homeomorphism if and only if each fiber $\pi^{-1}(b)$ is connected. 
     \item[2)] The leaf space $M/\f$ is a $T_1$ topological space. 
 \end{itemize}
\end{proposition}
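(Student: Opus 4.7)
My plan is to combine the universal property of the quotient topology with two standard facts about the onto submersion $\pi$: it is an open map (being locally a coordinate projection), and its fibers $\pi^{-1}(b)$ are closed in $M$ (since $B$ is Hausdorff and $\pi$ is continuous). Because the leaves of $\f$ are by construction the connected components of the fibers of $\pi$, the map $\pi$ is constant on each leaf. The universal property of $\pi_\f$ therefore yields a unique continuous map $\overline{\pi}:M/\f\to B$ making the diagram commute, and $\overline{\pi}$ is automatically surjective since $\pi$ is.

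For part 1), I first observe that $\overline{\pi}$ fails to be injective precisely when some fiber $\pi^{-1}(b)$ splits into two or more connected components, i.e.\ when two distinct leaves share the same image $b$; thus injectivity of $\overline{\pi}$ is equivalent to the connectedness of every fiber of $\pi$. Assuming this, it remains to show that $\overline{\pi}$ is open. For any open $U\subseteq M/\f$ one has $\overline{\pi}(U)=\pi(\pi_\f^{-1}(U))$, which is open in $B$ because $\pi_\f^{-1}(U)$ is open in $M$ by continuity of $\pi_\f$ and $\pi$ is an open map. Hence $\overline{\pi}$ is a continuous, open, surjective bijection, i.e.\ a homeomorphism. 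Conversely, any fiber with more than one connected component obstructs injectivity and hence rules out the homeomorphism.

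For part 2), to show that $M/\f$ is $T_1$ it suffices to verify that every singleton $\{[L]\}\subseteq M/\f$ is closed, and by the definition of the quotient topology this reduces to showing that $\pi_\f^{-1}(\{[L]\})=L$ is closed in $M$. Now, $L$ is by construction a connected component of the fiber $\pi^{-1}(\pi(L))$, which as noted above is closed in $M$. Since connected components of any topological space are closed in it, $L$ is closed in $\pi^{-1}(\pi(L))$, and consequently in $M$. No step in the argument is a serious obstacle; the only mildly non-trivial ingredient is the openness of the submersion $\pi$, which underlies the strategy of part 1) and makes the statement there slightly more than a purely set-theoretic observation.
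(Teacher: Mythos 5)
Your part 1) is essentially identical to the paper's argument: both use the universal property of the quotient to get the continuous $\overline{\pi}$, both use openness of the submersion to show $\overline{\pi}$ is open, and both reduce the homeomorphism property to injectivity, which is equivalent to connectedness of fibers.

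For part 2), your route is genuinely different and in fact cleaner. The paper proceeds by a case analysis on a pair of distinct leaves $L\neq L'$: if $\pi(L)\neq\pi(L')$, it uses Hausdorffness of $B$ to produce disjoint saturated open sets around each, and if $\pi(L)=\pi(L')$, it uses that $L'$ is closed to build an open saturated set $\pi^{-1}(U)\setminus L'$ around $L$ missing $L'$; in both cases it then invokes openness of $\pi_\f$ (Proposition~\ref{prop: proj is open}) to project to $M/\f$. You instead observe directly that each leaf $L$, being a connected component of the closed fiber $\pi^{-1}(\pi(L))$, is closed in $M$, and that closedness of each singleton in $M/\f$ is, by definition of the quotient topology, precisely closedness of each leaf. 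This avoids the case split and avoids any appeal to openness of $\pi_\f$. Both arguments are correct; as a side remark, the paper's first case actually establishes Hausdorff separation for leaves lying in distinct fibers, a slightly stronger statement than $T_1$ that your streamlined proof does not record, though it is not part of the claim.
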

\begin{proof}  
 $(1)$\\
 Observe first that $\overline{\pi}: M/\f \rightarrow B$ is well-defined and continuous by elementary properties of the quotient map $\pi_{\f}: M \rightarrow M/\f$. Because $\pi$ is onto, then so is $\overline{\pi}$. Also observe that given any open set $\widehat{O}\subset M/\f$ we have 
 $$\overline{\pi}(\widehat{O}) = \pi(\pi^{-1}_\f(\widehat{O})).$$
 Since any submersion is in particular an open map this shows that $\overline{\pi}$ is an open map. Therefore, $\overline{\pi}$ is a homeomorphism if and only if it is one-to-one. However, since $\forall x,y\in M$ we have 
 $$\overline{\pi}(\pi_\f(x)) =\overline{\pi}(\pi_\f(y))\Longleftrightarrow \pi(x)=\pi(y),$$
 this implies that $\overline{\pi}$ is injective if and only if to be on the same fiber is the same as being on the same leaf, i.e., if and only if each fiber is connected. \\
 $(2)$\\
 Let $L\neq L'\in \f$ be any two distinct leaves. If $b:=\pi(L) \neq \pi(L')=:b'$ in $B$, then since $B$ is a manifold, and in particular Hausdorff, we can pick disjoint open sets $U\ni b$ and $U'\ni b'$; hence, $\pi^{-1}(U)$ and $\pi^{-1}(U')$ are two disjoint open sets in $M$ containing $L$ and $L'$ respectively, and project via $\pi_\f$ [conf. \ref{prop: proj is open}] onto disjoint open sets in $M/\f$ containing $L$ and $L'$ (viewed now as elements in $M/\f$). In other words, $L$ and $L'$ are Hausfdorff-separated in $M/\f$. 

 If we now have $b=b'$, then $L,L'$ are two distinct connected components of the fiber $\pi^{-1}(b)$; since the latter set is closed in $M$ and $L,L'$ are closed in $\pi^{-1}(b)$, they are also closed subsets of $M$. Pick any open set $U\ni b$ in $B$. Then $\pi^{-1}(U)\setminus L'$ is an open set in $M$ which thus projects via $\pi_\f$ onto an open set in $M/\f$ containing $L$ but not containing $L'$. 
\end{proof}
We end this subsection by pointing out that a foliation produces a new topology on $M$ which will play a useful role later.
\begin{definition}
Let $(M,\mathcal{F})$ be a foliated manifold and $A\subset M$. We say that $A$ is \textit{satured with respect to $\mathcal{F}$} if $A$ is either empty or a union of leaves of $\mathcal{F}$, or, equivalently, if $A=\pi^{-1}_\f(\pi_\f(A))$.
\end{definition}
More generally, for any $A\subset M$, the \textit{saturation of $A$ with respect to $\mathcal{F}$} is the set $\pi^{-1}_\f(\pi_\f(A))\subset M$, which is just the union of all leaves that contain points of $A$.

If $\tau$ denotes the original (manifold) topology of $M$, then $$\tau_\f:=\{\mathcal{U}\in\tau: \mathcal{U} \text{ is saturated with respect to } \f\}$$ is also a topology on $M$, with countable basis, coarser than $\tau$ and always non-Hausdorff, except in the case that $\f$ is the trivial foliation by points of $M$.

In the next two subsections we summarize a few standard notions used in understanding certain properties of leaf spaces.

\subsection{Holonomy}\label{section: holonomy}
Let $(M,\f)$ be a codimension $q$ foliated manifold, represented by the cocycle $\{(U_i,\pi_i,\gamma_{ij})\}$, and let $x,y\in M$ be in the same leaf $L\in \f$. Given a path $c:[0,1]\to L$ joining $x$ to $y$, let $0=t_1<\dots<t_{m+1}=1$ be a partition of $[0,1]$ such that $s([t_k,t_{k+1}])\subset U_{i_k}$ for some $U_{i_k}$. Then, there is a diffeomorphism
$$\gamma_{i_mi_{(m-1)}}\circ\gamma_{i_{(m-1)}i_{(m-2)}}\circ\dots\circ\gamma_{i_2i_1}=\gamma_{i_mi_1}$$
between small enough neighborhoods of $\overline{x}=\pi_1(x)$ and $\overline{y}=\pi_m(y)$. If we identify $S_\gamma$ with a total transversal for $\f$ containing $x$ and $y$, this becomes the ``sliding along the leaves'' notion from \cite[Section 1.7]{molino}. Let us denote the germ of $\gamma_{i_mi_1}$ at $\overline{x}$ by $h_c$. This germ depends only on the $\partial[0,1]$-relative homotopy class of $c$. The \textit{holonomy group} of $L$ at $x$, given by
$$\mathrm{Hol}_x(L)=\{h_c\ |\ c:[0,1]\to L\ \mbox{\ is a loop}\},$$
then codifies the recurrence of leaves nearby $L$. A leaf $L\in\f$ is a leaf \textit{without holonomy} [res. \textit{with finite holonomy}] when $\mathrm{Hol}(L)$ is trivial [resp. finite].

\subsection{Orbifolds}\label{subsection: orbifolds}

Orbifolds are a generalization of manifolds that allow certain types of singularities: instead of being locally Euclidean, orbifolds are locally modeled by quotients of the Euclidean space by actions of finite groups. We describe them briefly here insofar as they relate to foliations, and refer to \cite{adem}, \cite{caramello3} and \cite{mrcun} for more detailed introductions.

Let $X$ be a topological space. An \textit{orbifold chart} of dimension $n\in\mathbb{N}$ on $X$ consists of a connected open subset $\widetilde{U}\subset\mathbb{R}^n$, a finite group $H$ acting smoothly and effectively on $\widetilde{U}$, and a continuous, $H$-invariant map $\phi:\widetilde{U}\to X$ inducing a homeomorphism $\widetilde{U}/H\cong U$ where $U$ is an open subset of $X$. We will denote such a chart by $(\widetilde{U},H,\phi)$.

Two charts $(\widetilde{U}_i,H_i,\phi_i)\in \mathcal{A}$ ($=1,2$) are compatible if for each given $x\in U_1\cap U_2$, there is an open subset $U_3\subset U_1\cap U_2$ containing $x$ charted by $(\widetilde{U}_3,H_3,\phi_3)$ and embeddings $\lambda_i:\widetilde{U}_3\hookrightarrow\widetilde{U}_i$ such that $\phi_i\circ\lambda=\phi_3$. Then one defines an orbifold atlas for $X$ as a cover by compatible $n$-dimensional charts, in the obvious sense, and an \textit{$n$-dimensional smooth orbifold} as a Hausdorff, second countable topological space $\mathcal{O}$ endowed with a smooth maximal orbifold atlas.

\begin{example}[Manifolds are orbifolds] Manifolds can be regarded as orbifolds (of the same dimension) for which all the finite groups $H_i$ appearing in local charts are trivial.
\end{example}

\begin{example}[Quotients by locally free actions]
Let $G$ by a Lie group acting properly on a manifold $M$. If the action is \textit{locally free} (i.e., if the induced infinitesimal action of the Lie algebra $\mathfrak{g}$ is free or, equivalently, if each stabilizer $G_x$ is finite), then $M/G$ is naturally an orbifold, as one easily establishes via an application of the tubular neighborhood theorem \cite[Theorem 3.57]{alex}. Indeed, these \textit{quotient orbifolds} in fact exhaust all possible orbifolds, as we shall see in Proposition \ref{proposition: every orbifold is an isometric quotient} below.
\end{example}

\begin{example}[Orbifolds arising as leaf spaces]\label{example: when leaf spaces are orbifolds} Let $\f$ is a smooth foliation of codimension $q$ on a manifold $M$ and suppose all leaves of $\f$ are compact and have finite holonomy. It is clear that $M/\f$ is second countable (recall Prop. \ref{prop: proj is open}). Moreover, the compactness of the leaves guarantee that $M/\f$ is Hausdorff, since then each leaf admits tubular neighborhoods. Given $[x]\in M/\f$, by the so-called generalized Reeb stability (cf. .e.g., \cite[Theorem 2.4.3 and Theorem 3.1.5]{candel}), there is a a tubular neighborhood $\mathrm{Tub}(L_x)$ restricted to which $\f$ is given by the suspension of the holonomy map $h:\pi_1(L,x)\to \mathrm{Hol}_x(L)<\mathrm{Diff}(S)$ on a small transversal $S\ni x$. We can, without loss of generality, identify $S$ with an open ball in $\mathbb{R}^q$. Then one easily verifies that $(S,\mathrm{Hol}_x(L),\pi_\f|_{\mathrm{Tub}(L_x)} )$ is a $q$-dimensional orbifold chart for the neighborhood $\pi_\f(\mathrm{Tub}(L_x))\ni [x]$, and that these charts in fact define a $q$-dimensional orbifold structure for $M/\f$.
\end{example}

Many of the elementary objects and constructions from the differential topology and geometry of manifolds generalize without difficulty to orbifolds, as can be seen in \cite{kleiner}, \cite{chenruan} and \cite{galazgarcia}. For instance, one constructs the (analog of the) tangent bundle of $\mathcal{O}$ by taking quotients of the tangent bundles $TU_i$ of the local charts by the corresponding linearized actions of the chart groups $H_i$ and showing that these local quotients glue together to form an orbifold $T\mathcal{O}$. Tensor bundles of every rank are constructed similarly. Their sections --- tensor fields on $\mathcal{O}$ --- correspond locally, in terms of a chart $(\widetilde{U},H,\phi)$, to $H$-invariant tensor fields on $\widetilde{U}$. A semi-Riemannian metric of index $\mu$ on the orbifold $\mathcal{O}$ is then just a field $\mathrm{g}$ of symmetric, non-degenerate covariant $2$-tensors of index $\mu$ on $\mathcal{O}$. As usual, such a semi-Riemannian metric is \textit{Riemannian} when $\mu =0$ and \textit{Lorentzian} when $\mu =1$ and $\dim\mathcal{O}\geq2$. In particular, many results from Riemannian geometry also have a generalization for orbifolds, see for instance \cite[Chapter 4]{caramello3}. For example, one proves via partitions of unity that \textit{any orbifold admits a Riemannian metric \cite[Proposition 2.20]{mrcun}}. To our knowledge far fewer results are known for more general indices, including the Lorentzian case.

The \textit{orthonormal frame bundle} $F\mathcal{O}$ of a Riemannian orbifold $\mathcal{O}$ is constructed by the same procedure: by gluing together the quotients of the orthonormal frame bundles of the charts by the (extended) actions of the chart groups. However, now such an extended action is actually free, since an isometry fixing a frame on a connected manifold must be the identity. Therefore $F\mathcal{O}$ is a bona fide manifold. It inherits a natural $\mathrm{O}(q)$-action which is proper, effective and locally free. As in the case of manifolds, by taking the quotient $F\mathcal{O}/\mathrm{O}(q)$ one recovers $\mathcal{O}$ up to diffeomorphism. 

In the case when $\mathcal{O}$ is orientable, it can further be obtained as the quotient of the bundle of \textit{oriented} orthonormal frames by the action of $\mathrm{SO}(q)$. This is desirable since $\mathrm{SO}(q)$ is connected and, in particular, its orbits form a foliation. When $\mathcal{O}$ is not orientable one can complexify $F\mathcal{O}$ in order to obtain $\mathcal{O}$ as the orbit space of the corresponding group $\mathrm{U}(q)$ (and hence again as a leaf space). Summing up, in this way one obtains a converse to Example \ref{example: when leaf spaces are orbifolds}, showing in particular that \textit{every orbifold can be realized as a leaf space for a foliation with compact leaves}:

\begin{proposition}[{\cite[Proposition 2.23]{mrcun}}]\label{proposition: every orbifold is an isometric quotient}
Every orbifold $\mathcal{O}$ can be realized as the orbit space of a locally free action of a compact Lie group $G$. Moreover, the group $G$ can be assumed to also be connected. Therefore, any orbifold $\mathcal{O}$ can be viewed as the leaf space of a homogeneous foliation with compact leaves.
\end{proposition}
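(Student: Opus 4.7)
The plan is to realize $\mathcal{O}$ concretely as the orbit space of an action on its orthonormal frame bundle, leveraging the fact that an orbifold always carries a Riemannian metric. First I would equip $\mathcal{O}$ with a Riemannian metric $\metric$, whose existence is guaranteed by the partition-of-unity argument cited in the preamble to the statement. With such a metric fixed, I would construct the orthonormal frame bundle $F\mathcal{O}$ by assembling, for each local chart $(\widetilde{U}_i, H_i, \phi_i)$, the orthonormal frame bundle $F\widetilde{U}_i$ and taking the quotient by the natural lift of the $H_i$-action. The crucial observation here is the standard rigidity fact that an isometry of a connected Riemannian manifold fixing a single frame is the identity; hence each lifted $H_i$-action on $F\widetilde{U}_i$ is \emph{free}, so the local quotient $F\widetilde{U}_i/H_i$ is a smooth manifold, and the compatibility of the charts ensures that these glue together to form a genuine manifold $F\mathcal{O}$ (rather than an orbifold).

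Next I would install on $F\mathcal{O}$ the natural right $\mathrm{O}(q)$-action obtained by precomposing frames with elements of $\mathrm{O}(q)$, which commutes with the $H_i$-actions and therefore descends to a well-defined smooth action on $F\mathcal{O}$. This action is proper (since $\mathrm{O}(q)$ is compact), effective (by standard frame-bundle arguments), and locally free: the stabilizer of a class $[u] \in F\mathcal{O}$ is finite, being isomorphic to a subgroup of $H_i$ (which is a finite subgroup of $\mathrm{O}(q)$ by rigidity). Then I would invoke Example \ref{exe: foliated actions} with constant stabilizer dimension zero to conclude that the orbits define a homogeneous foliation $\f_{\mathrm{O}(q)}$ of $F\mathcal{O}$ whose leaves are the orbits; since $\mathrm{O}(q)$ is compact and stabilizers are finite, each orbit is a compact immersed submanifold. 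Identifying $F\mathcal{O}/\mathrm{O}(q)$ with $\mathcal{O}$ as topological spaces (via the evident chart-by-chart map, using that inside each $\widetilde{U}_i$ one has $F\widetilde{U}_i/\mathrm{O}(q) \cong \widetilde{U}_i$, and then quotienting by $H_i$) exhibits $\mathcal{O}$ as the leaf space $F\mathcal{O}/\f_{\mathrm{O}(q)}$.

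To upgrade the acting group to a connected one, I would distinguish two cases. If $\mathcal{O}$ is orientable, replace $F\mathcal{O}$ by the bundle $F^{+}\mathcal{O}$ of oriented orthonormal frames and replace $\mathrm{O}(q)$ by $\mathrm{SO}(q)$, which is connected and still acts properly, effectively, and locally freely. If $\mathcal{O}$ is not orientable, the same construction carried out with the complexified (Hermitian) frame bundle replaces the structure group $\mathrm{O}(q)$ by $\mathrm{U}(q)$, which is connected and compact; the quotient of this larger manifold by $\mathrm{U}(q)$ still recovers $\mathcal{O}$, because the fibers over a point of $\mathcal{O}$ in the complexified bundle are $\mathrm{U}(q)$-orbits. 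In either case the resulting $G$ is a compact connected Lie group acting locally freely, with compact leaves.

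The main technical obstacle is verifying that the gluing of the local pieces $F\widetilde{U}_i/H_i$ really yields a smooth manifold structure on $F\mathcal{O}$ compatible with a smooth $\mathrm{O}(q)$-action: one must check that the chart-compatibility embeddings $\lambda_i$ between the $\widetilde{U}_i$'s lift to equivariant smooth embeddings of frame bundles, and that the quotients patch coherently on overlaps. The freeness of the lifted $H_i$-actions (consequence of the isometry rigidity invoked above) is what makes this possible. Once $F\mathcal{O}$ is established as a smooth manifold with the described properties, the rest of the argument is a direct appeal to Example \ref{exe: foliated actions} together with the openness of $\pi_\f$ from Proposition \ref{prop: proj is open}.
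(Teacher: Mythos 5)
Your proposal is correct and follows essentially the same approach the paper outlines in the two paragraphs preceding the proposition: fix a Riemannian metric on $\mathcal{O}$, form the orthonormal frame bundle $F\mathcal{O}$ as a genuine manifold (using the rigidity of isometries fixing a frame to make the lifted chart-group actions free), equip it with the proper, effective, locally free $\mathrm{O}(q)$-action whose quotient recovers $\mathcal{O}$, and then pass to $\mathrm{SO}(q)$ on oriented frames (orientable case) or $\mathrm{U}(q)$ on the complexified bundle (non-orientable case) to obtain a connected acting group. The paper defers the full verification to \cite[Prop.~2.23]{mrcun}, whereas you spell out the gluing and stabilizer checks, but the construction and the key lemma (frame-fixing isometries are trivial) are identical.
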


\subsection{Transverse structures} \label{section:transverse 1}

The {\it normal bundle} $\nu(\mathcal{F})$ of a codimension $q$ foliation $\mathcal{F}$ on $M$ is the rank $q$ vector bundle over $M$ whose fiber at each $x\in M$ is the quotient space $\nu(\mathcal{F})_x:=T_xM/T_x\mathcal{F}$. Given any $v\in T_xM$ we denote by $\overline{v}$ its equivalence class in $\nu(\mathcal{F})_x$. Similarly, for any $X\in \mathfrak{X}(M)$, $\overline{X} \in \Gamma(\nu(\mathcal{F}))$ if the section which takes the value $\overline{X_x}$ at each $x\in M$. One easily sees via a partition of unity argument that the $C^\infty(M)$-linear map 
\begin{equation}\label{thephi}
\Phi: X\in \mathfrak{X}(M)\mapsto \overline{X} \in \Gamma(\nu(\mathcal{F}))
\end{equation}
is surjective. Thus the equation 
\begin{equation}\label{tangentaction}
    \mathcal{L}_V\overline{X} := \overline{[V,X]}, \quad \forall V\in \mathfrak{X}(\mathcal{F}), \forall X\in \mathfrak{X}(M)
\end{equation}
gives a well-defined ``infinitesimal action'' of tangent vector fields on the sections of the normal bundle. Indeed, the correspondence 
$$(V,\chi) \in \mathfrak{X}(\mathcal{F})\times \Gamma(\nu(\mathcal{F})) \mapsto \mathcal{L}_V\chi \in \Gamma(\nu(\mathcal{F}))$$
defines a kind of ``partial connection'' on $\nu(\mathcal{F})$ defined for tangent vector fields, called a {\it Bott connection}. 

The elements of the set 
$$\mathfrak{l}(\mathcal{F}) :=\{\chi \in \Gamma(\nu(\mathcal{F}))\, :\, \mathcal{L}_V \chi =0, \forall V\in \mathfrak{X}(\mathcal{F})\}$$
are called {\it transverse vector fields}. The set $\mathfrak{l}(\mathcal{F})$ has an obvious structure of real vector space, but it is {\it not} a $C^\infty(M)$-submodule of $\Gamma(\nu(\mathcal{F}))$. However, it is naturally a module over the ring $\Omega^0(\f)$ of {\it $\mathcal{F}$-basic functions}, i.e., those $f\in C^\infty(M)$ such that $Vf=0$ for all $V\in \mathfrak{X}(\mathcal{F})$. (That is, a smooth real-valued function on $M$ is basic if and only if it is constant on the leaves of $\mathcal{F}$.) $\mathfrak{l}(\mathcal{F})$ is also naturally a Lie algebra with bracket given by
$$[\overline{X},\overline{Y}] := \overline{[X,Y]}, \quad X,Y \in \mathfrak{X}(M).$$

The Lie subalgebra of the {\it foliate} (or {\it projectable}) vector fields is 
$$\mathfrak{L}(\mathcal{F}):= \{X\in \mathfrak{X}(M)\, :\, [X,\mathfrak{X}(\mathcal{F})]\subset \mathfrak{X}(\mathcal{F})\}.$$
It is clear that $\mathfrak{L}(\mathcal{F})$ is also a $\Omega^0(\f)$-module and indeed that the map $\Phi$ in \eqref{thephi} restricts to an isomorphism of $\Omega^0(\f)$-modules and also a Lie algebra isomorphism between $\mathfrak{L}(\mathcal{F})$ and $\mathfrak{l}(\mathcal{F})$.

More generally, let $T: \Gamma(\nu(\mathcal{F}))^s \rightarrow C^\infty(M)$ be a $(0,s)$-type tensor field on the normal bundle. Given $V\in \mathfrak{X}(\mathcal{F})$, we define the action of the Bott connection via the product rule 
\begin{equation}\label{productrule2}
    (\mathcal{L}_VT)(S_1,\ldots,S_s) = V(T(S_1,\ldots,S_s)) - \sum_{i=1}^sT(S_1,\ldots, \mathcal{L}_VS_i,\ldots, S_s), 
\end{equation}
$\forall S_1, \ldots, S_k\in \Gamma(\nu(\mathcal{F}))$. We then say that $T$ is {\it holonomy-invariant} if 
$$\mathcal{L}_VT \equiv 0, \quad \forall V \in \mathfrak{X}(\mathcal{F}).$$
The set $\mathfrak{T}(s,\mathcal{F})$ of all holonomy-invariant $(0,s)$-type tensor fields on $ \Gamma(\nu(\mathcal{F}))$ has the natural structure of a $\Omega^0(\f)$-module. 

A $(0,s)$-type tensor field $\xi$ on $M$ is {\it $\mathcal{F}$-basic} if $\xi(X_1, \ldots, X_s) =0$ whenever some $X_i\in \mathfrak{X}(\mathcal{F})$, and $\mathcal{L}_V\xi =0$ for all $V\in \mathfrak{X}(\mathcal{F})$. We denote the $\Omega^0(\f)$-module of $\mathcal{F}$-basic $(0,s)$-type tensor fields on $M$ by $\mathfrak{L}(s,\mathcal{F})$. It is then easy to show:
\begin{proposition}\label{ prop: fundamental iso}
    For each $s\in \mathbb{N}$, the map 
    $$\Phi_s: \xi \in \mathfrak{L}(s,\mathcal{F}) \mapsto \overline{\xi} \in \mathfrak{T}(s,\mathcal{F})$$
    given, for each $\xi \in \mathfrak{L}(s,\mathcal{F})$ by 
    $$\overline{\xi}(\overline{X_1},\ldots, \overline{X_s}):= \xi(X_1,\ldots, X_s), \quad \forall X_i \in \mathfrak{X}(M),$$
    is an isomorphism of $\Omega^0(\f)$-modules.
\end{proposition}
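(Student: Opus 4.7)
The plan is to verify, in order, well-definedness of $\Phi_s$, $\Omega^0(\f)$-linearity, injectivity, and surjectivity, unpacking each from the definitions of $\mathfrak{L}(s,\f)$ and $\mathfrak{T}(s,\f)$. The underlying algebraic engine is the surjective $C^\infty(M)$-linear map $\Phi$ of \eqref{thephi}, whose kernel is precisely $\mathfrak{X}(\f)$; this is what converts statements about tangent vector fields into statements about sections of $\nu(\f)$ (and vice versa).

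First I would check well-definedness. Given $\xi\in\mathfrak{L}(s,\f)$, I want to see that $\overline{\xi}(\overline{X_1},\ldots,\overline{X_s}):=\xi(X_1,\ldots,X_s)$ does not depend on the representatives $X_i\in\mathfrak{X}(M)$: if $\overline{X_i}=\overline{Y_i}$ then $X_i-Y_i\in\mathfrak{X}(\f)$, and since $\xi$ vanishes whenever some argument is vertical, $C^\infty(M)$-multilinearity yields $\xi(X_1,\ldots,X_s)=\xi(Y_1,\ldots,Y_s)$. The resulting $\overline{\xi}$ is then $C^\infty(M)$-multilinear on $\Gamma(\nu(\f))^s$ because $\Phi$ is $C^\infty(M)$-linear and surjective. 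To see that $\overline{\xi}\in\mathfrak{T}(s,\f)$, apply the product rule \eqref{productrule2}: for $V\in\mathfrak{X}(\f)$ and $X_i\in\mathfrak{X}(M)$,
\[
(\mathcal{L}_V\overline{\xi})(\overline{X_1},\ldots,\overline{X_s})=V(\xi(X_1,\ldots,X_s))-\sum_{i=1}^s\xi(X_1,\ldots,[V,X_i],\ldots,X_s)=(\mathcal{L}_V\xi)(X_1,\ldots,X_s),
\]
where I used $\mathcal{L}_V\overline{X_i}=\overline{[V,X_i]}$ from \eqref{tangentaction}. Since $\xi$ is $\f$-basic the right-hand side vanishes, so $\overline{\xi}$ is holonomy-invariant.

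$\Omega^0(\f)$-linearity of $\Phi_s$ is immediate from the definition: for $f\in\Omega^0(\f)$ and $\xi,\eta\in\mathfrak{L}(s,\f)$, $\overline{f\xi+\eta}=f\overline{\xi}+\overline{\eta}$ as functionals on $\Gamma(\nu(\f))^s$. Injectivity is equally direct: if $\overline{\xi}\equiv 0$, then $\xi(X_1,\ldots,X_s)=\overline{\xi}(\overline{X_1},\ldots,\overline{X_s})=0$ for all $X_i\in\mathfrak{X}(M)$, whence $\xi=0$.

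For surjectivity, given $T\in\mathfrak{T}(s,\f)$ I would define $\xi(X_1,\ldots,X_s):=T(\overline{X_1},\ldots,\overline{X_s})$ for $X_i\in\mathfrak{X}(M)$; this is $C^\infty(M)$-multilinear (by $C^\infty(M)$-multilinearity of $T$ together with $C^\infty(M)$-linearity of $\Phi$), hence a $(0,s)$-tensor on $M$. It vanishes whenever any $X_i\in\mathfrak{X}(\f)$ since then $\overline{X_i}=0$, and reversing the calculation above gives $(\mathcal{L}_V\xi)(X_1,\ldots,X_s)=(\mathcal{L}_V T)(\overline{X_1},\ldots,\overline{X_s})=0$ for every $V\in\mathfrak{X}(\f)$, so $\xi\in\mathfrak{L}(s,\f)$ and clearly $\Phi_s(\xi)=T$. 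No step is a serious obstacle; the only point that requires a moment's care is ensuring that both conditions defining $\mathfrak{L}(s,\f)$---vanishing on vertical arguments and $\mathcal{L}_V$-invariance---are used (the first for well-definedness and injectivity of the correspondence at the level of arguments, the second for holonomy invariance of the image), and conversely that both are recovered on the $\mathfrak{T}(s,\f)$ side.
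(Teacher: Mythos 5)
Your proof is correct; the paper leaves this statement without an explicit argument (it is followed only by a QED mark), and your write-up supplies exactly the routine verification one would expect: well-definedness from the vanishing of $\xi$ on vertical arguments together with $C^\infty(M)$-multilinearity, holonomy-invariance via the product rule identifying $(\mathcal{L}_V\overline{\xi})(\overline{X_1},\ldots,\overline{X_s})$ with $(\mathcal{L}_V\xi)(X_1,\ldots,X_s)$, and surjectivity by running the same identification in reverse. No gaps.
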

\qcd
The geometric meaning of being $\mathcal{F}$-basic/projectable is elucidated via the following result, which also vindicates a general fact in foliation theory that underscores the philosophy of this paper: {\it the study of the transverse geometry of $\f$ is the study of the $\mathscr{H}_\f$-invariant geometry of $S_\f$ and thus also a way to ``regularize'' (or ``desingularize'') the geometry of the leaf space}.
\begin{proposition}\label{prop:basic 1}
For the codimension-$q$ foliation $\mathcal{F}$ the following hold.
\begin{itemize}
    \item[i)] Let $\xi$ be $(0,s)$-type tensor field on $M$, $f\in C^\infty(M)$, and $X\in \mathfrak{X}(M)$. $\xi$ is $\mathcal{F}$-basic [resp. $f\in \Omega^0(\f)$ is $\mathcal{F}$-basic; $X$ is projectable] if and only if for any $\pi:U\subset M \rightarrow \mathbb{R}^q$ submersion locally defining $\mathcal{F}$ there exists some $(0,s)$-type tensor $\widehat{\xi}$ on $\pi(U)$ [resp. $\hat{f}\in C^\infty(\pi(U))$; $\widehat{X} \in \mathfrak{X}(\pi(U))$] such that $\pi^{\ast}\widehat{\xi} = \xi|_{U}$ [resp. $\hat{f}\circ \pi = f|_{U}$, $X|_{U}$ is $\pi$-related to $\widehat{X}$].
 \item[ii)] Let $(U_i,\pi_i,\gamma_{ij})_{i\in I}$ be a Haefliger cocycle for $\mathcal{F}$, and suppose there exist $(0,s)$-tensor fields $\xi_i$ [resp. smooth real-valued functions $f_i$, vector fields $X_i$] defined on $\pi_i(U_i)$ such that for any $i,j\in I$ for which $U_i\cap U_j \neq \emptyset$ we have 
$$\gamma_{ij}^{\ast }\xi_i = \xi_j$$
[resp. $f_i\circ \gamma_{ij} = f_j$, $X_j \stackrel{\gamma_{ij}}{\sim}X_i$].
Then there exists an $\mathcal{F}$-basic $(0,s)$-type tensor field $\xi$ on $M$ [resp. $f\in \Omega^0(\f)$; projectable $X\in \mathfrak{L}(\mathcal{F})$] such that $\pi_i^{\ast}\xi_i = \xi|_{U_i}$ [resp. $f_i\circ \pi_i = f|_{U_i}$, $X|_{U}$ if $\pi_i$-related to $X_i$] for every $i\in I$.
\end{itemize}
\end{proposition}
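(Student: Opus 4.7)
The plan is to work in \emph{foliated coordinates}. By the local form of submersions, about every point of $U$ there exist coordinates $(x^1,\ldots,x^p,y^1,\ldots,y^q)$ on a neighborhood $V\subset U$ in which $\pi|_V$ takes the form $(x^i,y^j)\mapsto (y^j)$, so that $T\f|_V=\mathrm{span}\{\partial_{x^1},\ldots,\partial_{x^p}\}$ and the plaques inside $V$ are the level sets $\{y=\mathrm{const}\}$. Both implications in (i) will reduce to a computation in these coordinates, after which (ii) becomes a gluing argument powered by (i) and the cocycle identity.

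For part (i), the ``if'' direction is immediate in all three cases: pullbacks along $\pi$ automatically vanish on vertical arguments and have vanishing Lie derivative in vertical directions, since $\dif\pi$ kills $T\f$. So if $f=\hat f\circ\pi$, $X\stackrel{\pi}{\sim}\widehat X$, or $\xi=\pi^{\ast}\hat\xi$, one reads off the corresponding basicity/projectability properties directly. For the ``only if'' direction, in the foliated coordinates above, basicity of $f$ reads $\partial_{x^i}f\equiv 0$, so $f$ is constant along the plaques of $V$; since $U$ is simple, the plaques of $\pi$ are its connected fibers, hence $f$ descends to a smooth $\hat f$ on $\pi(V)$. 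Writing a projectable $X=A^i\partial_{x^i}+B^j\partial_{y^j}$, the condition $[\partial_{x^k},X]\in\mathfrak{X}(\f)$ forces $\partial_{x^k}B^j\equiv 0$, so the $B^j$ depend only on $y$ and define $\widehat X=B^j(y)\partial_{y^j}$ to which $X$ is $\pi$-related. For a basic tensor $\xi$ the only possibly nonzero components are $\xi_{j_1\cdots j_s}:=\xi(\partial_{y^{j_1}},\ldots,\partial_{y^{j_s}})$, and the identity $\mathcal{L}_{\partial_{x^k}}\xi\equiv 0$ forces $\partial_{x^k}\xi_{j_1\cdots j_s}\equiv 0$, so they descend to the components of a tensor $\hat\xi$ on $\pi(V)$ with $\pi^{\ast}\hat\xi=\xi|_V$. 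A cover of $U$ by such foliated charts produces local descents that agree on overlaps by injectivity of $\pi^{\ast}$ on covariant tensors (since $\dif\pi$ is surjective), so they patch into a global $\hat f$, $\widehat X$, or $\hat\xi$ on $\pi(U)$.

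For (ii), I would define $\xi|_{U_i}:=\pi_i^{\ast}\xi_i$ and analogously for the function and vector-field cases. On a nonempty overlap $U_i\cap U_j$ the Haefliger identity $\pi_i|_{U_i\cap U_j}=\gamma_{ij}\circ\pi_j|_{U_i\cap U_j}$ combined with $\gamma_{ij}^{\ast}\xi_i=\xi_j$ gives
\[
\pi_i^{\ast}\xi_i\big|_{U_i\cap U_j}=\pi_j^{\ast}\gamma_{ij}^{\ast}\xi_i=\pi_j^{\ast}\xi_j\big|_{U_i\cap U_j},
\]
so the local pieces glue to a well-defined global $\xi$ on $M$; the same identity (applied to $f_i\circ\gamma_{ij}=f_j$ and to $X_j\stackrel{\gamma_{ij}}{\sim}X_i$) handles the other two cases. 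That the resulting global object is $\f$-basic (resp.\ basic, projectable) is then immediate from the ``if'' part of (i) applied to each $\pi_i$.

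The only mildly delicate step is the ``only if'' direction in (i): one must use that $U$ is a simple open set so that plaques coincide with connected fibers of $\pi$, otherwise the constancy along plaques would only produce a function on the leaf space of $\pi|_U$ rather than on $\pi(U)$ itself. Everything else reduces to standard coordinate calculations and the naturality of pullbacks.
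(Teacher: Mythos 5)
Your argument is correct and gives a self-contained coordinate proof, whereas the paper itself contains no proof at all — it simply delegates the statement to Molino's book (Propositions 2.1--2.3 there), which is indeed the standard reference for this result. Your approach is the natural one to reconstruct: reduce everything to foliated coordinates, where basicity/projectability becomes ordinary independence of the $x$-coordinates, and then descend through $\pi$.

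One small point worth tightening. In the ``only if'' direction you justify that the local descents agree on overlaps ``by injectivity of $\pi^{\ast}$ on covariant tensors.'' That injectivity only sees overlaps of the coordinate cubes $V_\alpha$ in $M$, not overlaps of their images $\pi(V_\alpha)$ in $\mathbb{R}^q$: two cubes $V_1$ and $V_2$ can be disjoint while $\pi(V_1)\cap\pi(V_2)\neq\emptyset$ (e.g.\ two plaque-disjoint cubes over the same fiber). What actually makes the patching work is that, once local constancy along plaques is established, the fiber-connectedness of the simple open set $U$ propagates this along the entire fiber; then the set-theoretic descent to $\pi(U)$ is well-defined and foliated charts merely certify its smoothness. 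You do flag the role of fiber-connectedness in your final paragraph, so this is a matter of attribution rather than a gap, but as written the middle paragraph attributes the patching to the wrong mechanism. Everything else, including the cocycle gluing in (ii) and the linear-algebra reductions $\partial_{x^k}B^j=0$ and $\partial_{x^k}\xi_{j_1\cdots j_s}=0$, is sound.
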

\begin{proof}
   See \cite[Propositions 2.1, 2.2, 2.3]{molino}
\end{proof}
In particular, applying the local form of submersions in the item $(i)$ of the previous proposition we obtain the following result, which is quite useful in practical computations. 
\begin{corollary}\label{cor: local projectable}
Given a codimension $q$ foliated manifold $(M,\f)$ and any $x\in M$ there exist a open set $U \ni x$ of $M$ and a (linearly independent) frame of local vector fields $$V_1,\ldots, V_{n-q}, X_1,\ldots, X_q \in \mathfrak{X}(U)$$ such that each $V_i \in \mathfrak{X}(\f|_U)$ for all $i\in \{1,\ldots, n-q\}$, and each $X_j $ is both projectable and everywhere transverse to $\f$ in $U$.
\end{corollary}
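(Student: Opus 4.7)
The plan is to extract the required frame directly from a foliated chart around $x$ in which a locally-defining submersion takes the canonical projection form. The argument should reduce to the local normal form of a submersion combined with the characterization of projectable fields in Proposition~\ref{prop:basic 1}(i).

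First I would pick any simple open set $U_0 \ni x$ from a Haefliger cocycle for $\mathcal{F}$, together with the associated submersion $\pi: U_0 \to \mathbb{R}^q$ locally defining $\mathcal{F}$. By the local normal form of submersions, after shrinking $U_0$ to a smaller open neighborhood $U \ni x$ if necessary, I can choose smooth coordinates $(y^1,\ldots,y^{n-q},z^1,\ldots,z^q):U\to\mathbb{R}^n$ in which $\pi$ reads $\pi(y,z)=z$. In these adapted coordinates the plaques of $\mathcal{F}|_U$ are exactly the level sets $\{z=\mathrm{const}\}$, so $T\mathcal{F}|_U$ is spanned by $\partial/\partial y^1,\ldots,\partial/\partial y^{n-q}$.

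Next I would set
$$V_i := \frac{\partial}{\partial y^i}\quad (i=1,\ldots,n-q), \qquad X_j := \frac{\partial}{\partial z^j}\quad (j=1,\ldots,q),$$
and verify the three required properties. Pointwise linear independence is automatic, since this is a coordinate frame. Each $V_i$ is tangent to $\mathcal{F}|_U$ by construction, so $V_i\in\mathfrak{X}(\mathcal{F}|_U)$. Each $X_j$ is $\pi$-related to the coordinate vector field $\partial/\partial z^j$ on $\pi(U)\subset\mathbb{R}^q$, since $\dif\pi(\partial/\partial z^j)=\partial/\partial z^j$ in these coordinates; then Proposition~\ref{prop:basic 1}(i) immediately yields that $X_j$ is projectable. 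Transversality is equally direct: at every $p\in U$, the vector $X_j|_p$ lies outside $T_p\mathcal{F}=\mathrm{span}\{\partial/\partial y^i|_p\}_i$, and the full collection $\{V_i|_p,X_j|_p\}$ is a basis of $T_pU$.

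I do not anticipate any real obstacle: the corollary is essentially a translation of ``a foliation admits adapted charts'' into the language of projectable frames. The only mild point worth checking is that shrinking $U_0$ does not spoil the hypothesis of Proposition~\ref{prop:basic 1}(i), which it does not, since any open subset of a simple open set is itself simple with respect to the restricted submersion.
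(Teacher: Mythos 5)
Your proof is correct and is precisely the argument the paper has in mind: the corollary is stated right after the remark that it follows from ``applying the local form of submersions in the item $(i)$ of the previous proposition,'' which is exactly the adapted-coordinate construction you carry out. No further comment is needed.
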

\qcd

%\subsection{Molino's theory}\textcolor{teal}{...the leaf space of every Riemanniann foliation with closed leaves has Hausdorff topology...Unlike the Riemannian case, from the fact that the isometry group $O(n, n-\nu)$ is not compact for $\nu>0$, a semi-Riemannian foliation is allowed to have a non-Hausdorff leaf space, even if all of its leaves are closed, as is shown by the next example. }

\section{Semi-Riemannian foliations}\label{section: SR foliations}

\textit{Riemanniann foliations} have had a long history since their introduction by Reinhart \cite{reinhart} and further development especially by P. Molino \cite{molino2,molino3,molino} and A. Haefliger \cite{haefliger6,haefliger4}. Accordingly, the subject has accumulated a vast specialized literature (see, e.g., \cite{alex4,mrcun,molino,tondeur} for comprehensive treatments of the basic results and original references). The main objects studied in this paper, \textit{Lorentzian} foliations, have been to the best of our knowledge far less studied \cite{boubel,zhukova}. However, both Riemannian and Lorentzian foliations are naturally encompassed in a natural broader generalization, the so-called {\it semi-Riemannian foliations} (sometimes also called \textit{pseudo-Riemannian foliations}) \cite{russas,schafer}, whose general outline we give in this section. The advantage of this generality is that the description of local transverse geometry is basically unified in this larger context.

\begin{definition}[Semi-Riemannian foliations]\label{def: SR foliations}
Let $M$ be an $n$-dimensional manifold and let $\mu,q \in \mathbb{Z}$ with $0<q<n$ and $0\leq \mu\leq q$. A {\em codimension $q$ semi-Riemannian foliation of index $\mu$} on $M$ is a pair $(\f,g_\intercal)$ such that 
\begin{itemize}
    \item[i)] $\f$ is a codimension $q$ foliation on $M$;
    \item[ii)] $g_\intercal$ is an $\f$-basic symmetric $(0,2)$-type tensor field on $M$  which induces, via the isomorphism $\Phi_2$ in Proposition \ref{ prop: fundamental iso}, a holonomy-invariant semi-Riemannian metric $\overline{g_\intercal}$ of index $\mu$ on the normal bundle $\nu(\f)$. Such a tensor $g_\intercal$ is said to be a {\it semi-Riemannian transverse metric (tensor)} (of index $\mu$) for $\f$. 
\end{itemize}
In particular, a semi-Riemannian foliation of index $1$ and codimension $q\geq2$ is called a {\it Lorentzian foliation}. It is a {\it Riemannian foliation} if it has index zero. 
\end{definition}

We emphasize that a semi-Riemannian transverse metric for a given foliation $\f$ is {\it not} a semi-Riemannian metric, since it is in particular degenerate along directions tangent to the leaves of $\f$. Indeed, while any manifold possesses a Riemannian metric, it is certainly false that every foliated manifold $(M,\f)$ admits a Riemannian transverse metric, many obstructions for that being known (see the cited works and \cite{haefliger6,ghys,ghys2}). However, transverse metrics can be related to actual semi-Riemannian metrics in certain contexts. An immediate one is via the following basic characterization of existence of transverse metrics. 

\begin{proposition}\label{ prop: basic 2}
   Let $\f$ be a codimension $q$ foliation on the manifold $M$. The following statements are equivalent.
   \begin{itemize}
       \item[i)] There exists a semi-Riemannian transverse metric of index $\mu$ for $\f$.
       \item[ii)] There exist a Haefliger cocycle $(U_i,\pi_i, \gamma_{ij})_{i\in I}$ on $M$ together with semi-Riemannian metrics $h_i$ of index $\mu$ defined on $\pi_i(U_i)\subset \mathbb{R}^q$ for each $i\in I$ such that 
       $$\gamma_{ij}^\ast (h_i|_{\pi_i(U_i\cap U_j)}) = h_j|_{\pi_j(U_i\cap U_J)} \quad \forall i,j \in I,$$
   \end{itemize}
   that is, the transition maps $\gamma_{ij}$ are isometries. 
\end{proposition}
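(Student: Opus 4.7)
The plan is to reduce the equivalence to a direct application of Proposition \ref{prop:basic 1}, the bulk of the work being to track that the nondegeneracy and index data transfer correctly between the transverse metric on $\nu(\f)$ and the local metrics $h_i$ on the transversals.

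For the forward direction $(i)\Rightarrow(ii)$, I would start with an arbitrary Haefliger cocycle $(U_i,\pi_i,\gamma_{ij})_{i\in I}$ for $\f$ and apply Proposition \ref{prop:basic 1}(i) to the $\f$-basic tensor $g_\intercal$ to obtain, for each $i$, a $(0,2)$-type tensor field $h_i$ on $\pi_i(U_i)\subset\mathbb{R}^q$ with $\pi_i^\ast h_i = g_\intercal|_{U_i}$. The crux is to verify that each $h_i$ is actually a semi-Riemannian metric of index $\mu$: at a point $\overline{x}=\pi_i(x)$, the differential $d(\pi_i)_x$ descends to a canonical isomorphism $\nu(\f)_x\to T_{\overline{x}}\pi_i(U_i)$ which carries $\overline{g_\intercal}_x$ (in the sense of Proposition \ref{ prop: fundamental iso}) onto $(h_i)_{\overline{x}}$, so the symmetry, nondegeneracy, and index of the former, guaranteed by hypothesis, transfer directly to the latter. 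The cocycle compatibility $\gamma_{ij}^\ast h_i = h_j$ on $\pi_j(U_i\cap U_j)$ would then follow from $\pi_i=\gamma_{ij}\circ\pi_j$ by pulling back through $\pi_j$, since
$$\pi_j^\ast(\gamma_{ij}^\ast h_i)=\pi_i^\ast h_i=g_\intercal|_{U_i\cap U_j}=\pi_j^\ast h_j,$$
together with the injectivity of $\pi_j^\ast$ on $(0,2)$-tensor fields (a consequence of $\pi_j$ being a surjective submersion).

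For the converse $(ii)\Rightarrow(i)$, the assumed isometry condition $\gamma_{ij}^\ast h_i = h_j$ is precisely the hypothesis required to invoke Proposition \ref{prop:basic 1}(ii), which produces an $\f$-basic $(0,2)$-type tensor field $g_\intercal$ on $M$ satisfying $\pi_i^\ast h_i = g_\intercal|_{U_i}$. Symmetry of $g_\intercal$ passes through pullback, so $g_\intercal$ is symmetric as well. The remaining, purely pointwise step is to verify that the induced tensor $\overline{g_\intercal}\in\mathfrak{T}(2,\f)$ on $\nu(\f)$ is a fiberwise nondegenerate symmetric bilinear form of index $\mu$; running the same isomorphism $d(\pi_i)_x\colon\nu(\f)_x\to T_{\pi_i(x)}\pi_i(U_i)$ in reverse transports the nondegeneracy and index of $(h_i)_{\pi_i(x)}$ back to $\overline{g_\intercal}_x$. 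The holonomy-invariance of $\overline{g_\intercal}$ is automatic from the $\f$-basicness of $g_\intercal$ by the definition of $\Phi_2$.

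I expect no serious obstacle: Proposition \ref{prop:basic 1} has already packaged the gluing-and-projection technology, so the only piece that has to be checked by hand is the pointwise identification of $\overline{g_\intercal}_x$ with $(h_i)_{\pi_i(x)}$ under the canonical fiber isomorphism induced by $d(\pi_i)_x$. The mildly delicate point is recalling that this identification is well-defined on $\nu(\f)_x$ (and not merely on $T_xM$), which is exactly the role played by the tangent directions being in $\ker d(\pi_i)_x = T_x\f$.
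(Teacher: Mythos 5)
Your proof is correct and takes exactly the approach the paper intends: the paper's entire ``Proof outline'' is the single line ``Use Proposition \ref{prop:basic 1}'', and your argument fleshes out precisely how that proposition is applied in both directions, including the only genuinely hands-on step (the pointwise identification of $\overline{g_\intercal}_x$ with $(h_i)_{\pi_i(x)}$ under the fiber isomorphism induced by $d(\pi_i)_x$, which transfers symmetry, nondegeneracy, and index).
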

\begin{proofoutline}
    Use Proposition \ref{prop:basic 1}.
\end{proofoutline}
\begin{example}[Semi-Riemannian transverse metrics on submersions]\label{exe: submersions fol1}
    Let $\pi:M^{p+q}\rightarrow N^q$ be an onto submersion, and let $\f$ be the codimension $q$ foliation on $M$ whose leaves are the connected components of the fibers of $\pi$ (cf. Example \ref{exe: simple submersions and bundles}). Given any $\mu \in \mathbb{Z}$ with $0\leq \mu\leq q$ and any semi-Riemannian metric $h$ of index $\mu$ on $N$, $(\f,\pi^\ast h)$ is a semi-Riemannian foliation of same index. Moreover, any semi-Riemannian transverse metric of index $\mu$ for $\f$ arises in this way from a uniquely defined semi-Riemannian metric on $N$. In other words, there is a one-to-one correspondence between semi-Riemannian metrics on $N$ of a given index and semi-Riemannian transverse metrics of the same index for $\f$. In particular, any $\f$ arising in this way from submersions admits a Riemannian transverse metric. However, it is well-known (cf. e.g. \cite[Prop. 5.37]{oneillbook}) that $N$ admits a Lorentzian metric if and only if it admits an everywhere-nonzero vector field. Therefore, if no such vector field exists (say, if the Euler characteristic is not zero when $N$ is compact), then no foliation arising from a submersion onto $N$ can have an associated transverse Lorentzian foliation. For example, the standard Hopf fibration $\pi:\mathbb{S}^3 \rightarrow \mathbb{S}^2$ cannot possibly admit a transverse Lorentzian metric, though $\mathbb{S}^3$ itself (since it has Euler characteristic zero \cite[Prop. 5.37]{oneillbook}) does admit a Lorentzian metric. 
\end{example}
\begin{example}[Generators in lightlike immersions]
Let $(N,h)$ be a Lorentzian manifold. An immersion $\phi: M\rightarrow N$ is \textit{lightlike} if the pullback $(0,2)$-tensor $g_\intercal:= \phi^\ast h$ is degenerate everywhere on $M$. In this case, it is well-known that due to the Lorentzian signature of $h$, for each $x\in M$ the kernel 
$$D_x := \{v\in T_xM \: g_\intercal(v,w) = 0, \forall w\in T_xM\}$$
is 1-dimensional, and hence together they define an integrable distribution $D$ on $M$. If $\phi$ is totally geodesic, then the associated integral foliation $\f$ by one-dimensional submanifolds is a Riemannian foliation with transverse metric  $g_\intercal$ \cite{Kupeli,larz}. 
\end{example}
\begin{example}[Pullbacks]\label{pullback-transv}
    By using their description via Haefliger cocycles (conf. Example \ref{exe: pullbacks}) and Proposition \ref{ prop: basic 2}, any pullback of a semi-Riemannian foliation is naturally a semi-Riemannian foliation of same index.
\end{example}
\begin{example}[Products of semi-Riemannian foliations]
    If $(M,\f,g_\intercal)$ and $(N,\g, h_\intercal)$ are semi-Riemannian foliations of indices $\mu$ and $\nu$ respectively, then $g_\intercal \oplus h_\intercal$ (via the isomorphism $T(M\times N)\simeq TM\oplus TN$) is a semi-Riemannian transverse metric of index $\mu +\nu$ on the foliation $\f\times \g$ on $M\times N$ whose leaves are $\{L_1\times L_2 \, : \, L_1\in \f, L_2\in \g\}$. 
\end{example}

The simplest and most important way to associate semi-Riemmanian metrics with foliations and thus obtain more examples is via the classic Reinhart's concept of \textit{bundle-like metrics}.

\begin{definition}\label{def bundle-like metric}
    Let $\f$ be a foliation on the manifold $M$. A semi-Riemannian metric $g$ on $M$ is {\it bundle-like (with respect to $\f$)} if the following conditions are satisfied.
    \begin{itemize}
        \item[i)] $g$ is \textit{adapted} to $\f$, \textit{i.e.}, we have a direct sum decomposition 
        $$T\f \oplus T\f^{\perp}.$$
        In particular, this induces a concomitant smooth decomposition of any vector field $X\in \mathfrak{X}(M)$ into a \textit{vertical part} $X^\top \in \mathfrak{X}(\f)$ and a \textit{horizontal part} $X^\perp \in \Gamma(T\f^\perp)$.
        \item[ii)] The $(0,2)$-type symmetric tensor field $g_\intercal$ on $M$ given by 
        \begin{equation}\label{a que define}
            g_\intercal (X,Y) := g(X^\perp, Y^\perp), \quad \forall X,Y\in \mathfrak{X}(M)
        \end{equation}
        is a semi-Riemannian transverse metric for $\f$, said to be \textit{associated with $g$}.
    \end{itemize}
    
\end{definition}
\begin{remark}\label{rmk: facts about bundle like}
 The following remarks about Definition \ref{def bundle-like metric} should be borne in mind. 
 \begin{itemize}
     \item[1)] Given a semi-Riemannian foliation $(\f,g_\intercal)$ with index $\mu$ there always exists a bundle-like semi-Riemannian metric $g$ with same index with associated transverse metric $g_\intercal$: just pick any Riemannian metric $h$ on $M$ and define 
     \begin{equation}\label{a que define 2}
     g(X,Y):= g_\intercal(X^\perp, Y^\perp) + h(X^\top, Y^\top), \quad \forall X,Y \in \mathfrak{X}(M),
     \end{equation}
     where the vertical and horizontal parts on the right-hand side are taken with respect to $h$. Then it is easily seen that $g$ is a bundle-like metric for $\f$ with index $\mu$ and the same decomposition of vectors into vertical and horizontal parts as $h$'s, so \eqref{a que define} holds by construction. 
     \item[2)] Given a semi-Riemannian bundle-like metric $g$ on $M$ with respect to $\f$, condition $(i)$ in Definition \ref{def bundle-like metric} implies that all the leaves are not just immersed submanifolds, but \textit{semi-Riemannian immersed submanifolds}, i.e., the metric induced on any leaf by $g$ is itself semi-Riemannian. This is of course trivial when $g$ is Riemannian, but not for indefinite $g$ %[conf. Example \ref{exe: simplezinho} below]. 
     In addition, since the induced metric on $\nu(\f)$ has a fixed index $\mu$, the induced metrics on the leaves $L\in \f$ must all have the same index 
     $$\text{ind} (L) = \text{ind} \, g - \mu.$$
 \end{itemize}
\end{remark}

\begin{example}[Warped products] \label{exe: warped products}
Let $(B^q,g_B)$, $(F^p,g_F)$ be connected semi-Riemannian manifolds, and $f \in C^\infty(B)$ a positive function. Then $M:= B\times F$ endowed the metric 
$$g := \pi_B^\ast g_B + (f\circ \pi_B)^2 \pi_F^\ast g_F$$
where $\pi_B,\pi_F$ are the canonical projections onto the first and second factors, respectively, is a semi-Riemannian manifold, the \textit{warped product} with \textit{warping function} $f$ of the given manifolds. The submanifolds of the form $\{b\}\times F$ are the leaves of a $p$-dimensional foliation for which $g$ is bundle-like and $\pi_B^\ast g_B$ is a transverse metric of same index as that of $(B,g_B)$ associated with $g$. In particular, this class of examples show that there exist semi-Riemannian foliations of any index, and that a bundle-like metric need not have the same index as that of the associated transverse metric. 
\end{example}
\begin{example}[Semi-Riemannian submersions] \label{exe: SR submersions}
This class of examples is very important and generalizes the warped products discussed in the previous example. Let $(M^n,g)$ and $(B^q,h)$ be semi-Riemannian manifolds with $n>q$. Recall that an onto submersion $\pi:M\rightarrow B$ is {\it semi-Riemannian} provided \cite[Def. 7.44]{oneillbook}
\begin{itemize}
    \item[i)] The fibers $\pi^{-1}(y)$, $y\in B$, are semi-Riemannian manifolds of $(M,g)$, and
    \item[ii)] the derivative $d\pi$ preserves scalar products of vectors normal to the fibers.
\end{itemize}
Then $g$ is bundle-like for the codimension $q$ foliation defined by the fibers (conf. Example \ref{exe: simple submersions and bundles}). Note, in particular, that the projection $\pi_B$ in Example \ref{exe: warped products} above is a semi-Riemannian submersion. 
\end{example}
\begin{example}[Homogeneous semi-Riemannian foliations] 
 If $\f$ is a homogeneous foliation on $M$ given by the action of a Lie group $G$ (conf. Example \ref{exe: foliated actions}) and $g$ is a semi-Riemannian metric on $M$ such that the action of $G$ is $g$-isometric and all leaves are immersed semi-Riemannian submanifolds of the same index, then $g$ is bundle-like for $\f$ \cite[Remark 2.7(8)]{mrcun}. 
\end{example}
\begin{example}\label{exe: twice misner} 
Even if $\f$ is a homogeneous foliation on $M$ given by the action of a Lie group $G$ as in the previous and $g$ is a semi-Riemannian metric on $M$ such that the action of $G$ is $g$-isometric, $g$ need not be bundle-like for $\f$. For a simple and nice illustrative example, let $(M,g) = (\mathbb{R}^2\setminus \{(0,0)\},-dt^2+dx^2)$ be the $2d$-Minkowski spacetime minus the origin, and 
consider the isometric $\mathbb{R}$-action $h:\mathbb{R}\times M\rightarrow M$ given by $$
h(\tau,(t,x)) := \left(\begin{array}{cc}
\cosh(\tau)&\sinh(\tau)\\ \sinh(\tau)&\cosh(\tau)\end{array}\right)\left(\begin{array}{c}
t\\ x\end{array}\right), \quad \forall \tau,t,x \in \mathbb{R}, (t,x)\neq (0,0),$$ 
that is, the additive group $(\mathbb{R},+)$ is isomorphic to the \textit{orthochronous special Lorentz group} $SO_+(1,1)$ acting by pure boosts. The flat metric $g$ is \textit{not} bundle-like for the corresponding homogeneous $1d$-foliation because the orbits are not all semi-Riemannian submanifolds of the same index. In fact, the leaves are spacelike on the region $t^2>x^2$, and hence restricted to this region the foliation has index 1. The leaves are timelike on the region $t^2<x^2$, i.e., restricted to this region the foliation is Riemannian. But the leaves $t^2=x^2$ are null  lines, and hence not even semi-Riemannian submanifolds. 
\end{example}
\begin{example}[Semi-Riemannian foliations given by suspensions]\label{exe : SR suspensions }
    We look again at the suspensions of homomorphisms described in Example \ref{example: suspensions}. Thus, using the notation in that example pick manifolds $B^p$ and $S^q$, but this time around assume we have a semi-Riemanian metric $g_0$ of index $\mu\in \{0,\ldots,q\}$ on $S$ and take an homomorphism $ h:\pi_1(B,x_0)\to\mathrm{Iso}(S,g_0)$ defining an action on $S$ via isometries. Fix any Riemannian metric $\eta$ on $B$, and let $\widehat{\eta}$ be the induced metric on the covering $\widehat{B}$. Finally, take the product metric $\widetilde{g}$ of $\widehat{\eta}$ and $g_0$ on $\widetilde{M}=\widehat{B}\times S$. Since the projection $\pi_2: \widetilde{M}:=\widehat{B}\times S \rightarrow S$ onto the second factor is the one determining the foliation $\widetilde{\f}$, Example \ref{exe: warped products} above entails that $\pi_2^\ast g_0$ is a transverse metric of index $\mu$ for $\widetilde{\f}$ with bundle-like metric $\widetilde{g}$. The right-action of $\pi_1(B,x_0)$ on $\widetilde{M}$ is now isometric, so the smooth manifold $M=\widetilde{M}/\pi_1(B,x_0)$ inherits an induced semi-Riemannian metric $g$ which is bundle-like for the foliation $\f$ on $M$ given by the suspension of the homomorphism $h$.
\end{example}

Another useful, more geometric characterization of bundle-like metrics for semi-Riemannian foliations is given via their \textit{horizontal geodesics}, i.e., geodesics everywhere normal to the leaves. This is not too surprising since if $\pi:(M,g)\rightarrow (N,h)$ is a semi-Riemannian submersion, then only horizontal $g$-geodesics are always projected to $h$-geodesics on $N$. Moreover, in order to ascertain the bundle-like character of a given candidate metric it suffices to check its behavior on horizontal projectable vector fields. The next result summarizes these points. 

\begin{proposition}\label{bundlelike equivalences}
Let $(M, \mathcal{F})$ be a foliated manifold and let $g$ be a semi-Riemannian metric on $M$ adapted to $\f$ (in the sense of Def. \ref{def bundle-like metric}(i)). The following statements are equivalent.
\begin{enumerate}
\item [(i)] $g$ is bundle-like for $\mathcal{F}$.
\item[ii)] For any open set $U\subset M$ and any $X,Y \in \mathfrak{L}(\f|_U)\cap \Gamma(T\f^\perp|_U)$ we have $g(X,Y)\in \Omega^0(\f|_U)$.
\item [(iii)] If $\gamma: [a,b]\rightarrow M$ is a geodesic such that $\gamma^\prime(a)\perp T_{\gamma(a)}\f$, then $\gamma$ is everywhere horizontal.
\end{enumerate}
\end{proposition}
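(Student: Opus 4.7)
The plan is to establish (i) $\Leftrightarrow$ (ii) directly and then close the loop (ii) $\Rightarrow$ (iii) $\Rightarrow$ (ii); throughout, I rely on Corollary \ref{cor: local projectable} to reduce tensorial identities to computations on local projectable frames. For (i) $\Leftrightarrow$ (ii), the product rule \eqref{productrule2} gives, for any $V \in \mathfrak{X}(\mathcal{F}|_U)$ and $X, Y \in \mathfrak{L}(\mathcal{F}|_U) \cap \Gamma(T\mathcal{F}^\perp|_U)$,
\[
V(g_\intercal(X,Y)) = (\mathcal{L}_V g_\intercal)(X,Y) + g_\intercal([V,X],Y) + g_\intercal(X,[V,Y]).
\]
Projectability of $X, Y$ forces $[V,X], [V,Y] \in \mathfrak{X}(\mathcal{F}|_U)$, and $g_\intercal$ by construction vanishes on vertical arguments, so the last two terms drop; since $X, Y$ are horizontal, $g(X,Y) = g_\intercal(X,Y)$. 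Hence $V(g(X,Y)) = (\mathcal{L}_V g_\intercal)(X,Y)$, and the equivalence follows, once one also checks for the (ii) $\Rightarrow$ (i) direction that $g_\intercal$ descends to a non-degenerate bilinear form of index $\mu$ on $\nu(\mathcal{F})$, which is immediate from the adaptation of $g$.

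For (ii) $\Rightarrow$ (iii), I would decompose $\gamma'(t) = \sum_i a^i(t) X_i(\gamma(t)) + \sum_j b^j(t) V_j(\gamma(t))$ in a local projectable frame near $\gamma(a)$, with $b^j(a) = 0$. Applying the Koszul formula, adaptation annihilates the terms of the form $X_\ell\,g(X_m, V_j)$, hypothesis (ii) annihilates $V_j\,g(X_i, X_k)$, and the brackets $[X_m, V_j]\in \mathfrak{X}(\mathcal{F}|_U)$ (by projectability of $X_m$) are $g$-orthogonal to the horizontal $X_\ell$, so only the term $g(V_j,[X_i,X_k])$ survives, yielding $2g(\nabla_{X_i} X_k, V_j) = g(V_j, [X_i,X_k])$. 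Antisymmetry in $(i,k)$ then forces $(\nabla_{X_i}X_k + \nabla_{X_k}X_i)^\top = 0$, and therefore the vertical component of $\nabla_{\gamma'}\gamma' = 0$ collapses to an ODE of the form $\dot b^j = F^j(t, b)$ with $F^j(t, 0) = 0$; uniqueness with the initial condition $b(a) = 0$ yields $b \equiv 0$, so $\gamma$ stays horizontal.

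For (iii) $\Rightarrow$ (ii), given $X, Y \in \mathfrak{L}(\mathcal{F}|_U)\cap\Gamma(T\mathcal{F}^\perp|_U)$ and $V \in \mathfrak{X}(\mathcal{F}|_U)$, I would consider at each $p \in U$ the geodesic $\gamma$ with $\gamma'(0) = X(p)$, which is horizontal by (iii), so that $g(\dot\gamma, V\circ\gamma) \equiv 0$. Differentiating at $t=0$ using $\nabla_{\dot\gamma}\dot\gamma = 0$ gives $g(X, \nabla_X V)(p) = 0$; since $X + Y$ is again projectable and horizontal, polarization yields the symmetric identity $g(X, \nabla_Y V) + g(Y, \nabla_X V) = 0$ on $U$. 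Combining this with $\nabla_V X - \nabla_X V = [V,X] \in \mathfrak{X}(\mathcal{F}|_U)$ and $g([V,X], Y) = 0$ (with the analogous identity for $Y$), metric compatibility of $\nabla$ delivers
\[
V\,g(X,Y) = g(\nabla_V X, Y) + g(X, \nabla_V Y) = g(\nabla_X V, Y) + g(X, \nabla_Y V) = 0,
\]
which is exactly (ii). The main obstacle I anticipate is bookkeeping the roles of adaptation, projectability, and (ii) itself in each Koszul- or Leibniz-type manipulation; in particular, distinguishing when projectability of horizontal fields is genuinely used (to keep $[V,\cdot]$ vertical) rather than merely the adaptation of $g$. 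The projectable frames from Corollary \ref{cor: local projectable} keep this bookkeeping uniform.
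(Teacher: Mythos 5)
Your argument is correct, and for the $(iii)$ part it is genuinely more ambitious than the paper, which only sketches $(i)\Leftrightarrow(ii)$ via Corollary \ref{cor: local projectable} and defers the equivalence $(i)\Leftrightarrow(iii)$ entirely to Ref.~\cite[Thm.~1]{russas}.

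Your treatment of $(i)\Leftrightarrow(ii)$ coincides in spirit with the paper's hint: the Lie-derivative identity $V(g(X,Y))=(\mathcal{L}_Vg_\intercal)(X,Y)$ for projectable horizontal $X,Y$ is precisely what makes the reduction to projectable frames work. One small omission in the $(ii)\Rightarrow(i)$ direction: you record that $g_\intercal$ is non-degenerate of the right index on $\nu(\f)$, but the substantive point is that $(\mathcal{L}_Vg_\intercal)=0$ as a tensor, and your identity only controls $(\mathcal{L}_Vg_\intercal)(X,Y)$ on projectable horizontal pairs. You should note that $(\mathcal{L}_Vg_\intercal)(Z,W)=0$ is automatic whenever $Z$ or $W$ is vertical --- all three terms in the Leibniz expansion then vanish because $g_\intercal$ kills vertical entries and $\mathfrak{X}(\f)$ is a Lie subalgebra --- and then invoke Corollary \ref{cor: local projectable} to span $T_pM$ pointwise. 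This closes the gap.

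Where your route diverges from the paper is in establishing $(iii)$ without passing through $(i)$: you prove $(ii)\Rightarrow(iii)\Rightarrow(ii)$ directly. The Koszul computation giving $2g(\nabla_{X_i}X_k,V_j)=g(V_j,[X_i,X_k])$ is correct, and the antisymmetry in $(i,k)$ indeed kills the symmetric vertical part $\bigl(\nabla_{X_i}X_k+\nabla_{X_k}X_i\bigr)^\top$. The only imprecision is in the ODE step: the vertical component of the geodesic equation is not a self-contained system in the $b^j$ alone, since the coefficients also involve the $a^i$, which are coupled back to $b$. The clean way to phrase it is: for the particular $a^i(t)$ occurring in the given geodesic, the vertical equation has the form $\dot b^j=F^j(t,b)$ with $F^j(t,0)=0$ (the $b$-free contributions are exactly the symmetrized terms you showed vanish), so uniqueness for that scalar system forces $b\equiv 0$; alternatively, observe that $b\equiv 0$ together with the reduced horizontal ODE for $a$ solves the full geodesic system, and invoke uniqueness of geodesics. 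Either formulation works and should be spelled out. Your $(iii)\Rightarrow(ii)$ argument --- evaluate $\tfrac{d}{dt}g(\dot\gamma,V\circ\gamma)$ along the horizontal geodesic through $X_p$, polarize in $X,Y$, then swap $\nabla_VX$ for $\nabla_XV$ using torsion-freeness and the verticality of $[V,X]$ --- is clean and correct. The net effect is a self-contained, frame-based proof that avoids any appeal to the cited Dolgonosova--Zhukova result, at the cost of a slightly longer but elementary computation; this is a worthwhile trade when one wants the paper to be self-sufficient.
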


\begin{proofcomment} The equivalence $(i)\Leftrightarrow (ii)$ is easily established by using Corollary \ref{cor: local projectable}. The equivalence $(i)\Leftrightarrow (iii)$ is rather more involved, see Ref. \cite[Thm. 1]{russas} for the details.    
\end{proofcomment}

Let $(M,\mathcal{F}, g_\intercal)$ be a semi-Riemannian foliation. There is a unique connection $\nabla^\intercal$ on the normal bundle $\nu(\f)$ which is torsion free and compatible with the induced semi-Riemannian metric $\sigma:=\overline{g_\intercal}$ (recall the overline here indicates manifold representatives of geometric objects defined on $\nu(\f)$). This is the so-called \textit{basic Levi-Civita connection}. The existence and uniqueness  $\nabla^\intercal$ follows essentially as in the Riemannian case \cite[Lemma 3.3]{molino}. Let us recall how to obtain it concretely \cite[Section 2, Eq. (3)]{schafer}. Pick any bundle-like metric $g$ associated with $g_\intercal$, and denote by $\nabla ^g$ its (usual) Levi-Civita connection. Then define, for any $\xi \in \Gamma(\nu(\f))$ and any $X \in \mathfrak{X}(M)$, 
$$\nabla^\intercal_X\xi:= \left\{\begin{array}{cc}
     \overline{[X,Y_\xi]},& \text{ if $X\in \mathfrak{X}(\f)$}, \\
     \overline{\nabla^g_X Y_\xi}&  \text{ if $X \in \Gamma(T\f^\perp)$},
\end{array}\right.,$$
 where $Y_\xi \in \mathfrak{X}(M)$ is any representative of $\xi$, i.e., $\overline{Y_\xi}=\xi$, and we extend by linearity in the first entry of $\nabla^\intercal$. We give the full statement below for future reference.

\begin{proposition}\label{prop: basic connection unique}
   Let $g_\intercal$ be a transverse semi-Riemannian metric on the normal bundle $\nu(\f)$ of a foliation $\f$ on $M$. There is exactly one connection $\nabla^\intercal$ on $\nu(\f)$ such that
   \begin{itemize}
       \item[1)] $\nabla^\intercal$ is {\em compatible with $g_\intercal$}, i.e.,
       $$X\sigma(\xi,\chi)= \sigma(\nabla^\intercal_X\xi,\chi) + \sigma(\xi, \nabla^\intercal_X\chi), \quad \forall X\in \mathfrak{X}(M), \forall \xi,\chi \in \nu(\f),$$
       where $\sigma = \overline{g_\intercal}$;
       \item[2)] $\nabla^\intercal$ is {\em torsion-free}, i.e., $$\nabla^\intercal _X\overline{Y} - \nabla^\intercal _Y \overline{X} = \overline{[X,Y]}.$$
   \end{itemize}
\end{proposition}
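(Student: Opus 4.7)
The plan is to mimic the classical proof of the Fundamental Theorem of (semi-)Riemannian Geometry on the normal bundle $\nu(\f)$. I would first establish uniqueness via a transverse Koszul formula, and then existence using the explicit construction hinted at just before the proposition.

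For uniqueness, suppose $\nabla^\intercal$ satisfies $(1)$ and $(2)$, and take three projectable vector fields $X, Y, Z \in \mathfrak{L}(\f)$, so that $\overline{X}, \overline{Y}, \overline{Z}$ are transverse and $\sigma(\overline{Y},\overline{Z}) = g_\intercal(Y,Z)$ is $\f$-basic. Combining the three compatibility identities coming from $X\sigma(\overline{Y},\overline{Z})$, $Y\sigma(\overline{Z},\overline{X})$ and $-Z\sigma(\overline{X},\overline{Y})$, and eliminating the ``wrong-ordered'' covariant derivatives via the torsion-free identity, one arrives at the transverse Koszul formula
\begin{align*}
2\sigma(\nabla^\intercal_X\overline{Y},\overline{Z}) &= X\sigma(\overline{Y},\overline{Z}) + Y\sigma(\overline{Z},\overline{X}) - Z\sigma(\overline{X},\overline{Y})\\
&\quad + \sigma(\overline{[X,Y]},\overline{Z}) - \sigma(\overline{[X,Z]},\overline{Y}) - \sigma(\overline{[Y,Z]},\overline{X}).
\end{align*}
Since $\sigma$ is fiberwise nondegenerate and projectable fields span $\nu(\f)$ pointwise by Corollary \ref{cor: local projectable}, this determines $\nabla^\intercal_X\overline{Y}$ on projectable pairs; $C^\infty(M)$-linearity in the first slot together with the Leibniz rule in the second then propagates uniqueness to arbitrary $X \in \mathfrak{X}(M)$ and $\xi \in \Gamma(\nu(\f))$.

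For existence, by Remark \ref{rmk: facts about bundle like}(1) I would pick an associated bundle-like semi-Riemannian metric $g$ of the same index $\mu$ with Levi-Civita connection $\nabla^g$, and define $\nabla^\intercal_X \xi$ to equal $\overline{[X,Y_\xi]}$ when $X \in \mathfrak{X}(\f)$ and $\overline{\nabla^g_X Y_\xi}$ when $X \in \Gamma(T\f^\perp)$, for any representative $Y_\xi$ of $\xi$, extending by the adapted splitting of Definition \ref{def bundle-like metric}(i). I would then verify in order: (a) independence of the representative $Y_\xi$, (b) the Leibniz rule in $\xi$, (c) compatibility $(1)$, and (d) torsion-freeness $(2)$.

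The main obstacle is (a) for horizontal $X$: one must show $\overline{\nabla^g_X V} = 0$ for every $V \in \mathfrak{X}(\f)$, and this is precisely where the bundle-like hypothesis enters, through the characterization of Proposition \ref{bundlelike equivalences}(ii) --- the fact that $g(X',X'')$ is $\f$-basic on horizontal foliate pairs forces $\nabla^g_X V$ to be vertical after pairing with any horizontal projectable field, as a short Koszul-type computation confirms. Property (c) on horizontal pairs is then immediate from compatibility of $\nabla^g$ with $g$ and the defining equation \eqref{a que define}, while on mixed and vertical arguments it reduces to the holonomy invariance $\mathcal{L}_V\overline{g_\intercal}=0$ built into Definition \ref{def: SR foliations}(ii); (d) is a short four-case check handled by the torsion-freeness of $\nabla^g$ and the definition of the Bott piece. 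Independence of the auxiliary choice of $g$ then follows a posteriori from the already-established uniqueness, closing the argument.
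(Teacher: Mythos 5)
Your overall route (transverse Koszul identity for uniqueness, then the explicit Bott/Levi-Civita hybrid built from a bundle-like metric for existence) is essentially the paper's, and your uniqueness argument is sound. However, step (a) of your existence check contains a concrete error: you claim that bundle-likeness, via Proposition \ref{bundlelike equivalences}(ii), forces $\overline{\nabla^g_X V}=0$ for horizontal projectable $X$ and vertical $V$. Running the Koszul computation you allude to, with $X,W$ horizontal projectable and $V\in\mathfrak{X}(\f)$, every term does vanish by $g$-orthogonality or by the bundle-like condition \emph{except} a single bracket term, and one is left with
\begin{equation*}
2\,g\!\left(\nabla^g_X V,\, W\right) \;=\; -\,g\!\left([X,W]^{\top},\,V\right),
\end{equation*}
$[X,W]^{\top}$ denoting the $g$-vertical part. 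This right-hand side is (twice) the O'Neill $A$-tensor pairing and vanishes precisely when the horizontal distribution $T\f^{\perp}$ is integrable, which bundle-likeness alone does not give. The Hopf fibration $\pi:\mathbb{S}^3\to\mathbb{S}^2$ with the round metric is a bundle-like (indeed Riemannian-submersion) counterexample: if $X,W$ are unit horizontal Killing fields and $V$ the unit vertical Killing field generating the fibers, then $[X,W]=2V$ and the Koszul formula gives $g(\nabla^g_X V,W)=-1\neq 0$, so $\overline{\nabla^g_X V}\neq 0$. With an \emph{arbitrary} representative $Y_\xi$, the formula $\nabla^\intercal_X\xi=\overline{\nabla^g_X Y_\xi}$ for horizontal $X$ is therefore not well-defined.

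The repair does not prove the identity; it removes the need for it. When $X\in\Gamma(T\f^{\perp})$, take $Y_\xi$ to be the unique $g$-\emph{horizontal} representative of $\xi$ under the $g$-induced isomorphism $\nu(\f)\cong T\f^{\perp}$; the formula is then well-defined by construction, which is what the construction preceding the proposition (and the reference to Sch\"{a}fer) intends. With that convention your items (b)--(d) go through as sketched: compatibility on horizontal arguments follows from \eqref{a que define} and metric-compatibility of $\nabla^g$ (note only the horizontal part of $\nabla^g_X Y_\xi$ survives when paired with the horizontal $Y_\chi$), compatibility on vertical arguments is exactly the holonomy invariance built into Definition \ref{def: SR foliations}(ii), torsion-freeness reduces to that of $\nabla^g$ and the identity $\nabla^\intercal_V\overline{Y}=\overline{[V,Y]}$, and independence of the auxiliary $g$ follows a posteriori from uniqueness.
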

\begin{proofcomment}
    Having defined $\nabla^\intercal$ above, all that remains is to check uniqueness.  Let $X,Y,Z\in \mathfrak{X}(M)$. A computation entirely analogous to the one leading to the Koszul formula for the Levi-Civita connection for semi-Riemannian metrics (conf., e.g., the first paragraph of the proof of \cite[Thm. 3.11]{oneillbook}), that is, by expanding terms of the form $X\sigma(\overline{Y},\overline{Z})$ and its cyclic permutations and using $(1)$ and $(2)$ leads to the desired uniqueness.
\end{proofcomment}

We can define the \textit{transverse curvature} of $(M,\f,g_\intercal)$ by 
$$R_\intercal(X,Y)\xi := \nabla^\intercal_X\nabla^\intercal_Y\xi - \nabla^\intercal_Y\nabla^\intercal_X\xi -\nabla^\intercal_{[X,Y]} \xi,$$
for all $X,Y\in \mathfrak{X}(M)$ and all $\xi \in \Gamma(\nu(\f))$.
The following properties follow from straightforward computations [conf., e.g., discussion around \cite[Sect. 2, Prop. 1]{schafer}]. 
\begin{itemize}
    \item[1)] $\nabla^\intercal$ is holonomy-invariant, i.e., 
    $$(\mathcal{L}_V\nabla^\intercal)_X\xi := \mathcal{L}_V(\nabla^\intercal_X\xi) - \nabla^\intercal_{[V,X]}\xi- \nabla^\intercal_X(\mathcal{L}_V\xi) \equiv 0,$$
    $\forall V\in \mathfrak{X}(\f), \forall X\in \mathfrak{X}(M), \forall \xi \in \Gamma(\nu(\f))$.
    \item[2)] The transverse curvature is $\f$-basic, i.e., 
    \begin{enumerate}
        \item[2.1)] $R_\intercal(X,Y)\equiv 0$ if either $X$ or $Y$ are in $\mathfrak{X}(\f)$, and
        \item[2.2)] $\mathcal{L}_V R_\intercal =0$, $\forall V\in \mathfrak{X}(\f)$.
    \end{enumerate}
\end{itemize}
Finally, we define the \textit{transverse Ricci tensor}, given by
$$\ric_\intercal(X,Y) := \sum_{i=1}^q \varepsilon_i \sigma (R_\intercal(E_i,X)\overline{Y},\overline{E_i}),$$
for any $X,Y\in \mathfrak{X}(M)$, where $\{E_1,\ldots,E_q\}$ denotes a locally defined transverse set of vector fields on $M$ whose projections on $\nu(\f)$ define a $\sigma$-orthonormal frame thereon, and $\varepsilon_i= \sigma(\overline{E_i},\overline{E_i})$. We see from the properties of the transverse curvature listed above that that the transverse Ricci tensor is well-defined independently of the local frame, and it is a symmetric $\f$-basic tensor.

\begin{remark}\label{remark:basicLCconnectiononquotients}
In Section \ref{mainsec} we will be interested in imposing certain bounds on the transverse Ricci tensor and relate them to bounds on the Ricci tensors of local quotients of $\f$. Therefore, we note here that if $\pi:U\to S$ is a submersion that defines $\f$ locally, and we endow $S$ with the metric $\pi_*(g_\intercal)$ and identify $(\nu\f)|_U\equiv \pi^*TS$, then $\nabla^\intercal$ is given, on $U$, by the pullback of the Levi-Civita connection $\nabla^{S}$ of $S$. Thus, on $U$ the (Ricci) curvature tensor of $\nabla^\intercal$ is just the pullback by $\pi$ of the (Ricci) curvature tensor of $\nabla^{S}$. In particular, any bound for the transverse Ricci curvature of $\f$ is essentially the same as a corresponding bound on the Ricci curvature of $S$. 
\end{remark}

\section{Elements of transverse causality}\label{section: Lorentz fol1}

We start with a very brief review of Lorentzian geometry terminology, but we shall mostly assume the reader is familiar with the basic concepts including causality theory, and refer to the textbooks \cite{beem,oneillbook} for details. 

Let $(M,g)$ be a Lorentzian manifold. A nonzero vector $v\in TM$ is {\it timelike } [resp.  {\it lightlike} (or {\it null}), {\it spacelike}]
	(in $(M,g)$) if $g(v,v)<0$ [resp. $g(v,v)=0$, $g(v,v)>0$]. A vector is {\it causal} if it is either timelike or lightlike. The zero vector is spacelike by convention. (These are the possible {\it causal types} of the vector $v$.) Given $p\in M$, the set of timelike vectors $v\in T_pM$ has two connected components, each of which is an open (in $T_pM$) convex set called a {\it timecone}. Two timelike vectors $v,w \in T_pM$ are in the same timecone if and only if $g_p(v,w)<0$. A {\it time-orientation} on the Lorentzian manifold $(M,g)$ is a smooth choice of one of the two timecones at each $p \in M$, which we always denote as $\tau_p^+$ and refer to it a the {\it future} timecone - while the opposite timecone is denoted as $\tau^-_p$ and referred to as the {\it past} timecone. (By a ``smooth choice'' in this context we mean that for any $p \in M$ there exist an open set $U\ni p$ and a smooth vector field $X\in \mathfrak{X}(U)$ such that $X_q \in \tau^+_q$ for each $q\in U$.) The Lorentzian manifold $(M,g)$ is said to be {\it time-orientable} if a time-orientation exists, and in that case we can easily show - since $M$ is connected - that there exist exactly two such time-orientations. If one of these has been fixed, $(M,g)$ is said to be {\it time-oriented}. It is easy to check, via a partition of unity argument, that $(M,g)$ is time-orientable if and only if there exists a smooth {\it timelike} vector field $X:M\rightarrow TM$, i.e., $X_p$ is timelike for each $p \in M$. Finally, a connected, time-oriented Lorentzian manifold $(M,g)$ is called a {\it spacetime}. 
 
	Now, let $(M,g)$ be a spacetime. Given $p \in M$ any lightlike vector $v\in T_pM$ will be in only one of the sets $\mathcal{C}_p^\pm := \overline{\tau^\pm_p}\setminus \{0_p\}$, the overhead bar indicating topological closure in $T_pM$. The set $\mathcal{C}^+_p$ [resp. $\mathcal{C}^-_p$] is called the {\it future causal cone} [resp. {\it past causal cone}]. A causal vector $v\in T_pM$ is thus {\it future-directed} (or {\it future-pointing}) if $v\in \mathcal{C}^+_p$. ({\it Past-directed/past-pointing} causal vectors are defined similarly as pertaining to the past causal cone.) We can extend the notion of causal type to regular piecewise smooth curves as follows. Given a regular piecewise smooth curve $\gamma:I\subset \mathbb{R} \rightarrow M$, we say it is {\it future-directed timelike}/{\it future-directed null}/{\it future-directed causal} provided that for every $t\in I$ the tangent vector $\gamma'(t)$ is of the corresponding causal type. The {\it past-directed} versions are defined analogously. Although general piecewise smooth curves may not have a definite causal character, geodesics do, and in particular {\it causal geodesics} are either timelike or lightlike everywhere. 

 A peculiar feature of Lorentzian foliations (and more generally, of semi-Riemannian foliations of index $\mu\neq 0$) is the \textit{transverse causal character} of vectors. 

\begin{definition}[Transverse causal character of vectors]\label{defi: transverse causal vectors}
Let $(\f,g_\intercal)$ be a semi-Riemannian foliation of index $\mu \neq 0$ on $M$. A vector $v\in TM$ is 
\begin{itemize}
    \item \textit{transversely timelike}, if $g_\intercal(v,v)<0$;
    \item \textit{transversely lightlike}, if $v \notin T\f$ and $g_\intercal(v,v)=0$;
    \item \textit{transversely causal}, if $v$ is either transversely timelike or transversely lightlike;
    \item \textit{transversely spacelike}, if either $v\in T\f$ or else $g_\intercal(v,v)>0$.
\end{itemize}
These \textit{transverse causal characters} extend to smooth curves and vector fields in the usual way. Specifically, if $\gamma:I\rightarrow M$ is a smooth curve, we say it is  \textit{transversely timelike} [resp. \textit{transversely lightlike, transversely causal, transversely spacelike}], if $\displaystyle \forall t\in I, \gamma^\prime(t)$ is transversely timelike [resp. transversely lightlike, transversely causal, transversely spacelike], and if 
 $X\in\mathfrak{X}(\mathcal{U}), \mathcal{U}\subset M$ is a smooth vector field (maybe only locally defined), $X$ is \textit{transversely timelike [resp. tranversely lightlike, transversely causal, transversely spacelike]} if $X_p$ is transversely timelike [resp. tranversely lightlike, transversely causal, transversely spacelike] for all $p\in \mathcal{U}$.

\end{definition}

 For the rest of this section, we fix a codimension $2\leq q <n$ Lorentzian foliation $(\mathcal{F}, g_\intercal)$ on the manifold $M^n$. (We shall often abuse language and refer to the triple $(M, \mathcal{F}, g_\intercal)$ itself as the foliation.)
 %Recall (Definition \ref{defi: transverse causal vectors}) that we have assigned  \textit{transverse causal types} to vectors $v\in TM$. Just as the ordinary causal types for vectors mentioned in the previous paragraph, these transverse types can be naturally extended to any smooth curve $\alpha: I\subset \mathbb{R} \rightarrow M$ [respectively vector field $X\in \mathfrak{X}(M)$] by declaring that it is transversely timelike/lightlike/spacelike when its tangent vector $\alpha'(t)$ [resp. $X_x$] is of the corresponding (fixed) type for every $t\in I$ [resp. every $x\in M$]. 
 
 Now, it will often be convenient to pick a \textit{Lorentzian} bundle-like metric $g$ associated with the Lorentzian transverse metric $g_\intercal$. Remember this is always possible, and a bundle-like metric $g$ associated with $g_\intercal$ is Lorentzian if and only if the leaves are all \textit{spacelike}, i.e., have Riemannian metric induced by $g$ (conf. Remark \ref{rmk: facts about bundle like}). Whenever we use a Lorentzian bundle-like metric, for any $v\in TM$ we let $v^\top$ and $v^\perp$ denote its vertical and horizontal parts, respectively, with respect to that particular metric, without explicit reference to the latter unless there is any risk of confusion. 
 
 If such a Lorentzian bundle-like metric is defined, we can relate the usual causality types of vectors with transverse causal types as follows. 
\begin{lemma}\label{lemma: comparing causal types}
  Let $g$ be a Lorentzian bundle-like metric on $M$ associated with $(\mathcal{F}, g_\intercal)$. 
  \begin{itemize}
      \item[i)] If $v\in TM$ is timelike with respect to $g$, then it is transversely timelike on $(M, \mathcal{F}, g_\intercal)$. 
      \item[ii)] If $v\in TM$ is lightlike with respect to $g$, then either $v$ is transversely timelike, or else it is transversely lightlike and $g$-horizontal (i.e., $g$-orthogonal to a leaf of $\f$) on $(M, \mathcal{F}, g_\intercal)$. In particular, $v$ is transversely causal. 
      \item[iii)] If $v$ is transversely spacelike on $(M, \mathcal{F}, g_\intercal)$, then it is spacelike with respect to $g$. 
  \end{itemize}
  \end{lemma}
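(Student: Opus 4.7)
The plan is to reduce everything to the $g$-orthogonal decomposition $v = v^\top + v^\perp$ guaranteed by the ``adapted'' condition on $g$. Combining $g$-orthogonality with the defining identity \eqref{a que define} produces the two working formulas $g(v,v) = g(v^\top, v^\top) + g(v^\perp, v^\perp)$ and $g_\intercal(v,v) = g(v^\perp, v^\perp)$; the whole lemma will follow by comparing their signs. The essential structural input is that, since $g$ has index $1$ and $g_\intercal$ has index $\mu = 1$, Remark \ref{rmk: facts about bundle like}(2) forces the leaves to be $g$-spacelike, so $g(v^\top, v^\top) \geq 0$ with equality iff $v^\top = 0$.

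With these ingredients, each item reduces to a short algebraic case analysis. For (i), $g(v,v) < 0$ together with $g(v^\top, v^\top) \geq 0$ forces $g_\intercal(v,v) = g(v,v) - g(v^\top, v^\top) < 0$. For (ii), $g(v,v) = 0$ yields $g_\intercal(v,v) = -g(v^\top, v^\top) \leq 0$; the strict case gives a transversely timelike $v$, while equality forces $v^\top = 0$, making $v = v^\perp$ horizontal and, being nonzero, not tangent to $\mathcal{F}$, hence transversely lightlike. For (iii), Definition \ref{defi: transverse causal vectors} splits the argument into the cases $v \in T\mathcal{F}$ and $g_\intercal(v,v) > 0$; in the first, $v = v^\top$ gives $g(v,v) \geq 0$, and in the second, both summands in the formula for $g(v,v)$ are nonnegative with at least one strictly positive, so $g(v,v) > 0$.

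Since the entire argument is a pointwise algebraic consequence of a direct-sum decomposition with a positive-definite leafwise summand, I do not foresee any real difficulty. The only minor point worth flagging is the convention that the zero vector is spacelike, which is what allows the $v = 0$ boundary situation in (iii) to slot cleanly into the conclusion without any separate treatment.
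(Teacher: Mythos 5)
Your proof is correct and takes exactly the paper's route: the adapted decomposition $v = v^\top + v^\perp$, the identity $g(v,v) = g(v^\top,v^\top) + g_\intercal(v,v)$, and the positivity of $g$ on $T\f$ coming from the signature constraint. The paper's ``proof comment'' leaves the casework implicit; your writeup simply supplies those elementary sign arguments.
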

\begin{proofcomment}
 The proof is almost immediate if we recall that if $g$ a Lorentzian bundle-like metric as stated, then, using the vertical/horizontal decomposition of vectors that $g$ yields, we can write   
 \begin{equation}\label{basic rel}
     g(v,v) = g(v^\top, v^\top) + g(v^\perp, v^\perp)\, [\equiv g_\intercal(v,v)]\geq g_\intercal(v,v), \quad \forall v\in TM,
\end{equation}
 since we always have $g(v^\top, v^\top)\geq 0$ because $g|_{T\f}$ is positive definite. 
\end{proofcomment}

\begin{remark}\label{rmk: transverse causal type}
Every converse of the statements $(i)-(iii)$ in Lemma \ref{lemma: comparing causal types} is false. As an elementary example, pick $M=\mathbb{R}^3$ with the Minkowski metric $g$ given in global Cartesian coordinates $(t,x,y)$ by the line element $-dt^2+ dx^2+ dy^2$. Consider the 1-dimensional foliation $\f$ given by the spacelike curves $t=\text{ const.}, x=\text{ const.}$. Then $g$ is bundle-like with respect to $\f$ and $g_\intercal =-dt^2 + dx^2$ is the associated Lorentzian transverse metric. For any $a,b \in \mathbb{R}$ let 
  $$v_{a,b} = a \cdot \partial_y + b\cdot \partial_x + \partial_t.$$
 If $b^2<1$ then $v_{a,b}$ is transversely timelike [resp. transversely lightlike], but it can be timelike, lightlike or spacelike depending on the value of $a^2$. Again, if $b^2=1$ then $v_{a,b}$ is transversely lightlike, but it is spacelike for $a^2>0$, lightlike for $a=0$. (Of course, Lemma \ref{lemma: comparing causal types}(i) implies that no transversely lightlike vector can be timelike.)
\end{remark}

\begin{lemma}\label{lemma: timewedge comps}
  If $v\in TM$ is transversely timelike on $(M, \mathcal{F}, g_\intercal)$ and $w\in TM$ is such that $g_\intercal(v,w) =0$, then $w$ is transversely spacelike. 
\end{lemma}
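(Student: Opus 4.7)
The plan is to reduce this to a purely pointwise linear-algebra statement in the fiber of the normal bundle, where the transverse metric is a genuine (non-degenerate) Lorentzian inner product, and then invoke a standard fact about Lorentzian vector spaces. All the conceptual content is packaged in the holonomy-invariant metric $\overline{g_\intercal}$ on $\nu(\f)$ provided by Definition \ref{def: SR foliations}(ii) together with the isomorphism $\Phi_2$ of Proposition \ref{ prop: fundamental iso}.

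Fix $x \in M$ so that $v,w \in T_xM$, and consider the projections $\overline{v},\overline{w}\in \nu(\f)_x = T_xM/T_x\f$. Since $g_\intercal$ is $\f$-basic, Proposition \ref{ prop: fundamental iso} gives
\[
g_\intercal(v,w)=\overline{g_\intercal}_x(\overline{v},\overline{w}),\qquad g_\intercal(v,v)=\overline{g_\intercal}_x(\overline{v},\overline{v}),
\]
and analogously for $w$. By hypothesis $\overline{g_\intercal}_x(\overline{v},\overline{v})<0$, so $\overline{v}$ is a (nonzero) timelike vector in the Lorentzian vector space $(\nu(\f)_x,\overline{g_\intercal}_x)$, and $\overline{w}$ belongs to the $\overline{g_\intercal}_x$-orthogonal complement $\overline{v}^{\perp}$.

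Now I invoke the standard linear-algebra fact that in any Lorentzian vector space the orthogonal complement of a timelike vector is a positive-definite (i.e.\ spacelike) hyperplane; see, for example, \cite[Lemma 5.26]{oneillbook}. It follows that either $\overline{w}=0$, in which case $w\in T_x\f$, or $\overline{g_\intercal}_x(\overline{w},\overline{w})>0$, in which case $g_\intercal(w,w)>0$. By Definition \ref{defi: transverse causal vectors}, both alternatives yield that $w$ is transversely spacelike, as required.

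There is no substantive obstacle here: the entire content is to recognize that the transverse setting inherits the linear Lorentzian structure fiberwise, so the classical pointwise dichotomy applies verbatim. The only point one must be careful about is not to conflate ``$w\in T_x\f$'' with $w=0$: the transverse spacelike condition explicitly allows the vertical case $\overline{w}=0$, which is exactly what makes the statement work even when $w$ itself might be a nonzero tangent vector to the leaf.
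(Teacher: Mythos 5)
Your proof is correct and follows essentially the same approach as the paper's: reduce the claim to the standard linear-algebra fact that in a Lorentzian vector space the orthogonal complement of a timelike vector is positive definite. The only (minor, and arguably cleaner) difference is that you work intrinsically with the induced metric $\overline{g_\intercal}$ on the fiber $\nu(\f)_x$, whereas the paper first introduces an auxiliary Lorentzian bundle-like metric $g$ and argues with the horizontal projections $v^\perp,w^\perp$ in the concretely realized complement $T_x\f^\perp\simeq\nu(\f)_x$; both amount to the same pointwise reduction.
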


\begin{proof} 
  Suppose $v,w\in T_xM$. Let $g$ be a Lorentzian bundle-like metric for $\f$ associated with $g_\intercal$. Then $g(v^\perp, w^\perp) =0$ and $g(v^\perp,v^\perp)<0$. Since $g_\intercal$ restricted to $T_x\f^\perp\simeq \nu(\f)$ is a \textit{bona fide} Lorentzian metric  with respect to which $v^\perp$ is timelike and $g$-orthogonal to $w^\perp$, we conclude that $g(w^\perp, w^\perp) \geq 0$ with equality if and only if $w^\perp =0$. In the latter case $w \in T_x\f$, and otherwise $g_\intercal(w,w) > 0$. In any case $w$ is transversely spacelike as claimed. 
\end{proof}

\begin{proposition}\label{prop : timewedges}
    Let $(\mathcal{F}, g_\intercal)$ be a Lorentzian foliation on $M$. Given $x\in M$, the set  
    \begin{equation}
        \tau^\intercal(x) := \{v\in T_xM \, : \, \text{ $v$ is transversely timelike}\}
    \end{equation}
    is a non-empty open set and has exactly two connected components $\tau^\intercal_{\pm}(x)$, which are convex open cones in $T_xM$, i.e.,
    $$v,w \in \tau^\intercal_{\pm}(x) ,a,b >0 \Longrightarrow a \cdot v + b \cdot w \in \tau^\intercal_{\pm}(x).$$
    Each of these two components is called a \emph{timewedge} at $x$.
\end{proposition}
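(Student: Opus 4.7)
My plan is to reduce everything to the well-known structure of timecones in an honest Lorentzian vector space by using a Lorentzian bundle-like metric. By Remark~\ref{rmk: facts about bundle like}(1) (and the assumption that the foliation is of index $1$ with $q\geq 2$), there exists a Lorentzian bundle-like metric $g$ associated with $g_\intercal$, providing the splitting $T_xM=T_x\f\oplus T_x\f^\perp$ and the horizontal projection $\pi^\perp:T_xM\to T_x\f^\perp$, $v\mapsto v^\perp$. From the identity
\[
g_\intercal(v,v)=g(v^\perp,v^\perp),
\]
used in the proof of Lemma~\ref{lemma: comparing causal types}, I have that $v\in\tau^\intercal(x)$ iff $v^\perp$ is timelike for $g|_{T_x\f^\perp}$. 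Since $g|_{T_x\f^\perp}$ is a \emph{bona fide} Lorentzian inner product on the $q$-dimensional space $T_x\f^\perp$, the set $\widehat{\tau}(x):=\{u\in T_x\f^\perp:g(u,u)<0\}$ is non-empty and, by standard linear algebra on Lorentzian vector spaces, has exactly two connected components $\widehat{\tau}^\pm(x)$, each a convex open cone.

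The first step is to observe $\tau^\intercal(x)=(\pi^\perp)^{-1}(\widehat{\tau}(x))$. Openness and non-emptiness of $\tau^\intercal(x)$ then follow immediately from the continuity and surjectivity of $\pi^\perp$. For the component count, I would use the linearity of $\pi^\perp$ together with the direct sum decomposition to exhibit the explicit diffeomorphism
\[
(\pi^\perp)^{-1}(\widehat{\tau}^\pm(x))\;\cong\;T_x\f\times\widehat{\tau}^\pm(x),
\]
so that each preimage is connected (as a product of connected sets), and the two preimages are disjoint because $\widehat{\tau}^+(x)\cap\widehat{\tau}^-(x)=\emptyset$. Hence there are exactly two components $\tau^\intercal_\pm(x):=(\pi^\perp)^{-1}(\widehat{\tau}^\pm(x))$.

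Finally, the convex cone property transfers from $\widehat{\tau}^\pm(x)$ by linearity of $\pi^\perp$: for $v,w\in\tau^\intercal_\pm(x)$ and $a,b>0$,
\[
(a\cdot v+b\cdot w)^\perp=a\cdot v^\perp+b\cdot w^\perp\in\widehat{\tau}^\pm(x),
\]
so $a\cdot v+b\cdot w\in\tau^\intercal_\pm(x)$. The only point that might seem to require extra care, and that I would flag as the main technical check, is that the decomposition of $\tau^\intercal(x)$ into two components is independent of the auxiliary choice of Lorentzian bundle-like $g$, since $\tau^\intercal(x)$ is defined purely in terms of $g_\intercal$. This is automatic once one notes that the components are characterized intrinsically by the sign of $g_\intercal(v_1,v_2)$: two transversely timelike $v_1,v_2$ lie in the same component iff $g_\intercal(v_1,v_2)<0$, which is standard on any Lorentzian vector space (applied to $T_x\f^\perp$) and independent of the chosen identification $\nu(\f)_x\cong T_x\f^\perp$.
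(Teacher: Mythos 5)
Your proof is correct and relies on the same key reduction as the paper's — fix a Lorentzian bundle-like metric $g$, use $g_\intercal(v,v)=g(v^\perp,v^\perp)$ to reduce to the honest Lorentzian vector space $(T_x\f^\perp, g|_{T_x\f^\perp})$ — but you organize the argument differently. The paper works intrinsically: it fixes one transversely timelike $u$, defines $\tau^\intercal(\pm u)=\{v:\mp g_\intercal(u,v)>0\}$ (well-defined and disjoint by Lemma~\ref{lemma: timewedge comps}), and its essential step is the transitivity implication
$$g_\intercal(u,v)<0,\;g_\intercal(v,w)<0\;\Longrightarrow\;g_\intercal(u,w)<0,$$
which it proves by passing to $g$ and invoking the analogous fact for timecones; convexity and the component count both follow from this. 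You instead identify the putative components extrinsically as $(\pi^\perp)^{-1}(\widehat\tau^\pm(x))\cong T_x\f\times\widehat\tau^\pm(x)$ and read off openness, non-emptiness, the component count, and convexity directly from the product/linear structure, then address $g$-independence at the end. Your route has the advantage of being slightly more streamlined — no separate transitivity lemma is needed, and the convexity is immediate from linearity — and it in effect proves the identity $\tau^\intercal_{\pm}(x)=\tau^{\mathrm{hor}}_{\pm}(x)+T_x\f$ stated in Remark~\ref{rmk: about timewedges}(a) as part of the argument. The paper's route defines the components in a manifestly $g$-independent way from the start, so no a posteriori independence check is required (as you yourself note, your worry there is essentially a sanity check: the connected components of $\tau^\intercal(x)$ depend only on $g_\intercal$ by definition).
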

\begin{proof}
  The set $\tau^\intercal(x)$ is non-empty since in particular it contains the timelike vectors at $x$ with respect to some associated Lorentzian bundle-like metric. Openness follows from the continuity of the function $v\in T_xM \mapsto g_\intercal(v,v) \in \mathbb{R}$. 
  
  Now, fix some transversely timelike vector $u\in T_xM$. Let 
  $$\tau^\intercal(\pm u) := \{v \in \tau^\intercal(x) \, : \, \mp g_\intercal(u,v) >0\}.$$
  Lemma \ref{lemma: timewedge comps} implies the disjoint union $\tau^\intercal(x) = \tau^\intercal(+ u) \cup \tau^\intercal(- u)$. It is clear that 
  $$v,w \in \tau^\intercal(\pm u) ,a,b >0 \Longrightarrow a \cdot v + b \cdot w \in \tau^\intercal(\pm u).$$
  Thus, to conclude the proof it is sufficient to show that for any transversely timelike vectors $u,v,w$ we have
  \begin{equation}\label{cmon}
      \begin{array}{c}
           g_\intercal(u,v) <0  \\
           \text{ and } \\
           g_\intercal(v,w)<0
      \end{array}
      \Longrightarrow g_\intercal(u,w)<0.
  \end{equation}
  Indeed, just as in the proof of Lemma \ref{lemma: timewedge comps} we can pick any Lorentzian bundle-like metric $g$ for $\f$ associated with $g_\intercal$. Recall, restricted to $T_x\f^\perp\simeq \nu(\f)$ $g_\intercal$ is an actual Lorentzian metric. Then, \eqref{cmon} becomes just the analogous implication for timecones in a Lorentz vector space (conf. e.g., \cite[Lemma 5.29]{oneillbook}).  
\end{proof}

\begin{remark}\label{rmk: about timewedges}
 It is evident that timewedges as defined in Prop. \ref{prop : timewedges} are the transverse analogues of timecones in usual Lorentzian geometry. We saw that if an associated bundle-like Lorentzian metric $g$ is introduced, then for each $x\in M$ the $g$-horizontal subspace $T_x\f^\perp\subset T_xM$ becomes a Lorentz vector space upon restricting $g_x$ to horizontal vectors; if we denote by $\tau_{\pm}^{\mathrm{hor}}(x) \subset T_x\f^\perp$ the two timecones therein, then one can easily check the following facts.
 \begin{itemize}
     \item[a)] The timewedges are given by ``translating the transversal timecones along vertical directions'', i.e., 
     $$\tau^\intercal_{\pm}(x)= \tau_{\pm}^{\mathrm{hor}}(x)+ T_x\f.$$
     \item[b)] The boundary of $\tau^\intercal(x)$ consists of the disjoint union of the vertical subspace $T_x\f$ and the set $\Lambda^\intercal(x)$ of transversely lightlike vectors, which we refer to as the \textit{nullwedge}. 
     \item[c)] The nullwedge $\Lambda^\intercal(x)$ also has exactly two connected components $\Lambda^\intercal_{\pm}(x)$. A connected component of $\Lambda^\intercal(x)$ is in the boundary of a given timewedge if and only if for any $v\in \Lambda^\intercal(x)$ in that component and any $u\in \tau^\intercal(x)$ in that timewedge we have $g_\intercal(u,v)<0$. To see this, note that given $v\in \Lambda^\intercal(x)$, we can pick a sequence $v_k$ of vectors in the timewedge (whose boundary contains $v$) converging in $T_xM$ to $v$. Pick any $u$ in that timewedge. Since $g_\intercal(u,v_k)<0$ for each $k\in \mathbb{N}$ we have $g_\intercal(u,v)\leq 0$. But if $g_\intercal(u,v)=0$ Lemma \ref{lemma: timewedge comps} would imply that $v$ is transversely spacelike, in contradiction. So we conclude that $g_\intercal(u,v)<0$ as claimed. The union of a timewedge $\tau^\intercal_{\pm}(x)$ with the component $\Lambda^\intercal_{\pm}(x)$ of the nullwedge contained in its boundary is called a \textit{causal wedge}. 
 \end{itemize}
\end{remark}
\begin{definition}[Transverse time-orientation]\label{def: timeorientation}
    A \textit{transverse time-orientation} for the Lorentzian foliation $(M, \mathcal{F}, g_\intercal)$ is a function $\mathcal{T}$ that assigns to each $x\in M$ a timewedge $\mathcal{T}(x) \subset T_xM$ and varies smoothly on $M$ in the sense that for each open set $U\subset M$ we have a smooth transversely timelike vector field $X\in \mathfrak{X}(U)$ such that $X_y\in \mathcal{T}(y)$ $\forall y\in U$. If such a transverse time-orientation $\mathcal{T}$ for $(M, \mathcal{F}, g_\intercal)$ has been fixed, then $(M, \mathcal{F}, g_\intercal)$ is said to be \textit{time-oriented}. In this case, for each $x\in M$ the timewedge $\tau^\intercal_{+}(x):= \mathcal{T}(x)$ is called the \textit{future timewedge} (at $x$ with respect to $\mathcal{T}$), while the opposite timewedge is the \textit{past timewedge}. 
\end{definition}

\begin{remark}
    Just as in standard causal theory on Lorentzian manifolds, fixing a transverse time-orientation on
    the Lorentzian foliation $(M, \mathcal{F}, g_\intercal)$ amounts to classifying transversely causal vectors/curves/vector fields into either future-directed or past-directed. Thus, for example, we say that a transversely causal vector field $X\in\mathfrak{X}(M)$ is \textit{(transversely) future-directed} (resp. \textit{past-directed}), if $X_x$ is in the future (resp. past) causal wedge at $x$ (with its obvious meaning) for each $x\in M$. We analogously define (transversely) future/past-directed transversely timelike/lightlike/causal vectors/vector fields/piecewise smooth curves.  
\end{remark}
As a simple criterion for transverse time-orientability we have:
\begin{proposition}\label{prop: time orient crit}
 For the Lorentzian foliation $(M, \mathcal{F}, g_\intercal)$ the following statements are equivalent.
 \begin{itemize}
     \item[i)] $(M, \mathcal{F}, g_\intercal)$ is transversely time-orientable.
     \item[ii)] There exists a smooth, globally defined transversely timelike vector field $X\in \mathfrak{X}(M)$.
     \item[iii)] Given any Lorentzian bundle-like metric $g$ on $M$ associated with $g_\intercal$ the Lorentzian manifold $(M,g)$ is time-orientable (in the usual sense).
 \end{itemize}
\end{proposition}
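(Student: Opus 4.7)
The plan is to establish the chain (iii) $\Rightarrow$ (ii) $\Rightarrow$ (i) $\Rightarrow$ (ii) and separately (ii) $\Rightarrow$ (iii). The argument rests on two standard ingredients: the convexity of each timewedge $\tau^\intercal_\pm(x)$ proved in Proposition \ref{prop : timewedges}, and the comparison between causal types granted by Lemma \ref{lemma: comparing causal types} once an associated bundle-like Lorentzian metric $g$ is fixed.

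First I would prove (iii) $\Rightarrow$ (ii). Fix any Lorentzian bundle-like metric $g$ on $M$ associated with $g_\intercal$, whose existence is guaranteed by Remark \ref{rmk: facts about bundle like}(1). If $(M,g)$ is time-orientable in the usual sense, there exists a smooth nowhere-zero $g$-timelike vector field $Y \in \mathfrak{X}(M)$. By Lemma \ref{lemma: comparing causal types}(i), $Y$ is automatically transversely timelike, which gives (ii). For (ii) $\Rightarrow$ (iii), given such a global $X \in \mathfrak{X}(M)$ and an associated Lorentzian bundle-like metric $g$, the decomposition $TM = T\f \oplus T\f^\perp$ is smooth, so $X^\perp$ is a smooth vector field on $M$; from the identity \eqref{basic rel} one has $g(X^\perp,X^\perp) = g_\intercal(X,X) < 0$, so $X^\perp$ is a global $g$-timelike vector field, whence $(M,g)$ is time-orientable.

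Next I would address the equivalence (i) $\Leftrightarrow$ (ii). The implication (i) $\Rightarrow$ (ii) is where one needs a little care. By the smoothness clause in Definition \ref{def: timeorientation}, there is an open cover $\{U_\alpha\}_{\alpha \in A}$ of $M$ together with smooth $X_\alpha \in \mathfrak{X}(U_\alpha)$ such that $X_\alpha(y) \in \mathcal{T}(y)$ for every $y \in U_\alpha$. Choose a smooth partition of unity $\{\rho_\alpha\}$ subordinate to a locally finite refinement and define
$$
X := \sum_{\alpha} \rho_\alpha\, X_\alpha \in \mathfrak{X}(M).
$$
At each $x \in M$ only finitely many terms are nonzero, and each nonzero summand $\rho_\alpha(x) X_\alpha(x)$ is a positive multiple of a vector lying in the single timewedge $\mathcal{T}(x)$. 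By the convexity of timewedges (Proposition \ref{prop : timewedges}), $X(x) \in \mathcal{T}(x)$, so $X$ is globally transversely timelike.

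Finally, for (ii) $\Rightarrow$ (i), given a global transversely timelike $X \in \mathfrak{X}(M)$, define $\mathcal{T}(x)$ to be the unique timewedge at $x$ containing $X_x$; this is well-defined since timewedges are the two connected components of $\tau^\intercal(x)$ and $X_x$ lies in $\tau^\intercal(x)$. To verify the smoothness condition of Definition \ref{def: timeorientation}, on any open $U \subset M$ simply take the vector field $X|_U$, which is smooth, transversely timelike, and satisfies $X|_U(y) \in \mathcal{T}(y)$ by construction. This closes the cycle and completes the proof. The only nontrivial step is the partition-of-unity argument in (i) $\Rightarrow$ (ii), which hinges crucially on convexity of timewedges; the remaining implications are essentially immediate from Lemma \ref{lemma: comparing causal types} and the bundle-like decomposition.
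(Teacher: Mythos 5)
Your proof is correct and follows essentially the same structure as the paper's: (ii) $\Leftrightarrow$ (i) via the partition-of-unity/convexity argument and the obvious reverse, (iii) $\Rightarrow$ (ii) from Lemma \ref{lemma: comparing causal types}(i), and (ii) $\Rightarrow$ (iii) by taking the $g$-horizontal part $X^\perp$. You have merely supplied the details the paper's brief proof leaves implicit.
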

\begin{proof} The implication $(ii) \Rightarrow (i)$ is immediate. $(i) \Rightarrow (ii)$ follows from a simple partition of unity argument. $(iii) \Rightarrow (ii)$ is also immediate from Lemma \ref{lemma: comparing causal types}(i) when we recall that time-orientability on the Lorentzian manifold $(M,g)$ is equivalent to the existence of a globally defined $g$-timelike vector field on $M$. Finally, $(ii)\Rightarrow (iii)$ follows from the fact that given any transversely timelike vector field $X\in \mathfrak{X}(M)$, its $g$-horizontal part $X^\perp$ is timelike since $g$ is associated with $g_\intercal$.     
\end{proof}
It is evident from the previous result that a \textit{necessary} condition for the existence of a time-orientable tranverse Lorentzian metric on a codimension $q\geq2$ foliation is the existence of a global, everywhere-nonzero section of its normal bundle. If the foliation is Riemannian and the section can be chosen to be holonomy-invariant, then this condition is also sufficient. 
\begin{theorem}\label{ teo: we have to talk about riemannian}
 Let $\f$ be a codimension $q\geq2$ foliation on the manifold $M$, and suppose there exist a transverse Riemannian metric $h_\intercal$ on $(M,\f)$ and an everywhere-nonzero holonomy-invariant section of the normal bundle $\nu(\f)$. Then there exists a transverse Lorentzian metric $g_\intercal$ such that $(M, \mathcal{F}, g_\intercal)$ is a transversely time-orientable Lorentzian foliation.
\end{theorem}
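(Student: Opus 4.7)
The strategy is to transplant, to the transverse setting, the standard pointwise construction that turns a Riemannian metric together with a nowhere-zero vector field into a Lorentzian one. I would work entirely on the normal bundle. Let $\sigma := \overline{h_\intercal}$ denote the holonomy-invariant Riemannian metric on $\nu(\f)$ induced by $h_\intercal$ via Proposition \ref{ prop: fundamental iso}, and write $\chi \in \mathfrak{l}(\f)$ for the given nowhere-vanishing holonomy-invariant section. Since $\sigma$ is positive definite and $\chi$ is nowhere zero, $\sigma(\chi,\chi)$ is a strictly positive smooth function on $M$. I then define
$$\widetilde{g}_\intercal(\xi,\eta) \;:=\; \sigma(\xi,\eta) - \frac{2\,\sigma(\xi,\chi)\,\sigma(\eta,\chi)}{\sigma(\chi,\chi)}, \qquad \xi,\eta \in \Gamma(\nu(\f)),$$
which is a smooth symmetric $(0,2)$-tensor field on $\nu(\f)$.

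Next, I would verify pointwise that $\widetilde{g}_\intercal$ has index $1$. At each $x \in M$ one has $\widetilde{g}_\intercal(\chi,\chi)|_x = -\sigma(\chi,\chi)|_x < 0$, while on the $\sigma$-orthogonal complement of $\chi_x$ in $\nu(\f)_x$ the tensor $\widetilde{g}_\intercal$ agrees with $\sigma$ and is therefore positive definite. Thus $\widetilde{g}_\intercal$ is fiberwise a Lorentzian inner product. To see that $\widetilde{g}_\intercal \in \mathfrak{T}(2,\f)$, fix $V \in \mathfrak{X}(\f)$ and apply the product rule \eqref{productrule2} together with the hypotheses $\mathcal{L}_V\sigma = 0$ and $\mathcal{L}_V\chi = 0$. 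These yield $V(\sigma(\chi,\chi)) = 2\sigma(\mathcal{L}_V\chi,\chi) = 0$, so $\sigma(\chi,\chi) \in \Omega^0(\f)$, and a direct computation then gives $\mathcal{L}_V \widetilde{g}_\intercal = 0$. By the isomorphism $\Phi_2$ in Proposition \ref{ prop: fundamental iso} there is a unique $\f$-basic tensor $g_\intercal \in \mathfrak{L}(2,\f)$ on $M$ with $\overline{g_\intercal} = \widetilde{g}_\intercal$, exhibiting $(M,\f,g_\intercal)$ as a Lorentzian foliation in the sense of Definition \ref{def: SR foliations}.

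Finally, for transverse time-orientability I would invoke Proposition \ref{prop: time orient crit}(ii). By the surjectivity of the map $\Phi$ in \eqref{thephi}, there exists a smooth, globally defined vector field $Y \in \mathfrak{X}(M)$ such that $\overline{Y} = \chi$. Since $\chi$ is nowhere zero, $Y_x \notin T_x\f$ for every $x \in M$, and
$$g_\intercal(Y,Y) \;=\; \widetilde{g}_\intercal(\chi,\chi) \;=\; -\sigma(\chi,\chi) \;<\; 0$$
pointwise. Hence $Y$ is a globally defined, smooth transversely timelike vector field, and Proposition \ref{prop: time orient crit}(ii) yields the transverse time-orientability of $(M,\f,g_\intercal)$. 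No step here presents a genuine obstacle; the only point requiring minor care is the holonomy invariance of $\widetilde{g}_\intercal$, where one must exploit that $\sigma(\chi,\chi)$ is $\f$-basic in order to pass the invariance of $\sigma$ and $\chi$ through the quotient appearing in the definition of $\widetilde{g}_\intercal$.
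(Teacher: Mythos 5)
Your proof is correct and uses essentially the same construction as the paper, namely the standard deformation $\sigma \mapsto \sigma - 2\,\sigma(\cdot,\chi)\,\sigma(\cdot,\chi)/\sigma(\chi,\chi)$ (what O'Neill does in Lemma 5.36, also cited by the paper). The only difference is presentational: you work intrinsically on $\nu(\f)$ with the holonomy-invariant tensor $\sigma$ and section $\chi$, whereas the paper first fixes a Riemannian bundle-like metric $h$ on $M$, identifies $\nu(\f)\cong T\f^{\perp_h}$ to extract a unit foliate horizontal representative $X$, and performs the same deformation on $M$ as $g := -2X^\flat\otimes X^\flat + h$, then projects to $g_\intercal$; your route avoids the auxiliary bundle-like metric and normalization, at the cost of having to verify holonomy invariance of the quotient term by hand, which you do correctly.
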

\begin{proof}
  The proof here is entirely analogous to the proof of \cite[Lemma 5.36]{oneillbook}. Let $h$ be a Riemannian bundle-like metric on $(M,\f)$ associated with $h_\intercal$. We may identify $\nu(\f)$ with the $h$-normal bundle $T\f^{\perp_{h}}$, and thus to fix an everywhere-nonzero, foliate $h$-horizontal vector field $X\in \mathfrak{L}(\f)$, which we may in addition assume to have unit $h$-norm. Let $X^\flat$ be its metrically associated (with respect to $h$) $1$-form. Then 
  $$g:= -2 X^\flat\otimes X^\flat + h$$
  can be easily seen to be a Lorentzian bundle-like metric for $\f$, for which $X$ is unit timelike and then defines a transverse time-orientation. 
\end{proof}

It is well-known (see, e.g., \cite[Section 3.1]{beem} that given any connected (not necessarily time-orientable) Lorentzian manifold $(M,g)$ there exists a smooth double covering map $c:\widetilde{M}\rightarrow M$ such that the Lorentzian manifold $(\widetilde{M},c ^\ast g) $ is time-orientable, with $\widetilde{M}$ connected if and only if $(M,g)$ is not time-orientable. This has the following transverse analogue.

\begin{proposition}[Transversely time-orientable double cover]\label{prop: TO covering}
Let $(M,\f,g_\intercal)$ be a Lorentzian foliation. There exists a smooth double covering map $c:\widetilde{M}\rightarrow M$ such that the pullback Lorentzian foliation $(\widetilde{M},c^\ast \f, c ^\ast g_\intercal)$ is transversely time-orientable and $\widetilde{M}$ connected if and only if $(M,\f, g_\intercal)$ is not transversely time-orientable.
\end{proposition}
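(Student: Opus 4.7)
The plan is to reduce to the classical construction of the time-orientation double cover for Lorentzian manifolds. By Remark \ref{rmk: facts about bundle like}(1) pick any Lorentzian bundle-like metric $g$ on $M$ associated with $g_\intercal$. The standard construction (see, e.g., \cite[Section 3.1]{beem}) produces a smooth double covering map $c:\widetilde{M}\to M$ such that the pulled-back Lorentzian manifold $(\widetilde{M}, c^\ast g)$ is time-orientable, with $\widetilde{M}$ connected if and only if $(M,g)$ is not time-orientable.

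Since $c$ is a local diffeomorphism, the pullback foliation $c^\ast\f$ of Example \ref{exe: pullbacks} is well-defined and of codimension $q$, and (cf. Example \ref{pullback-transv}) $c^\ast g_\intercal$ is a Lorentzian transverse metric for it. Moreover, $c^\ast g$ is readily checked to be bundle-like with respect to $c^\ast\f$ with associated transverse metric $c^\ast g_\intercal$, so $(\widetilde{M}, c^\ast\f, c^\ast g_\intercal)$ is a Lorentzian foliation whose leaves are the connected components of the preimages $c^{-1}(L)$ for $L\in\f$. Its transverse time-orientability follows directly from Proposition \ref{prop: time orient crit}(iii)$\Rightarrow$(i) applied to the time-orientable bundle-like metric $c^\ast g$; equivalently, choosing a globally defined $c^\ast g$-timelike vector field $\widetilde{X}\in\mathfrak{X}(\widetilde{M})$ and invoking Lemma \ref{lemma: comparing causal types}(i) produces a transversely timelike vector field on $\widetilde{M}$, whence Proposition \ref{prop: time orient crit}(ii)$\Rightarrow$(i) gives the same conclusion.

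For the connectedness equivalence, Proposition \ref{prop: time orient crit}(i)$\Leftrightarrow$(iii) applied to the original data on $M$ yields that $(M,\f,g_\intercal)$ is transversely time-orientable if and only if $(M,g)$ is time-orientable. Chaining this biconditional with the classical criterion recalled in the first paragraph, $\widetilde{M}$ is connected iff $(M,g)$ is not time-orientable iff $(M,\f,g_\intercal)$ is not transversely time-orientable, concluding the argument. The only delicate point in this strategy is the compatibility between classical and transverse time-orientations, which is precisely what Proposition \ref{prop: time orient crit} is designed to guarantee; absent that compatibility, one would instead have to construct the double cover intrinsically from the bundle of timewedges, gluing local transversely timelike vector fields (whose existence is provided by Proposition \ref{prop: time orient crit}) via a $\mathbb{Z}_2$-valued cocycle defined through the sign of $g_\intercal$-pairings as in Lemma \ref{lemma: timewedge comps}, and then verifying smoothness and the appropriate pullback structure by hand.
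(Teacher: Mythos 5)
Your proof is correct, but it takes a genuinely different route from the paper's. You reduce the statement to the classical Lorentzian time-orientation double cover by first picking an associated Lorentzian bundle-like metric $g$ (Remark \ref{rmk: facts about bundle like}(1)), appealing to the standard construction for $(M,g)$, and then transferring time-orientability back and forth across the equivalence in Proposition \ref{prop: time orient crit}; the logic is sound at each step, and the connectedness equivalence follows correctly by chaining the two biconditionals. The paper instead builds $\widetilde{M}$ intrinsically as the set of pairs $(x,\tau)$ with $\tau$ a timewedge at $x$, topologized via base sets determined by local transversely timelike vector fields, mirroring the classical transversely orientable double cover from \cite[Ch.~2, Sect.~5]{camacho}. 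Your approach is shorter and delegates the covering-space work to well-known Lorentzian geometry, at the price of introducing an auxiliary bundle-like metric $g$ on which the construction \emph{a priori} depends (although Proposition \ref{prop: time orient crit} guarantees that the time-orientability of $(M,g)$, and hence the connectedness of $\widetilde{M}$, is independent of that choice, so existence is assured); the paper's construction is metric-free and makes the identification of $\widetilde{M}$ with the timewedge bundle explicit, which is conceptually cleaner and parallels the orientability case exactly. You even flag this alternative intrinsic construction yourself at the end — which is precisely what the paper does.
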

\begin{proofoutline}
 The proof is entirely analogous to the more standard construction of a \textit{transversely orientable} double cover described in detail, e.g., on \cite[Ch. 2, Sect. 5]{camacho}. Therefore we only outline it briefly here. 

 As a set, define 
 $$\widetilde{M}=\{(x,\tau) \, :\, \text{ $x\in M$, $\tau$ is a timewedge on $T_xM$}\},$$
 together with the map $c:\widetilde{M}\rightarrow M$ given by $c(x,\tau) :=x,\forall x\in M$. We shall call a subset $\mathcal{V}\subset \widetilde{M}$ a \textit{base set} if there exist an open set $V\subset M$ and a transversely timelike vector field $Z\in \mathfrak{X}(V)$ such that 
 $$\mathcal{V}= \{(x,\tau(Z_x)) \, : \, x\in V \},$$
 where $\tau(Z_x)\subset T_xM$ is the unique timewedge containing $Z_x$. It is straightforward to check that all the base sets together indeed comprise a base for a smooth manifold topology on $\widetilde{M}$ for which $c:\widetilde{M}\rightarrow M$ is the desired double covering map. On the pullback foliation $(\widetilde{M},c^\ast \f, c ^\ast g_\intercal)$ we define a transverse time-orientation as follows. Given a transversely timelike vector $\mathcal{Z}_{(x,\tau)}\in T_{(x,\tau)}\widetilde{M}$, 
 $$\mathcal{Z}_{(x,\tau)} \text{ is future-directed if and only if } dc_{(x,\tau)}(\mathcal{Z}_{(x,\tau)}) \in \tau.$$
One can thus check that this indeed is well-defined and the remaining properties just as in the proof of \cite[Ch. 2, Sect. 5, Thm. 4]{camacho}.
 
\end{proofoutline}

Let us consider some further examples of transversely time-orientable Lorentzian foliations.
\begin{example}[Lorentzian warped products and submersions]\label{exe: such}
   It is clear that Examples \ref{exe: warped products} and \ref{exe: SR submersions} are immediately adapted to the present context. Thus, if $(B^q,g_B)$ is a Lorentzian manifold, $(F^p,g_F)$ is a connected Riemannian manifold and $f\in C^\infty(B)$ a positive warping function, then the warped product $B\times_f F := (B\times F,g)$ given by 
   $$g= \pi_B^\ast g_B + (f\circ \pi_B)^2 \pi_F^\ast g_F$$
   is a Lorentzian manifold. The fibers $\{b\}\times F$ naturally define a foliation by spacelike fibers with respect to which $g$ is bundle-like, and hence defines a transversely Lorentzian foliation if $q\geq 2$. Note that the leaf space in this case is identified with $B$. Thus, this foliation is transversely time-orientable if and only if $(B,g_B)$ is time-orientable. In particular, \textit{any spacetime can be naturally viewed as the leaf space of a transversely time-oriented Lorentzian foliation.} 

    More generally, if $(M^n ,g)$ and $(B^q,h)$ are Lorentzian manifolds with $n>q$ and $\pi: (M,g) \rightarrow (B,h)$ is a semi-Riemannian submersion, then the connected components of the fibers automatically define a semi-Riemannian foliation of index 1, and hence a Lorentzian foliation with bundle-like $g$ if $q\geq 2$. We shall call such a submersion a \textit{Lorentzian submersion}. If $(M,g)$ is connected, then so is $(B,h)$, and again, the associated Lorentzian foliation is transversely time-orientable if and only if $(B,h)$ is time-orientable. 
\end{example}

\begin{example} \label{exe: misneragain} Here is an example of a foliation that \textit{does not admit a transverse Riemannian metric, but does admit a (transversely time-orientable) transverse Lorentzian metric}. To describe it, we revisit Example \ref{exe: twice misner} in a different guise. 
We now fix a number $\alpha>0$ and consider the linear mapping $h:\mathbb{R}^2\rightarrow\mathbb{R}^2$ given by the matrix $$
\left(\begin{array}{cc}
\cosh(\alpha)&\sinh(\alpha)\\ \sinh(\alpha)&\cosh(\alpha)\end{array}\right),$$ the boost with (fixed) parameter $\alpha$. Let $\varphi:\mathbb{Z}\simeq \pi_1(\mathbb{S}^1)\rightarrow Iso(\mathbb{R}^2_1)$ be the group homomorphism given by $\varphi(n)=h^n$ for all $n\in\mathbb{Z}$, and consider the foliation $(M, \f)$ given by the suspension of $h$ (cf. Example \ref{exe : SR suspensions }). That is, $M$ is the quotient of $\mathbb{R}\times\mathbb{R}^2$ by the $\mathbb{Z}-$action $(n, t, \textbf{x})\mapsto (t+n, h^{-n}(\textbf{x}))$, and the leaves of $\f$ are obtained from the leaves of $pr_2:\mathbb{R}\times\mathbb{R}^2\rightarrow\mathbb{R}^2$. The quotient of the connected open region $x+t>0$ of $\mathbb{R}^2$ by the iterations of $h$ is known as the \textit{Misner spacetime} (cf. \cite{hawking-ellis}, section 5.8). It is well-known that hen the action is taken over the whole $\mathbb{R}^2$ the quotient ceases to be a manifold altogether, and it is not even Hausdorff. The closure of the orbit of any point lying in the either $\mathbb{R}\cdot (1,1)$ or $\mathbb{R}\cdot (1,-1)$ contains the origin, and the action is neither free nor proper. In the foliation $(M, \f)$, the leaves of the form $[pr_2^{-1}(a, \pm a)]$ are not closed, for $a\neq 0$. In fact, these leaves are homeomorphic to $\mathbb{R}$ and their closures contain the leaf $[pr_2^{-1}(0, 0)]$, which is homeomorphic to $\mathbb{S}^1$, hence these closures are not manifolds. This is at odds with Molino's structural theory for Riemannian foliations, which asserts that the closures of leaves of any Riemannian foliation are always manifolds (\cite{molino}, Ch. 5). Nonetheless, this foliation is evidently Lorentzian, since the Minkowski metric on $\mathbb{R}^2$ can be pulled back to $\mathbb{R}\times\mathbb{R}^2$ in order to yield a Lorentzian transverse metric, and thus it is projected down to a Lorentzian transverse metric on $(M,\f)$.
\end{example}

\begin{example}
A codimension $q$ foliation $(M,\f)$ is \textit{transversely parallelizable} if there exist $q$ transverse vector fields $\xi_1, \ldots,\xi_q \in \mathfrak{l}(\f)$ that form a basis of the normal bundle $\nu(\f)$ at each point (see, e.g., \cite[p. 88]{mrcun}). Thus, for example, the so-called \textit{transverse frame bundle} of any (Riemannian) foliation with the lifted foliation is always transversely parallelizable \cite[Thm. 4.20]{mrcun}. If $(M,\f)$ is transversely parallelizable, then it admits a transverse Riemannian metric \cite[Prop.4.7]{mrcun}, and thus when $q\geq 2$ we can pick a projectable vector field $X_1 \in \mathfrak{L}(\f)$ associated with $\xi_1$ (say) and use Theorem \ref{ teo: we have to talk about riemannian} to conclude that \textit{any transversely parallelizable foliation of codimension $q\geq 2$ admits a time-orientable transverse Lorentzian metric}. 
\end{example}
\begin{example}[Commuting actions]\label{exe:commuting actions}
 Let $G$ and $H$ be two Lie groups acting on the smooth manifold $M$ in such a way that their actions, $\mu$ and $\nu$, respectively, commute (i.e., $\mu_\mathrm{g}\circ\nu_\mathrm{h}=\nu_\mathrm{h}\circ\mu_\mathrm{g}$ for any $\mathrm{g}\in G,\mathrm{h}\in H$). Suppose further that $G$ is compact, $\nu$ is locally free (i.e., all stabilizers $H_x$ at any point $x\in M$ are discrete subgroups of $H$) and the actions define two everywhere transverse homogeneous foliations, $\mathcal{G}$ and $\mathcal{H}$, on $M$, with $\mathrm{codim}(\mathcal{G})\geq 2$ (conf. Example \ref{exe: foliated actions}). Then it is well-known (via group averaging \cite[VI Thm. 2.1]{bredon}) that $M$ always admits a $G$-invariant Riemannian metric $\sigma$, say, which is thus bundle-like for $\mathcal{G}$. Furthermore, any non-trivial infinitesimal generator $\xi$ of the $H$-action will be projectable with respect to $\mathcal{G}$ (by the commutation of the actions), everywhere non-zero (otherwise at a zero of $\xi$ the stabilizer would contain a copy of $\mathbb{R}$ or $\mathbb{S}^1$) and transversal to the leaves of $\mathcal{G}$. Thus $(M,\mathcal{G})$ admits a time-orientable transverse Lorentz metric by Theorem \ref{ teo: we have to talk about riemannian}. These general assumptions, although seemingly contrived, are all illustrated on the so-called \textit{stationary and axisymmetric spacetimes} such as the region outside the ergosphere in the ``slowly rotating'' Kerr-Newmann family of solutions to the Einstein field equations in general relativity. Spacetimes in this class are equipped with an isometric action of the abelian group $SO(2)\times \mathbb{R}$ satisfying all the conditions laid out here. (Here, of course, $G=SO(2)$ and $H=\mathbb{R}$.)
\end{example}
We turn to the definition of transverse analogues of the causal structure on Lorentzian manifolds. From now on we assume that $(M, \mathcal{F}, g_\intercal)$ is transversely time-oriented. Whenever we introduce a Lorentzian bundle-like metric $g$ on $M$ associated with $g_\intercal$ we shall implicitly assume, taking Prop. \ref{prop: time orient crit} into account, that the time-orientation on $(M,g)$ is such that any future-directed timelike vector on $M$ is transversely future-directed on $(M, \mathcal{F}, g_\intercal)$ (the \textit{induced time-orientation}). 

\begin{definition}[Leaf space chronological and causal relations]\label{def: chronology and causality relations}
Let $L_1, L_2\in \mathcal{F}$. We define the following relations between the leaves of $\f$.
\begin{enumerate}
    \item [(a)] The \textit{leaf space chronological relation}, denoted by $L_1\ll_{M/\f} L_2$, holds if there is a transversely timelike, future-directed piecewise smooth curve $\alpha:[a,b]\rightarrow M$ such that $\alpha(a)\in L_1$ and $\alpha(b)\in L_2$;
    \item [(b)] The \textit{leaf space causality relation}, denoted by $L_1<_{M/\f} L_2$, holds if there is a transversely causal future-directed piecewise smooth curve $\alpha:[a,b]\rightarrow M$ such that $\alpha(a)\in L_1$ and $\alpha(b)\in L_2$;
    \item [(c)] We also write $L_1\leq_{M/\f} L_2$ if either $L_1<_{M/\f} L_2$ or $L_1=L_2$.
\end{enumerate}
\end{definition}

It is immediate from these definitions that $$L_1 \ll_{M/\f} L_2 \Longrightarrow L_1\leq_{M/\f} L_2.$$ 
It is also very convenient to introduce the analogues of the standard chronological and causal futures and pasts on $M/\f$. 

\begin{definition}[Leaf space futures and pasts]\label{def: leaf future pasts}
    Let $\mathcal{A}\subset \mathcal{F}$ be a set of leaves. We define:
\begin{itemize}
\item $I^+_{M/\f}(\mathcal{A}):=\{L^\prime\in \mathcal{F}: L\ll_{M/\f} L^\prime \text{ for some $L\in \mathcal{A}$}\}$, the \textit{leaf space chronological future} of $\mathcal{A}$;
\item $I^-_{M/\f}(\mathcal{A}):=\{L^\prime\in \mathcal{F}: L^\prime \ll_{M/\f} L \text{ for some $L\in \mathcal{A}$}\}$, the \textit{leaf space chronological past} of $\mathcal{A}$;
\item $J^+_{M/\f}(\mathcal{A}):=\{L^\prime\in \mathcal{F}: L\leq_{M/\f} L^\prime \text{ for some $L\in \mathcal{A}$} \}$, the \textit{leaf space causal future} of $\mathcal{A}$;
\item $J^-_{M/\f}(\mathcal{A}):=\{L^\prime\in \mathcal{F}: L^\prime \leq_{M/\f} L \text{ for some $L\in \mathcal{A}$} \}$, the \textit{leaf space causal past} of $\mathcal{A}$.
\end{itemize}
If $\mathcal{A}=\{L\}$, i.e., the singleton set of a leaf $L$, then we write 
$I^\pm_{M/\f}(L)$ and $J^\pm_{M/\f}(L)$. 
\end{definition}

We emphasize that the leaf space chronological and causal relations as defined are relations on the leaf space $M/\f$, \textbf{not} on $M$. This is most natural when we recall our intended goal of ``doing Lorentzian geometry on leaf spaces''. However, two important issues arise. First, it is not immediately clear from these definitions that the leaf space chronology and causality relations are even transitive, as ought to be expected if they are to have something to do with chronological/causal relations on Lorentzian manifolds as usually understood.  

The second issue regards a simple but key example. Suppose we have a smooth (onto) submersion $\pi:M\rightarrow B$ so that each fiber $\pi^{-1}(b)$ is connected. In this case we can identify $B\simeq M/\f$ for the foliation $\f$ given by the fibers of $\pi$ (conf. Prop. \ref{prop: submersion leaf space}(i)). Let $h$ be a time-oriented Lorentzian metric on $B$. We know (Example \ref{exe: SR submersions}) that $g_\intercal:= \pi^\ast h$ is naturally a time-oriented Lorentz transverse metric on $\f$. We should expect that the leaf space causality on this particular $(M,\f,g_\intercal)$ ought to be entirely equivalent to the (standard) causality on $(B,h)$ if our ``transverse-geometry-is-the-geometry-on-the-quotient'' philosophy is to be upheld in this example. Indeed, on the one hand, given $b_1,b_2 \in B$ such that $L_1:= \pi^{-1}(b_1) \ll _{M/\f} \pi^{-1}(b_2)=:L_2$ we can pick a transversely future-directed timelike curve $\alpha:[0,1]\rightarrow M$ with $\alpha(0) \in L_1,\alpha(1) \in L_2$. We then have 
$$h((\pi \circ \alpha)'(t), (\pi \circ \alpha)'(t)) = g_\intercal(\alpha '(t),\alpha'(t))< 0 , \forall t\in [0,1] \Rightarrow b_1 \ll_h b_2. $$
On the other hand, however, if we only knew that $b_1 \ll_h b_2$ it would not be immediately clear that this would lift to the corresponding relation $L_1\ll_{M/\f} L_2$. Similar considerations apply when the leaves $L_1,L_2$ are connected via transversely causal curves. To address these (and later other) issues is the object of the next two key technical lemmas. 

\begin{lemma}[Global trivialization lemma]\label{lemma: global trivialize}
    Let $\alpha:[a,b] \rightarrow M$ be a continuous, injective curve whose image is entirely contained in a leaf $L\in \f$. There exists a neighborhood $\mathcal{N}\supset \alpha([a,b]) $ and a diffeomorphism 
    $$\phi: D^{n-q}\times D^q \rightarrow \mathcal{N}$$
    (where $D^i \subset \mathbb{R}^i$ denotes the open $i$-dimensional unit disc) such that the pullback $\phi^\ast \f$ is the foliation whose leaves are of the form $\pi_2^{-1}(y)$, where $\pi_2: D^{n-q}\times D^q \rightarrow D^q$ is the projection $\pi_2(x,y) =y$.
\end{lemma}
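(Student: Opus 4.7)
My plan is to build the product chart by induction along a plaque chain covering the compact arc $K:=\alpha([a,b])\subset L$.

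\textbf{Plaque chain.} By compactness of $K$ and continuity of $\alpha$, I choose a partition $a=t_0<t_1<\cdots<t_N=b$ together with simple open sets $U_1,\ldots,U_N$ of a Haefliger cocycle for $\f$ such that: (i) $\alpha([t_{i-1},t_i])\subset U_i$ lies inside a single plaque $P_i$ of $U_i$; (ii) consecutive plaques $P_i,P_{i+1}$ meet in an open set containing $\alpha(t_i)$ within $L$; (iii) each $U_i$ carries a product chart $\varphi_i: U_i\to D^{n-q}_i\times B_i$ (a diffeomorphism onto its image, with $B_i\subset\mathbb{R}^q$ an open ball) sending $\f|_{U_i}$ to the trivial horizontal foliation and $P_i$ to $D^{n-q}_i\times\{0\}$.

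\textbf{Inductive gluing.} I construct inductively, for $k=1,\ldots,N$, an open ball $T_k\subset\mathbb{R}^q$, a neighborhood $\mathcal{N}_k\supset\alpha([a,t_k])$, and a diffeomorphism $\phi_k:D^{n-q}\times T_k\to\mathcal{N}_k$ pulling $\f$ back to the trivial horizontal foliation, with $\phi_k(D^{n-q}\times\{0\})$ an open piece of $L$ containing $\alpha([a,t_k])$. The base case $k=1$ comes from $\varphi_1^{-1}$ after rescaling. For the inductive step, the overlap $\mathcal{N}_k\cap U_{k+1}$ contains a neighborhood of $\alpha(t_k)$ on which both $\phi_k^{-1}$ and $\varphi_{k+1}$ are product trivializations of $\f$; hence the transition $\varphi_{k+1}\circ\phi_k$ has the form $(u,y)\mapsto(F(u,y),\gamma(y))$, where $\gamma$ is (part of) the cocycle transition between transversal slices. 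After possibly shrinking $T_k$, $\gamma$ is a diffeomorphism from $T_k$ onto its image in $B_{k+1}$; reparametrizing $\varphi_{k+1}$ via $\gamma$ in the transverse direction yields a chart $\psi_{k+1}$ whose transverse parameter matches that of $\phi_k$ on the overlap. In the leaf direction, $\phi_k(\cdot,0)$ and $\psi_{k+1}(\cdot,0)$ now parametrize two overlapping open discs in $L$ whose union is diffeomorphic to $D^{n-q}$ and contains $\alpha([a,t_{k+1}])$; a smooth partition-of-unity gluing, carried out fiberwise and depending smoothly on $y\in T_{k+1}\subset T_k$, merges the two parametrizations into a single one. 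This produces $\phi_{k+1}$ with the required properties; global injectivity, after a final shrinking of $T_{k+1}$, follows from the injectivity of $\alpha$ and the compactness of $K$.

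\textbf{Conclusion and main obstacle.} After $N$ steps, $\phi:=\phi_N$ is the desired diffeomorphism, up to identifying $T_N$ with $D^q$ via a final rescaling. The technical crux is the smooth leaf-gluing in the inductive step: one must merge two overlapping parametrizations of open pieces of the leaf $L$ into a single $D^{n-q}$-parametrization in a way that depends smoothly on the transverse parameter $y$, and then guarantee that the resulting map is a genuine diffeomorphism rather than merely a local one onto a neighborhood of $K$. The former is a standard fiberwise bump-function construction; the latter follows from the injectivity of $\alpha$ (which makes $K$ a topologically embedded arc in $M$) combined with the compactness of $K$, which allows the transverse disc to be shrunk until the resulting tube is embedded.
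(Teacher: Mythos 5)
Your proposal is essentially the standard plaque-chain argument, which is exactly what the paper itself references rather than proving (it points the reader to Camacho--Lins Neto, Lemma IV.3.3). The one place where you gloss over something that actually needs an argument is the assertion that the two overlapping leafwise discs always have union diffeomorphic to $D^{n-q}$; without extra care this can fail (for instance, the new plaque might meet the accumulated tube $\phi_k(D^{n-q}\times\{0\})$ in more than one component, or along a part of its boundary far from $\alpha(t_k)$). The standard fix is to use the injectivity and compactness of $\alpha$ at \emph{every} inductive step --- not only at the final global-injectivity check --- to keep each leafwise piece a thin embedded tube around the growing arc $\alpha([a,t_k])$, so that the union with a sufficiently small plaque around $\alpha([t_k,t_{k+1}])$ is again such a tube and hence a disc. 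With that adjustment the argument is correct and coincides with the one the paper cites.
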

\begin{proofcomment}
    This is a classic result in foliation theory. See, e.g., \cite[Lemma IV.3.3, p. 69]{camacho} for a proof.
\end{proofcomment}

\begin{lemma}[Transversely causal waterfall lemma]\label{lemma: causal waterfall}
 Let $\alpha: [a,b]\rightarrow M$ be a piecewise smooth, future-directed transversely causal [resp. transversely timelike] curve, and let $z\in M$ be any point on the same leaf $L\in \f$ as $\alpha(a)$. There exists a  piecewise smooth, future-directed transversely causal [resp. transversely timelike] curve $\beta: [c,d]\rightarrow M$ such that $\beta(c) =z$ and $\beta(d)=\alpha(b)$.
\end{lemma}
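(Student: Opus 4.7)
My plan is to build $\beta$ by modifying $\alpha$ only on a short initial segment, sliding the starting point along the leaf $L$ within a single ambient foliated chart supplied by Lemma \ref{lemma: global trivialize}. The driving observation is this: on any such chart $\phi: D^{n-q}\times D^q \to \mathcal{N}$ trivializing $\f|_\mathcal{N}$ via the submersion $\pi_2\circ\phi^{-1}$, because $g_\intercal$ is $\f$-basic, Proposition \ref{prop:basic 1}(i) yields a genuine Lorentzian metric $\widehat{g_\intercal}$ on $D^q$ with $g_\intercal|_\mathcal{N} = (\pi_2\circ \phi^{-1})^\ast \widehat{g_\intercal}$, and similarly the transverse time-orientation descends to an honest time-orientation on $(D^q,\widehat{g_\intercal})$. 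Hence the transverse causal character and the future/past sense of a tangent vector in $\mathcal{N}$ are determined entirely by its image under $d(\pi_2\circ\phi^{-1})$. Consequently, altering the leaf-direction component of a curve in $\mathcal{N}$ preserves both its transverse causal type and its time-orientation \emph{pointwise}.

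Concretely, assume $z\neq\alpha(a)$ (otherwise $\beta:=\alpha$ already works) and pick a continuous injective curve $\gamma:[0,1]\to L$ with $\gamma(0)=\alpha(a)$ and $\gamma(1)=z$, which exists since $L$ is a path-connected manifold. Apply Lemma \ref{lemma: global trivialize} to $\gamma$ to obtain a foliated trivialization $\phi: D^{n-q}\times D^q \to \mathcal{N}$ with $\gamma([0,1])\subset \mathcal{N}$. By continuity of $\alpha$, choose $\epsilon\in(0,b-a]$ with $\alpha([a,a+\epsilon])\subset\mathcal{N}$, and write $\phi^{-1}(\alpha(t))=(\alpha_1(t),\alpha_2(t))$ on this interval and $\phi^{-1}(z)=(z_1,\alpha_2(a))$, the transverse coordinate being shared because $z$ and $\alpha(a)$ lie in the same leaf of $\phi^\ast\f$. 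Since $D^{n-q}$ is path-connected, select any smooth path $\sigma:[a,a+\epsilon]\to D^{n-q}$ with $\sigma(a)=z_1$ and $\sigma(a+\epsilon)=\alpha_1(a+\epsilon)$, and then define $\beta:[a,b]\to M$ by
$$\beta(t) = \begin{cases} \phi(\sigma(t),\alpha_2(t)), & t\in[a,a+\epsilon],\\ \alpha(t), & t\in[a+\epsilon,b]. \end{cases}$$

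To finish, I would verify the claimed properties. The curve $\beta$ is continuous and piecewise smooth (with at worst a corner at $t=a+\epsilon$), with $\beta(a)=\phi(z_1,\alpha_2(a))=z$ and $\beta(b)=\alpha(b)$. On $[a+\epsilon,b]$ the transverse causal type and future orientation are inherited directly from $\alpha$, while on $[a,a+\epsilon]$ a direct computation gives $d(\pi_2\circ\phi^{-1})(\beta'(t)) = \alpha_2'(t) = d(\pi_2\circ\phi^{-1})(\alpha'(t))$, so by the opening remark $\beta'(t)$ is transversely causal (respectively transversely timelike) and future-directed precisely because $\alpha'(t)$ is. I do not anticipate a genuine obstacle beyond carrying out these routine verifications; the substantive work is already encoded in Lemma \ref{lemma: global trivialize}, which lets one sidestep an inductive chain-of-charts argument and perform the whole ``sliding'' inside a single ambient product chart where transverse data is literally a horizontal factor.
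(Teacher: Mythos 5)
Your proposal is correct and follows essentially the same approach as the paper: pick an injective path within the leaf $L$ connecting $\alpha(a)$ to $z$, invoke Lemma \ref{lemma: global trivialize} to obtain a single product chart $\phi:D^{n-q}\times D^q\to\mathcal{N}$ containing this path, and then modify only the leaf-direction component of $\alpha$ on a short initial arc, noting that the transverse metric and time-orientation depend only on the $D^q$ factor. The sole cosmetic difference is that you allow an arbitrary smooth path $\sigma$ in the leaf factor, whereas the paper uses the explicit affine interpolation $\sigma(t)=\bigl(\frac{\widehat{x}(t_0)-x_0}{t_0}\,t + x_0,\widehat{y}(t)\bigr)$; both choices give identically the same pointwise computation $d(\pi_2\circ\phi^{-1})(\beta'(t))=d(\pi_2\circ\phi^{-1})(\alpha'(t))$ and hence the same conclusion.
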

\begin{proof}
 The idea of the proof is to connect $x$ to $\alpha(a)$ by a curve within their common leaf $L$ to be concatenated with $\beta$ and ``slant it forward'' a bit, inspiring the ``waterfall'' appellation, probably due to Candel and Conlon in \cite{candel}. It is actually a modification, to incorporate transverse causality and greater codimension, of the argument in \cite[lemma 3.3.7]{candel} (which treats only codimension 1). 

 Concretely, since $L$ is connected we can pick any continuous injective curve $\gamma:[0,1] \rightarrow M$ whose image is entirely contained within $L$ and has $x$ and $\alpha(a)$ as endpoints. We now use Lemma \ref{lemma: global trivialize} (notation included) to fix the neighborhood $\mathcal{N}\supset \gamma[0,1]$ and diffeomorphism $\phi: D^{n-q}\times D^q \rightarrow \mathcal{N}$ with the property described therein. Then we know (example \ref{pullback-transv}) that $\widehat{g}^\intercal:= \phi^\ast g_\intercal$ is a transverse Lorentzian metric on $(D^{n-q}\times D^q, \phi^\ast \f)$, and we can also induce a time-orientation so that any future-directed transversely causal [resp. timelike] curve on $D^{n-q}\times D^q$ is mapped onto a future-directed transversely causal [resp. timelike] curve on $\mathcal{N}$, i.e., $\phi$ acts like a transverse isometry. Now, $\widehat{z}_0:=\phi^{-1}(z)$ and $\widehat{\alpha}(a):= \phi^{-1}(\alpha(a))$ are on the same plaque $L_0 = D^{n-q}\times \{y_0\}$ of $\phi^\ast \f$, and by continuity for some $t_0\in (a,b]$ we have $\alpha[a,t_0]\subset \mathcal{N}$ and the curve $\widehat{\alpha}:= \phi^{-1}\circ \alpha|_{[a,t_0]}$ is future-directed $\widehat{g}^\intercal$-causal [resp. timelike]. We conclude it is sufficient to prove the lemma for $M= D^{n-q}\times D^q$ and the foliation $\f =\{D^{n-q}\times \{y\} : y \in D^q\}$ endowed with some transverse Lorentzian metric $\widehat{g}^\intercal$, and we do so for the rest of the proof. 

 We use adapted Cartesian coordinates $x^1,\ldots, x^{p:=n-q},y^1,\ldots, y^q$, and whenever we write a pair such as $(\xi_1,\xi_2)$ to describe a point in $D^p\times D^q$ it is understood that $\xi_1\in D^p$ and $\xi_2\in  D^q$. Observe that $\widehat{g}^\intercal$ being transverse means that its coefficients with respect to the coordinate basis $\{\partial_{x^i},\partial_{y^j}\}$ only depend on the $y^j$-coordinates, and even then only coefficients of the form $g_{y^jy^k}:= \widehat{g}^\intercal(\partial_{y^j},\partial_{y^k})$ are non-zero. Also, reparametrizing we take $a=0$. 

 Write $\widehat{\alpha}(t)=(\widehat{x}(t),\widehat{y}(t))$ and $\widehat{z}=(x_0,y_0)$ and define
 $$\sigma(t)=\left(\frac{\widehat{x}(t_0)-x_0}{t_0}\cdot t + x_0,\widehat{y}(t)\right), \quad 0\leq t\leq t_0.$$
 We then see that $\sigma(0)=\widehat{z}_0$ (recall that $\widehat{y}(0)=y_0$) and $\sigma(t_0)=\widehat{\alpha}(t_0)$. Moreover, 
 $$\widehat{g}^\intercal(\sigma'(t),\sigma'(t)) = \sum_{i,j=1}^q g_{y^iy^j}(\widehat{y}(t))(\widehat{y}'(t))^{i}(\widehat{y}'(t))^{j} \equiv \widehat{g}^\intercal(\widehat{\alpha}'(t),\widehat{\alpha}'(t)),$$
 and thus $\sigma$ is seen to be (future-directed and) transversely causal [resp. timelike]. By concatenating the image by $\phi$ of this curve with $\alpha|_{[t_0,b]}$ with a convenient reparametrization we obtain the desired curve $\beta$. 
\end{proof}

\begin{remark}\label{rmk: evident in waterfall}
It is clear that a simple modification the proof of Lemma \ref{lemma: causal waterfall} can be made to show the following variant: given a future-directed (piecewise smooth) transversely causal/timelike curve $\alpha:[a,b] \rightarrow M$ with the \textit{future endpoint} $\alpha(b)$ on the same leaf $L$ as some $z\in M$, there exists a a future-directed (piecewise smooth) transversely causal/timelike curve starting at $\alpha(a)$ and ending at $z$. This will be used without further comments below. 
\end{remark}

The first fruit of the transversely causal waterfall lemma is the following nice property of submersions.

\begin{corollary}\label{cor: submersoes no saco}
 Let $\pi:M\rightarrow B$ be an onto submersion whose fibers are all connected, and consider the foliation $\f$ whose leaves are these fibers. Let $h$ be a Lorentzian metric on $B$ so that $(B,h)$ is time-oriented, and consider the Lorentzian foliation $(M,\f, g_\intercal:= \pi^\ast h)$ with the induced time-orientation. Then for all $x,y \in B$, if we write $L^x=\pi^{-1}(x)$ and $L^y=\pi^{-1}(y)$ respectively, we have
 $$L^x\ll_{M/\f} L^y \text{ [resp. $L^x<_{M/\f} L^y$]} \Longleftrightarrow x\ll_h y \text{ [resp. $x<_h y$]}.$$
\end{corollary}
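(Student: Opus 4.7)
The plan is to prove the two implications separately and note that the chronological ($\ll$) and causal ($<$) versions follow from the same argument \emph{mutatis mutandis}; I outline the timelike case in detail.

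For the forward direction ("$\Rightarrow$"), if $L^x \ll_{M/\f} L^y$ via a transversely timelike future-directed piecewise smooth $\alpha:[a,b]\to M$, then $\pi\circ\alpha$ is a piecewise smooth curve in $B$ from $x$ to $y$, and at every smooth point
\[
h\bigl((\pi\circ\alpha)'(t),(\pi\circ\alpha)'(t)\bigr) = (\pi^\ast h)(\alpha'(t),\alpha'(t)) = g_\intercal(\alpha'(t),\alpha'(t)) < 0.
\]
Hence $\pi\circ\alpha$ is $h$-timelike; it is future-directed because the induced time orientation is precisely the one for which $d\pi$ sends transversely future-directed timelike vectors to $h$-future-directed vectors. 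This gives $x\ll_h y$. (This is essentially the informal remark made just before the statement of Lemma \ref{lemma: global trivialize}.)

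The reverse direction ("$\Leftarrow$") is the substantive content and requires lifting a timelike curve from $B$ to a transversely timelike curve in $M$. Given a future-directed timelike piecewise smooth $\gamma:[a,b]\to B$ from $x$ to $y$, the submersion $\pi$ admits local smooth sections, so I can cover $\gamma([a,b])$ by finitely many open sets $V_0,\ldots,V_{N-1}\subset B$ with sections $s_i:V_i\to M$, and pick a partition $a=t_0<\cdots<t_N=b$ with $\gamma([t_i,t_{i+1}])\subset V_i$. Define the local lifts $\tilde\gamma_i := s_i\circ\gamma|_{[t_i,t_{i+1}]}$. Since $\pi\circ s_i=\mathrm{id}_{V_i}$, a direct computation shows
\[
g_\intercal(\tilde\gamma_i'(t),\tilde\gamma_i'(t)) = h\bigl(d\pi\,ds_i\,\gamma'(t),\,d\pi\,ds_i\,\gamma'(t)\bigr) = h(\gamma'(t),\gamma'(t)) < 0,
\]
and future-directedness of $\tilde\gamma_i'$ follows again from the induced time orientation, since its projection under $d\pi$ is $\gamma'(t)$.

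The final and crucial step is to assemble these local pieces into a single piecewise smooth curve in $M$. This is where the main obstacle lies: at each partition point $t_i$, the endpoint $\tilde\gamma_{i-1}(t_i)=s_{i-1}(\gamma(t_i))$ and the starting point $\tilde\gamma_i(t_i)=s_i(\gamma(t_i))$ lie on the same fiber $L^{\gamma(t_i)}$ but are generally distinct, so naïve concatenation would fail even to be continuous. The transversely causal waterfall lemma (Lemma \ref{lemma: causal waterfall}) is precisely the tool to handle this: inductively, set $\Gamma_0:=\tilde\gamma_0$, and assuming $\Gamma_{i-1}$ ends at $s_{i-1}(\gamma(t_i))$, apply the lemma with $\alpha=\tilde\gamma_i$ and $z=s_{i-1}(\gamma(t_i))$ to produce a future-directed transversely timelike $\beta_i$ from $s_{i-1}(\gamma(t_i))$ to $s_i(\gamma(t_{i+1}))$, and set $\Gamma_i := \Gamma_{i-1}\ast\beta_i$. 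After $N$ steps, $\Gamma_{N-1}$ is a piecewise smooth, future-directed, transversely timelike curve from $s_0(x)\in L^x$ to $s_{N-1}(y)\in L^y$, establishing $L^x\ll_{M/\f}L^y$. The causal version is obtained verbatim by replacing "transversely timelike" by "transversely causal" everywhere and invoking the causal clause of Lemma \ref{lemma: causal waterfall}.
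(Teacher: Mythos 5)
Your proof is correct and follows essentially the same approach as the paper's: lift $\gamma$ piecewise via local sections of $\pi$, verify each local lift is transversely timelike/causal from $\pi^\ast h = g_\intercal$, and repair the jumps between sections at each partition point by applying the transversely causal waterfall lemma (Lemma~\ref{lemma: causal waterfall}) inductively. Your write-up simply makes the induction and the concatenation more explicit than the paper's statement ``applying Lemma~\ref{lemma: causal waterfall} to each segment in sequence.''
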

\begin{proof} We have already seen that the implication $\Rightarrow$ holds. We prove the converse for the chronological relation since the proof for the causal relation is entirely analogous. Suppose then $x\ll_h y$ and fix a future-directed timelike curve $\alpha:[a,b]\rightarrow B$ such that $\alpha(a)=x$, $\alpha(b)=y$.  

Cover $\alpha([a,b])\subset B$ by a finite collection $U_1,\ldots, U_k$ of open sets of $B$ which are domains of local sections $\sigma_i: U_i\subset B \rightarrow M$ of $\pi$, i.e., $\pi\circ \sigma_i = \mathrm{Id}_{U_i}$, for all $i\in \{1,\ldots, k\}$. Let $\{t_0=a< t_1< \cdots, t_k=b\}$ be a partition of $[a,b]$ with $\alpha([t_{i-1},t_i])\subset U_i$ for each $i\in \{1,\ldots, k\}$. Again, for each such $i$ one can define $$\widehat{\alpha}_i: t\in [t_{i-1},t_i] \mapsto (\sigma_i\circ \alpha) (t) \in M.$$
We easily check that $\widehat{\alpha}_i$ is a lift of $\alpha|_{[t_{i-1},t_i]}$ through $\pi$ and that it is a transversely timelike curve segment with respect to $(M,\f, g_\intercal)$. In particular, $\widehat{\alpha}_{i}(t_i)$ and $\widehat{\alpha}_{i+1}(t_i)$ will be on the same leaf, while $\widehat{\alpha}_{1}(a)\in L^x$ and $\widehat{\alpha}_{k}(b)\in L^y$. Applying Lemma \ref{lemma: causal waterfall} to each segment in sequence, we can obtain a single piecewise smooth future-directed transversely timelike curve $\beta:[a,b]\rightarrow M$ with $\beta(a)\in L^x$ and $\beta(b)\in L^x$ as desired; we conclude that $L^x\ll_{M/\f} L^y$.
\end{proof}
\begin{example}\label{exe: not every sub}
  \textit{The hypothesis that the fibers are connected cannot be dropped in Corollary \ref{cor: submersoes no saco}}. To see this take the submersion $\pi:(t,x,y) \in \mathcal{U}\subset \mathbb{R}^3 \mapsto (t,x) \in \mathbb{R}^2$ and the Minkowski metric $h=-dt^2 + dx^2$ on $\mathbb{R}^2$ with its standard time-orientation, where 
  $$\mathcal{U}:= \mathbb{R}^3\setminus \{(t,x,0)\, : \, 0\leq t\leq 1\}.$$
  Consider
  $$ L_1=\{(0,0,y) \, : \, y< 0\}, L_a^-=\{(a,0,y) \, : \, y< 0\} \text{ and }L_a^+ = \{(a,0,y) \, : \, y>0\},$$
     with $0< a\leq 1$. Note that $\pi(L_1)= (0,0) \ll _h\pi(L_a^\pm) =(a,0)$. $L_1,L^\pm_a$ are leaves of the foliation given by the connected components of the fibers, and $\pi^\ast h$ is a Lorentzian transverse metric. Clearly, we have $L_1 \ll_{M/\f} L_a^-$, but we claim that $L_1 \nleq _{M/\f} L_a^+$. Indeed, any future-directed transversely causal curve $\alpha(s)= (t(s),x(s),y(s))$ ($0\leq s \leq 1$) in $\mathcal{U}$ starting at $L_1$ at $s=0$ would have to cross the plane $y=0$ - say at $s_0 \in (0,1)$ - to reach any point on $L^+_a$. But since $\alpha$ is future-directed, 
     $$t'(s) >0 \Rightarrow 0=t(0) < t(s_0) (\notin [0,1]) < t(1) \Rightarrow a\leq 1< t(1),$$
     and hence $\alpha(1) \notin L_a^+$. 
    % but it would land at the region $y>0$ at points with $t>1$, since 
     %$$-t'(s)^2 + x'(s)^2 \leq 0 \Rightarrow 1<x(1) \leq t(1).$$  
\end{example}

The next consequences are the following key structural properties of leaf space causality. 
\begin{theorem}\label{teo: structural properties leaf causal}
The leaf space chronological and causal relations on the time-oriented Lorentzian foliation $(M,\f,g_\intercal)$ have the following properties. 
\begin{enumerate}
    \item[a)] Both $\ll_{M/\f}$ and $<_{M/\f}$ are transitive relations. Moreover, they are dense preorders, i.e., for any leaves $L,L'\in \f$, if $L\ll_{M/\f} L'$, then there exists $L''\in \f$ such that $L\ll_{M/\f} L''\ll_{M/\f} L'$ and analogously for $<_{M/\f}$. Finally, $I^\pm_\f(L),J^\pm_\f(L)$ are non-empty for any $L\in \f$. 
    
    \item[b)] (Openness of chronology) The relation $\ll_{M/\f}$ is open (with respect to the product topology) in $M/\f\times M/\mathcal{F}$.
    
    \item[c)] (Push-up property) If $L_1, L_2, L_3\in\mathcal{F}$ are such that either $L_1\ll_{M/\f} L_2$ and $L_2\leq_{M/\f} L_3$ or else $L_1\leq_{M/\f} L_2$ and $L_2\ll_{M/\f} L_3$, then $L_1\ll_{M/\f} L_3$. 

    \item[d)] Let $\mathcal{A}$ be a set of leaves of $\f$. Regarding the leaf space chronological and causal futures, with respect to the quotient topology on $M/\f$ we have:
      \begin{enumerate}
      \item[d.i)] $$\overline{J^{\pm}_{M/\f}(\mathcal{A})} = \overline{I^{\pm}_{M/\f}(\mathcal{A})},$$
      \item[d.ii)] $$\mathrm{int} \, J^{\pm}_{M/\f}(\mathcal{A}) = I^{\pm}_{M/\f}(\mathcal{A}).$$
  \end{enumerate}
  In particular, 
  $$\partial J^{\pm}_{M/\f}(\mathcal{A}) = \partial I^{\pm}_{M/\f}(\mathcal{A}).$$

\end{enumerate}
\end{theorem}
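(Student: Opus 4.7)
The plan is to derive all four parts from a shared toolkit: the waterfall lemma (Lemma \ref{lemma: causal waterfall}), the open map property of $\pi_\f$ (Prop.~\ref{prop: proj is open}), the local trivialization (Lemma \ref{lemma: global trivialize})---which in a simple open set identifies the transverse geometry with a bona fide Lorentzian structure on $D^{q}$---and standard causality theory on Lorentzian manifolds.

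For (a), transitivity of each relation is a direct consequence of the waterfall lemma applied at the common leaf of two witnessing curves: given $\alpha$ ending on $L_2$ and $\beta$ beginning on $L_2$, the lemma (via Remark \ref{rmk: evident in waterfall}) produces a second curve of the same transverse causal type from the endpoint of $\alpha$ to the endpoint of $\beta$, whose concatenation with $\alpha$ witnesses $L_1\ll_{M/\f}L_3$ (resp.\ $L_1<_{M/\f}L_3$). Density is immediate by restricting a witness to any interior parameter. Non-emptiness of $I^\pm_{M/\f}(L)$ and $J^\pm_{M/\f}(L)$ follows by integrating any future/past transversely timelike $v\in T_pM$, $p\in L$: since $v\notin T_p\f$, the projection of the integral curve to the local Lorentzian quotient in any foliation chart is non-constant, and hence a short piece of that curve witnesses a leaf in $I^\pm_{M/\f}(L)$.

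For (b), I would first show that $\widetilde{I}^{\pm}(L):=\pi_{\f}^{-1}(I^{\pm}_{M/\f}(L))$ is open in $M$; openness in $M/\f$ then follows from Prop.~\ref{prop: proj is open}. Given $p\in\widetilde{I}^{+}(L)$ and a witnessing curve $\alpha$, take a foliation chart $\phi\colon D^{n-q}\times D^{q}\to\mathcal{N}$ around $p$ such that the final segment of $\alpha$ is contained in $\mathcal{N}$. There the transverse metric descends to an honest Lorentzian metric on $D^{q}$, and standard openness of chronology on $D^{q}$ gives an open set of transverse coordinates still reachable from the coordinate of the head of that segment by local timelike curves; each such local timelike curve lifts to a transversely timelike curve in $\mathcal{N}$, and the waterfall lemma spreads its terminus across the whole target plaque, producing an open neighborhood of $p$ inside $\widetilde{I}^{+}(L)$. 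Joint openness of $\ll_{M/\f}$ on $M/\f\times M/\f$ then follows by running the same construction simultaneously near both endpoints of a connecting curve, yielding neighborhoods $\mathcal{U}\ni L$ and $\mathcal{U}'\ni L'$ whose every pair is related by $\ll_{M/\f}$.

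Parts (c) and (d) then follow from (a)--(b) together with a local reduction to the standard convex-neighborhood push-up on Lorentzian manifolds. For (c), given $L_1\ll_{M/\f}L_2\leq_{M/\f}L_3$ with causal witness $\beta$ from $L_2$ to $L_3$, pick a foliation chart around a late point of $\beta$; openness from (b) places a whole neighborhood of $L_2$ inside $I^{+}_{M/\f}(L_1)$, and the corresponding open set in the local Lorentzian quotient fits the hypotheses of the classical convex-neighborhood push-up, producing an intermediate leaf $L^{*}\in I^{+}_{M/\f}(L_1)\cap I^{-}_{M/\f}(L_3)$ and hence $L_1\ll_{M/\f}L_3$ by transitivity (the mirror case with the causal segment first is symmetric). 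Once (c) is in hand, (d) is formal: $L\in\overline{I^{\pm}_{M/\f}(L)}$ comes for free from the integral-curve construction of (a), so for $L\in J^{+}_{M/\f}(\mathcal{A})$ with $L_0\leq_{M/\f}L$, a sequence $L_n\to L$ with $L_n\in I^{+}_{M/\f}(L)$ is upgraded by push-up to $L_n\in I^{+}_{M/\f}(\mathcal{A})$, yielding $\overline{J^{+}_{M/\f}(\mathcal{A})}\subset\overline{I^{+}_{M/\f}(\mathcal{A})}$; and for $L\in\mathrm{int}\,J^{+}_{M/\f}(\mathcal{A})$, choosing a leaf $L'\in I^{-}_{M/\f}(L)$ inside a small neighborhood $U\subset J^{+}_{M/\f}(\mathcal{A})$ of $L$ supplies $L_0\leq_{M/\f}L'\ll_{M/\f}L$, and a last application of push-up gives $L\in I^{+}_{M/\f}(\mathcal{A})$. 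The main obstacle will be part (c): one must transfer the classical convex-neighborhood push-up cleanly from the local Lorentzian quotient back to $M$, coordinating the openness from (b) with the waterfall lemma so that the timelike upgrade obtained inside the chart genuinely corresponds to a globally defined transversely timelike curve from $L_1$ to $L_3$.
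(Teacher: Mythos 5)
Your parts (a), (b), and (d) follow essentially the same route as the paper: the waterfall lemma \ref{lemma: causal waterfall} for transitivity, localization via submersion charts plus $\pi_\f$ openness (Prop.~\ref{prop: proj is open}) for joint openness of $\ll_{M/\f}$, and the formal derivation of (d) from density, openness, and push-up. Those are fine.

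Part (c) has a genuine gap. Your plan is to place a single foliation chart around ``a late point of $\beta$'', invoke openness from (b) to get a neighborhood of $L_2$ inside $I^{+}_{M/\f}(L_1)$, and apply the classical convex-neighborhood push-up there. But $\beta$ is an arbitrary transversely causal curve from $L_2$ to $L_3$: it may leave the chart around $\beta(a)$, traverse many simple open sets, and only much later approach $L_3$. A chart near the terminal end of $\beta$ sees nothing of your neighborhood of $L_2$, and a chart near $L_2$ sees nothing of $L_3$. The local push-up you can perform inside any one chart only advances the chronological relation by a tiny parameter increment along $\beta$; you cannot jump from ``$L_1\ll_{M/\f}L_2$'' to ``$L_1\ll_{M/\f}L_3$'' in one chart-sized step. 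This is precisely why the paper's proof runs a sup/continuity argument on the set $K=\{s\in[a,b]\,:\, L_1\ll_{M/\f}L_{\beta(t)},\ \forall t\in[a,s]\}$, showing $\sup K=b$ by deriving a contradiction from $\sup K<b$ via a \emph{local} push-up performed in a single chart around $\beta(\sup K)$. An equivalent remedy would be to cover $\beta([a,b])$ by finitely many simple open sets and chain the local push-ups through the cover; either way, the single-chart version you sketched does not work as stated. You flag ``coordinating the openness with the waterfall lemma'' as the main obstacle, but the real obstacle is that $\beta$ is global while your chart is local: some explicit chaining or bootstrap over the parameter interval is unavoidable.

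A minor point in (a): to certify non-emptiness of $I^\pm_{M/\f}(L)$ you should integrate a locally defined transversely timelike \emph{vector field} (which exists near any $p\in L$ by the smoothness requirement in Definition~\ref{def: timeorientation}), not merely a single vector $v\in T_pM$; once you have an integral curve, the rest of what you wrote is correct.
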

\begin{proof}
%It will be convenient for some steps to fix a bundle-like Lorentzian metric $g$ associated with $g_\intercal$ (with induced time-orientation), and all mentioned future/past sets (in the usual sense) are to be taken with respect to the causal structure given by $g$. 
\begin{enumerate}
\item[(a)] The proofs are analogous for both relations, so we discuss it only for $\ll_{M/\f}$. Given $L,L',L'' \in \f$ such that $L\ll_{M/\f} L'$ and $L'\ll_{M/\f} L''$ we can pick future-directed transversely timelike curves $\alpha:[a,b]\rightarrow M$ and $\beta:[c,d] \rightarrow M$ with $\alpha(a)\in L$, $\alpha(b),\beta(c)\in L'$ and $\beta(d)\in L''$. Using the transversely causal waterfall lemma \ref{lemma: causal waterfall}, we can obtain a future-directed transversely timelike curve $\gamma:[b,b']\rightarrow M$ with $\gamma(b)=\alpha(b)$ and $\gamma(b')=\beta(d)$. Concatenating $\alpha$ and $\gamma$ we end up with a future-directed transversely timelike curve from $L$ to $L''$, i.e, $L\ll_{M/\f} L''$. The other claims in this item follow immediately from the definitions. 
    \item [(b)] Let $L_1, L_2\in\mathcal{F}$ be such that $L_1\ll_{M/\f} L_2$. Since the projection $\pi_\f:M\rightarrow M/\f$ is open, we only need to find saturated neighborhoods $\mathcal{U}_i$ of $L_i, i=1,2$ in $M$ respectively such that $L\ll_{M/\f} L^\prime$ for all $L\subset\mathcal{U}_1$ and $L^\prime\subset\mathcal{U}_2$. Since $L_1\ll_{M/\f} L_2$ there is a future-directed transversely timelike curve $\alpha:[0,1]\rightarrow M$ such that $\alpha(0)\in L_1$ and $\alpha(1)\in L_2$. Choose some submersion $\pi: U\subset M\rightarrow \mathbb{R}^q$ with connected fibers locally defining $\f$ such that $\alpha(0) \in U$. There exists a unique Lorentzian metric $h$ on $\pi(U) \subset \mathbb{R}^q$ such that $\pi^\ast h = g_\intercal$, and it is naturally time-oriented with the induced time-orientation. Pick $0<\varepsilon<1$ small enough so that $\alpha[0,\varepsilon] \subset U$. Now, $\pi(\alpha(0))\ll_h\pi(\alpha(\varepsilon))$, and (the standard $h$-past) $I^-_h(\pi(\alpha(\varepsilon)))$ contains $\pi(\alpha(0))$ and is open. Choose therefore 
    $$\mathcal{U}_1:= \pi_\f^{-1}(\pi_\f( \pi^{-1}(I^-_h(\pi(\alpha(\varepsilon)))))),$$ 
    in other words, $\mathcal{U}_1$ is the saturation of the open set $\pi^{-1}(I^-_h(\alpha(\varepsilon)))\subset M$, which intersects $L_1$, so $L_1 \subset \mathcal{U}_1$. Now, if $L\subset \mathcal{U}_1$, then we can pick some $x\in L\cap \pi^{-1}(I^-_h(\pi(\alpha(\varepsilon))))$. Thus, $\pi(x) \ll_h \pi(\alpha(\varepsilon))$, and Corollary \ref{cor: submersoes no saco} applied to $\pi$ implies the existence of a future-directed transversely timelike curve $\gamma:[0,1] \rightarrow U$ with $\gamma(0) \in L, \gamma(1) \in L_{\alpha(\varepsilon)}$. Therefore $L\ll_{M/\f} L_{\alpha(\varepsilon)}$. An entirely analogous construction at $\alpha(1)$ will imply the existence of a saturated open set $\mathcal{U}_2\supset L_2$ such that for any $L' \subset \mathcal{U}_2$ we have $ L_{\alpha(1-\varepsilon)}\ll_{M/\f} L'$. We conclude by transitivity (recall (a)) that 
    $$L \subset \mathcal{U}_1, L' \subset \mathcal{U}_2 \Longrightarrow L \ll_{M/\f} L'$$
    as desired. 
    
\item[(c)] We shall prove only the first case, since the second one is analogous. Consider $\beta:[a, b]\rightarrow M$ a future-directed transversely causal curve such that $\beta(a)\in L_2$ and $\beta(b)\in L_3$. Let 
$$K:=\{s\in [a,b]: L_1\ll_{M/\f} L_{\beta(t)}, \forall t\in[a,s]\}.$$
Note that $K$ is not empty, since $a\in K$ by assumption. Define $s_0:=\sup(K)$. The openness of leaf space chronology we saw implies that $a<s_0$. Suppose $s_0 <b$ and pick just as in the proof of the previous item some submersion $\pi: U\subset M\rightarrow \mathbb{R}^q$ with connected fibers locally defining $\f$ such that $\beta(s_0) \in U$. By continuity we take $\varepsilon>0$ such that $a<s_0-\varepsilon< s_0+\varepsilon<b$ and $\beta[s_0-\varepsilon, s_0+\varepsilon]\subset U$. For some $s' \in [s_0-\varepsilon, s_0]$ we can fix a future-directed transversely timelike curve $\alpha:[0,1]\rightarrow M$ such that $\alpha(0)\in L_1$ and $\alpha(1)=\beta(s')$. Again by continuity we can pick $0<\delta <1$ such that $\alpha[1-\delta,1] \subset U$. Consider again the unique Lorentzian metric $h$ on $\pi(U) \subset \mathbb{R}^q$ such that $\pi^\ast h = g_\intercal$, and it is naturally time-oriented with the induced time-orientation. Then we have by the standard push-up property in Lorentzian causality
$$\pi(\alpha(1-\delta))\ll_h \pi(\alpha(s')) <_h \pi(\beta(s_0+ \varepsilon)) \Longrightarrow \pi(\alpha(1-\delta))\ll_h \pi(\beta(s_0+ \varepsilon)).$$
Thus, by Corollary \ref{cor: submersoes no saco} applied to $\pi$ we have that $L_{\alpha(1-\delta)}\ll_{M/\f} L_{\beta(s_0+\varepsilon)} $ and by transitivity $L_1\ll_{M/\f} L_{\beta(s_0+\varepsilon)} $, i.e., $s_0+\varepsilon\in K$, a contradiction. Therefore we must have $s_0=b$.

\item[d)] We deal only with futures, since the arguments are wholly analogous for pasts.\\
$(i)$\\
Since $I^{+}_{M/\f}(\mathcal{A})\subset J^{+}_{M/\f}(\mathcal{A})$, we immediately get $\overline{I^{+}_{M/\f}(\mathcal{A})} \subset \overline{J^{+}_{M/\f}(\mathcal{A})}$. Now, given $L\in \overline{J^{+}_{M/\f}(\mathcal{A})}$, and given any $\mathcal{O} \ni L$ open in $M/\f$, pick $L'\in \mathcal{O}\cap J^{+}_{M/\f}(\mathcal{A})$. Let $L_0\in \mathcal{A}$ such that $L_0\leq_{M/\f} L'$. Now, density near gives the existence of some $L''\in I^{+}_{M/\f}(L') \cap \mathcal{O}$, and the push-up property yields 
$$L_0\leq_{M/\f} L'\ll_{M/\f} L'' \Rightarrow L_0 \ll_{M/\f} L'',$$
i.e., $\mathcal{O} \cap I^{+}_{M/\f}(\mathcal{A}) \neq \emptyset$. We conclude that $L \in \overline{I^{+}_{M/\f}(\mathcal{A})}$, and hence that $\overline{J^{+}_{M/\f}(\mathcal{A})} \subset \overline{I^{+}_{M/\f}(\mathcal{A})}$.\\
$(ii)$\\
Again, since $I^{+}_{M/\f}(\mathcal{A})\subset J^{+}_{M/\f}(\mathcal{A})$ and $I^{+}_{M/\f}(\mathcal{A})$ is open, we always have $I^{+}_{M/\f}(\mathcal{A})\subset \mathrm{int}\, J^{+}_{M/\f}(\mathcal{A})$. On the other hand, given $L \in \mathrm{int} \, J^{+}_{M/\f}(\mathcal{A})$, density near $L$ allows us to pick $L'\in I^{-}_{M/\f}(L)\cap \mathrm{int} \, J^{+}_{M/\f}(\mathcal{A})$, so there exists $L_0\in A$ for which
$$L_0\leq_{M/\f} L'\ll_{M/\f} L \Rightarrow L_0\ll_{M/\f} L$$
by push-up; thus $L\in I^{+}_{M/\f}(\mathcal{A})$, concluding the proof. 
\end{enumerate}
\end{proof}

 It is as well to deal with sets in the leaf space, but for operational convenience - also consistent with the ``transverse approach'' adopted here - we will need to define the (saturated) inverse images by the standard projection $\pi_\f: M\rightarrow M/\f$ of leaf space futures and pasts. 

\begin{definition}[Transverse futures and pasts]\label{def: transverse future pasts}
    Let $A\subset M$ be a saturated set, so that for some set of leaves  $\mathcal{A}\subset \mathcal{F}$ we have $A=\pi^{-1}_\f(\mathcal{A})$. We define:
\begin{itemize}
\item $I^\pm_\intercal(A):= \pi_\f^{-1}\left(I^\pm_{M/\f}(\mathcal{A})\right)$, the \textit{transverse chronological future/past} of $A$;
\item $J^\pm_\intercal(A):= \pi_\f^{-1}\left(J^\pm_{M/\f}(\mathcal{A})\right)$, the \textit{transverse causal future/past} of $A$.
\end{itemize}
Again, if $\mathcal{A}=\{L\}$, then we write 
$I^\pm_\intercal(L)$ and $J^\pm_\intercal(L)$. 
\end{definition}

\begin{remark}\label{rmk: on transverse future pasts}
  Let $A\subset M$ be a saturated set. It is clear from the above definitions that $I^\pm_\intercal(A) \subset J^\pm_\intercal(A)$ and these are also saturated sets. Indeed, $x\in I^+_\intercal(A)$ (say) if and only if there exists a piecewise smooth, future-directed, transversely timelike curve $\alpha:[a,b] \rightarrow M$ with $\alpha(a)\in L$ for some leaf $L\subset A$ and $\alpha(b)\in L_x$, so by the causal waterfall lemma  $L_x\subset I^+_\intercal(A)$, with analogous statements for the other transverse chronological/causal future/pasts.   When one introduces a bundle-like Lorentzian metric $g$ associated with $g_\intercal$ we also have (recall Lemma \ref{lemma: comparing causal types})
  $$I^\pm_g(A) \subset I^\pm_\intercal(A) \text{ and }J^\pm_g(A) \subset J^\pm_\intercal(A),$$
  where $I^\pm_g,J^\pm_g$ are the standard future/pasts defined via $g$-causal curves. By saturation, we also have more generally 
  \begin{equation}\label{not the same uncle}    
  \pi^{-1}_\f(\pi_\f(I^\pm_g(A)) \subset I^\pm_\intercal(A) \text{ and }\pi^{-1}_\f(\pi_\f(J^\pm_g(A)) \subset J^\pm_\intercal(A).
  \end{equation}
\end{remark}

\begin{example}
The inclusions \eqref{not the same uncle} are strict in general. For a simple example, let $$K:\{(t, x, y)\in\mathbb{R}^3: y\leq 4, |t| \leq 1, |x|\leq 4\}\cup \{(-2,0,0)\}$$ 
and take $M:=\mathbb{R}^3\setminus K$ foliated by the fibers of the submersion $(t, x, y)\mapsto (t,x)$ and endowed with the bundle-like Lorentzian metric $g=-dt^2 + dx^2+dy^2$. Consider the leaf 
$$L=\{(-2,0,y)\, : \, y<0\}.$$
%point $p=(-2,0, 0)$. We see that $$I^+_g(p)=\{(t, x, y)\in M: -2<t<-1, t^2>x^2+y^2\}$$ so that 
We see that its saturation is $$\pi^{-1}(\pi(I^+_g(L)))=\{(t, x, y)\in M: t<-1, t+2>|x|\},$$ while $$I^+_\intercal(L)=\{(t, x, y)\in M: t+2>|x|\}.$$
\end{example}

Theorem \ref{teo: structural properties leaf causal} has the following immediate consequence for transverse futures and pasts.

\begin{corollary}\label{cor: structural properties transverse causal}
The transverse chronological and transverse causal pasts and futures of any saturated set $A\subset M$ for a time-oriented Lorentzian foliation $(M,\f,g_\intercal)$ have the following properties. 
\begin{enumerate}
    \item [a)] $I^\pm_\intercal(A)$ are open in $M$.

    \item[b)] With respect to the manifold topology on $M$, we have $$\overline{J^{\pm}_\intercal(A)} = \overline{I^{\pm}_\intercal(A)},$$
      and
      $$\mathrm{int} \, J^{\pm}_\intercal(A) = I^{\pm}_\intercal(A).$$
  In particular, 
  $$\partial J^{\pm}_\intercal(A) = \partial I^{\pm}_\intercal(A).$$
\end{enumerate}
\end{corollary}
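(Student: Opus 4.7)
The plan is to transport the structural properties of leaf-space futures and pasts already established in Theorem \ref{teo: structural properties leaf causal} up to $M$ via the projection $\pi_\f \colon M \to M/\f$, exploiting the fact that $\pi_\f$ is both continuous (by definition of the quotient topology) and an open map (Proposition \ref{prop: proj is open}).

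For part (a), I would simply observe that $I^\pm_\intercal(A) = \pi_\f^{-1}(I^\pm_{M/\f}(\mathcal{A}))$, where $\mathcal{A} = \pi_\f(A)$. Theorem \ref{teo: structural properties leaf causal}(b) says that $\ll_{M/\f}$ is open in $M/\f \times M/\f$, which immediately yields that each $I^\pm_{M/\f}(L)$ is open; hence $I^\pm_{M/\f}(\mathcal{A}) = \bigcup_{L\in \mathcal{A}} I^\pm_{M/\f}(L)$ is open in $M/\f$, and continuity of $\pi_\f$ delivers (a).

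The core of (b) I would handle via a general topological lemma: for any saturated subset $B \subset M$ with $\pi_\f(B) = \mathcal{B}$, one has $\overline{B} = \pi_\f^{-1}(\overline{\mathcal{B}})$ and $\mathrm{int}\, B = \pi_\f^{-1}(\mathrm{int}\, \mathcal{B})$. The inclusion $\overline{B} \subset \pi_\f^{-1}(\overline{\mathcal{B}})$ is immediate from continuity. The reverse inclusion is where both openness of $\pi_\f$ and the saturation of $B$ come in: if $x \in \pi_\f^{-1}(\overline{\mathcal{B}})$ and $U$ is an open neighborhood of $x$, then $\pi_\f(U)$ is open in $M/\f$ and meets $\mathcal{B}$, so there exists $u \in U$ whose leaf $L_u$ lies in $\mathcal{B}$; saturation of $B$ then forces $u \in B$, hence $U \cap B \neq \emptyset$. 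The identity for the interior follows by applying this to the (likewise saturated) complement $M \setminus B$ and taking complements, i.e., $\mathrm{int}\, B = M \setminus \overline{M\setminus B} = M \setminus \pi_\f^{-1}\!\left(\overline{(M/\f)\setminus \mathcal{B}}\right) = \pi_\f^{-1}(\mathrm{int}\, \mathcal{B})$.

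With this lemma in hand I would apply it to the saturated sets $J^\pm_\intercal(A) = \pi_\f^{-1}(J^\pm_{M/\f}(\mathcal{A}))$ and $I^\pm_\intercal(A) = \pi_\f^{-1}(I^\pm_{M/\f}(\mathcal{A}))$, and invoke Theorem \ref{teo: structural properties leaf causal}(d) to rewrite $\overline{J^\pm_{M/\f}(\mathcal{A})}$ as $\overline{I^\pm_{M/\f}(\mathcal{A})}$ and $\mathrm{int}\, J^\pm_{M/\f}(\mathcal{A})$ as $I^\pm_{M/\f}(\mathcal{A})$; the two displayed identities of (b) then drop out formally. The boundary statement follows since $\partial S = \overline{S} \setminus \mathrm{int}\, S$ and $I^\pm_\intercal(A)$ is open by (a). The only subtle point I expect to have to argue carefully is the saturation-plus-openness step in the lemma: continuity of $\pi_\f$ alone would not suffice to commute $\pi_\f^{-1}$ with closure, so both hypotheses on the projection really are essential.
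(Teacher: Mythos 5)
Your proposal is correct and is precisely the argument the paper leaves implicit when it asserts this as an ``immediate consequence'' of Theorem~\ref{teo: structural properties leaf causal}: the transverse futures and pasts are by definition saturated preimages under $\pi_\f$, and your general lemma that $\pi_\f^{-1}$ commutes with closure and interior on saturated sets (using both the continuity and the openness of $\pi_\f$, Proposition~\ref{prop: proj is open}) is exactly the right mechanism for transporting items~(b) and~(d) of that theorem from $M/\f$ up to $M$. No gaps; you correctly flag that openness of $\pi_\f$ and saturation of the sets involved are both indispensable for the closure inclusion that continuity alone does not give.
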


We now show that these boundaries $\partial I^{\pm}_\intercal(A)$ are actually $C^0$ hypersurfaces of $M$, just like achronal boundaries in the usual causal theory on spacetimes. In fact, this conclusion will follow immediately from the combination of two features we will see they share with achronal boundaries: achronality and edgelessness. We first introduce transverse analogues of these concepts.

\begin{definition}
    We say that a set $A\subset M$ is \textit{transversely achronal} (on $(M,\f, g_\intercal)$) if no two points of $A$ can be connected by a transversely timelike curve.
    %, or equivalently, if $A\cap I^+_\intercal(A)=\emptyset$.
\end{definition}
Let $g$ be any Lorentzian bundle-like metric associated with $g_\intercal$. Since any $g$-timelike curve is in particular $g_\intercal$-transversely timelike, it is immediate that 
$$A\subset M \text{ transversely achronal on $(M,\f,g_\intercal)$} \Longrightarrow A \text{ is achronal on $(M,g)$}.$$
\begin{proposition}
For any saturated set $A\subset M$ we have that $\partial I^{\pm}_\intercal(A)$ are transversely achronal.
\end{proposition}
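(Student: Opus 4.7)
\begin{proofoutline}
The plan is to transfer the problem to the leaf space $M/\f$ and mimic the classical Lorentzian argument, using the structural properties already proved in Theorem \ref{teo: structural properties leaf causal}.

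First I would verify the topological identity
$$\partial I^{\pm}_\intercal(A) \;=\; \pi_\f^{-1}\bigl(\partial I^{\pm}_{M/\f}(\mathcal{A})\bigr),$$
where $\mathcal{A}\subset\f$ is the family of leaves with $A=\pi_\f^{-1}(\mathcal{A})$. This follows from the fact that $\pi_\f$ is a continuous \emph{open} surjection (Proposition \ref{prop: proj is open}), so $\pi_\f^{-1}$ commutes with closure (by continuity for ``$\supseteq$'' and by openness of $\pi_\f$ for ``$\subseteq$''). Since $I^{\pm}_\intercal(A)$ is already open in $M$ by Corollary \ref{cor: structural properties transverse causal}(a) and $\pi_\f$ is a quotient map, the set $I^{\pm}_{M/\f}(\mathcal{A})$ is open in $M/\f$. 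The boundary identity above then reduces transverse achronality of $\partial I^{\pm}_\intercal(A)$ in $M$ to a statement about $\partial I^{\pm}_{M/\f}(\mathcal{A})$ in the leaf space.

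Next I would argue by contradiction. Treating the future case (the past case being entirely analogous), suppose $x,y\in\partial I^{+}_\intercal(A)$ are joined by a future-directed transversely timelike piecewise smooth curve. Then $L_x\ll_{M/\f} L_y$ with $L_x,L_y\in \partial I^{+}_{M/\f}(\mathcal{A})$. By the openness of $\ll_{M/\f}$ (Theorem \ref{teo: structural properties leaf causal}(b)), there is an open neighborhood $\mathcal{O}\ni L_x$ in $M/\f$ such that $L'\ll_{M/\f} L_y$ for every $L'\in\mathcal{O}$. Because $L_x$ lies in the closure of $I^{+}_{M/\f}(\mathcal{A})$, we can choose $L'\in\mathcal{O}\cap I^{+}_{M/\f}(\mathcal{A})$, and hence there exists $L_0\in\mathcal{A}$ with $L_0\ll_{M/\f} L'$.

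Finally, the transitivity of $\ll_{M/\f}$ (Theorem \ref{teo: structural properties leaf causal}(a)) gives $L_0\ll_{M/\f} L_y$, so $L_y\in I^{+}_{M/\f}(\mathcal{A})$. Since $I^{+}_{M/\f}(\mathcal{A})$ is open and thus disjoint from $\partial I^{+}_{M/\f}(\mathcal{A})$, this contradicts $L_y\in\partial I^{+}_{M/\f}(\mathcal{A})$. I do not anticipate a serious obstacle here: the only delicate point is the interchange of $\pi_\f^{-1}$ with closure/interior, which is handled by the openness of $\pi_\f$, and after that the proof is the expected adaptation of the standard argument for achronal boundaries of futures in Lorentzian manifolds.
\end{proofoutline}
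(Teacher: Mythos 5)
Your proof is correct and is essentially the paper's argument transposed to the leaf space. The paper works directly in $M$: given $x,y\in\partial I^+_\intercal(A)$ with $x\in I^-_\intercal(L_y)$, it uses openness of $I^-_\intercal(L_y)$ to find $x'\in I^+_\intercal(A)\cap I^-_\intercal(L_y)$ and then the waterfall lemma/transitivity to get $y\in I^+_\intercal(A)$, contradicting $y\in\partial I^+_\intercal(A)$. You instead first establish $\partial I^\pm_\intercal(A)=\pi_\f^{-1}\bigl(\partial I^\pm_{M/\f}(\mathcal{A})\bigr)$ (correctly, via the open quotient map) and then run the same openness-plus-transitivity contradiction in $M/\f$; this is the same core argument with one extra, but harmless and correctly justified, topological transfer step.
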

\begin{proof}
We only show that for $\partial I^+_\intercal(A)$ since the other case is analogous. If this was not transversely achronal we would have $x,y\in \partial I^{\pm}_\intercal(A)$ with $x\in I^-_\intercal(L_y)$. Since the latter set is open, it would then intersect $I^+_\intercal(A)$, i.e., there would exist $x' \in I^+_\intercal(A)\cap I^{-}_\intercal(L_y)$. We conclude (using the causal waterfall lemma) that $y\in \partial I^+_\intercal(A)\cap I^+_\intercal(A)$, an absurd.  
\end{proof}

\begin{definition}\label{defi: transverse closure}
 Given $A\subset M$, we define the \textit{transverse closure} of $A$, denoted by $\overline{A}^\intercal$, as the closure of $A$ with respect to the topology $\tau_\f$ of saturated open subsets of $M$. If $A=\overline{A}^\intercal$, then $A$ is said to be \emph{transversely closed}. 
\end{definition}
Since $M\setminus \overline{A}^\intercal \in \tau_\f$ by definition, this set is also open and saturated (with respect to the manifold topology) in $M$, whence $\overline{A}^\intercal$ is always closed and saturated in $M$. In fact, it is easy to see that $\overline{A}^\intercal$ is precisely the smallest closed (in the manifold topology) and saturated set containing $A$. Therefore, we also have
\begin{itemize}
    \item  $\overline{A}^\intercal=\overline{\pi_\f^{-1}(\pi_\f(A))}^\intercal$ (so in taking transverse closures we may consider only saturated sets) and
    \item  $\pi_\f(\overline{A}^\intercal) = \overline{\pi_\f(A)}$, where the latter closure is taken on $M/\f$. 
\end{itemize}
\begin{definition}\label{defi: transverse edge}
    Let $A\subset M$ be a transversely achronal set. We define the \textit{transverse edge of} $A$, denoted by $\mathrm{edge}_\intercal(A)$ by the set of points $x\in\overline{A}^\intercal$ such that for every saturated neighborhood $\mathcal{U}$ of $x$ there exists a future-directed transversely timelike curve $\gamma$ contained in $\mathcal{U}$ which connects $I^-_\intercal(L_x,\mathcal{U})$ to $I^+_\intercal(L_x,\mathcal{U})$ without meeting $A$.
\end{definition}
It is immediate from this definition that 
\begin{itemize}
    \item If $A$ is transversely achronal, then $\mathrm{edge} (A)\subset \mathrm{edge}_\intercal(A)$, and 
    \item for any saturated set $A\subset M$ we have $\mathrm{edge}_\intercal(\partial I^\pm _\intercal(A)) = \emptyset$. This already shows that $\partial I^\pm _\intercal(A)$ are closed and edgeless sets, and therefore $C^0$ hypersurfaces as claimed.
\end{itemize}

\begin{example}
Consider $M=\mathbb{R}^3$ with global coordinates $(t,x,y)$, with the foliation $\f$ given by $(t,x)=constant$ endowed with Lorentzian transverse metric $g_\intercal=-dt^2+dx^2$, and associated Lorentzian bundle-like metric $g=-dt^2+dx^2+dy^2$ on $M$. Consider the sets \begin{align*}
A&:=\{t=0, x^2+y^2>0\},\\
B&:=\{t=sin(y)\}.
\end{align*}

$A$ is both achronal in $(M,g)$ and transversely achronal in $(M,\f,g_\intercal)$, but while $\mathrm{edge}(A)=\{(0,0,0)\}$ we have $\mathrm{edge}_\intercal(A)=\{t=0, x=0\}$. As for $B$, it is achronal but not transversely achronal, since $(0,0,0)$ and $(1,0,\pi/2)$ which are in $B$ can be joined by the curve $s\mapsto (s, 0, (\pi/2)s)$ for $s\in[0,1]$ which is not timelike but it is transversely timelike.
\end{example}

\begin{lemma}\label{edgeless is saturated}
If $A\subset M$ is transversely achronal with $\mathrm{edge}_\intercal(A)=\emptyset$, then $A$ is transversely closed.    
\end{lemma}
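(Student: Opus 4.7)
The plan is to mimic the classical argument from spacetime causality that an achronal set with empty edge is closed, suitably adapted to the transverse setting. Since $A\subset\overline{A}^\intercal$ always holds, it suffices to prove the reverse inclusion. I will argue by contradiction: assuming there is some $x\in\overline{A}^\intercal\setminus A$, I will show $x\in\mathrm{edge}_\intercal(A)$, contradicting the hypothesis. (Note this will simultaneously force $A$ to be saturated, which is required for transverse closedness.)

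The main construction goes as follows. Fix an arbitrary saturated neighborhood $\mathcal{U}$ of $x$ and, by Remark \ref{rmk: facts about bundle like}(1), pick a Lorentzian bundle-like metric $g$ on $M$ associated with $g_\intercal$ and time-oriented compatibly (Proposition \ref{prop: time orient crit}). Exponentiating a $g$-timelike vector at $x$ and restricting to a small interval, produce a $g$-timelike curve $\gamma\colon[-\varepsilon,\varepsilon]\to\mathcal{U}$ with $\gamma(0)=x$; by Lemma \ref{lemma: comparing causal types}(i) it is automatically transversely timelike. Consequently the past endpoint $\gamma(-\varepsilon)$ lies in $I^-_\intercal(L_x,\mathcal{U})$ and the future endpoint $\gamma(\varepsilon)$ lies in $I^+_\intercal(L_x,\mathcal{U})$, so $\gamma$ will be a candidate edge-witnessing curve once we arrange it to avoid $A$.

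The key observation to handle $A$ is that $\gamma$ meets $A$ in at most one point: any two distinct points of $\gamma\cap A$ would be joined by a transversely timelike sub-arc of $\gamma$, directly contradicting the transverse achronality of $A$. Since $x\notin A$, either $\gamma\cap A=\emptyset$ (in which case $\gamma$ itself is the desired curve), or $\gamma\cap A=\{y\}$ with $y=\gamma(t_0)$ for some $t_0\neq 0$. In the latter case, assuming without loss of generality $t_0<0$, replace $\gamma$ by its restriction to $[t_1,\varepsilon]$ for any $t_1\in(t_0,0)$; the truncated curve remains transversely timelike, is still contained in $\mathcal{U}$, avoids $A$, and its new past endpoint $\gamma(t_1)$ still lies in $I^-_\intercal(L_x,\mathcal{U})$ since it is joined to $x$ by a transversely timelike segment inside $\mathcal{U}$. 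Either way we exhibit a transversely timelike curve in $\mathcal{U}$ from $I^-_\intercal(L_x,\mathcal{U})$ to $I^+_\intercal(L_x,\mathcal{U})$ missing $A$; since $\mathcal{U}$ was arbitrary, $x\in\mathrm{edge}_\intercal(A)$, giving the desired contradiction. The only mildly delicate step is verifying that the endpoints of the (possibly truncated) geodesic still belong to the relativized transverse past/future $I^\pm_\intercal(L_x,\mathcal{U})$; but this is immediate from the fact that the relevant sub-arcs are transversely timelike and entirely contained in $\mathcal{U}$.
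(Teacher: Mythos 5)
Your proof is correct, and it takes a genuinely different route from the paper's. The paper first proves an auxiliary claim that the transverse closure $\overline{A}^\intercal$ is itself transversely achronal --- a step that invokes the openness of leaf-space chronology and the transversely causal waterfall lemma --- and then, using the waterfall lemma again, builds a transversely timelike curve passing exactly through $x$ from $I^-_\intercal(L_x,\mathcal{U})$ to $I^+_\intercal(L_x,\mathcal{U})$; the achronality of $\overline{A}^\intercal$ guarantees that this curve, already meeting $\overline{A}^\intercal$ at $x$, meets it (and hence $A$) nowhere else. You instead skip the auxiliary claim entirely: you use a short $g$-timelike geodesic through $x$ (transversely timelike by Lemma \ref{lemma: comparing causal types}(i)), observe that the transverse achronality of $A$ alone forces this geodesic to meet $A$ in at most one point, and then truncate the geodesic on the far side of that point if it exists. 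This is the classical ``truncate-the-geodesic'' trick from spacetime causality, and it buys you a considerably lighter proof: no waterfall lemma, no separate lemma about $\overline{A}^\intercal$. Both arguments are sound; the paper's has the side effect of establishing achronality of the transverse closure (a fact of independent interest), while yours is more elementary and self-contained.
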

\begin{proof}
We first show that $\overline{A}^\intercal$ is also transversely achronal. Indeed, suppose not, and pick $x,y\in \overline{A}^\intercal$ connected by some transversely timelike curve segment. But then $L_x,L_y \in \pi_\f(\overline{A}^\intercal)=\overline{\pi_\f(A)}$ and $L_x \ll_{M/\f}L_y$. Since the leaf space chronological relation is open, there exist $L',L'' \in \pi_\f(A)$ such that $L'\ll_{M/\f} L''$, and thus by the causal waterfall lemma \ref{lemma: causal waterfall}, we can construct a future-directed transversely timelike curve connecting a point $x'\in L'\cap A$ to a point $y \in L''\cap A$, contradicting the transverse achronality of $A$. 

Thus, to complete the proof it suffices to show that $\overline{A}^\intercal\setminus A \subset \mathrm{edge}_\intercal(A)$. But given $x\in \overline{A}^\intercal\setminus A$, let $\mathcal{U}$ be any saturated neighborhood of $x$ and $x^+, x^-$ points respectively in $I^+_\intercal(L_x,\mathcal{U})$ and $I^-_\intercal(L_x,\mathcal{U})$. We can view $(\mathcal{U},\f|_{\mathcal{U}}, g_\intercal|_{\mathcal{U}})$ as a Lorentzian foliation by itself, and we do so for the rest of the proof. Using the causal waterfall lemma \ref{lemma: causal waterfall}, we can construct a future-directed transversely timelike curve $\alpha:[0,1] \rightarrow \mathcal{U}$ with $\alpha(0)=x^-$ and $\alpha(1)=x^+$ and passing through $x$. Since $\alpha$ intersects $\overline{A}^\intercal$ at $x$ and we saw this set is transversely achronal, it cannot intersect it anywhere else, and in particular, it cannot intersect $A$. Therefore $x\in \mathrm{edge}_\intercal(A)$ as desired. 
\end{proof}
\begin{corollary}
    For any saturated set $A\subset M$, the sets $\partial I^\pm _\intercal(A)\subset M$ are transversely closed (and in particular closed and saturated).
\end{corollary}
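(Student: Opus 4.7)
The plan is to simply combine the three facts that have just been established in the immediately preceding material, since the corollary is a direct consequence of them. Specifically, I will invoke: (i) the proposition showing that $\partial I^\pm_\intercal(A)$ is transversely achronal for any saturated $A \subset M$; (ii) the bullet point immediately after Definition \ref{defi: transverse edge} observing that $\mathrm{edge}_\intercal(\partial I^\pm_\intercal(A)) = \emptyset$; and (iii) Lemma \ref{edgeless is saturated}, which says that any transversely achronal set with empty transverse edge is transversely closed.

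First I would fix a saturated set $A \subset M$ and, without loss of generality, focus on $\partial I^+_\intercal(A)$ (the case of $\partial I^-_\intercal(A)$ being entirely analogous by time-reversal). By the preceding proposition, $\partial I^+_\intercal(A)$ is transversely achronal. Next, I would verify the edgelessness claim carefully: given $x \in \partial I^+_\intercal(A)$ and any saturated neighborhood $\mathcal{U}$ of $x$, any future-directed transversely timelike curve $\gamma \subset \mathcal{U}$ joining $I^-_\intercal(L_x,\mathcal{U})$ to $I^+_\intercal(L_x,\mathcal{U})$ must meet $\partial I^+_\intercal(A)$. Indeed, the endpoint in $I^+_\intercal(L_x,\mathcal{U})$ is pushed (via the push-up property and the causal waterfall lemma, applied within $\mathcal{U}$) into $I^+_\intercal(A)$, while the endpoint in $I^-_\intercal(L_x,\mathcal{U})$ lies outside $\overline{I^+_\intercal(A)}$ by transverse achronality, so by continuity $\gamma$ crosses $\partial I^+_\intercal(A)$; hence $x \notin \mathrm{edge}_\intercal(\partial I^+_\intercal(A))$.

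Having secured transverse achronality together with empty transverse edge, I would then apply Lemma \ref{edgeless is saturated} directly to conclude that $\partial I^+_\intercal(A)$ is transversely closed. Since transversely closed sets are, by definition, closed in the manifold topology and saturated with respect to $\f$, the parenthetical consequence follows at once.

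The main (and really only) obstacle is making the edgelessness argument airtight, i.e., being explicit about why a transversely timelike curve traversing $\partial I^+_\intercal(A)$ inside a saturated neighborhood must actually cross the boundary rather than skirting it; but this is a standard continuity/push-up argument exactly mirroring the achronal-boundary case in ordinary causality theory, and it is already implicit in the arguments given for the preceding proposition and lemma. Once this is spelled out, the rest is immediate from the cited results.
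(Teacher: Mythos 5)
Your proof is correct and follows exactly the route the paper (implicitly) intends: transverse achronality of $\partial I^\pm_\intercal(A)$ from the preceding proposition, the edgelessness observation following Definition \ref{defi: transverse edge}, and then Lemma \ref{edgeless is saturated}.

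Two small remarks. First, your verification of edgelessness only handles points $x \in \partial I^+_\intercal(A)$, whereas $\mathrm{edge}_\intercal(\partial I^+_\intercal(A))$ is a priori a subset of the possibly larger set $\overline{\partial I^+_\intercal(A)}^\intercal$. This is harmless, because $\partial I^+_\intercal(A)$ is in fact already closed and saturated (so coincides with its own transverse closure): $I^+_\intercal(A)$ is open and saturated, and since $\pi_\f$ is an open map one has $\overline{\pi_\f^{-1}(X)}=\pi_\f^{-1}(\overline{X})$ for any $X\subset M/\f$, whence $\overline{I^+_\intercal(A)}$ is saturated and therefore so is the difference $\partial I^+_\intercal(A)=\overline{I^+_\intercal(A)}\setminus I^+_\intercal(A)$. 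Noting this would have closed the gap — and, incidentally, would immediately give the corollary with no edgelessness argument at all, since a set that is already closed and saturated equals its transverse closure by definition. Second, your justification that the past endpoint lies outside $\overline{I^+_\intercal(A)}$ is phrased as being ``by transverse achronality,'' but the cleaner reason is the push-up-type argument you used for the future endpoint, combined with the fact that $x\notin I^+_\intercal(A)$ (since $x$ is on the boundary of this open set). These are stylistic rather than substantive, and the proposal stands.
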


\begin{theorem}
Let $A\subset M$ be a transversely achronal set. If $\mathrm{edge}_\intercal(A)=\emptyset$, then $A$ is a transversely closed (hence saturated) $C^0$ hypersurface. 
\end{theorem}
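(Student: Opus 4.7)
The plan is to reduce the statement to the classical fact that an achronal edgeless subset of a time-oriented Lorentzian manifold is a $C^0$ hypersurface (a standard result in Lorentzian causality theory; see, e.g., \cite{oneillbook,beem}). That $A$ is already transversely closed (hence saturated and closed in $M$) is provided by Lemma \ref{edgeless is saturated}, so only the hypersurface property remains, and this can be verified locally in distinguished foliation charts around each $x \in A$.

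Concretely, the first step is to fix $x \in A$ and pick a foliation chart $\phi: D^{n-q}\times D^q \to U \subset M$ around $x$ in which $\f|_U$ is trivialized; let $\pi: U \to S \simeq D^q$ be the associated submersion with connected fibers and push $g_\intercal|_U$ down to a time-oriented Lorentzian metric $h$ on $S$ so that $\pi^\ast h = g_\intercal|_U$ (as in Example \ref{exe: submersions fol1}). Since $A$ is saturated, $A\cap U$ is a union of plaques of $\f|_U$, so $A\cap U = \pi^{-1}(B)$ where $B := \pi(A\cap U)\subset S$; because $A\cap U$ is closed in $U$ and $\pi$ is an open surjection with connected fibers, $B$ is closed in $S$.

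The key intermediate step is to show that $B$ is achronal and edgeless in $(S,h)$, at which point the classical result says $B$ is a $C^0$ hypersurface of $S$, so $A\cap U = \pi^{-1}(B)$ is locally a product $D^{n-q}\times B$ in the chart and hence a $C^0$ hypersurface of $U$ of dimension $n-1$; covering $A$ by such charts finishes the proof. Achronality of $B$ is immediate: if $y_1 \ll_h y_2$ with $y_i \in B$, then Corollary \ref{cor: submersoes no saco} applied to $\pi$ yields a transversely timelike curve in $U$ joining $\pi^{-1}(y_1)\subset A$ to $\pi^{-1}(y_2)\subset A$, contradicting the transverse achronality of $A$. For edgelessness, I would argue contrapositively: given $y \in \mathrm{edge}(B)$ and an arbitrary saturated neighborhood $\mathcal{U}\ni x$, choose $W' \ni y$ small enough that $W := \pi^{-1}(W') \subset \mathcal{U}\cap U$, pick a timelike curve in $(W', h)$ from $I^-_h(y, W')$ to $I^+_h(y, W')$ avoiding $B$, and lift it to $W$ through a local section of $\pi$. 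Corollary \ref{cor: submersoes no saco} places its endpoints in $I^-_\intercal(L_x, W)\subset I^-_\intercal(L_x, \mathcal{U})$ and $I^+_\intercal(L_x, W)\subset I^+_\intercal(L_x, \mathcal{U})$ respectively, while the lifted curve avoids $A$ because its $\pi$-image avoids $B$; this would force $x \in \mathrm{edge}_\intercal(A) = \emptyset$, a contradiction.

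The main delicacy I anticipate is the bookkeeping required to shuttle between saturated open neighborhoods of $x$ in $M$, distinguished charts $U$, and (not necessarily saturated) open neighborhoods of $y$ in $S$; once one is careful to always work with subsets of the form $\pi^{-1}(W')$ and uses saturation of $A$ to identify $A\cap U$ with $\pi^{-1}(B)$, the argument is essentially a transverse analogue of the standard hypersurface proof for spacetimes, with the waterfall lemma \ref{lemma: causal waterfall} and Corollary \ref{cor: submersoes no saco} doing the work of translating between the local Lorentzian causal structure on $(S,h)$ and its transverse counterpart on $(U,\f|_U, g_\intercal|_U)$.
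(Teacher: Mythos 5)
Your argument is correct, but it takes a genuinely different route than the paper's. The paper's proof is a one-liner: fix a global Lorentzian bundle-like metric $g$ associated with $g_\intercal$ (which always exists, Remark~\ref{rmk: facts about bundle like}), observe that transverse achronality implies $g$-achronality (already noted in the paper) and that $\mathrm{edge}(A)\subset\mathrm{edge}_\intercal(A)$ (also already noted), so $A$ is achronal and edgeless in the honest spacetime $(M,g)$, and then quote \cite[Cor.~14.26]{oneillbook} directly on $(M,g)$. Your proof instead works locally in each distinguished chart, pushes $A$ down to a closed set $B$ in the local quotient $(S,h)$, verifies achronality and edgelessness of $B$ there via Corollary~\ref{cor: submersoes no saco}, applies the classical result to $(S,h)$, and then pulls $\pi^{-1}(B)$ back up. What your route buys is independence from the existence of a global bundle-like Lorentzian metric on $M$, keeping the entire argument ``downstairs'' in the local quotients, which is closer in spirit to the leaf-space philosophy advertised in the paper; what it costs is length and the kind of chart/neighborhood bookkeeping you flagged. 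One small slip to fix: in the edgelessness step you start with an arbitrary $y\in\mathrm{edge}(B)$ but then write $I^\pm_\intercal(L_x,\mathcal{U})$ for the original base point $x$; you should instead choose $x'\in\pi^{-1}(y)\subset A\cap U$ (which exists since $y\in B$ as $B$ is closed) and derive $x'\in\mathrm{edge}_\intercal(A)$. Also, patching ``local sections'' to lift the timelike curve $c$ is unnecessary: in the trivializing chart $U\cong D^{n-q}\times D^q$ you can take $\tilde c(t)=(z_0,c(t))$ directly.
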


\begin{proof}
%$(\implies)$ 
Let $g$ be a Lorentzian bundle-like metric associated with $g_\intercal$. Since in this case $A$ is achronal and edgeless in $(M,g)$, then $A$ is a closed $C^0$ hypersurface by standard results \cite[Cor. 14.26]{oneillbook}. It is transversely closed by Lemma \ref{edgeless is saturated}. 

\end{proof}

\section{The transverse causal ladder}\label{section: Lorentzfol2}

Another important aspect of the geometry of spacetimes, especially in physical applications, is their position on the so-called \textit{causal ladder} (or \textit{causal hierarchy}) (for a comprehensive discussion on this, see \cite{minguzzi-sanchez}). Thus, for example, a spacetime $(M,g)$ is said to be {\it chronological} [resp. {\it causal}] if there is no timelike [resp. causal] curve segment $\gamma:[a,b]\rightarrow M$ with $\gamma(a)=\gamma(b)$. Clearly if $(M,g)$ is causal, then it is chronological, but the converse is false. The physical importance of this classification is that while non-causal spacetimes are widely believed to be of scant physical interest insofar as these describe universes in which two-way communication with one's past is possible, it is often unclear just how strong the physics actually constrains the causal structure. Therefore, a thorough understanding of these causal possibilities is desirable. We thus extend the usual causal conditions and obtain a - for now partial - \textit{transverse causal ladder}, and investigate its relation to the standard causality defined via an associated bundle-like time-oriented Lorentzian metric. 

We again fix throughout this section a codimension $q\geq2$ time-oriented Lorentzian foliation $(M, \mathcal{F}, g_\intercal)$. As made clear in the previous section, there is a particularly nice interplay between causal transverse notions - defined on $M$ - and a causal structure defined directly on the leaf space $M/\f$, which we will keep exploring here. 

We first introduce a (purely foliation-theoretic) notion that will recur a number of times. 
\begin{definition}[Tranverse compactness]\label{defi:transverse compactness}
Given a foliation $(M,\f)$, a subset $K\subset M$ is \textit{transversely compact (with respect to $\f$)} if any covering of $K$ by $\f$-saturated open sets admits a finite subcovering. 
\end{definition}
Of course, any compact subset of $M$ is transversely compact, but the converse is not true in general. Indeed, a transversely compact set need be neither closed nor bounded (with respect to a natural background distance function). For example, if the foliation is given by the fibers of the projection $\pi:(x,y,z)\in \mathbb{R}^3\mapsto (x,y) \in \mathbb{R}^2$ the set 
$$\{(x,y,z) \in \mathbb{R}^3 \, : \, x^2+y^2\leq 1\} \setminus \{(0,0,0)\}$$
is transversely compact. More generally, it is an easy exercise left to the reader to check the following characterization.
\begin{proposition}\label{prop: transversely compact charac}
For a subset $K\subset M$ and a foliation $\f$ of $M$ the following statements are equivalent.
\begin{itemize}
    \item[i)] $K$ is transversely compact.
    \item[ii)] $\pi_\f(K)$ is compact in the leaf space $M/\f$.
    \item[iii)] The saturation $\pi^{-1}_\f(\pi_\f(K))$ of $K$ is transversely compact. 
\end{itemize}
\end{proposition}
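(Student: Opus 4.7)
The plan is to exploit the bijective correspondence between saturated open subsets of $M$ and open subsets of $M/\f$ provided by $\pi_\f$ and $\pi_\f^{-1}$. By definition of the quotient topology, $V \subset M/\f$ is open iff $\pi_\f^{-1}(V)$ is open in $M$, and any such preimage is automatically saturated. Conversely, by Proposition \ref{prop: proj is open} the projection $\pi_\f$ is open, so if $U\subset M$ is saturated and open then $\pi_\f(U)\subset M/\f$ is open and satisfies $\pi_\f^{-1}(\pi_\f(U))=U$. This correspondence will allow us to transfer covers between $M$ and $M/\f$.

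With this in hand, I would prove (i)$\Leftrightarrow$(ii) directly. For (i)$\Rightarrow$(ii), take any open cover $\{V_\alpha\}_{\alpha\in A}$ of $\pi_\f(K)$ in $M/\f$; then $\{\pi_\f^{-1}(V_\alpha)\}_{\alpha\in A}$ is a cover of $K$ by saturated open sets, from which transverse compactness of $K$ extracts a finite subcover whose images under $\pi_\f$ still cover $\pi_\f(K)$. For (ii)$\Rightarrow$(i), take a cover $\{U_\alpha\}_{\alpha\in A}$ of $K$ by saturated open sets; then $\{\pi_\f(U_\alpha)\}_{\alpha\in A}$ is an open cover of $\pi_\f(K)$ by the openness of $\pi_\f$, and compactness of $\pi_\f(K)$ extracts a finite subcover. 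Using saturation ($\pi_\f^{-1}(\pi_\f(U_\alpha))=U_\alpha$) the preimages of this finite subcover still cover $\pi_\f^{-1}(\pi_\f(K))\supseteq K$, yielding a finite subcover of $K$ from the original family.

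Finally, (ii)$\Leftrightarrow$(iii) is essentially a tautology once (i)$\Leftrightarrow$(ii) is established: the set $\pi_\f^{-1}(\pi_\f(K))$ has the same image under $\pi_\f$ as $K$, namely $\pi_\f(K)$, so applying (i)$\Leftrightarrow$(ii) to $\pi_\f^{-1}(\pi_\f(K))$ in place of $K$ shows that this saturation is transversely compact iff $\pi_\f(K)$ is compact.

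I do not anticipate any genuine obstacle; the only point requiring care is bookkeeping the saturation condition when passing between covers of $K$ and covers of $\pi_\f(K)$, since in general $\pi_\f^{-1}(\pi_\f(K))$ strictly contains $K$. The identity $\pi_\f^{-1}(\pi_\f(U))=U$ for saturated $U$, combined with $K\subseteq\pi_\f^{-1}(\pi_\f(K))$, is precisely what makes the argument work and is the crux of the equivalence.
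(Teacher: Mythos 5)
Your argument is correct and is essentially the expected one for this proposition (the paper leaves it as ``an easy exercise'' and gives no proof). The key ingredient you isolate --- the bijective correspondence between saturated open subsets of $M$ and open subsets of $M/\f$, furnished by the quotient topology on one side and Proposition \ref{prop: proj is open} on the other --- is precisely what makes the cover-transfer argument go through, and your careful use of $\pi_\f^{-1}(\pi_\f(U))=U$ for saturated $U$ closes the loop cleanly.
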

\qcd
We now turn to the transverse causal ladder. 
\begin{definition}\label{defi: transverse ladder}
We say that $(M, \mathcal{F}, g_\intercal)$ is
\begin{itemize}
\item \textit{transversely totally vicious} if $L\cap I^+_\intercal(L) \neq \emptyset$ (equivalently $L\in I^{+}_{M/\f}(L)$), $\forall L\in\mathcal{F}$;
    \item \textit{transversely chronological} if $L\cap I^+_\intercal(L) =\emptyset$ (equivalently $L\notin I^{+}_{M/\f}(L)$), $\forall L\in\mathcal{F}$;
    \item \textit{transversely causal} if $L\nless_{M/\f} L$ $ \forall L\in\mathcal{F}$;
    \item \textit{transversely future [resp. past] distinguishing} if $\forall L,L'\in \f$
   % $$I^+_\intercal(L) =I^+_\intercal(L') \text{ [resp. $I^-_\intercal(L) =I^-_\intercal(L')$]} \Longrightarrow L=L',$$
    $$L\leq_{M/\f} L' \text{ and } L \in \overline{J^+_{M/\f}(L')}\text{ [resp. $L'\in \overline{J^-_{M/\f}(L)}$]} \Longrightarrow L=L',$$
    and \textit{transversely distinguishing} if it is both transversely future distinguishing and transversely past distinguishing;
    \item \textit{transversely strongly causal} if $\forall L\in \mathcal{F}$ and every saturated open neighbourhood $\mathcal{U}\supset L$ in $M$, there exists an open neighborhood $L\subset \mathcal{V}\subset \mathcal{U}$ such that if $\gamma:[a,b]\rightarrow M$ is any transversely causal curve with $\alpha(a), \alpha(b)\in\mathcal{V}$ then $\alpha([a,b])\subset\mathcal{U}$;
    %\item  \textit{transversely stably causal} if $M$ admits a transverse time-function $f$, \textit{i.e.}, a function $f:M\rightarrow\mathbb{R}$ which is strictly increasing along any future-directed transversely timelike curve;
    \item \textit{transversely globally hyperbolic} if it is transversely strongly causal and if for all $L, L'\in\mathcal{F}$ the \textit{causal prism} $J^+_\intercal(L)\cap J^-_\intercal(L')$ is transversely compact in $M$. 
\end{itemize}
\end{definition}
\begin{remark}\label{rmk: cute fact}
It should be clear that transverse causality, as something naturally pertaining to the quotient $M/\f$, can be very ``bad'' while the causality on $(M,g)$ (with $g$ bundle-like time-oriented Lorentzian metric) is ``very good''\footnote{This should not be surprising, as there are well-known precedents from isometric group actions in Lorentzian geometry: quotients of globally hyperbolic spacetimes (say) by suitable isometries can even be totally vicious. Perhaps the simplest example arises by identifying points via their time coordinate $(t,x) \sim (t+ 1,x)$ on the - globally hyperbolic - $2d$ Minkowski spacetime along the ``time axis'' to form a totally vicious flat cylinder.}. For instance, consider, on $3d$ Minkowski spacetime $(\mathbb{R}^3,-dt^2+dx^2+dy^2)$, the foliation given by the integral curves of the spacelike Killing vector field $X= \partial _t + 2\partial_x $. Upon the isometric identification $(t,x+ 2, y) \sim (t,x,y)$ the resulting spacetime $(M,g)$ is still globally hyperbolic, by the induced foliation $\f$ becomes one by spacelike helices each one of which is intersected by timelike curves infinitely many times, and hence this $(M,\f,g_\intercal)$ is transversely totally vicious.  
\end{remark}

Since the standard proof \cite[Lemma 14.10]{oneillbook} that compact spacetimes are never chronological only uses openness of futures that also apply for leaf space chronology, we immediately have (recall Proposition \ref{prop: transversely compact charac}):
\begin{corollary}
$(M,\f,g_\intercal)$ is not transversely chronological if $M$ is transversely compact. 
\end{corollary}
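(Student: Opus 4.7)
The plan is to adapt the classical argument (see, e.g., \cite[Lemma 14.10]{oneillbook}) showing that a compact chronological spacetime cannot exist, translating it into the leaf space $M/\f$ equipped with the leaf space chronology $\ll_{M/\f}$. The three features of $\ll$ that this proof uses are (i) openness of chronological futures, (ii) non-emptiness of chronological pasts, and (iii) transitivity of $\ll$. All three have already been established at the leaf space level in Theorem \ref{teo: structural properties leaf causal}, so the bulk of the work is to combine them with the appropriate notion of compactness in $M/\f$.

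First, by Proposition \ref{prop: transversely compact charac}, the hypothesis that $M$ is transversely compact is equivalent to compactness of $\pi_\f(M)=M/\f$ with respect to the quotient topology. Next, I would argue that the collection
\[
\{\, I^+_{M/\f}(L) \, : \, L \in \f \,\}
\]
is an open cover of $M/\f$. Each member is open in $M/\f$ by the openness of $\ll_{M/\f}$ in $M/\f\times M/\f$ (Theorem \ref{teo: structural properties leaf causal}(b)). That it actually covers $M/\f$ follows from the non-emptiness of $I^-_{M/\f}(L')$ for every leaf $L'\in\f$ (Theorem \ref{teo: structural properties leaf causal}(a)): any $L\in I^-_{M/\f}(L')$ witnesses $L'\in I^+_{M/\f}(L)$.

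By compactness, extract a finite subcover $I^+_{M/\f}(L_1),\ldots, I^+_{M/\f}(L_n)$. Arguing by contradiction, assume $(M,\f,g_\intercal)$ is transversely chronological. Because each $L_j$ lies in some member of the subcover, there is a function $\sigma:\{1,\dots,n\}\to\{1,\dots,n\}$ with $L_j\in I^+_{M/\f}(L_{\sigma(j)})$. Iterating $i_1:=\sigma(1)$, $i_{k+1}:=\sigma(i_k)$ produces an infinite descending chain
\[
\cdots \ll_{M/\f} L_{i_3}\ll_{M/\f} L_{i_2}\ll_{M/\f} L_{i_1}\ll_{M/\f} L_1
\]
inside the finite set $\{L_1,\dots,L_n\}$. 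By pigeonhole, some index repeats, i.e., $L_{i_k}=L_{i_{k+m}}$ for some $k\geq 1$, $m\geq 1$; transitivity of $\ll_{M/\f}$ (Theorem \ref{teo: structural properties leaf causal}(a)) then yields $L_{i_k}\ll_{M/\f} L_{i_k}$, contradicting transverse chronology.

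In effect, the main conceptual hurdle has already been cleared by the previous sections: one needs $\ll_{M/\f}$ to behave on the (possibly non-Hausdorff) leaf space as the chronological relation does on a spacetime, and one needs the right notion of compactness to recover the finite-subcover step. Once Theorem \ref{teo: structural properties leaf causal} and Proposition \ref{prop: transversely compact charac} are in hand, the classical argument transcribes essentially verbatim, which is why the corollary can be labelled as immediate.
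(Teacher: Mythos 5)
Your proposal is correct and follows exactly the route the paper intends: the paper notes that the classical argument of \cite[Lemma 14.10]{oneillbook} transcribes verbatim once openness of $\ll_{M/\f}$ (Theorem~\ref{teo: structural properties leaf causal}) and the characterization of transverse compactness (Proposition~\ref{prop: transversely compact charac}) are available, which is precisely what you carry out. The only difference is that you spell out the finite-subcover/pigeonhole step explicitly, which the paper treats as immediate.
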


It is immediate from the definitions that transversely globally hyperbolic $\implies$ transversely strongly causal, and that transversely causal $\implies$ transversely chronological $\implies$ non-transversely totally vicious. It should also be clear than none of converses of these implications holds in general, since we can easily adapt from spacetime examples where they fail. However, the implications transversely strongly causal $\implies $ transversely distinguishing $\implies $ transversely causal requires further discussion. 

First we check that our definition of transverse strongly causality does yield some nice properties. 
\begin{proposition}\label{prop: strong causality characterization}
  The following statements regarding the time-oriented Lorentzian foliation $(M,\f,g_\intercal)$ are equivalent.   
  \begin{enumerate}
      \item[i)] $(M,\f,g_\intercal)$ is transversely strongly causal.
      \item[ii)] For any $L\in \f$, and any open set $\widehat{U} \ni L$ in $M/\f$ there exists another open set $L\in \widehat{V}\subset \widehat{U}$ of $M/\f$ such that $\forall L_1,L_2\in \widehat{V}$ and for any $L'\in \f$
      $$L_1 <_{M/\f} L'<_{M/\f} L_2 \Longrightarrow L'\in \widehat{U}.$$
      \item[iii)] The sets of the form $I^+_{M/\f}(L_1)\cap I^-_{M/\f}(L_2)$ for every $L_i\in \f$ are a basis for the quotient topology on $M/\f$, i.e., the quotient topology of the leaf space coincides with the \emph{Alexandrov topology} arising from the chronological relation. 
  \end{enumerate}
  
\end{proposition}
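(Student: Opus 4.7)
The plan is to establish the cyclic chain (i) $\Rightarrow$ (ii) $\Rightarrow$ (iii) $\Rightarrow$ (i), closely paralleling the classical Lorentzian proof of the corresponding basis theorem for strongly causal spacetimes, with the transverse causal waterfall lemma (Lemma \ref{lemma: causal waterfall}) and the openness of $\pi_\f$ (Proposition \ref{prop: proj is open}) mediating between arguments on $M$ and arguments on $M/\f$. For (i) $\Rightarrow$ (ii), given $L$ and an open $\widehat{U}\ni L$ in $M/\f$, I apply transverse strong causality to the saturated open neighborhood $\mathcal{U}:=\pi_\f^{-1}(\widehat{U})$ to obtain an open neighborhood $\mathcal{V}\subset\mathcal{U}$ of $L$ with the trapping property, and set $\widehat{V}:=\pi_\f(\mathcal{V})$, which is open by Proposition \ref{prop: proj is open} and contains $L$. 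Given $L_1,L_2\in\widehat{V}$ and any $L'$ with $L_1<_{M/\f} L'<_{M/\f} L_2$, I pick $x_1\in L_1\cap\mathcal{V}$ and $x_2\in L_2\cap\mathcal{V}$ together with the transversely causal segments from $L_1$ to $L'$ and from $L'$ to $L_2$ that the relations furnish, and then use the waterfall lemma and its endpoint variant (Remark \ref{rmk: evident in waterfall}) to successively slide the three endpoints so as to produce a single transversely causal curve from $x_1$ to $x_2$ still passing through $L'$. Since this curve has endpoints in $\mathcal{V}$, the trapping property places it inside $\mathcal{U}$, so $L'\cap\mathcal{U}\neq\emptyset$ and, $\mathcal{U}$ being saturated, $L'\subset\mathcal{U}$, i.e.\ $L'\in\widehat{U}$.

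For (ii) $\Rightarrow$ (iii), the sets $I^+_{M/\f}(L_1)\cap I^-_{M/\f}(L_2)$ are open by Theorem \ref{teo: structural properties leaf causal}(b), so it suffices to show that any open $\widehat{W}\ni L$ contains a set of this form that in turn contains $L$. I apply (ii) to obtain $\widehat{V}\subset\widehat{W}$ and then exhibit $L_1,L_2\in\widehat{V}$ with $L_1\ll_{M/\f} L\ll_{M/\f} L_2$: for this I choose a Lorentzian bundle-like metric $g$ associated with $g_\intercal$ (Remark \ref{rmk: facts about bundle like}(1)), fix $x\in L$, integrate a locally defined $g$-timelike vector field through $x$ on an interval short enough that the resulting curve remains inside $\pi_\f^{-1}(\widehat{V})$, and take $L_1,L_2$ as the leaves of its past and future endpoints; Lemma \ref{lemma: comparing causal types}(i) converts $g$-timelikeness into transverse timelikeness, yielding the required relations. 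For any $L'\in I^+_{M/\f}(L_1)\cap I^-_{M/\f}(L_2)$ one has $L_1<_{M/\f} L'<_{M/\f} L_2$ with $L_1,L_2\in\widehat{V}$, whence the defining property of $\widehat{V}$ forces $L'\in\widehat{W}$.

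For (iii) $\Rightarrow$ (i), given $L$ and a saturated open $\mathcal{U}\supset L$, set $\widehat{U}:=\pi_\f(\mathcal{U})$ and use the Alexandrov basis property to find $L_1,L_2\in\f$ with $L\in I^+_{M/\f}(L_1)\cap I^-_{M/\f}(L_2)\subset\widehat{U}$. The set $\mathcal{V}:=\pi_\f^{-1}(I^+_{M/\f}(L_1)\cap I^-_{M/\f}(L_2))$ is then a saturated open neighborhood of $L$ contained in $\mathcal{U}$. For any transversely causal $\gamma:[a,b]\to M$ with $\gamma(a),\gamma(b)\in\mathcal{V}$ and any $t\in[a,b]$, the leaf chain
\[
L_1\ll_{M/\f} L_{\gamma(a)}\leq_{M/\f} L_{\gamma(t)}\leq_{M/\f} L_{\gamma(b)}\ll_{M/\f} L_2,
\]
together with the push-up property of Theorem \ref{teo: structural properties leaf causal}(c), places $L_{\gamma(t)}$ in $I^+_{M/\f}(L_1)\cap I^-_{M/\f}(L_2)$, so $\gamma(t)\in\mathcal{U}$, as required.

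The main technical hurdle is the iterated-waterfall construction in (i) $\Rightarrow$ (ii): packaging the two separate leaf-space relations $L_1<_{M/\f} L'$ and $L'<_{M/\f} L_2$ into a single transversely causal curve on $M$ between prescribed points of $\mathcal{V}$ that still visits $L'$ is the only step that does not reduce to openness of chronology, push-up, or Lemma \ref{lemma: comparing causal types}. Everything else is standard Lorentzian causal-ladder reasoning transferred to the leaf-space setting via the openness of $\pi_\f$ and the availability of an associated Lorentzian bundle-like metric.
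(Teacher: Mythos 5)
Your proof is correct and follows essentially the same route as the paper's, merely organized as a single cycle $(i)\Rightarrow(ii)\Rightarrow(iii)\Rightarrow(i)$ instead of the paper's $(i)\Leftrightarrow(ii)$ followed by $(ii)\Leftrightarrow(iii)$; logically the two decompositions are interchangeable given the argument you supply for $(iii)\Rightarrow(i)$. The genuinely useful point you make explicit is the iterated-waterfall construction in $(i)\Rightarrow(ii)$: the paper dismisses $(i)\Leftrightarrow(ii)$ as ``essentially rephrasing,'' but the forward direction does require splicing the two separate curves witnessing $L_1<_{M/\f}L'$ and $L'<_{M/\f}L_2$ into a \emph{single} transversely causal curve with prescribed endpoints in the trapping neighborhood $\mathcal{V}$ (not merely in the leaves $L_1,L_2$, since $\mathcal{V}$ need not be saturated), and this is exactly where Lemma \ref{lemma: causal waterfall} and Remark \ref{rmk: evident in waterfall} are indispensable. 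In $(ii)\Rightarrow(iii)$ your detour through a Lorentzian bundle-like metric to produce $L_1\ll_{M/\f}L\ll_{M/\f}L_2$ inside $\widehat{V}$ is valid but a touch heavier than the paper's appeal to nonemptiness and openness of $I^\pm_{M/\f}(L)$ from Theorem \ref{teo: structural properties leaf causal}(a),(b); both work. One small point you might add for completeness: in $(iii)\Rightarrow(i)$, the transversely causal curve $\gamma$ in the trapping condition could a priori be past-directed, in which case the chain should be run with $L_1$ and $L_2$ swapped.
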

\begin{proof}
   The equivalence $(i)\Leftrightarrow (ii)$ is essentially rephrasing leaf space causality into transverse causality and vice-versa.
   
   Let us show $(ii)\Rightarrow (iii)$. Assuming that $(ii)$ holds and given any $L\in \f$, and any open set $\widehat{U} \ni L$ in $M/\f$, and the open set $L\in \widehat{V}\subset \widehat{U}$ of $M/\f$ such that $\forall L_1,L_2\in \widehat{V}$ and for any $L'\in \f$
      $$L_1 <_{M/\f} L'<_{M/\f} L_2 \Longrightarrow L'\in \widehat{U},$$
      we can indeed find by the density of chronology two $L_1,L_2\in \widehat{V}$ with $L_1\ll_{M/\f} L\ll_{M/\f} L_2$, so that $I^+_{M/\f}(L_1)\cap I^-_{M/\f}(L_2) \subset \widehat{U}$.
      
      The proof of $(iii)\Rightarrow (ii)$ is straightforward. 
\end{proof}

\begin{proposition}\label{prop: cute fact}
  If $(M,\f,g_\intercal)$ is 
  \begin{enumerate}
      \item[a)] either transversely past or transversely future distinguishing, then it is transversely causal;
      \item[b)] transversely strongly causal and the (quotient) topology of the leaf space is $T_1$, then it is transversely distinguishing.
      %causal. 
      %\item[c)] transversely strongly causal and the (quotient) topology of the leaf space is Hausdorff, then it is transversely distinguishing. 
  \end{enumerate}
\end{proposition}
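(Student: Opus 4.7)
For part \emph{(a)}, I would handle only the transverse future distinguishing case, since the past case is entirely symmetric. The plan is to argue by contradiction: suppose some $L\in\f$ satisfies $L<_{M/\f}L$, witnessed by a future-directed transversely causal curve $\alpha:[a,b]\to M$ with $\alpha(a),\alpha(b)\in L$. The crucial observation, built into Definition \ref{defi: transverse causal vectors}, is that every transversely causal vector lies outside $T\f$; therefore $\alpha$ cannot stay inside the leaf $L$ throughout $[a,b]$, for otherwise $\alpha'(t)\in T_{\alpha(t)}\f$ for all $t$. Picking $t_0\in(a,b)$ with $\alpha(t_0)\notin L$ and setting $L':=L_{\alpha(t_0)}\neq L$, the two pieces $\alpha|_{[a,t_0]}$ and $\alpha|_{[t_0,b]}$ witness $L\leq_{M/\f}L'$ and $L'\leq_{M/\f}L$, so in particular $L\in J^+_{M/\f}(L')\subset\overline{J^+_{M/\f}(L')}$. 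Transverse future distinguishing then forces $L=L'$, a contradiction.

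For part \emph{(b)}, I would again argue only for transverse future distinguishing, the past case being symmetric. Given $L,L'\in\f$ with $L\leq_{M/\f}L'$ and $L\in\overline{J^+_{M/\f}(L')}$, the goal is to conclude $L=L'$. Suppose not. Using the $T_1$ hypothesis on $M/\f$, I would pick an open $\widehat U\subset M/\f$ with $L\in\widehat U$ and $L'\notin\widehat U$. Transverse strong causality, via the characterization in Proposition \ref{prop: strong causality characterization}(ii), then provides an open $\widehat V$ with $L\in\widehat V\subset\widehat U$ such that for all $L_1,L_2\in\widehat V$ and all $\tilde L\in\f$, the condition $L_1<_{M/\f}\tilde L<_{M/\f}L_2$ implies $\tilde L\in\widehat U$. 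Since $L\in\overline{J^+_{M/\f}(L')}$, the neighbourhood $\widehat V$ of $L$ meets $J^+_{M/\f}(L')$, so I can choose $L_2\in\widehat V$ with $L'\leq_{M/\f}L_2$. The possibility $L_2=L'$ is ruled out because then $L'=L_2\in\widehat V\subset\widehat U$, contrary to the choice of $\widehat U$; thus $L'<_{M/\f}L_2$. Similarly, $L\neq L'$ combined with $L\leq_{M/\f}L'$ upgrades to $L<_{M/\f}L'$. Taking $L_1:=L$, I obtain $L_1,L_2\in\widehat V$ and $L_1<_{M/\f}L'<_{M/\f}L_2$, whence the defining property of $\widehat V$ forces $L'\in\widehat U$, again a contradiction.

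The only mildly delicate point I anticipate is the appeal, in \emph{(a)}, to the fact that transversely causal vectors cannot lie in $T\f$; missing this would collapse the argument, since a loop entirely contained in a single leaf would not produce the intermediate distinct leaf $L'$ needed for the contradiction. Otherwise the reasoning is a routine unravelling of the definitions of leaf space futures and of the four transverse causality conditions in Definition \ref{defi: transverse ladder}, combined with Proposition \ref{prop: strong causality characterization}(ii) and the $T_1$ separation hypothesis invoked in \emph{(b)}.
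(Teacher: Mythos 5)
Your proposal is correct and follows essentially the same route as the paper: in (a), use that transversely causal vectors lie outside $T\f$ to find an intermediate distinct leaf $L'$ and derive a violation of the distinguishing condition; in (b), combine $T_1$ separation with the Alexandrov-type characterization of transverse strong causality from Proposition \ref{prop: strong causality characterization}(ii). The only cosmetic difference is that you argue directly by contradiction where the paper phrases (b) contrapositively, and in (a) you track slightly different leaf labels, but the substance is identical.
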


\begin{proof}
    For $(a)$, suppose that for some leaf $L\in \f$ we have $L<_{M/\f} L$, and let $\alpha:[0,1] \rightarrow M$ be a future-directed transversely causal curve with $\alpha(a),\alpha(b)\in L$. Then since $\alpha$ must leave $L$, it will intersect some other leaf $L'\neq L$. But then $L<_{M/\f} L'<_{M/\f} L$, so have both $L'<_{M/\f}L$ and $L'\in \overline{J^+_{M/\f}(L)}$ for distinct $L,L'$,
    %and it follows from transitivity and push-up (Theorem \ref{teo: structural properties leaf causal}) that 
    %$$I^\pm_{M/\f}(L) = I^\pm_{M/\f}(L') \Rightarrow I^\pm_\intercal(L) = I^\pm_\intercal(L'),$$
    and similarly for transverse pasts, i.e., $(M,\f,g_\intercal)$ is neither transversely past nor transversely future distinguishing.

    Let us now prove $(b)$. Suppose the leaf space topology is $T_1$, and that $(M,\f,g_\intercal)$ is not, say, transversely future-distinguishing, the past-distinguishing case being entirely analogous. Then there exist $L\neq L'\in \f$ such that $L<_{M/\f}L'$
    and $L\in \overline{J^+_{M/\f}(L')}$. By the $T_1$ property, there exists an open set $\mathcal{U}\ni L$ in $M/\f$ not containing $L'$. Let $L \subset \mathcal{V}\subset \mathcal{U}$ be any open set and pick $L''\in \mathcal{V}\cap J^+_{M/\f}(L')$. Thus, by transitivity, we have that $L<_{M/\f}L'<_{M/\f}L''$. We conclude that $(M,\f,g_\intercal)$ is not transversely strongly causal (conf. Prop. \ref{prop: strong causality characterization}(ii)). 
    %If $L<_{M/\f} L$ for some $L\in \f$, then there exists some leaf $L'\neq L$ such that $L<_{M/\f} < L' <_{M/\f} L$. By the $T_1$ property we have some open set $\widehat{U}\ni L$ not containing $L'$, which according to Proposition \ref{prop: strong causality characterization}(ii) means $(M,\f,g_\intercal)$ cannot be transversely strongly causal.
    
    %Finally, for $(c)$, we show that the transverse strong causality together with the Hausdorff character of the leaf space topology imply the \textit{future} transverse distinguishing property, since the past case is analogous. It is actually a straightforward adaptation of the standard proof in Lorentzian causal theory (see, e.g., \cite{minguzzi-sanchez}). Suppose that there are $L\neq L'\in \f$ with $I^+_{M/\f}(L)=I^+_{M/\f} (L')$. From the Hausdorff property we can assume there are disjoint open sets $\widehat{U}\ni L$, $\widehat{V} \ni L'$ of $M/\f$. By transverse strong causality in the form of Proposition \ref{prop: strong causality characterization}(iii) we have some $L_2,L_1 \in \f$ such that $L \in I^+_{M/\f}(L_1)\cap I^-_{M/\f}(L_2)\subset \widehat{U}$. But then $L_2 \in I^+_{M/\f}(L)=I^+_{M/\f} (L')$, that is $L' \in I^-_{M/\f}(L_2)$. Since $\widehat{V} \cap I^-_{M/\f}(L_2) $ is open in $M/\f$ and contains $L'$ pick any $L'' \in \f$ contained in the set $\widehat{V} \cap I^-_{M/\f}(L_2) \cap I^+_{M/\f}(L')$, whence we conclude that $L_1\ll_{M/\f} L\ll_{M/\f} L''\ll_{M/\f} L_2$ and therefore $L'' \in \widehat{U}\cap \widehat{V}$, a contradiction. 
    
\end{proof}
\begin{remark}\label{rmk: so cool counterexamples}
\textit{Without the topological assumption in item $(b)$ of Prop. \ref{prop: cute fact} the conclusion therein may fail.} 
\begin{comment}Let us examine $(c)$ first. Consider $M=\mathbb{R}^3\setminus \{(0,0,0)\}$ with the foliation $\f$ again given by the fibers of the projection $\pi: (t,x,y)\in M\mapsto (t,x)\in \mathbb{R}^2$ with transverse Lorentzian metric on $(M,\f)$ given by $g_\intercal =-dt^2+dx^2$ time-oriented by $\partial_t$. Consider the leaves
$$L_\pm := \{(0,0,y)\in M \, : \, \pm y >0\}.$$
Then it is easy to check that 
$$I^\pm_\intercal(L_-)= I^\pm_\intercal(L_+) = \{(t,x,y)\in M\, :\, \pm t>|x|\}.$$
Thus, this $(M,\f,g_\intercal)$ is neither future transversely distinguishing, nor past transversely distinguishing. Observe, however, that any transverse causal curve that leaves a saturated open cylinder of any radius around a leaf will not return, thus establishing that $(M,\f,g_\intercal)$ is indeed transversely strongly causal. However, this does not contradict statement $(c)$ because the topology on leaf space is not Hausdorff. Indeed, any two leaves other than $L_\pm$ are pairwise Hausdorff-related, while the saturated open set 
$$\mathcal{U}_+= \{(t,x,y) \in M\, :\, x^2+ t^2 <1\} \setminus \{(0,0,y) \in M\, :\, y<0\}$$
contains $L_+$ but not $L_-$, and if we ``turn it upside down'' we obtain another that contains $L_-$ but not $L_+$. But clearly there are no disjoint saturated open sets that contain $L_+$ and $L_-$ separately. We conclude that the leaf space is $T_1$ but not Hausdorff as claimed (compare with Proposition \ref{prop: submersion leaf space}). 
\end{comment}
To see that the implication in $(b)$ fails if $M/\f$ is not $T_1$, start with $3-d$ Minkowski spacetime $(\mathbb{R}^3,-dt^2+dx^2+dy^2)$ and consider the $1-d$ homogeneous foliation $\hat{\mathcal{G}}$ given by the orbits of the isometric $\mathbb{R}$-action

$$\Phi: (\lambda, (t,x,y)) \in \mathbb{R}\times \mathbb{R}^3 \mapsto (t+\lambda ,x+a\lambda, y+b\lambda) \in \mathbb{R}^3$$
where $a,b \in \mathbb{R}\setminus \mathbb{Q}$ are fixed, rationally independent and chosen such that the $\mathbb{R}$-orbits are spacelike. Notice that, since $\mathbb{R}$ acts by translations, $\hat{\mathcal{G}}$ is invariant by the natural $\mathbb{Z}^3$-action on $\mathbb{R}^3$, and thus projects down to a foliation $\mathcal{G}$ of the flat Lorentzian torus $\mathbb{T}^3=\mathbb{R}^3/\mathbb{Z}^3$. By construction, every leaf of $\mathcal{G}$ is dense in $\mathbb{T}^3$ (indeed this is just a $3$-d version of the Kronecker foliation), and the induced Lorentzian flat metric $g$ is bundle-like for $\mathcal{G}$ by construction. On the one hand, $(\mathbb{T}^3,g)$ is a compact spacetime, and hence has a closed $g$-timelike curve \cite[Lemma 14.10]{oneillbook}, which is then a closed transversely timelike curve (by Lemma \ref{lemma: comparing causal types}) with respect to the associated Lorentzian transverse metric $g_\intercal$. In other words, $(\mathbb{T}^3,\f,g_\intercal)$ is not transversely chronological, hence not transversely causal of past/future distinguishing either. Now, on the one hand the leaf space has the chaotic topology, which is not even $T_0$, let alone $T_1$. On the other hand, this topology is trivially the same as the Alexandrov topology in Prop. \ref{prop: strong causality characterization} (which is evidently always coarser than the quotient topology from the openness of leaf space chronological futures and pasts). Therefore $(\mathbb{T}^3,\f,g_\intercal)$ is transversely strongly causal. 
\end{remark}

One of the main technical reasons why strong causality is important in causal theory is that it ensures \textit{non-partial imprisonment} of causal curves in compact sets, i.e., inextendible causal curves that enter some compact set eventually leave it and never return. We now show that there is a transverse analogue of that statement. 
\begin{definition}\label{def: inextendibility}
Let $\gamma:[a,b)\rightarrow M$ be a continuous curve. We say that a leaf $L\in\mathcal{F}$ is a \textit{transverse limit to the right} for $\gamma$ if for any saturated open set $\mathcal{U}\supset L$ there exists some $s_0\in[a,b)$ such that $s\geq s_0\implies \gamma(s)\in \mathcal{U}$. We say that $\gamma$ is \textit{transversely right-inextendible} if there are no transverse limits to the right for $\gamma$. We analogously define versions to the left for a curve defined on $(a, b]$. Finally, a curve $\gamma:(a,b)\rightarrow M$ is \textit{transversely inextendible} if for some (and hence for any) $c\in (a,b)$ we have both that $\gamma|_{[c,b)}$ is transversely right-inextendible and $\gamma|_{(a,c]}$ is transversely left-inextendible. 
\end{definition}
\begin{remark}
Note that any curve that is transversely right-inextendible is in particular right-inextendible (in the usual sense), but the converse is not true. Indeed, it is clear that the definition of transverse (right- or left-)inextendibility is equivalent to the requirement that the projection of the curve onto the leaf space $M/\f$ is (right- or left-)inextendible in the usual sense therein. But a curve on $M$ may be inextendible and yet have an extendible projection. Furthermore, since the leaf space need not be Hausdorff, a curve may well have more than one transverse limit to the right and/or to the left. As a simple concrete example, consider the manifold $M=\mathbb{R}^3\setminus \{(0,0,0)\}$ with the foliation $\f$  given by the fibers of the projection $\pi: (t,x,y)\in M\mapsto (t,x)\in \mathbb{R}^2$ with transverse Lorentzian metric on $(M,\f)$ given by $g_\intercal =-dt^2+dx^2$ time-oriented by $\partial_t$, and the leaves
$$L_\pm := \{(0,0,y)\in M \, : \, \pm y >0\}.$$
The curve $\alpha: s \in [-1,0) \mapsto (s,0, \sin (1/t)) \in M$ clearly has both $L_\pm$ as transverse limits to the right, though it is right-inextendible. The existence of these two transverse right-limits is possible because the topology on leaf space is not Hausdorff. Indeed, any two leaves other than $L_\pm$ are pairwise Hausdorff-related, while the saturated open set 
$$\mathcal{U}_+= \{(t,x,y) \in M\, :\, x^2+ t^2 <1\} \setminus \{(0,0,y) \in M\, :\, y<0\}$$
contains $L_+$ but not $L_-$, and if we ``turn it upside down'' we obtain another that contains $L_-$ but not $L_+$. But clearly there are no disjoint saturated open sets that contain $L_+$ and $L_-$ separately. We conclude that the leaf space is $T_1$ but not Hausdorff as claimed (compare with Proposition \ref{prop: submersion leaf space}).
\end{remark}

\begin{theorem}\label{teo: transverse disprisonment}
Suppose $(M, \mathcal{F}, g_\intercal)$ is transversely strongly causal and let $\gamma:[a,b)\rightarrow M$ be a future-directed, transversely right-inextendible, transversely causal curve such that $\gamma(a)\in K$, where $K\subset M$ is transversely compact. Then there exists some $s_0\in (a,b)$ such that $s\geq s_0\implies \gamma(s)\notin K$.
\end{theorem}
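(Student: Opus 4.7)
The plan is to argue by contradiction. Suppose the conclusion fails, so that for every $s_0 \in [a,b)$ there exists $s \geq s_0$ with $\gamma(s) \in K$; this gives an increasing sequence $s_n \nearrow b$ with $\gamma(s_n) \in K$ for every $n \in \mathbb{N}$. The idea is to combine transverse right-inextendibility with transverse strong causality to isolate a leaf $L$ intersecting $K$ near which $\gamma$ must both keep returning to a very small saturated neighborhood and also exit a larger saturated neighborhood, producing a transversely causal subarc that violates the strong-causality trapping property.

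Concretely, for each leaf $L \in \pi_\f(K)$, transverse right-inextendibility implies that $L$ is not a transverse limit to the right of $\gamma$, so there is a saturated open $\mathcal{U}_L \supset L$ for which the set $\{s \in [a,b) : \gamma(s) \notin \mathcal{U}_L\}$ is unbounded in $[a,b)$. I then apply transverse strong causality, but via the leaf-space characterization in Proposition \ref{prop: strong causality characterization}(ii), to obtain a saturated open neighborhood $L \subset \mathcal{V}_L \subset \mathcal{U}_L$ with the property that every future-directed transversely causal curve whose endpoints lie in $\mathcal{V}_L$ has its image contained in $\mathcal{U}_L$. (Going through Proposition \ref{prop: strong causality characterization}(ii) ensures that $\mathcal{V}_L$ can be chosen saturated; this is not automatic from Definition \ref{defi: transverse ladder} and is essential for the pigeonhole step below, since $M/\f$ need not be Hausdorff.)

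The family $\{\mathcal{V}_L\}_{L \in \pi_\f(K)}$ is a cover of $K$ by saturated open sets, so by transverse compactness (Definition \ref{defi:transverse compactness}) it admits a finite subcover $\mathcal{V}_{L_1},\ldots,\mathcal{V}_{L_r}$. Each $\gamma(s_n) \in K$ lies in some $\mathcal{V}_{L_{j_n}}$, so by the pigeonhole principle there is an index $i \in \{1,\ldots,r\}$ with $\gamma(s_n) \in \mathcal{V}_{L_i}$ for infinitely many $n$, and after passing to a subsequence I assume this holds for every $n$. Fix any $n_1$; using that $\{s : \gamma(s) \notin \mathcal{U}_{L_i}\}$ is unbounded, pick $t \in (s_{n_1}, b)$ with $\gamma(t) \notin \mathcal{U}_{L_i}$, and then pick $n_2$ with $s_{n_2} > t$. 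The restriction $\gamma|_{[s_{n_1}, s_{n_2}]}$ is then a future-directed transversely causal curve whose two endpoints lie in $\mathcal{V}_{L_i}$, yet whose image contains the point $\gamma(t) \notin \mathcal{U}_{L_i}$, contradicting the trapping property associated to the pair $(\mathcal{V}_{L_i}, \mathcal{U}_{L_i})$.

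The main obstacle is this saturation issue: without arranging $\mathcal{V}_L$ to be saturated one cannot turn the covering of $\pi_\f(K)$ provided by transverse compactness into pointwise membership $\gamma(s_n) \in \mathcal{V}_{L_i}$, and the pigeonhole step collapses. Carrying out the shrinking on $M/\f$ via Proposition \ref{prop: strong causality characterization}(ii) and then pulling back under $\pi_\f$ is the cleanest way to sidestep this; everything else in the argument is a direct mimicking of the standard non-imprisonment proof in Lorentzian causality theory.
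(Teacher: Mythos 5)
Your argument is correct, but it follows a genuinely different route from the paper. You use the classical ``finite subcover plus pigeonhole'' strategy: cover $K$ by saturated strong-causality neighborhoods $\mathcal{V}_L \subset \mathcal{U}_L$ (with $\mathcal{U}_L$ chosen so that $\gamma$ exits it unboundedly often, by negating the transverse-limit condition), extract a finite subcover via transverse compactness, pigeonhole the returning sequence into a single $\mathcal{V}_{L_i}$, and contradict the trapping property by locating a departure inside a subarc whose endpoints both lie in $\mathcal{V}_{L_i}$. The paper instead uses compactness of $\pi_\f(K)$ in $M/\f$ to extract a subnet of $\pi_\f\circ\gamma(s_i)$ converging to some fixed leaf $L_0$, then applies Proposition~\ref{prop: strong causality characterization}(ii) once around $L_0$ to show directly that $\gamma$ is eventually trapped in any saturated neighborhood of $L_0$, contradicting transverse right-inextendibility. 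Both are valid; the paper's argument is shorter once one is comfortable with nets (used because $M/\f$ is compact but the authors avoid invoking second countability of the quotient topology), while yours is more elementary and sequence-based, closer in spirit to the standard non-imprisonment proof for spacetimes. Your remark about passing through Proposition~\ref{prop: strong causality characterization}(ii) to arrange saturated $\mathcal{V}_L$ is apt: Definition~\ref{defi: transverse ladder} as stated does not build in saturation of $\mathcal{V}$, and saturation is indeed what makes the pigeonhole step work cleanly in terms of leaves rather than points.
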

\begin{proof}
    Suppose the conclusion is false. Then either $\gamma$ is entirely contained in $K$ or it persistently returns to $K$. In either case, we have a sequence $(s_i)_{i\in\mathbb{N}}\subset [a,b)$ such that $s_i\rightarrow b$ and $(\gamma(s_i))_{i\in\mathbb{N}}\subset K$. Since $K$ is transversely compact, $\pi_\f(K)$ is compact in $M/\f$ and there exists a subnet $(s_\lambda)_{\lambda \in \Lambda}$ such that $\pi_\f\circ \gamma(s_\lambda)\rightarrow L_0$ for some leaf $L_0\in \f$, where $\Lambda$ is some partially ordered, directed index set. Let $\mathcal{U}\supset L_0$ be any saturated open set. Using transverse strong causality in the form of Proposition \ref{prop: strong causality characterization}(ii), we choose some open set $L_0\in \widehat{V}\subset \pi_\f(\mathcal{U})$ in $M/\f$ such that $\forall L_1,L_2\in \widehat{V}$ and for any $L'\in \f$
      $$L_1 <_{M/\f} L'<_{M/\f} L_2 \Longrightarrow L'\in \pi_\f(\mathcal{U}).$$ 
      Since for some $\lambda_0\in \Lambda$ we do have $ \pi_\f\circ \gamma(s_{\lambda})\in \widehat{V}$ whenever $\lambda \geq \lambda_0$, given $s\in (a,b)$ with $s\geq s_{\lambda_0}$, there exists $\lambda_1\geq \lambda_0$ such that $s_{\lambda_1}> s$. Hence 
     $$ \pi_\f\circ \gamma(s_{\lambda_0})\leq_{M/\f} \pi_\f\circ \gamma(s)\leq_{M/\f} \pi_\f\circ \gamma(s_{\lambda_1}) \Longrightarrow \pi_\f\circ \gamma(s)\in \pi_\f(\mathcal{U}),$$ 
     that is $\gamma[s_{\lambda_0},b) \subset \pi^{-1}_\f(\pi_\f(\mathcal{U}))\equiv \mathcal{U}$ since the latter set is saturated. We conclude that $L_0$ is transverse limit to the right for $\gamma$, contrary to our assumption that $\gamma$ is transversely right-inextendible. 
\end{proof}

\subsection{Transverse Lorentzian causality versus standard causality}\label{extrinsic}

 A question that not only will be of practical importance for us in the final section of this paper, but that has indeed independent interest is this: given a time-oriented Lorentzian foliation $(M,\f,g_\intercal)$, and a Lorentzian bundle-like metric $g$ associated with $g_\intercal$ and with compatible time-orientation, how exactly does the transverse causal structure of $(M,\f,g_\intercal)$ is related to the standard causal structure of the spacetime $(M,g)$? 
 
We shall pursue this issue in more detail in this subsection. Recall that as we impose $\mathrm{ind}(g)=1$, we necessarily have that all the leaves of $\mathcal{F}$ are spacelike submanifolds with respect to $g$. The issue at hand has been investigated in particular situations. For example, the causality of Lorentzian warped products is well-known. 
\begin{proposition}\cite[Props. 3.62, 3.62 and 3.68]{beem}\label{prop:causality warped products}
    Let $B\times _f F$ be a Lorentzian warped product as in Example \ref{exe: such}. Then 
    \begin{itemize}
        \item[i)] $B\times _f F$ is chronological [resp. causal, strongly causal] if and only if $(B,g_B)$ is a chronological [resp. causal, strongly causal] spacetime;
        \item[ii)] $B\times_f F$ is globally hyperbolic if and only if $(B,g_B)$ is a globally hyperbolic spacetime and the Riemannian manifold $(F,g_F)$ is complete.
    \end{itemize}
\end{proposition}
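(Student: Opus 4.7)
\begin{proofoutline}
My plan is to analyze causal curves in $M:=B\times_f F$ via their projections to the factors. Any smooth curve $\gamma:I\to M$ decomposes as $\gamma=(\alpha,\beta)$ with $\alpha:=\pi_B\circ\gamma$ and $\beta:=\pi_F\circ\gamma$, and one computes
$$g(\gamma',\gamma') = g_B(\alpha',\alpha') + (f\circ\alpha)^2\, g_F(\beta',\beta').$$
Since $g_F$ is Riemannian and $f>0$, the second summand is nonnegative, so any $g$-timelike [resp. $g$-causal] $\gamma$ projects to a $g_B$-timelike [resp. $g_B$-causal] curve $\alpha$. Conversely, for any timelike/causal $\alpha$ in $B$ and fixed $p_0\in F$, the horizontal lift $\gamma(t):=(\alpha(t),p_0)$ is a timelike/causal curve in $M$ of the same character. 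This dictionary is the backbone of everything that follows.

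For item (i) I would argue by contrapositive. If $(B,g_B)$ admits a closed timelike [resp. causal] curve, its horizontal lift is a closed timelike [resp. causal] curve in $M$; conversely, any closed timelike/causal $\gamma$ in $M$ projects to a closed timelike/causal curve in $B$. For strong causality, given $(b_0,p_0)\in M$ and a basis neighborhood of the form $U_B\times U_F$ with $U_B$ strongly-causally nested in $B$, any causal curve in $M$ leaving and re-entering $U_B\times U_F$ projects to a causal curve in $B$ leaving and re-entering $U_B$, contradicting strong causality at $b_0$. The converse direction is symmetric via horizontal lifts.

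For item (ii), the forward direction is the more delicate one. Strong causality of $(B,g_B)$ follows from (i), and compactness of $J_B^+(b_1)\cap J_B^-(b_2)$ follows by noting it equals $\pi_B\bigl(J_M^+((b_1,p_0))\cap J_M^-((b_2,p_0))\bigr)$, a continuous image of a compact set. The completeness of $F$ is where I expect the main obstacle: assuming $\beta:[0,L)\to F$ is an incomplete unit-speed geodesic, I would build a future causal curve $\gamma=(\alpha,\beta)$ on $[0,L)$ by choosing $\alpha:[0,L)\to B$ to follow the integral curve of a timelike vector field $T$ on $B$ tuned so that $g_B(T,T)+(f\circ\alpha)^2<0$. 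With $\alpha[0,L)$ arranged to lie in a compact causal diamond of $B$, $\gamma$ becomes future-inextendible in $M$ (any extension would extend $\beta$) while remaining in a compact causal diamond of $M$, contradicting the no-imprisonment property valid in globally hyperbolic spacetimes.

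For the converse of (ii), assume $(B,g_B)$ globally hyperbolic and $(F,g_F)$ complete. Strong causality of $M$ is item (i). For the diamond $J_M^+((b_1,p_1))\cap J_M^-((b_2,p_2))$, any causal $\gamma=(\alpha,\beta)$ therein has $\alpha$ confined to the compact set $K:=J_B^+(b_1)\cap J_B^-(b_2)$, yielding a uniform bound on the $g_B$-arclength of $\alpha$; combined with the positive lower bound of $f$ on $K$, the key length identity above converts this into a uniform upper bound on the $g_F$-arclength of $\beta$. Hopf--Rinow applied to the complete $(F,g_F)$ then confines $\beta$ to a compact Riemannian ball around $p_1$, so the diamond is precompact in $K$ times a compact ball; standard closedness of causal diamonds in strongly causal spacetimes then yields compactness.
\end{proofoutline}
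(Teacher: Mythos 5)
The paper does not prove this statement: it is quoted directly from Beem--Ehrlich--Easley and used as a black box, so there is no ``paper's own proof'' for your argument to be measured against. Your curve-projection dictionary (decompose $\gamma=(\alpha,\beta)$, use the identity $g(\gamma',\gamma')=g_B(\alpha',\alpha')+(f\circ\alpha)^2 g_F(\beta',\beta')$, and lift horizontally) is the right backbone, and the closed-curve argument for the chronological and causal rungs is fine. But the proposal has three genuine gaps.

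First, the strong causality step fails as written. You claim a causal curve leaving and re-entering $U_B\times U_F$ projects to a causal curve leaving and re-entering $U_B$; this is false, since the curve can escape $U_B\times U_F$ by wandering out of $U_F$ while its $B$-shadow never leaves $U_B$. To control the $F$-component you need the arclength estimate $\ell_F(\beta)\le \ell_B(\alpha)/\inf_{K} f$, precisely the observation you deploy later in the converse of (ii), together with the fact that near $b_0$ in a strongly causal $B$ one can choose causally convex neighborhoods in which the Lorentzian arclength of causal curves is arbitrarily small. You should fold that estimate back into the strong causality argument (or reduce to the unwarped product via the conformal rescaling $g\mapsto f^{-2}g$, which is probably the cleanest route).

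Second, in the ``completeness of $F$ is necessary'' step, the asserted imprisonment ``$\gamma$ remains in a compact causal diamond of $M$'' is not established. You confine $\alpha$ to a compact diamond of $B$, but to confine $\gamma$ to a diamond of $M$ you need a single point $q\in M$ with $\gamma(s)\le_M q$ for all $s\in[0,L)$, and you never produce one: the $F$-component $\beta(s)$ escapes every compact subset of $F$ (finite-length inextendibility forces this), so finding such a $q$ is delicate and needs an explicit estimate tying the $F$-distance from $\beta(s)$ to a fixed $p_2$ to the residual $B$-timelike separation. Also, the recipe ``$\alpha$ an integral curve of a timelike field $T$ tuned so that $g_B(T,T)+(f\circ\alpha)^2<0$'' does not by itself guarantee that the integral curve lives on all of $[0,L)$ or stays in a compact diamond of $B$; a reparametrized timelike geodesic segment is safer, but one must still check the parameter bookkeeping closes up. Third, and more minor: in the converse of (ii) the final appeal to ``standard closedness of causal diamonds in strongly causal spacetimes'' is incorrect -- such diamonds need not be closed (2D Minkowski with a point deleted is strongly causal and has non-closed diamonds). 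The correct invocation is that strong causality plus precompactness of all causal diamonds implies their compactness, and hence global hyperbolicity; conclude from there.
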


G. Walschap \cite{walschap} has shown that these results basically generalize to the more general case of Lorentzian submersions $\pi: (M,g)\rightarrow (N,h)$ [conf. Example \ref{exe: such}]. Of course, it is immediate that $(N,h)$ chronological [resp. causal] implies that $(M,g)$ is also chronological [resp. causal]. Relative strong causality too follows without further ado. However, for the globally hyperbolic context, Walschap felt the need of imposing additional geometric constraints on the fibers apart from completeness: that the submersion has bounded geometry \cite[Def. 3.3]{walschap}. 

\begin{proposition}\cite[Thms. 3.2 and 3.6]{walschap}\label{prop: causality lorentzian submersions}
    Let $\pi: (M,g)\rightarrow (N,h)$ be a Lorentzian submersion. The following statements hold. 
    \begin{itemize}
        \item[i)] If $(N,h)$ is strongly causal, then so is $(M,g)$.
        \item[ii)] If $(N,h)$ is globally hyperbolic, all fibers of $\pi$ are complete and $\pi$ has bounded geometry, then $(M,g)$ is also globally hyperbolic.
    \end{itemize}
\end{proposition}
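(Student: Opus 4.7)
The plan is to handle (i) and (ii) separately, exploiting the fact that any $g$-causal tangent vector $v\in T_pM$ satisfies
$$g(v,v)=g(v^\top,v^\top)+h(d\pi(v),d\pi(v))\leq 0,$$
with $g(v^\top,v^\top)\geq 0$ since the fibers are spacelike (Remark \ref{rmk: facts about bundle like}(2)). Hence $d\pi(v)$ is $h$-causal, the projection $\pi\circ\gamma$ of any $g$-causal curve $\gamma$ is $h$-causal in $N$, and one obtains the pointwise control
$$0\leq g(\gamma'^\top,\gamma'^\top)\leq -h(d\pi\gamma',d\pi\gamma').$$
Equipping $M$ and $N$ with background Riemannian metrics $\tilde g$ and $\tilde h$ that turn $\pi$ into a Riemannian submersion and restrict to $g$ on the fibers, this inequality gives a local bound of the form $|\gamma'|_{\tilde g}\leq C\,|d\pi\gamma'|_{\tilde h}$ on any given relatively compact set (since $|h(w,w)|\leq C'|w|_{\tilde h}^2$ there), so the $\tilde g$-length of $\gamma$ is controlled by the $\tilde h$-length of its projection.

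For (i), suppose strong causality fails at some $p\in M$. Then there exist a neighborhood $U\ni p$ and sequences $p_n,p'_n\to p$ together with future-directed $g$-causal curves $\gamma_n$ from $p_n$ to $p'_n$ that exit $U$. The projections $\pi\circ\gamma_n$ are $h$-causal with endpoints tending to $q:=\pi(p)$. By strong causality of $(N,h)$ at $q$, for any preassigned $h$-neighborhood $W$ of $q$, one has $\pi\circ\gamma_n\subset W$ for $n$ large. Choose $W$ so small that $\pi^{-1}(W)$ intersected with a fixed relatively compact tube $\tilde U\Subset U$ around the fiber $\pi^{-1}(q)\cap\overline U$ is itself contained in $U$, and so that the constant $C$ above is uniform on $\tilde U$. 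Then each $\gamma_n$ remaining in $\tilde U$ has $\tilde g$-length at most $C\cdot\mathrm{diam}_{\tilde h}(W)$, which is arbitrarily small, forcing $\gamma_n$ to stay in $U$ for large $n$ — a contradiction.

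For (ii), (i) already supplies strong causality of $(M,g)$, so it remains to show that every causal diamond $J^+(p)\cap J^-(q)$ is compact. Let $x_k\in J^+(p)\cap J^-(q)$. Projecting, $\pi(x_k)\in J^+_h(\pi(p))\cap J^-_h(\pi(q))$, which is compact by global hyperbolicity of $(N,h)$, so after extraction $\pi(x_k)\to y\in N$. Pick $g$-causal curves $\sigma_k$ from $p$ to $x_k$ and let $\bar\sigma_k$ be the horizontal lifts of $\pi\circ\sigma_k$ starting at $p$. The bounded-geometry hypothesis keeps the $\bar\sigma_k$ inside a fixed compact set as long as their projections do, and the vertical-versus-horizontal inequality controls the $g$-distance (along the fiber) between $\bar\sigma_k(1)$ and $x_k$ by the $h$-length of $\pi\circ\sigma_k$, which is uniformly bounded. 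Completeness of the fibers then lets a further subsequence $x_k\to x\in\pi^{-1}(y)$ be extracted. A standard limit-curve argument in $(M,g)$ applied to the $\sigma_k$ and to the causal curves from $x_k$ to $q$ — legitimate now that $(M,g)$ is strongly causal — yields causal curves from $p$ to $x$ and from $x$ to $q$, so $x\in J^+(p)\cap J^-(q)$.

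The main obstacle is the compactness step in (ii): projecting reduces the problem to a compact diamond in $N$, but the fibers are spacelike and potentially noncompact, so one must rule out vertical escape when lifting back. The two quantitative hypotheses do exactly this — bounded geometry of $\pi$ trivializes the submersion uniformly over compact subsets of $N$, and fiber completeness guarantees that bounded-length vertical drift has a limit point in $M$. Without both, one can exhibit warped-product examples (as in Proposition \ref{prop:causality warped products}(ii)) where $(N,h)$ is globally hyperbolic yet $(M,g)$ fails to be so.
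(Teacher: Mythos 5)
The paper does not supply a proof of this proposition at all: it is explicitly cited from Walschap \cite[Thms.\ 3.2 and 3.6]{walschap}, with the surrounding text only remarking that item (i) ``follows without further ado'' and that Walschap imposed bounded geometry for item (ii). So there is no in-paper argument against which to compare your proposal; what follows assesses it on its own merits.

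Your overall strategy --- project causal curves to $N$, use the causal hypotheses there, and control the vertical (fiber) drift upstairs --- is the correct one, and the starting decomposition $g(v,v)=g(v^\top,v^\top)+h(d\pi v,d\pi v)$ with $g(v^\top,v^\top)\ge 0$ is exactly the right entry point. Two quantitative steps, however, need repair. In (i), after getting $\ell_{\tilde g}(\gamma_n)\le C\,\ell_{\tilde h}(\pi\circ\gamma_n)$, you replace $\ell_{\tilde h}(\pi\circ\gamma_n)$ by $\mathrm{diam}_{\tilde h}(W)$. A causal curve confined to an arbitrary small open set can have $\tilde h$-length much larger than the set's diameter; the bound you want is valid only after you require $W$ to be contained in a convex normal neighborhood of $q$, where a local time function with $\tilde h$-uniformly timelike gradient forces $h$-causal curves to be quasi-graphs and hence to have $\tilde h$-length controlled by the temporal extent of $W$. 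Since any sufficiently small $W$ can be so chosen, this is a missing hypothesis rather than a fatal flaw. In (ii), the real gap is the claim that the fiber $g$-distance between $\bar\sigma_k(1)$ and $x_k$ is bounded by $\ell_h(\pi\circ\sigma_k)$. Integrating the pointwise inequality only bounds $\int\sqrt{g(\sigma'^\top,\sigma'^\top)}\,dt$; to convert this into a distance \emph{within a single fiber} you must horizontally transport the vertical increments of $\sigma_k$ along $\pi\circ\sigma_k$ back to $\pi^{-1}(\pi(x_k))$, and for a general Lorentzian submersion horizontal transport need not be length-nonincreasing (nor even uniformly Lipschitz) on vertical vectors. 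This is precisely where Walschap's bounded-geometry hypothesis does its work: a uniform bound on the O'Neill tensors yields a Gronwall estimate, introducing a factor of the form $\exp\!\bigl(C\,\ell_{\tilde h}(\pi\circ\sigma_k)\bigr)$, which is finite because the projected curves lie in a compact causal diamond of $N$. Your sketch invokes bounded geometry only to ``keep $\bar\sigma_k$ inside a fixed compact set,'' so as written the hypothesis that is actually doing the crucial work is not connected to the step that needs it. With the convex-normal-neighborhood choice in (i) and the Gronwall mechanism in (ii) spelled out, the outline closes correctly.
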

In discussing our generalizations of these results to Lorentzian foliations we too shall need to impose further conditions in appropriate places. The case of the chronological/causal/distinguishing conditions, however, are simple enough. 
\begin{proposition}\label{prop: tranverse chron implies chron}
If $(M,\mathcal{F}, g_\intercal)$ is transversely chronological [respectively transversely causal, transversely future/past distinguishing], then $(M,g)$ is chronological [resp. causal, future/past distinguishing]. Moreover, all the leaves of $\f$ are achronal [resp. acausal] immersed spacelike submanifolds in $(M,g)$. 
\end{proposition}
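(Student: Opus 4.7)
The unifying tool throughout is Lemma \ref{lemma: comparing causal types}: every $g$-timelike (resp. $g$-causal) vector is transversely timelike (resp. transversely causal), and therefore any $g$-timelike (resp. $g$-causal) piecewise smooth curve joining leaves $L$ and $L'$ witnesses $L \ll_{M/\f} L'$ (resp. $L \leq_{M/\f} L'$). I will prove every assertion by contrapositive, exploiting this bridge to promote an assumed violation of a standard causality condition on $(M,g)$ into a forbidden transverse configuration on $(M,\f,g_\intercal)$.

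For the chronology clause and $g$-achronality of leaves: if $(M,g)$ were not chronological, a closed $g$-timelike curve $\gamma$ with $\gamma(a)=\gamma(b)\in L$ would, by the bridge, be transversely timelike, giving $L \ll_{M/\f} L$ and contradicting transverse chronology. Likewise, if some leaf $L$ failed to be $g$-achronal, a $g$-timelike curve joining two distinct points of $L$ would yield the same contradiction. For the causality clause and $g$-acausality of leaves under transverse causality, one repeats the argument with $g$-causal curves, with one extra step: a nontrivial $g$-causal curve with both endpoints on a single leaf $L$ cannot remain inside $L$ because $g|_L$ is Riemannian (leaves are $g$-spacelike by Remark \ref{rmk: facts about bundle like}(2)); thus the curve must cross some $L'\neq L$, producing the transverse round trip $L \leq_{M/\f} L' \leq_{M/\f} L$, which by transitivity (Theorem \ref{teo: structural properties leaf causal}(a)) yields $L <_{M/\f} L$, contradicting transverse causality.

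For the future-distinguishing clause (the past case being entirely symmetric), assume the contrapositive: there exist distinct $x,y\in M$ with $x\leq_g y$ and $x \in \overline{J^+_g(y)}$. The bridge promotes $x \leq_g y$ to $L_x \leq_{M/\f} L_y$. Choosing a sequence $z_n \in J^+_g(y)$ with $z_n \to x$, the bridge gives $L_{z_n} \in J^+_{M/\f}(L_y)$, and continuity of $\pi_\f$ yields $L_x \in \overline{J^+_{M/\f}(L_y)}$. If $L_x \neq L_y$, this already violates transverse future distinguishing. If instead $L_x = L_y$, the same $g$-spacelike-leaf argument of the previous paragraph applies to the $g$-causal curve from $x$ to $y$, producing a forbidden transverse round trip and hence violating transverse causality, which by Proposition \ref{prop: cute fact}(a) is implied by transverse future distinguishing.

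The subtle point to watch is the coincident-leaf case $L_x = L_y$ in the distinguishing argument, where the causal-type bridge alone does not suffice; one must genuinely invoke both the Riemannian character of $g|_L$ and the fact, encoded in Proposition \ref{prop: cute fact}(a), that transverse future distinguishing upgrades to transverse causality.
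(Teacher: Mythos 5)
Your proof is correct and follows essentially the same path as the paper's: Lemma~\ref{lemma: comparing causal types} is the bridge in both, and the distinguishing case invokes Proposition~\ref{prop: cute fact}(a) exactly as the paper does. One small simplification: in the causal clause you do not need the detour through an intermediate leaf $L'$ and transitivity --- any $g$-causal curve with both endpoints on $L$ is already transversely causal by the lemma, so $L<_{M/\f}L$ follows immediately from Definition~\ref{def: chronology and causality relations}(b), which is how the paper phrases it.
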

\begin{proof}
If $\gamma$ is a timelike [resp. causal] curve starting and ending on the same leaf (perhaps at the same point), then we already know it is transversely timelike [resp. transversely causal] curve by Lemma \ref{lemma: comparing causal types}. 

The future/past distinguishing case is slightly more elaborate, and again we only do the future case. Suppose $(M,\mathcal{F}, g_\intercal)$ is transversely future-distinguishing, and let $x,y \in M$ such that 
$$x\leq_g y \text{ and } x\in \overline{J^+_g(y)}.$$
We claim that 
$$L_x\leq_{M/\f}L_y \text{ and } L_x\in \overline{J^+_{M/\f}(L_y)}.$$
If this is the case, then the transverse future-distinguishing condition would imply $L_x =L_y = L$. Thus, if we had $x\neq y$, then $L<_{M/\f}L$, violating Proposition \ref{prop: cute fact}(a). We would then conclude that $x=y$, which in turn would mean that $(M,g)$ is future-distinguishing, as desired. 

We thus proceed to prove the claim. Now, we immediately have $L_x\leq_{M/\f}L_y$. Let $\mathcal{U}\ni L_x$ be an open set $M/\f$, and let $\widetilde{U}=\pi_{}^{-1}(\mathcal{U})$ be the associated saturated open subset of $M$. Since $x\in \widetilde{U}\cap \overline{J^+_g(y)} $. we can pick $x'\in \widetilde{U}\cap J^+_g(y)$. But then, since $\widetilde{U}$ is saturated, $L_{x'}\subset \widetilde{U}$; furthermore,
$$L_{x'}\in J^+_{M/\f}(L_y) \text{ and } L_{x'} \in \mathcal{U},$$
and we conclude that $L_x\in \overline{J^+_{M/\f}(L_y)}$, so the proof is complete. 
\end{proof}

\begin{comment}
\begin{example}\label{exe: distinguishing nao vai}
\color{teal}Não está claro para mim porque é transversalmente  distinguidor. De fato, fico  com a impressão que o espaço de folhas é justamente o exemplo clássico de espaço-tempo que é causal mas não distinguidor.\color{black}
 \textcolor{red}{[VERDADE, REALMENTE NÃO FUNCIONA ESTE EXEMPLO. VOU TENTAR PENSAR EM OUTRO.]}\textit{We may have a transversely distinguishing $(M, \mathcal{F}, g_\intercal)$ and a good topology on the leaf space with $(M,g)$ still not distinguishing.} To see this, start with the spacetime $(\mathbb{R}^3, (\cosh \, t -1)^2(dx^2-dt^2) -dt\,dx +dy^2)$ and the foliation $\widetilde{\f}$ given by the fibers of $\pi:(t,x,y) \in \mathbb{R}^3\mapsto (t,x) \in \mathbb{R}^2$. Now, identify $(t,x,y) \sim (t,x+m,y+m^\prime), \forall m, m^\prime\in\mathbb{Z}$, which is clearly isometric, to obtain a spacetime with topology $\mathbb{R}\times \mathbb{T}^2$ foliated by spacelike circles, hence with leaf space homeomorphic to $\mathbb{R}\times\mathbb{S}^1$, with a Hausdorff topology. This spacetime is not causal because all curves with $t=0$ and $y= \text{ const.}$ are closed null curves. But if we delete some circle $x= \text{ const.}$ for $t=0$ it becomes causal but not distinguishing. Nonetheless, it is transversely distinguishing with respect to the associated transverse Lorentzian metric $(\cosh \, t -1)^2(dx^2-dt^2) -dt\,dx$ 
\end{example}
\end{comment}
As we saw in the Remark \ref{rmk: so cool counterexamples}, if the leaf space topology is bad, then transverse strong causality by itself can do little to control the causality of the ambient spacetime. But if leaf space topology is good enough, then that issue disappears. 
\begin{proposition}\label{prop:strong causality does happen}
If $(M, \mathcal{F}, g_\intercal)$ is transversely strongly causal and if $M/\mathcal{F}$ is Hausdorff, then $(M, g)$ is strongly causal.
\end{proposition}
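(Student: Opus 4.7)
The plan is to argue by contradiction. Suppose $(M, g)$ fails strong causality at some $x \in M$, so there exist a relatively compact open neighborhood $U \ni x$ and sequences $p_n, q_n \to x$ together with future-directed $g$-causal curves $\gamma_n : [0, 1] \to M$ satisfying $\gamma_n(0) = p_n$, $\gamma_n(1) = q_n$, and $\gamma_n([0, 1]) \not\subset U$. By Lemma \ref{lemma: comparing causal types} each $\gamma_n$ is transversely causal, and continuity of $\pi_\mathcal{F}$ gives $\pi_\mathcal{F}(p_n), \pi_\mathcal{F}(q_n) \to L_x$ in $M/\mathcal{F}$. Invoking transverse strong causality at $L_x$ in the form of Proposition \ref{prop: strong causality characterization}, for every $\mathcal{F}$-saturated open $\mathcal{U} \supset L_x$ there is a saturated $\mathcal{V}$ with $L_x \subset \mathcal{V} \subset \mathcal{U}$ trapping every transversely causal curve with endpoints in $\mathcal{V}$ inside $\mathcal{U}$; therefore $\gamma_n([0,1]) \subset \mathcal{U}$ for all large $n$.

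Next, let $t_n \in (0, 1)$ be the first exit time of $\gamma_n$ from $U$, so $\gamma_n(t_n) \in \partial U$, and pass to a subsequence with $\gamma_n(t_n) \to z \in \partial U$. Continuity yields $L_{\gamma_n(t_n)} \to L_z$ in $M/\mathcal{F}$, while the previous paragraph forces $L_{\gamma_n(t_n)}$ into every open neighborhood of $L_x$, so $L_{\gamma_n(t_n)} \to L_x$ as well. Uniqueness of limits in the Hausdorff quotient $M/\mathcal{F}$ then gives $L_z = L_x$, so $z \in L_x$; and since $z \in \partial U$ while $x$ lies in the interior of $U$, necessarily $z \neq x$.

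Now I apply the limit curve theorem to the segments $\gamma_n|_{[0, t_n]}$, which are $g$-causal curves in the compact set $\overline{U}$ with endpoints converging to the distinct points $x$ and $z$. Extracting a subsequence that converges uniformly in $C^0$ to a future-directed $g$-causal curve $\lambda$ from $x$ to $z$ completes the argument: by Lemma \ref{lemma: comparing causal types} the limit $\lambda$ is transversely causal, and since $L_x$ is spacelike while $\lambda$ is non-constant with both endpoints on $L_x$, it must enter leaves distinct from $L_x$, so it witnesses $L_x <_{M/\mathcal{F}} L_x$. This contradicts transverse causality of $(M, \mathcal{F}, g_\intercal)$, which follows from transverse strong causality and the $T_1$ character of the Hausdorff leaf space by successively applying items (b) and (a) of Proposition \ref{prop: cute fact}.

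The principal technical obstacle lies in securing enough uniform control on the $g_R$-arclengths of $\gamma_n|_{[0, t_n]}$ to guarantee that the limit curve theorem yields a finite causal arc from $x$ to $z$ rather than merely a future-inextendible limit curve from $x$. My plan is to exploit the bundle-like structure: the inequality $|v^\top|_g^2 \leq -g(v^\perp, v^\perp)$ for every $g$-causal $v$ bounds the leaf-wise motion of $\gamma_n$ by the transverse motion, and the latter is confined by transverse strong causality combined with strong causality of the local Lorentzian quotients $(S, h)$ arising from $g_\intercal$ on foliated charts. Any residual inextendible-limit case can be ruled out via the transverse disprisonment theorem (Theorem \ref{teo: transverse disprisonment}), since the corresponding $\lambda$ would be trapped in every saturated neighborhood of $L_x$ by the argument of the first paragraph, and a truncation at an appropriate return time would then directly produce a transversely causal loop at $L_x$, once more contradicting transverse causality.
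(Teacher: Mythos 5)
Your opening two paragraphs are sound and land on the same territory as the paper's proof, but the argument has a genuine gap at the decisive step, and the patch you propose cannot close it. Absent a prior causality condition on $(M,g)$, the limit curve theorem applied to $\gamma_n|_{[0,t_n]}$ gives a dichotomy: either a causal curve from $x$ to $z$, or a future-inextendible causal limit curve $\lambda$ emanating from $x$ (together with a past-inextendible one ending at $z$), totally imprisoned in the accumulation set. You propose ruling out the second case via Theorem \ref{teo: transverse disprisonment}. But that theorem requires $\lambda$ to be \emph{transversely right-inextendible}, and your own first paragraph shows exactly the opposite: since every $\gamma_n$ is eventually confined in each saturated open $\mathcal{U}\supset L_x$, so is the accumulation set, and hence $L_x$ is a \emph{transverse limit to the right} for $\lambda$ in the sense of Definition \ref{def: inextendibility}. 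So $\lambda$ is transversely right-extendible, the disprisonment theorem is simply not applicable, and "truncating at a return time" does not produce a return to $L_x$ — staying in small saturated neighborhoods of $L_x$ is not being \emph{in} $L_x$. (One could try to salvage this case by invoking transverse future-distinguishing, which you do know holds via Proposition \ref{prop: cute fact}(b) and the $T_1$ property, rather than the disprisonment theorem, but that is a different argument from the one you wrote and still needs to be supplied.)

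There is also a secondary issue: the $C^0$ limit curve is only locally Lipschitz, whereas Lemma \ref{lemma: comparing causal types} and the leaf space causal relations of Definition \ref{def: chronology and causality relations} are formulated for smooth vectors and piecewise smooth curves; to extract $L_x \ll_{M/\f} L'$ or $L' \ll_{M/\f} L_x$ you would first need to pass from the continuous causal curve to a piecewise smooth one. The paper dodges both difficulties in one stroke: it shrinks the failing neighborhood so that $\overline{\mathcal{U}}$ lies in a \emph{convex normal} set, inside which the causal relation generated by causal curves is closed, so the convergences $\gamma_k(0)\to x_0$, $\gamma_k(1/2)\to y_0$ yield $x_0 <_g y_0$ directly, with no limit curve theorem and no regularity questions. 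From there the $T_1$/Hausdorff hypothesis is used only to separate $L_{x_0}$ from $L_{y_0}$ by disjoint saturated opens and apply transverse strong causality. I would encourage you to adopt the convex-neighborhood shortcut; it is both shorter and avoids the case analysis that doomed your limit-curve approach.
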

\begin{proof}
Suppose that $(M,g)$ is not strongly causal. In that case there exist a point $x_0\in M$, an open set $x_0\subset \mathcal{U}\subset M$ and a sequence $(\gamma_k:[0,1]\rightarrow M)_{k\in\mathbb{N}}$ of future-directed causal curves such that $\gamma_k(0),\gamma_k(1)\rightarrow x_0$ but $\gamma_k(1/2)\notin \mathcal{U}, \forall k$. Shrinking $\mathcal{U}$ and reparametrizing the curves if necessary we can assume that $\overline{\mathcal{U}}$ is compact and contained in some normal convex set, with $\gamma_k(1/2)\in \partial \mathcal{U}, \forall k$. Thus, up to passing to a subsequence we can assume that $\gamma_k(1/2)\rightarrow y_0 \in \partial \mathcal{U}$ and $x_0 <_g y_0$ \cite[Lemma 2.14]{oneillbook}, which in turn implies $L_{x_0} <_\f L_{y_0}$. However, recall that since topology of the leaf space is in particular $T_1$, $(M, \mathcal{F}, g_\intercal)$ is transversely causal by Proposition \ref{prop: cute fact}(b); thus we certainly have $L_{x_0}\neq L_{y_0}$ and by the Hausdorff property on the leaf space there exist open, saturated, disjoint sets $\widetilde{V}\supset L_{x_0}$ and $\widetilde{W}\supset L_{y_0}$. By the transverse strong causality there must exist some open set $L_{x_0}\subset \widetilde{O}\subset \widetilde{V}$ such that any transversely causal curve segment with endpoints in $\widetilde{O}$ is all within $\widetilde{V}$. But for large $k\in \mathbb{N}$ we clearly have $\gamma_k(0),\gamma_k(1)\in \widetilde{O}$, and hence $\gamma_k[0,1]\subset \widetilde{V}$, while $\gamma_k(1/2)\in \widetilde{W}$, a contradiction.
\end{proof}

We now turn to investigation of global hyperbolicity of bundle-like metrics, which will be of special importance for us in the next section. We shall see that it arises at the price of extra assumptions on the topology of the leaves. 
\begin{lemma}\label{inextransvinex}
    Let $\gamma:[a, b)\rightarrow M$ be a right-inextedible causal curve. If $(M,g)$ is strongly causal and the leaves of $\mathcal{F}$ are compact, then $\gamma$ is transversely right-inextendible.
\end{lemma}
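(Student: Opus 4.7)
\emph{Proof plan.} I would argue by contrapositive: assuming $\gamma$ admits a transverse right-limit, i.e., a leaf $L\in\f$ such that for every saturated open $\mathcal{U}\supset L$ there is $s_0<b$ with $\gamma([s_0,b))\subset\mathcal{U}$, I will produce a continuous extension of $\gamma$ at $b$, contradicting right-inextendibility.

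First, fix an auxiliary complete Riemannian metric $h$ on $M$; since $L$ is a compact embedded submanifold, the $h$-metric balls $B_\epsilon(L):=\{y\in M:d_h(y,L)<\epsilon\}$ form a base of neighborhoods of $L$ with compact closure for small $\epsilon$. The central geometric step is the following Reeb-stability-type claim: for every $\epsilon>0$ there exists an open saturated neighborhood $\widetilde{\mathcal{U}}_\epsilon$ of $L$ with $\widetilde{\mathcal{U}}_\epsilon\subset B_\epsilon(L)$. To establish this, I would cover $L$ by finitely many adapted foliation charts $\phi_i:D^p\times D^q\to U_i$ such that $L\cap U_i=\phi_i(D^p\times\{0_i\})$ is a single plaque of $L$ (using compactness of $L$), and shrink both the plaque and transverse directions so that $\bigcup_i U_i\subset B_\epsilon(L)$. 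The compactness of the leaves through each point of a small transversal $\phi_i(\{0_i\}\times D^q_\delta)$, together with the compactness of $L$, will then ensure that these nearby leaves remain inside $B_\epsilon(L)$ for sufficiently small $\delta>0$, so that the saturation of the resulting open set is still contained in $B_\epsilon(L)$.

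Granted the claim, for each $n\in\mathbb{N}$ the transverse right-limit hypothesis provides $s_n<b$ with $\gamma([s_n,b))\subset\widetilde{\mathcal{U}}_{1/n}\subset B_{1/n}(L)$. Picking $t_n\in[s_n,b)$ with $t_n\to b^-$ then yields $d_h(\gamma(t_n),L)<1/n$, so by compactness of $L$ a subsequence $\gamma(t_{n_j})$ converges to some point $x_0\in L$.

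Finally, I would invoke the standard fact that in a strongly causal spacetime, any future-directed causal curve $\gamma:[a,b)\to M$ possessing a limit point $x_0\in M$ (i.e., $\gamma(s_k)\to x_0$ for some $s_k\to b^-$) extends continuously at $b$, with $\lim_{s\to b^-}\gamma(s)=x_0$ (see, e.g., \cite[Prop. 3.31]{beem}). This contradicts the right-inextendibility of $\gamma$ and completes the proof. The main obstacle will be the stability claim; compactness of \emph{all} leaves of $\f$ is essential here, since otherwise the saturation of a small neighborhood of $L$ could escape $B_\epsilon(L)$ along long or noncompact nearby leaves.
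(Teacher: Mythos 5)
Your argument is correct and essentially the same as the paper's: both use an auxiliary complete Riemannian metric $h$ to trap the final portion of $\gamma$ inside a compact $\varepsilon$-tube around the compact leaf $L$, and then derive a contradiction from strong causality (your version, via ``a causal curve accumulating at a point has it as an endpoint,'' and the paper's, via non-imprisonment of inextendible causal curves in compact sets, are equivalent packagings of the same fact). One thing your write-up does usefully is isolate, as the crux, the existence of arbitrarily small $\f$-saturated neighborhoods of $L$ inside the $\varepsilon$-tubes, a point the paper's proof passes over in silence. Do note, however, that compactness of every leaf does not by itself guarantee this claim in codimension $\geq 3$: Sullivan-type compact-leaf foliations have leaves with no small saturated neighborhoods, so the step in your sketch where ``compactness of the leaves through the small transversal ensures they remain inside $B_\varepsilon(L)$'' is not automatic. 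What is really needed is that $L$ has finite holonomy, so that generalized Reeb stability applies; this is automatic in the only place the lemma is used, Theorem~\ref{teo: global hyperbolic does happen}, where compact leaves plus Hausdorff leaf space imply finite holonomy of every leaf by Millett's theorem, as the paper's closing remark records.
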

\begin{proof}
Suppose $\gamma$ is not transversely right-inextendible. Therefore, there is $L\in \mathcal{F}$ such that $L$ is a transverse limit to the right for $\gamma$. Let $h$ be an auxiliary complete Riemannian metric on $M$ and let $d_h$ denote its associated metric distance. Since $L$ is a transverse limit to the right for $\gamma$, given any $\varepsilon>0$, the compact set $$\{x\in M\, : \, d_h(x,L)\leq \varepsilon\}$$ 
contains the final portion of $\gamma$, which cannot happen since $\gamma$ is right-inextendible and $(M, g)$ is strongly causal.
\end{proof}

\begin{theorem}\label{teo: global hyperbolic does happen}
Suppose $(M, \mathcal{F}, g_\intercal)$ is transversely globally hyperbolic, $M/\f$ is Hausdorff and $\mathcal{F}$ has compact leaves. Then for any bundle-like Lorentz metric $g$ associated with $g_\intercal$, $(M,g)$ is globally hyperbolic.
\end{theorem}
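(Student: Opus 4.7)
The plan is to deduce global hyperbolicity of $(M,g)$ from the transverse data in three steps: derive strong causality, establish properness of the leaf projection, and combine these to control the compactness of $g$-causal diamonds. First, strong causality of $(M,g)$ is immediate from Proposition \ref{prop:strong causality does happen}, since transverse global hyperbolicity includes transverse strong causality by definition and $M/\f$ is Hausdorff by assumption.

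Next, I would show that $\pi_\f:M\to M/\f$ is a proper map. Because $M/\f$ is Hausdorff and every leaf is compact, each leaf has finite holonomy, and so by generalized Reeb stability (conf.\ Example \ref{example: when leaf spaces are orbifolds}) each compact leaf $L$ admits a saturated tubular neighborhood of the orbifold-chart form described there; in particular, every open neighborhood of $L$ in $M$ contains a saturated open neighborhood of $L$. Given a compact set $C\subset M/\f$ and an open cover $\{\mathcal{U}_i\}_{i\in I}$ of $\pi_\f^{-1}(C)$, for each $[L]\in C$ the compactness of $L$ lets me cover it by finitely many of the $\mathcal{U}_i$, and then shrink the union of this finite subfamily to a saturated open neighborhood $W_L$ of $L$ contained within it. The $\pi_\f(W_L)$ form an open cover of $C$, a finite subcover exists, and pulling it back yields a finite subcover of $\pi_\f^{-1}(C)$ from $\{\mathcal{U}_i\}$, so $\pi_\f$ is proper. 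Applying this, for any $p,q\in M$, Lemma \ref{lemma: comparing causal types} gives
$$J^+_g(p)\cap J^-_g(q)\subset J^+_\intercal(L_p)\cap J^-_\intercal(L_q)=:K;$$
the set $K$ is saturated and transversely compact by the transverse global hyperbolicity hypothesis, so $\pi_\f(K)$ is compact in $M/\f$ by Proposition \ref{prop: transversely compact charac}, and therefore $K=\pi_\f^{-1}(\pi_\f(K))$ is itself compact in $M$.

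Finally, to conclude compactness of $J^+_g(p)\cap J^-_g(q)$ it suffices to show it is closed in $M$, as it is already contained in the compact set $K$. Given a sequence $(x_k)$ in $J^+_g(p)\cap J^-_g(q)$ converging to some $x\in M$, I would fix for each $k$ a future-directed $g$-causal curve $\gamma_k$ from $p$ to $q$ passing through $x_k$ (obtained by concatenating segments from $p$ to $x_k$ and from $x_k$ to $q$); all the $\gamma_k$ lie in the compact set $K$, and $(M,g)$ is strongly causal by Step 1, so the standard Lorentzian limit curve theorem supplies (after passing to subsequences) a $C^0$-limit causal curve $\gamma$ from $p$ to $q$ together with parameter values $t_k\to t_\infty$ satisfying $\gamma(t_\infty)=x$; hence $x\in J^+_g(p)\cap J^-_g(q)$. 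Combined with strong causality, this proves global hyperbolicity of $(M,g)$. I expect the second step to be the main obstacle: while properness of $\pi_\f$ for compact-leaf foliations with Hausdorff leaf space is essentially folklore in foliation theory, making it precise requires carefully invoking generalized Reeb stability to produce saturated refinements of arbitrary open covers, which in turn crucially depends on the finiteness of holonomy of compact leaves enforced by Hausdorffness of $M/\f$.
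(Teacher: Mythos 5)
Your proof is correct but takes a genuinely different route from the paper. The paper argues by contradiction: if $J^+_g(x)\cap J^-_g(y)$ were non-compact for some $x,y$, strong causality (from Proposition~\ref{prop:strong causality does happen}) would supply a future-directed, right-inextendible $g$-causal curve $\gamma$ trapped in the diamond; Lemma~\ref{inextransvinex} (which is exactly where the compact-leaf hypothesis enters the paper's argument) promotes $\gamma$ to a transversely right-inextendible curve sitting inside the transversely compact prism $J^+_\intercal(L_x)\cap J^-_\intercal(L_y)$, contradicting the transverse disprisonment Theorem~\ref{teo: transverse disprisonment}. You instead give a direct argument: you establish as a standalone fact that $\pi_\f$ is a proper map (via Millett's finite-holonomy consequence of Hausdorffness plus generalized Reeb stability), which lets you upgrade ``transversely compact and saturated'' to ``compact in $M$''; then the causal diamond sits inside the compact set $J^+_\intercal(L_p)\cap J^-_\intercal(L_q)$ and is closed by a standard limit-curve argument under strong causality, hence compact. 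What your route buys is a useful independent lemma (properness of $\pi_\f$ under these hypotheses) and a proof that reduces global hyperbolicity to ``closed subset of a compact set''; what the paper's route buys is brevity given the disprisonment machinery already in place (Theorem~\ref{teo: transverse disprisonment} and Lemma~\ref{inextransvinex}), and it avoids invoking the Lorentzian limit curve theorem. Both use the compact-leaf and Hausdorff hypotheses in an essential way, just at different points: you use them to get properness of $\pi_\f$, the paper uses them (via Lemma~\ref{inextransvinex}) to convert $g$-inextendibility into transverse inextendibility.
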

\begin{proof}
Suppose that $(M,g)$ is not globally hyperbolic. Since Prop. \ref{prop:strong causality does happen} yields that $(M,g)$ is strongly causal, there must exist some $x, y\in M$ such that the causal diamond $J(x,y):=J^+(x)\cap J^-(y)$ is not compact. But in that case we can find some future-directed, right-inextendible causal curve $\gamma:[0, b)\rightarrow M$ entirely contained in $J(x,y)$. Now, on the one hand, by Lemma \ref{inextransvinex} $\gamma$ is transversely right-inextendible. On the other hand, by the transverse global hyperbolicity hypothesis, the causal prism $J_\intercal(L_x,L_y):= J_\intercal^+(L_x)\cap L^-_\intercal(L_y)$ is transversely compact, and since $\gamma$ is also transversely causal we have that $\gamma[0,b)\subset J(x,y)\subset J_\intercal(L_x,L_y)$, in violation of Theorem \ref{teo: transverse disprisonment}. 
\end{proof}

\begin{remark}
By deleting a point from $\mathbb{R}^2\times \mathbb{T}^2$ with a suitable flat Lorentzian metric we easily find a counterexample to the previous result (with a Hausdorff leaf space) when we drop compactness of fibers. A topological restriction on the space of leaves is also clearly necessary: the second example in Remark \ref{rmk: so cool counterexamples} is actually transversely globally hyperbolic, while the associated spacetime is not even chronological.    
\end{remark}

\section{A transverse diameter theorem}\label{mainsec}

Recall that the classic Bonnet--Myers theorem in Riemannian geometry establishes that if $(M^n,h)$ is a complete Riemannian manifold such that $\ric_h \geq (n-1)C$, for some positive constant $C>0$, then $(M,h)$ has finite diameter 
$$\diam(M,h) \leq \pi/\sqrt{C},$$
so in particular $M$ is compact. Moreover, since the geometric assumptions on $(M,h)$ also apply to its universal cover, the latter is also compact, so $\pi_1(M)$ is finite. There is also a well-known \textit{transverse} version of the Bonnet-Myers theorem for Riemannian foliations due to Hebda \cite{hebda}.

Now, there is an also well-known \textit{Lorentzian} analogue of the Bonnet-Myers result due to Beem and Ehrlich, the so-called \textit{timelike diameter theorem}:
\begin{theorem}\cite[Thm. 11.9]{beem}\label{teo: lorentzian diameter}
 Let $(M^n,g)$ be a globally hyperbolic spacetime and suppose $\ric_g(v,v)\geq (n-1)C$ for any unit timelike $v\in TM$ and some positive constant $C>0$. Then its \emph{timelike} diameter is finite. More precisely,
 $$\diam(M,g) \leq \pi/\sqrt{C}.$$
\end{theorem}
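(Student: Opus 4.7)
The strategy is to adapt Myers' classical Ricci--diameter argument to the Lorentzian setting, with two key replacements: global hyperbolicity plays the role played by geodesic completeness in the Riemannian case (producing length-\emph{maximizing} geodesics), and the sign conventions on the index form flip (maximizers, rather than minimizers, force a definite sign on perpendicular proper variations).

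First I would reduce the claim to bounding the Lorentzian distance $d_g(p,q)$ for arbitrary chronologically related pairs $p \ll q$, since the timelike diameter is $\diam(M,g) = \sup_{p,q} d_g(p,q)$ and $d_g \equiv 0$ on pairs not chronologically related. By the Avez--Seifert theorem, global hyperbolicity provides, for any $p \ll q$, a future-directed timelike geodesic $\gamma:[0,L]\to M$ parameterized by proper time with $\gamma(0)=p$, $\gamma(L)=q$, and $L=d_g(p,q)$; by maximality $\gamma$ is free of interior conjugate points.

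Next I would argue by contradiction, supposing $L > \pi/\sqrt{C}$. Along $\gamma$ choose a parallel orthonormal frame $(e_1 = \gamma', e_2, \ldots, e_n)$, with each $e_i$ unit spacelike and perpendicular to $\gamma'$ for $i\geq 2$, and set $V_i(t) := \sin(\pi t/L)\, e_i(t)$. Each $V_i$ is an admissible proper variation vector field (vanishing at $0, L$, spacelike, perpendicular to $\gamma'$). A routine computation using the parallelism of the $e_i$ and the definition of the Ricci tensor gives, for the Lorentzian index form $I$ of $\gamma$,
$$\sum_{i=2}^n I(V_i, V_i) \;=\; \int_0^L \left[\sin^2(\pi t/L)\,\ric_g(\gamma',\gamma') \;-\; (n-1)(\pi/L)^2 \cos^2(\pi t/L)\right] dt.$$
Applying the hypothesis $\ric_g(\gamma',\gamma') \geq (n-1)C$ together with $\int_0^L \cos^2(\pi t/L)\,dt = \int_0^L \sin^2(\pi t/L)\,dt = L/2$ bounds the right-hand side from below by $\tfrac{(n-1)L}{2}\left[C - (\pi/L)^2\right]$, which is strictly positive under the assumption $L > \pi/\sqrt{C}$. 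Consequently at least one $I(V_i, V_i) > 0$, contradicting the non-positivity of the index form on perpendicular proper variations that is forced by the length-maximality of $\gamma$. Hence $L \leq \pi/\sqrt{C}$; taking the supremum over pairs $p \ll q$ yields the diameter bound.

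The principal obstacle is guaranteeing the existence of the maximizing geodesic $\gamma$: this is precisely what global hyperbolicity supplies via Avez--Seifert, and without it the supremum defining $d_g(p,q)$ need not be attained by any geodesic at all, neutering the second-variation argument entirely. A secondary subtlety is the careful bookkeeping of signs between the Riemannian and Lorentzian conventions for the index form, as the definite-sign property used to derive the contradiction is reversed in the timelike-maximization setting.
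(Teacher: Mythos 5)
The paper does not prove this result itself---it is invoked from {\cite[Thm.\ 11.9]{beem}}---and your reconstruction essentially reproduces the argument in that reference: global hyperbolicity plus the Avez--Seifert theorem yields a maximizing timelike geodesic between chronologically related points, and a Myers-style second-variation computation along that maximizer (with the $\sin(\pi t/L)$ test fields and the integrals $\int_0^L\sin^2 = \int_0^L\cos^2 = L/2$) forces a conjugate point once $L > \pi/\sqrt{C}$, contradicting maximality. Your sign bookkeeping is correct: under the standard convention the index form for a timelike geodesic is $I(V,V)=\int [\,g(R(V,\gamma')\gamma',V)-g(V',V')\,]$ on perpendicular fields, maximality forces $I\le 0$ on such fields vanishing at the endpoints, and your computation makes $\sum_i I(V_i,V_i)>0$, which is the required contradiction. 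The reduction to chronologically related pairs (since $d_g\equiv 0$ otherwise) and the use of global hyperbolicity precisely to guarantee an attained supremum are both correctly flagged as the key points. This is the same route the cited source takes.
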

Although Theorem \ref{teo: lorentzian diameter} is formally similar to its Riemannian counterpart, its geometric interpretation is strikingly different. Since our goal in this section is to establish a transverse analogue for Lorentzian foliation of the timelike diameter theorem, it is worthwhile to review that interpretation here. We do so only briefly and refer to \cite[Chs. 4 and 11]{beem} for further details. 

Let $(M,g)$ be a spacetime. Given a piecewise $C^1$ curve segment $\alpha:[a,b]\rightarrow M$, its \textit{Lorentzian length} is given by
$$\ell_g(\alpha):=\int_a^b\sqrt{|g(\alpha'(s),\alpha'(s))|}ds.$$
The Lorentzian length is particularly significant for a timelike $\alpha$: in general relativity and other geometric theories of gravity, it measures the \textit{proper time} between the events $\alpha(a)$ and $\alpha(b)$ as reckoned by an observer whose worldline is described by $\alpha$. 

Using the Lorentzian length of causal curves one also defines the so-called \textit{Lorentzian distance function} $d_g: M\times M\rightarrow [0,+\infty]$ as follows. For $x,y \in M$, we put 
   $$ d_g(x,y):= \sup \{\ell_g(\alpha) \  |\  \alpha:[a,b]\rightarrow M \text{ future-directed causal curve, $\alpha(a)=x,\alpha(b)=y$}\}, $$
provided $x<_gy$. If $y\notin J^+_g(x)$ then we put $d_g(x,y)=0$. Despite its name, the Lorentzian distance function is \textit{not} a distance in the metric space sense, as in the Riemannian context\footnote{Because of its peculiar features and the physical interpretation of the Lorentzian length of timelike curves as a proper time, $d_g$ is sometimes also called \textit{time-separation function}.}. In particular, it is almost never the case that $d_g(x,y)=d_g(y,x)$ and the Lorentzian distance between two points can perfectly well be infinite-valued. (For example we have $d_g(x,x)=+\infty$ for \textit{every} $x\in M$ if $(M,g)$ is totally vicious.) 
In addition, we have that $d_g(x,y) >0$ if and only $x\ll_g y$.  

The \textit{timelike diameter} of the spacetime $(M,g)$ is thus defined as 
$$\diam(M,g) := \sup \{d_g(x,y) \, :\, x,y \in M\}.$$
Observe that we always have $\diam(M,g)>0$, but if $\diam(M,g) <+\infty$ then \textit{every} timelike geodesic has finite Lorentzian length, and is therefore incomplete. In addition, $M$ can \textit{never} be compact in this case, because the compactness of $M$ implies the existence of some $x\in M$ with $d_g(x,x)=\infty$. These features are of course in stark contrast with their Riemannian counterparts. 

\subsection{Contrasting Ricci and transverse Ricci bounds}\label{section: constrasting ricci and transverse ricci}
As announced before, in order to obtain a transverse timelike diameter result, we shall have to impose a suitable bound on the \textit{transverse} Ricci tensor of a given time-orientable Lorentzian foliation along transversely timelike directions\footnote{Observe that since the transverse Ricci tensor is $\f$-basic, this is equivalent to imposing bounds along the $g$-timelike directions for any associated Lorentzian bundle-like metric $g$.}. From the outset, we ought to discuss the logical relation between such a bound and analogous bounds on the (full) Ricci tensor. In this subsection we establish via explicit examples the following logical independence result.

\begin{proposition}\label{prop: ricciversustransversericci}
  There exist examples of time-orientable Lorentzian foliations $(M,\f,g_\intercal)$ with associated timelike bundle-metric $g$ such that, respectively, 
  \begin{itemize}
      \item[1)] there exist positive constants $A,C>0$ such that for any $g$-unit timelike vector $v\in TM$ we have $\ric_g(v,v) \geq C$ but $\ric_\intercal(v,v)\leq -A$;
      \item[2)] there exists a positive contant $C>0$ such that for any $g$-unit timelike vector $v\in TM$ we have $\ric_\intercal(v,v) \geq C$, but there exists a $g$-timelike vector $v_0\in TM$ such that $\ric_g(v_0,v_0)<0$. 
  \end{itemize}
\end{proposition}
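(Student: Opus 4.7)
The plan is to construct explicit examples for each part. Throughout, we exploit the fact that $\ric_\intercal$ is determined by the transverse (base) geometry alone (agreeing locally with the Ricci of the quotient, as in Remark \ref{remark:basicLCconnectiononquotients}), whereas $\ric_g$ receives additional contributions from the Ricci of the leaves and from the O'Neill fundamental tensors of the associated semi-Riemannian submersion. Since the latter tensors vanish for any Riemannian product, on horizontal $g$-unit timelike vectors we have $\ric_g(v, v) = \ric_\intercal(v, v)$ in a product metric, so a product suffices for (2) but cannot possibly work for (1).

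For part (2), I would take $M = \mathrm{AdS}^q \times \mathbb{H}^r$ with $q \geq 2$ and $r > q$, endowed with the Lorentzian product metric $g$ (both factors having constant sectional curvature $-1$). The foliation $\f$ is by $\mathbb{H}^r$-fibers of the projection onto the first factor, which are spacelike, and the transverse metric is $\pi^* g_{\mathrm{AdS}^q}$. For any $g$-unit timelike $v = v^A + v^H$, the inequality $g_{\mathbb{H}^r}(v^H, v^H) \geq 0$ forces $g_{\mathrm{AdS}^q}(v^A, v^A) \leq -1$, whence $\ric_\intercal(v, v) = -(q-1)\, g_{\mathrm{AdS}^q}(v^A, v^A) \geq q - 1$, giving the constant $C = q - 1$. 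A direct computation in the product then yields $\ric_g(v, v) = (q - 1) + (q - r)\, g_{\mathbb{H}^r}(v^H, v^H)$; since $q - r < 0$, any $g$-timelike $v_0$ with $g_{\mathbb{H}^r}(v_0^H, v_0^H) > (q - 1)/(r - q)$ satisfies $\ric_g(v_0, v_0) < 0$, as required.

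For part (1), I would take a warped product $M = B \times_f F$ with base $(B, g_B)$ a $q$-dimensional Lorentzian manifold ($q \geq 2$) satisfying $\ric_B(u, u) \leq -(q-1)\kappa$ on $g_B$-unit timelike $u$ for some $\kappa > 0$ (for instance, a suitable domain in $q$-dimensional de Sitter space), fiber $(F, g_F)$ a Ricci-flat Riemannian $p$-manifold (e.g., a flat torus), and warping function $f : B \to (0, \infty)$ chosen so that $\Hess_f(X, X) \leq -c\, f$ for every $g_B$-unit timelike $X$ (for some constant $c > (q - 1)\kappa/p$) and so that the quantity $f^\# := \Delta_B f/f + (p - 1)\, g_B(\nabla_B f, \nabla_B f)/f^2$ satisfies $f^\# \leq 0$. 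Because the transverse metric is $\pi^* g_B$, one immediately gets $\ric_\intercal(v, v) \leq -(q - 1)\kappa =: -A$ for every $g$-unit timelike $v$. Applying the O'Neill warped product identities $\ric_g(X, X) = \ric_B(X, X) - (p/f)\Hess_f(X, X)$ on horizontal $X$ and $\ric_g(V, V) = -g(V, V)\, f^\#$ on vertical $V$ (using $\ric_F = 0$), a simple scaling argument yields $\ric_g(v, v) \geq pc - (q - 1)\kappa =: C > 0$ uniformly for every $g$-unit timelike $v$.

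The main obstacle is the construction in (1) of a warping function $f$ that realizes the uniform Hessian bound $\Hess_f \leq -c\, f\, g_B$ along all timelike directions of $B$. This is a restrictive differential inequality that typically can only be enforced on bounded subregions of the base --- one candidate is $f$ proportional to $\cos(\sqrt{c}\, t)$ on a truncated chart of $q$-dimensional de Sitter, with $c$ chosen so that the estimate persists along every boost direction --- and it must be coupled with a verification that $f^\#$ remains non-positive. Once such an $f$ is secured, the desired bounds follow by a direct application of the warped product Ricci formulas from Section \ref{section: SR foliations} together with Remark \ref{remark:basicLCconnectiononquotients}.
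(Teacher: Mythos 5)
Your part (2) is correct and is a genuinely different (and arguably cleaner) example than the paper's. A direct Lorentzian product works here: on $\mathrm{AdS}^q\times\mathbb{H}^r$ with $r>q$, the identity $g_{\mathrm{AdS}}(v^A,v^A)=-1-g_{\mathbb{H}^r}(v^H,v^H)\leq -1$ for $g$-unit timelike $v$ immediately gives $\ric_\intercal(v,v)=-(q-1)\,g_{\mathrm{AdS}}(v^A,v^A)\geq q-1$, while $\ric_g(v,v)=(q-1)+(q-r)\,g_{\mathbb{H}^r}(v^H,v^H)$ goes negative for large vertical component. The paper instead uses a warped product over a flat Lorentzian strip with a compact hyperbolic fiber and $f=\log t$, which is somewhat more involved; your example shows the phenomenon already occurs for a product and pins it on the negative Ricci curvature of the fiber alone.

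Your part (1), however, has a genuine gap that you yourself flag: you never actually produce a warping function $f$ satisfying the two differential inequalities you impose. The intended approach (warped product over a truncated de Sitter base with Ricci-controlled fiber, then choose $f$ to compensate the negative base Ricci via the Hessian term in O'Neill's formula) is the same one the paper uses, so the strategy is right. But the candidate $f\propto\cos(\sqrt{c}\,t)$ does not satisfy $f^\#\leq 0$ near $t=0$: a direct computation gives $f^\#\to c>0$ as $t\to 0^+$, so the sign condition fails precisely where the truncation would have to start. The paper sidesteps this difficulty with a different, much more pedestrian choice on $B=(0,L)\times\mathbb{S}^3$, namely $f(t)=t/2$; this is far from satisfying a clean Hessian comparison like $\Hess_f\leq -cf\,g_B$, but explicit substitution into the warped-product Ricci formula yields the uniform bound $\ric_g(T,T)\geq(p/(4t^2)-3)\lambda^2$ on unit timelike $T$ with $T^H=\lambda\partial_t+x$, which is $\geq1$ once $L\leq\sqrt{p}/4$. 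The paper also takes $\ric_F\geq 0$ rather than insisting $F$ be Ricci-flat; that extra freedom (a nonnegative rather than vanishing fiber contribution) matters because it removes the need for $f^\#\leq 0$ altogether and absorbs the sign of $f^\#$ into the explicit numerical estimate. To complete your argument you would need either to verify your Hessian and $f^\#$ inequalities for some concrete $f$ on a concrete domain, or — better — to relax them to the kind of explicit localized estimate the paper performs.
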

The two classes of examples we shall build here use warped products. Specifically, we shall take two distinct convenient choices of a $p$-dimensional connected Riemannian manifold $(F,g_F)$, a $q$-dimensional ($q\geq 2$) spacetime $(B,g_B)$ and a warping function $f\in C^\infty(B)$. We then shall take $M=B\times F$ with metric 
$$g= \pi^\ast g_B + (f\circ \pi_B)^2 \pi_F^\ast g_F,$$
which is as we know bundle-like for the foliation $\f$ whose leaves are the fibers $\{b\}\times F$. 

Let $T\in TM\cong TB\oplus TF$ be a unit $g$-timelike vector. We decompose it in vertical ($T^V$) and horizontal ($T^H$) parts and use the standard formulas given in \cite[Cor. 7.43]{oneillbook}: 
\begin{equation}\label{oneill formula for ricci}
  \ric_g(T,T) = \ric^F( T^V,T^V) - g(T^V,T^V) f^{\#} + \ric^B (T^H,T^H) - \frac{p}{f} \mathrm{Hess}^B_f(T^H,T^H),
\end{equation}
where quantities indicated by superscripts $F$ and $B$ on the upper right correspond to lifts to $(M,g)$ of the corresponding quantities on $F$ and $B$, respectively, and 
$$f^{\#}:= \left(\frac{\Delta_B f}{f}+(p-1)\frac{g_B(\nabla_Bf,\nabla _Bf)}{f^2}\right)\circ \pi_B.$$

\begin{example}
In our example establishing the situation in Prop. \ref{prop: ricciversustransversericci}(1), we fix $(F,g_F)$ to be any connected compact manifold such that $p\geq 2$ and 
\begin{equation}\label{boundonF}
    \ric_F\geq 0, 
\end{equation}
such as for example a round sphere or some compact quotient of the Euclidean space. As $(B,g_G)$, however, we make a more careful choice. Take $B=(0,L)\times \mathbb{S}^3$ where $L>0$ is to be specified later on, with the \textit{de Sitter} metric in the form 
\begin{equation}\label{eqdesitter}
g_B = -dt^2 + a(t)^2\cdot \omega_3,    
\end{equation}
where $\omega_3$ denotes the standard round metric on the $3$-sphere and $a(t):=\cosh\, t$. Time-orientation is taken so that $\partial_t$ is future-directed. Thus, \cite[Thm. 3.68]{beem} ensures that $(B,g_B)$ is globally hyperbolic. Moreover, a direct calculation shows that given any vector $v\in TB$ we have 
\begin{equation}\label{usualdesitterbound}
\ric_B(v,v)= 3\cdot g_B(v,v).
\end{equation}
In particular, then, for the $g$-unit timelike vector $T\in TM$ we have 
$$\ric_\intercal(T,T) \leq -3.$$
Now, on $B$ we choose the smooth warping function 
$$f(t,x) := e^{\phi(t)}, \quad \phi(t) = \log \left(t/2\right).$$
Therefore, a straightforward computation gives (omitting composition on the right with $\pi_B$ for notational simplicity)
\begin{equation}\label{f ast}
  f^{\#} = -\left[\frac{(p-2)}{4t^2} + \frac{3 a(t)\dot{a}(t)}{2t}\right]. 
\end{equation}
If we write $T^H= \lambda \cdot \partial_t +x$ where $\partial_t$ and $x$ indicate both vectors on $B$ with $x$ on the sphere and their horizontal lifts (again by notational simplicity), another straightforward calculation yields
\begin{equation}\label{hessianonb}
   \frac{\mathrm{Hess}^B_f(T^H,T^H)}{f} = - \frac{\lambda^2}{4t^2} -\frac{\omega_3(x,x)}{2t}.
\end{equation}
Therefore, plugging eqs. \eqref{boundonF}, \eqref{usualdesitterbound}, \eqref{f ast} and \eqref{hessianonb} into \eqref{oneill formula for ricci} we have
$$\ric_g(T,T) \geq \left(\frac{p}{4t^2} - 3\right) \lambda^2,$$
and 
$$g(T,T) =-1 \Longrightarrow \lambda^2 \geq 1,$$
so provided we choose $L\leq \sqrt{p}/4$ we end up with $\ric_g(T,T) \geq 1,$ thus obtaining the desired conclusion.
\end{example}
\begin{example}\label{bounded-diameter}
We now choose $(F^p, g_F)$ as some compact quotient of the $p$-dimensional hyperbolic space $\mathbb{H}^p$ by some discrete subgroup of isometries, so it has constant negative curvature $k=-1$. Take $B=(1,e)\times \mathbb{R}^{q-1}$ with metric 
$$g_B= -dt^2 + \delta^{q-1}$$
where $\delta^{q-1}$ is the Euclidean (flat) metric. On $B$ we consider the warping function 
$$f=f(t) = \log \, t, \quad 1<t<e.$$

Again, $(B,g_B)$ is globally hyperbolic, and thus so is the full warped product $(M,g)$. Moreover, since the $\mathbb{R}^{q-1}$ is flat and appears via a direct product, it will give no contribution to the either the Ricci tensor or its transverse version. Thus, let $E=\frac{\partial}{\partial t}+X$, for (the horizontal lift of]) some $X\in\mathfrak{X}(F)$. Then $g\left(E,E\right)=-1+g_F(X,X)\log^2(t)$, and therefore $E$ is timelike iff $g_F(X,X)<\frac{1}{\log^2(t)}$. In this case, on the one hand we have 
$$\ric_\intercal(E,E)=\ric_B\left(\frac{\partial}{\partial t},\frac{\partial}{\partial t}\right)=\frac{p}{t^2\log(t)}\geq \frac{p}{e^2}>0.$$ 
On the other hand, the Ricci tensor of the warped product is given by (see \cite[Cor. 12.10]{oneillbook})\begin{align*}
    \ric\left(E,E\right)&=-p\frac{f^{\prime\prime}}{f}+\left[(p-1)\left(\frac{f^\prime}{f}\right)^2+(p-1)\frac{k}{f^2}+\frac{f^{\prime\prime}}{f}\right]g_F(X,X)\\
&=\frac{p}{t^2\log(t)}+\left[\frac{(p-1)}{t^2\log^2(t)}+\frac{(p-1)k}{\log^2(t)}-\frac{1}{t^2\log(t)}\right]g_F(X,X)\\
&=\frac{p\,\log(t)+((p-1)(1+kt^2)-\log(t))g_F(X,X)}{t^2\log^2(t)}.
\end{align*}
Now, recall that $k=-1$, and since $1/\log^2 \, t> 1$ for $t\in (1,e)$, we can pick $X$ so that for some $x_0\in N$ we have $g_F(X(x_0),X(x_0))) =1$. For such $x_0$ and for some $t_0$ near enough $e$, $E(z_0)$ is timelike and $\ric_g(E(z_0),E(z_0))<0$ where $z_0=(t_0,0,x_0)$. 
\end{example}

\subsection{Transverse diameter}\label{finalsec}
After these preliminaries, we now turn to our announced transverse version of the Lorentzian timelike diameter theorem. In a first moment we discuss a few ideas and results which are valid on any semi-Riemannian foliation, and later on we specialize to Lorentzian foliations. So let $(M,\mathcal{F}, g_\intercal)$ be any semi-Riemannian foliation. The \textit{transverse length} of a piecewise $C^1$ curve segment $\alpha:[a,b]\rightarrow M$ is given by
$$\ell_\intercal(\alpha):=\int_a^b\sqrt{|g_\intercal(\alpha'(s),\alpha'(s))|}ds.$$

\begin{proposition}\label{prop: tocomparelengths} Let $(M,\mathcal{F}, g_\intercal)$ be a codimension $q$  semi-Riemannian foliation of index $\mu$. Given piecewise $C^1$ curve segments $\alpha,\beta:[a,b]\rightarrow M$, the following properties of the transverse length hold.  \begin{enumerate}\label{length-facts}
\item If $\alpha[a,b] \subset L$ for some leaf $L\in \f$, then $\ell_\intercal(\alpha)=0$.
\item If $\pi_\mathcal{F}\circ\alpha=\pi_\mathcal{F}\circ\beta$, then $\ell_\intercal(\alpha)=\ell_\intercal(\beta)$.
\item If $g$ is a bundle-like metric associated with $g_\intercal$ and $\alpha$ is $g$-causal, then $\ell_g(\alpha)\leq\ell_\intercal(\alpha)$, and equality holds if and only if $\alpha$ is $g$-horizontal.
\end{enumerate}
\end{proposition}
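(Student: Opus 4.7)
For (1), the argument is immediate from the defining $\f$-basic property of $g_\intercal$ (Definition \ref{def: SR foliations}): the isomorphism $\Phi_2$ in Proposition \ref{ prop: fundamental iso} ensures that $g_\intercal(V, W) = 0$ whenever $V \in \mathfrak{X}(\f)$. If $\alpha([a,b]) \subset L$ for some leaf $L$, then $\alpha'(s) \in T_{\alpha(s)}\f$ for every $s$, so $g_\intercal(\alpha'(s), \alpha'(s)) \equiv 0$ and the integrand of $\ell_\intercal(\alpha)$ vanishes identically.

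For (2), the plan is to establish the stronger pointwise identity $g_\intercal(\alpha'(s),\alpha'(s)) = g_\intercal(\beta'(s),\beta'(s))$ for every $s \in [a,b]$, which immediately yields equality of the integrals. Fix $s_0 \in [a,b]$; pick a simple open set $U$ containing $\alpha(s_0)$ and one $V$ containing $\beta(s_0)$, with defining submersions $\pi_U: U \to S$ and $\pi_V: V \to T$, and invoke Proposition \ref{ prop: basic 2} to obtain the unique semi-Riemannian metrics $h_S, h_T$ of index $\mu$ satisfying $g_\intercal|_U = \pi_U^{\ast}h_S$ and $g_\intercal|_V = \pi_V^{\ast}h_T$. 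Since $\alpha(s_0)$ and $\beta(s_0)$ share a common leaf, I will connect them by a path therein, cover that path by overlapping simple open sets, and compose the associated Haefliger cocycle transition maps --- all of which are transverse-metric isometries by Proposition \ref{ prop: basic 2} --- to produce a local diffeomorphism $\gamma$ from a neighborhood of $\pi_U(\alpha(s_0))$ onto a neighborhood of $\pi_V(\beta(s_0))$ that is an $(h_S,h_T)$-isometry and, by the hypothesis $\pi_\f\circ\alpha = \pi_\f\circ\beta$, satisfies $\gamma\circ\pi_U\circ\alpha(s) = \pi_V\circ\beta(s)$ for $s$ near $s_0$. Differentiating this relation and using the isometry property gives
\[
g_\intercal(\alpha'(s),\alpha'(s)) = h_S\bigl(d\pi_U(\alpha'(s)), d\pi_U(\alpha'(s))\bigr) = h_T\bigl(d\pi_V(\beta'(s)), d\pi_V(\beta'(s))\bigr) = g_\intercal(\beta'(s),\beta'(s)),
\]
so the integrands agree pointwise and compactness of $[a,b]$ patches the local identities into the global one. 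The main technical obstacle will be the bookkeeping in the chain of cocycle transitions that produces $\gamma$; this is the usual holonomy-along-a-path construction but needs care to ensure well-definedness on a neighborhood of the chosen curve in the common leaf.

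For (3), the argument is a direct pointwise computation. Using the $g$-orthogonal splitting $TM = T\f \oplus T\f^\perp$ from Definition \ref{def bundle-like metric}(i), decompose $\alpha'(s) = \alpha'(s)^\top + \alpha'(s)^\perp$. Since $g$ is Lorentzian (as $\alpha$ is $g$-causal) with associated Lorentzian $g_\intercal$, Remark \ref{rmk: facts about bundle like}(2) gives that the leaves are $g$-spacelike, so $g|_{T\f}$ is positive definite and $A(s) := g(\alpha'(s)^\top, \alpha'(s)^\top) \geq 0$. By \eqref{a que define}, $g_\intercal(\alpha'(s),\alpha'(s)) = g(\alpha'(s)^\perp, \alpha'(s)^\perp)$, and hence
\[
g(\alpha'(s),\alpha'(s)) = A(s) + g_\intercal(\alpha'(s),\alpha'(s)) \leq 0
\]
by causality, forcing $g_\intercal(\alpha'(s),\alpha'(s)) \leq -A(s) \leq 0$. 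Taking absolute values yields
\[
|g(\alpha'(s),\alpha'(s))| = |g_\intercal(\alpha'(s),\alpha'(s))| - A(s) \leq |g_\intercal(\alpha'(s),\alpha'(s))|,
\]
so $\sqrt{|g(\alpha',\alpha')|} \leq \sqrt{|g_\intercal(\alpha',\alpha')|}$ pointwise; integration gives $\ell_g(\alpha) \leq \ell_\intercal(\alpha)$. Equality holds iff $A \equiv 0$ on $[a,b]$, which by the positive definiteness of $g|_{T\f}$ is equivalent to $\alpha'(s)^\top \equiv 0$, i.e., to $\alpha$ being $g$-horizontal.
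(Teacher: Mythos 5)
Your items (1) and (3) are correct and match the paper's reasoning: (1) is immediate from the $\f$-basic property, and (3) is just the pointwise decomposition $g(v,v)=g(v^\top,v^\top)+g_\intercal(v,v)$ together with the fact that $g|_{T\f}$ is positive definite (so $g(v^\top,v^\top)\geq 0$), plus $g$-causality to control signs --- exactly the identity the paper cites. One small caveat on (3): the positive-definiteness of $g|_{T\f}$ is not automatic for an arbitrary bundle-like $g$ associated with a $g_\intercal$ of index $\mu$; it holds when $\mathrm{ind}(g)=\mu$ (so the leaves are spacelike), which is the implicit reading here.

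For (2) your overall strategy --- establish a pointwise identity of integrands via a holonomy/cocycle transition map $\gamma$ constructed by transporting along a path in the common leaf, then conclude by compactness --- is the same strategy the paper follows. However, you dismiss the crucial step too quickly. The assertion ``by the hypothesis $\pi_\f\circ\alpha=\pi_\f\circ\beta$, $\gamma$ satisfies $\gamma\circ\pi_U\circ\alpha(s)=\pi_V\circ\beta(s)$ for $s$ near $s_0$'' is precisely what must be \emph{proved}, not what follows from the hypothesis. The hypothesis only says $\alpha(s)$ and $\beta(s)$ lie on a common leaf $L_s$; it does not by itself guarantee that $\beta(s)$ is on the plaque of $L_s$ in $V$ that one reaches by following (via the chain of cocycle transitions underlying $\gamma$) the plaque of $L_s$ through $\alpha(s)$ --- a leaf may intersect a simple open set in more than one plaque, and holonomy picks out a particular one. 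That this does in fact hold for $s$ near $s_0$ is a continuity/uniqueness statement about plaques in simple neighborhoods, and it is exactly what occupies the bulk of the paper's proof (via the ``shadowing'' curve $\widehat{\alpha}_{t_0}$, the factored projection $\overline{\pi_{j_0}}$, and the Lebesgue-number patching). So the proposal identifies the right approach and the right technical obstacle, but the obstacle is the argument, not the bookkeeping; as written the step is a genuine gap that needs to be filled in the style of the paper's proof.
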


\begin{proof} Items (1) and (3) follow easily from the definitions and the identity \eqref{basic rel}. Let us prove (2). Using Prop. \ref{ prop: basic 2}, we can pick a Haefliger cocycle $(U_i,\pi_i, \gamma_{ij})_{i\in I}$ on $M$ together with semi-Riemannian metrics $h_i$ of the same index $\mu$ defined on $\pi_i(U_i)\subset \mathbb{R}^q$ for each $i\in I$ such that 
       $$\gamma_{ij}^\ast (h_i|_{\pi_i(U_i\cap U_j)}) = h_j|_{\pi_j(U_i\cap U_J)} \quad \forall i,j \in I,$$
   that is, the transition maps $\gamma_{ij}$ are isometries. By shrinking the $U_i$ if needed we can assume that each $\pi_i$ has connected fibers; observe in addition that $$\pi_i^\ast h_i = g_\intercal$$ on $U_i$ for each $i\in I$.
   
   Fix $t_0 \in [a,b]$, and let $i_0 \in I$ such that $\alpha(t_0)\in U_{i_0}$. We claim that given any $x\in U_{i_0}$, there exists some number $\varepsilon = \varepsilon(x,t_0)>0$ and a piecewise $C^1$ curve segment $\widehat{\alpha}_{t_0,x}:[t_1,t_2]\rightarrow U_{i_0} $ with $t_1<t_2$, $t_0 \in [t_1,t_2] \subset [a,b] $ and $|t-t_0|< \varepsilon, \forall t\in [t_1,t_2]$ for which 
   $$\widehat{\alpha}_{t_0,x}(t_0) =x,\, \ell_\intercal(\widehat{\alpha}_{t_0,x}) = \ell_\intercal(\alpha|_{[t_1,t_2]}) \text{ and } \pi_\f\circ \widehat{\alpha}_{t_0,x} = \pi_\f \circ \alpha|_{[t_1,t_2]}.$$
   Indeed, by continuity of $\alpha$ we first pick $\varepsilon>0$ be such that $\alpha(t) \in U_{i_0}$ whenever $t\in [a,b]$ is such that $|t-t_0|<\varepsilon$. Then let $\sigma_0: V\subset \pi_{i_0}(U_{i_0}) \subset \mathbb{R}^q \rightarrow U_{i_0}$ be a smooth local section of $\pi_i$ with $\pi_{i_0}(\alpha(t_0)) \in V$, $\sigma_0(\pi_{i_0}(\alpha(t_0))) =x$ and 
   $$\pi_{i_0}\circ \sigma_0 = \mathrm{Id}_{V}.$$
   By shrinking $\varepsilon >0$ if necessary we can assume the existence of $\varepsilon$-close-to-$t_0$ numbers $t_1<t_2$ in $[a,b]$ such that $t_0\in [t_1,t_2]$and $\pi_{i_0}\circ \alpha[t_1,t_2]\subset V$, and thus define 
   $$\widehat{\alpha}_{t_0,x}(t):= \sigma_0\circ \pi_{i_0}\circ \alpha(t), \quad \forall t\in [t_1,t_2].$$ 
   Now, $$\sigma_0^\ast g_\intercal = \sigma_0^\ast \pi_{i_0}^\ast h_{i_0}=(\pi_{i_0}\circ \sigma_0)^\ast h_{i_0} \equiv h_{i_0},$$
   whence we conclude that
   $$\ell_\intercal(\widehat{\alpha}_{t_0,x})= \ell_{h_{i_0}}(\pi_{i_0}\circ \alpha|_{[t_1,t_2]}) = \ell_\intercal(\alpha|_{[t_1,t_2]}).$$
   Furthermore,
   $$\pi_{i_0}\circ \widehat{\alpha}_{t_0,x} = \pi_{i_0}\circ \alpha|_{[t_1,t_2]} \Longrightarrow \pi_\f\circ \widehat{\alpha}_{t_0,x} = \pi_\f \circ \alpha|_{[t_1,t_2]}.$$ 
   This proves the claim. 

   Now, recall we are assuming in particular that 
   $$L_{\beta(t_0)} = L_{\alpha(t_0)}.$$
   Pick any continuous curve $\gamma:[0,1] \rightarrow M$ whose image is contained in this common leaf and such that $\gamma(0) =\alpha(t_0), \gamma(1)=\beta(t_0)$, and let $U_{i_0}, U_{i_1}, \dots, U_{i_k}$ of the simple sets in our Haefliger cocycle whose union covers $\gamma[0,1]$. We also can assume that $U_{i_{j-1}}\cap U_{i_j}\neq \emptyset$ for each $j\in \{1,\ldots, k\}$, $\alpha(t_0) \in U_{i_0}$ and $\beta(t_0) \in U_{j_0}:= U_{i_k}$. By applying our claim repeatedly along points $x_j=\gamma(s_j)\in U_{i_{j-1}}\cap U_{i_j}$ we conclude that there exist a number $\varepsilon_{t_0}>0$, and a piecewise $C^1$ curve segment $\widehat{\alpha}_{t_0}:[s_1,s_2]\rightarrow U_{j_0} $ with $s_1<s_2$, $t_0 \in [s_1,s_2] \subset [a,b] $ and $|s-t_0|< \varepsilon_{t_0}, \forall s\in [s_1,s_2]$ for which 
   $$\widehat{\alpha}_{t_0}(t_0) =\beta(t_0),\, \beta[s_1,s_2] \subset U_{j_0},\, \ell_\intercal(\widehat{\alpha}_{t_0}) = \ell_\intercal(\alpha|_{[s_1,s_2]}) \text{ and } \pi_\f\circ \widehat{\alpha}_{t_0} = \pi_\f \circ \alpha|_{[s_1,s_2]}.$$
   But note that in that case we also have 
   \begin{equation}\label{a single step}
    \pi_\f\circ \widehat{\alpha}_{t_0}=\pi_\f \circ \beta|_{[s_1,s_2]}. 
   \end{equation}
   Since $\widehat{\alpha}_{t_0}$ is in the same simple set $U_{j_0}$ as $\beta|_{[s_1,s_2]}$ we can consider as in Proposition \ref{prop: submersion leaf space} the uniquely defined map $\overline{\pi_{j_0}}: \pi_\f(U_{j_0}) \rightarrow \mathbb{R}^q$ that makes the diagram 
     $$
     \xymatrix{
 & U_{j_0}  \ar[dr]^-{\pi_{j_0}} \ar[dl]_{\pi_{\f}} & \\
\pi_\f(U_{j_0}) \ar[rr]^{\overline{\pi_{j_0}}} & & \mathbb{R}^q}
$$
commute. By applying $\overline{\pi_{j_0}}$ to equation \eqref{a single step} we end up with 
$$\pi_{j_0}\circ \widehat{\alpha}_{t_0}=\pi_{j_0} \circ \beta|_{[s_1,s_2]} \Longrightarrow \ell_\intercal (\widehat{\alpha}_{t_0})= \ell_\intercal(\beta|_{[s_1,s_2]}),$$
whence finally 
$$\ell_\intercal(\beta|_{[s_1,s_2]})=\ell_\intercal(\alpha|_{[s_1,s_2]}).$$
Since the fiducial point $t_0 \in [a,b]$ was taken arbitrarily, we have in effect established (by taking a Lebesgue number for the open cover $(t-\varepsilon_t,t+\varepsilon_t)_{t\in [a,b]}$ of the compact interval $[a,b]$ thus obtained) the existence of a partition $s_0=a< \cdots < s_m = b$ of the interval $[a,b]$ such that 
$$\ell_\intercal(\beta|_{[s_{j-1},s_j]})=\ell_\intercal(\alpha|_{[s_{j-1},s_j]}), \quad \forall j \in \{1,\ldots, m\},$$
and thus $\ell_\intercal(\beta)=\ell_\intercal(\alpha).$
\end{proof}

We now establish the presence of focal points of leaves along certain sufficiently long horizontal geodesics. Let $(M,\mathcal{F}, g_\intercal)$ be a semi-Riemannian foliation. We say that $v\in T_pM$ is \textit{transversely cospacelike} if either $(M,\mathcal{F}, g_\intercal)$ is a Riemannian foliation, or else $(M,\mathcal{F}, g_\intercal)$ has index 1 and $v$ is transversely timelike. A cospacelike vector $v\in TM$ is a \textit{$g_\intercal$-unit} vector if $|g_\intercal(v,v)|=1$.

\begin{theorem}\label{theorem: transverse focal points}
Let $(M^n,\mathcal{F}, g_\intercal)$ be a codimension $q\geq 2$ semi-Riemannian foliation of index $\nu\in\{0,1\}$, and suppose $g$ is an associated bundle-like semi-Riemannian metric of index $\mu\geq\nu$ on $M$. If there exists some constant $C>0$ such that $\ric_\intercal(v,v)\geq (q-1)C$ on any $g_\intercal$-unit cospacelike vector $v\in 
TM$, then any unit speed, $g$-horizontal cospacelike $g$-geodesic segment $\gamma:[a,b]\rightarrow M$ with $\ell_\intercal(\gamma)\geq \pi/\sqrt{C}$ admits a focal point of the leaf $L_{\gamma(a)}$. 
\end{theorem}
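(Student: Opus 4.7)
The plan is to adapt the variational Morse-theoretic proof of the classical Bonnet--Myers theorem (for $\nu=0$) or the Beem--Ehrlich timelike focal-point theorem (for $\nu=1$) to the transverse setting, using $\nabla^\intercal$-parallel frames of $\nu(\f)|_\gamma$ to build a test variation whose transverse index form becomes non-positive once $\ell_\intercal(\gamma)\geq\pi/\sqrt{C}$, and then to convert the resulting transverse focal datum into a genuine focal point of $L_{\gamma(a)}$ in $(M,g)$ via the semi-Riemannian submersion structure of any local submersion defining $\f$.

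First, since $\nabla^\intercal$ is metric (Proposition~\ref{prop: basic connection unique}), I parallel-transport a $g_\intercal$-orthonormal basis of $\nu(\f)_{\gamma(a)}$ along $\gamma$ to obtain a frame $\{\overline{E}_1(t),\ldots,\overline{E}_q(t)\}$ of $\nu(\f)|_\gamma$. Since $\gamma$ is horizontal and a $g$-geodesic, the explicit formula $\nabla^\intercal_X\xi=\overline{\nabla^g_XY_\xi}$ for horizontal $X$ (used in the construction of $\nabla^\intercal$) gives $\nabla^\intercal_{\gamma'}\overline{\gamma'}=\overline{\nabla^g_{\gamma'}\gamma'}=0$, so I may take $\overline{E}_q=\overline{\gamma'}$; the remaining $\overline{E}_1,\ldots,\overline{E}_{q-1}$ are $g_\intercal$-orthogonal to $\overline{\gamma'}$ and hence transversely spacelike, with $g_\intercal(\overline{E}_i,\overline{E}_i)=+1$, in both the Riemannian and Lorentzian cases (in the latter, orthogonals to a $g_\intercal$-timelike vector are transversely spacelike, cf.\ Lemma~\ref{lemma: timewedge comps}). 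Reparametrizing so that $[a,b]=[0,L]$ with $L=\ell_\intercal(\gamma)\geq\pi/\sqrt{C}$, for $i=1,\ldots,q-1$ consider the endpoint-vanishing section $\overline{V}_i(t):=\sin(\pi t/L)\,\overline{E}_i(t)\in\Gamma(\nu(\f)|_\gamma)$.

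Next, I would carry out the transverse index form computation. Setting
\[
I^\intercal(\overline{V},\overline{V}) := \int_0^L\!\bigl[g_\intercal(\nabla^\intercal_{\gamma'}\overline{V},\nabla^\intercal_{\gamma'}\overline{V})-g_\intercal(R^\intercal(\overline{V},\gamma')\gamma',\overline{V})\bigr]\,dt,
\]
parallelism of $\overline{E}_i$ gives $\nabla^\intercal_{\gamma'}\overline{V}_i=(\pi/L)\cos(\pi t/L)\,\overline{E}_i$; moreover the property $R^\intercal(X,Y)\equiv 0$ whenever $X$ or $Y$ lies in $\mathfrak{X}(\f)$ annihilates the $i=q$ term in the defining sum for $\ric_\intercal$, so $\sum_{i=1}^{q-1}g_\intercal(R^\intercal(\overline{E}_i,\gamma')\gamma',\overline{E}_i)=\ric_\intercal(\gamma',\gamma')\geq(q-1)C$. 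Summing,
\[
\sum_{i=1}^{q-1}I^\intercal(\overline{V}_i,\overline{V}_i)\leq\int_0^L\!\Bigl[(q-1)\tfrac{\pi^2}{L^2}\cos^2\tfrac{\pi t}{L}-(q-1)C\sin^2\tfrac{\pi t}{L}\Bigr]\,dt=\tfrac{(q-1)L}{2}\Bigl(\tfrac{\pi^2}{L^2}-C\Bigr)\leq 0,
\]
so some $\overline{V}_{i_0}$ realizes $I^\intercal(\overline{V}_{i_0},\overline{V}_{i_0})\leq 0$. A standard index-form/Morse argument applied to the transverse Jacobi equation $\nabla^\intercal_{\gamma'}\nabla^\intercal_{\gamma'}\overline{J}+R^\intercal(\overline{J},\gamma')\gamma'=0$ on sections of $\nu(\f)|_\gamma$ vanishing at the endpoints then produces a nontrivial transverse Jacobi field $\overline{J}$ with $\overline{J}(0)=0$ and $\overline{J}(t_0)=0$ for some $t_0\in(0,L]$.

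Finally, this transverse focal datum must be transferred to a genuine focal point of $L_{\gamma(a)}$ in $(M,g)$. Covering $\gamma([0,L])$ by finitely many simple open sets $U_j$ of a Haefliger cocycle with defining submersions $\pi_j:U_j\to S_j$, Remark~\ref{remark:basicLCconnectiononquotients} identifies each $\pi_j:(U_j,g)\to(S_j,(\pi_j)_\ast g_\intercal)$ as a semi-Riemannian submersion whose fiber over $\pi_j(\gamma(a))$ equals $L_{\gamma(a)}\cap U_j$, and presents $(\nabla^\intercal,R^\intercal)$ on $U_j$ as pullbacks of the Levi--Civita data of $(S_j,(\pi_j)_\ast g_\intercal)$; hence transverse Jacobi fields along $\gamma|_{U_j}$ correspond to Jacobi fields along $\pi_j\circ\gamma$ on the base, with the isometric Haefliger transitions gluing these pieces coherently along $\gamma$. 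Thus $\overline{J}$ projects, in the simple set containing $\gamma(t_0)$, to a base Jacobi field witnessing a conjugate point at $\pi_j(\gamma(t_0))$ of $\pi_j(\gamma(a))$ along the projected geodesic; the classical correspondence for semi-Riemannian submersions between base conjugate points along a geodesic and focal points of the fiber along a horizontal lift then yields the sought focal point of $L_{\gamma(a)}$ at $\gamma(t_0)$ in $(M,g)$. The main obstacle lies precisely in this transfer step, requiring compatible gluing of the local base Jacobi fields via the Haefliger transitions and a careful formulation of the focal-lifting lemma in the semi-Riemannian submersion setting (its Riemannian counterpart being classical).
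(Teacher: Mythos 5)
Your approach is genuinely different from the paper's, and it is worth contrasting the two orders of operations. You do an intrinsic second-variation computation on $\nu(\f)$ first, obtaining a nontrivial transverse Jacobi field with two zeros, and only afterwards try to convert this into a genuine focal point of $L_{\gamma(a)}$ in $(M,g)$ via local semi-Riemannian submersion charts. The paper works in the opposite order: it desingularizes \emph{first}. Namely, it pulls back $\f$ and $g$ under the normal exponential map $\exp_h^\perp:\nu_\varepsilon\gamma\to M$ of a small tubular neighborhood of $\gamma$, which (for $\varepsilon$ small) yields a single global semi-Riemannian submersion $\pi:\nu_\varepsilon\gamma\to B$ such that the zero section covers $\gamma$; then the statement becomes a direct application of O'Neill's conjugate-to-focal correspondence \cite[Thm.~4]{oneill2} together with the classical Bonnet--Myers-type conjugate-point estimate \cite[Lemma~10.23]{oneillbook} on $B$, whose Ricci bound is inherited from $\ric_\intercal$ via Remark~\ref{remark:basicLCconnectiononquotients}. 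The paper's order costs nothing new (all heavy machinery is classical, applied on an honest base manifold), whereas yours buys an explicit index-form inequality at the expense of having to set up a ``transverse Morse index theory'' and then glue.

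That said, your proposal has a genuine gap precisely in the transfer step, and one that your sketch does not close. As stated, $\gamma(a)$ and $\gamma(t_0)$ need not lie in a common simple open set $U_j$, so $\pi_j(\gamma(a))$ and $\pi_j(\gamma(t_0))$ need not both be defined in any one chart; there is no global base curve $\pi\circ\gamma$ to which O'Neill's conjugate-focal correspondence can be applied. Saying that ``the isometric Haefliger transitions gluing these pieces coherently along $\gamma$'' does not by itself produce a submersion whose fiber is $L_{\gamma(a)}$ and whose base contains a geodesic joining the images of $\gamma(a)$ and $\gamma(t_0)$; this is exactly what the tubular neighborhood pullback supplies (a \emph{single} simple foliation $\mathcal{G}$ along the whole curve, hence a single submersion). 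Without that construction, your concluding appeal to a focal-lifting lemma is applied to objects that are not well-defined. Two smaller points: the vanishing of the $i=q$ term in the sum defining $\ric_\intercal(\gamma',\gamma')$ is a consequence of the antisymmetry $R_\intercal(\gamma',\gamma')=0$ in the first two slots, not of $R_\intercal$ vanishing on vertical arguments — $\gamma'$ is horizontal, not in $\mathfrak{X}(\f)$; and the step from $I^\intercal(\overline V_{i_0},\overline V_{i_0})\le 0$ to a transverse Jacobi field with a pair of zeros requires a transverse Morse index theorem (or at least a variational characterization of transverse focal/conjugate points) that you would also need to establish, since $I^\intercal$ is not, \emph{a priori}, the second variation of any global base energy functional.
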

\begin{proof}
We will adapt the construction in the proof of \cite[Proposition 5]{hebda} to our setting. Endow $M$ with an auxiliary Riemannian metric $h$ and let $\nu\gamma\to [a,b]$ denote the normal bundle of $\gamma$ with respect to $h$. Then, since $[a,b]$ is compact, there exists some $\varepsilon>0$ such that $\exp_h^\perp:\nu_\varepsilon\gamma\to M$ is a local diffeomorphism onto its image $U=\mathrm{Tub}_\varepsilon(\gamma)$, where $\nu_\varepsilon\gamma=\{v\in\nu\gamma\ |\ \|v\|_h<\varepsilon\}$. By further shrinking $\varepsilon$ if necessary, we can assume that $\f|_V$ is a simple foliation (i.e., its leaves are the fibers of a submersion) on each open set $V\subset U$ onto which $\exp_h^\perp$ restricts to a diffeomorphism: this is possible since $\gamma([a,b])$ can be covered by finitely many $\f$-simple open sets, and then we can take $\varepsilon$ small enough so that $U$ is contained in their union. Hence, if we consider $\mathcal{G}:=(\exp_h^\perp)^*(\f|_U)$, then by construction $\mathcal{G}$ is a simple foliation, given by the fibers of a submersion $\pi:\nu_\varepsilon\gamma\to B$, where $B$ is locally isometric to $S_\f$.

We can now endow $(\nu_\varepsilon\gamma,\mathcal{G})$ with the bundle-like semi-Riemannian metric $(\exp_h^\perp)^*g$ of index $\mu$ and drop the auxiliary metric $h$. Notice that $(\exp_h^\perp)^*g$ projects down to $B$ and it is either Riemannian or Lorentzian, according if $\nu$ is respectively $0$ or $1$. Hence $\pi:\nu_\varepsilon\gamma\to B$ becomes a semi-Riemannian submersion. By construction, it now suffices to show that there is a focal point of the leaf $J$ of $\mathcal{G}$ containing $(a,0)$ over the zero section $s$ of $\nu\gamma$, since $J$ is mapped on $L_{\gamma(a)}$ and $s$ is mapped over $\gamma$ under $\exp_h^\perp$. Since $\pi:\nu_\varepsilon\gamma\to B$ is a semi-Riemmanian submersion and $s$ is a geodesic perpendicular to its fibers, $\pi\circ s$ is a geodesic on $B$ with the same causal character of $\gamma$, which is not null. Also, $J$-focal points over $s$ correspond directly to $\pi(J)$-conjugate points in $B$ over $\pi\circ s$ (see \cite[Theorem 4]{oneill2}: its Riemannian statement notwithstanding, it works exactly the same in a semi-Riemannian ambiance for horizontal geodesics).

Notice that, since $B$ is locally isometric to local quotients of $\f$, it follows from what we saw in Remark \ref{remark:basicLCconnectiononquotients} that $B$ satisfies $\ric_B\geq(q-1)C>0$ for all unit cospacelike vectors, hence, since
$$\ell_B(\pi\circ s)=\ell_{(\exp_h^\perp)^*g}(s)=\ell_g(\gamma)=\ell_\intercal(\gamma)\geq \pi/\sqrt{C},$$
it follows that there exists the required conjugate point in $B$, by \cite[Lemma 10.23]{oneillbook}.
\end{proof}

We define the \textit{transverse Lorentzian distance function} on $(M,\f,g_\intercal)$ as follows: for $L,L'\in\f$, we put  $d_\intercal(L,L')=0$ if $L'\notin J^+_\intercal(L)$ and, otherwise, if $L'\in J^+_\intercal(L)$, we define
$$d_\intercal(L,L')=\sup\{\ell_\intercal(\gamma)\},$$
where the supremum ranges over the set of all piecewise $C^1$, future-directed, transversely causal curves starting on $L$ and ending on $L'$. Again, despite the name $d_\intercal$ is not a distance function in the usual sense; in fact, in view of Prop. \ref{prop: tocomparelengths} it should be thought of as the Lorentzian distance function of $M/\f$. Then, the \textit{transverse timelike diameter} of $(M,\f,g_\intercal)$ is
$$\diam_\intercal(M,\f,g_\intercal):=\sup\{d_\intercal(L,L')\ |\ L,L'\in\f\}.$$

\begin{remark}
It is immediate that, if $g$ is a bundle-like metric compatible with $g_\intercal$, then
$\diam(M,g)\leq\diam_\intercal(M,\f,g_\intercal)$.
\end{remark}

We are finally in position to state and proof the main results regarding the timelike diameter. We actually obtain two such results: one for the timelike diameter of $(M,g)$, and another one for the transverse timelike diameter of $(M,\f,g_\intercal)$, which as we pointed out can be seen as a timelike diameter for $M/\f$. Let us start with the timelike diameter of $(M,g)$. For that it will be useful to recall that a subset $C$ in a spacetime $(M,g)$ is \textit{future causally complete} (FCC) if for any $x\in J^+(C)$ the closure of $J^-(x)\cap C$ has compact closure in $C$. \textit{Past causally complete} (PCC) sets are defined time-dually. Observe that any FCC/PCC is necessarily closed in $M$, and any compact subset in $M$ is both FCC and PCC.

\begin{theorem}\label{diameter}
Let $(M, g)$ be a globally hyperbolic spacetime. Let $\mathcal{F}$ be a codimension $q\geq2$ foliation on $M$ whose leaves are all future causally complete, spacelike submanifolds with respect to $g$, and such that $g$ is bundle-like with respect to $\f$. Suppose that $\ric_\intercal\geq (q-1)C>0$ on $g_\intercal$-unit transversely timelike vectors. Then
$$\diam(M, g)\leq\frac{\pi}{\sqrt{C}}.$$
\end{theorem}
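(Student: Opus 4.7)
The plan is to argue by contradiction, combining a standard maximizing-geodesic argument from a leaf with the transverse focal point theorem already available. Suppose for contradiction that $\diam(M,g)>\pi/\sqrt{C}$, so there exist $p,q\in M$ with $d_g(p,q)>\pi/\sqrt{C}$; in particular $q\in I^+(p)\subset I^+(L_p)$. Write $L:=L_p$. The overall strategy is to produce a maximal future-directed timelike $g$-geodesic from $L$ to $q$, use the bundle-like hypothesis to force it to be horizontal, and then invoke Theorem \ref{theorem: transverse focal points} to locate a focal point of $L$ strictly before the endpoint, contradicting maximality.

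First I would produce a future-directed unit-speed timelike $g$-geodesic $\gamma:[0,b]\to M$ with $\gamma(0)\in L$, $\gamma(b)=q$, $\gamma'(0)\perp T_{\gamma(0)}L$, and $\ell_g(\gamma)=b=d_g(L,q)\geq d_g(p,q)>\pi/\sqrt{C}$. Here is where the hypotheses are crucially used: because $(M,g)$ is globally hyperbolic and $L$ is a future causally complete spacelike submanifold, the set $J^-(q)\cap L$ has compact closure in $L$, so the space of future-directed causal curves from $L$ to $q$ is compact in the $C^0$-topology (limit curve lemma), and upper semicontinuity of Lorentzian arc length gives a maximizer, which is a smooth timelike $g$-geodesic; first-variation yields orthogonality to $L$ at the initial endpoint (this is the Avez--Seifert argument adapted to closed FCC spacelike submanifolds, following the pattern of \cite[Ch.~14]{oneillbook}). \emph{This is the technical heart of the argument and the main obstacle}, since everything else is a direct assembly of the transverse machinery built above.

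Next, since $\gamma'(0)$ is $g$-normal to $L$ and $g$ is bundle-like for $\f$, Proposition \ref{bundlelike equivalences}(iii) ensures that $\gamma$ remains $g$-horizontal on all of $[0,b]$. Being $g$-timelike, $\gamma$ is in particular transversely timelike by Lemma \ref{lemma: comparing causal types}(i), hence cospacelike in the sense of Theorem \ref{theorem: transverse focal points}. Moreover, because $\gamma$ is horizontal, Proposition \ref{prop: tocomparelengths}(3) gives
\[
\ell_\intercal(\gamma)=\ell_g(\gamma)=b>\frac{\pi}{\sqrt{C}},
\]
and the unit-speed parametrization of $\gamma$ with respect to $g$ coincides with its $g_\intercal$-arclength parametrization.

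Finally, apply Theorem \ref{theorem: transverse focal points} to the subsegment $\gamma|_{[0,\pi/\sqrt{C}]}$, which is available because $b>\pi/\sqrt{C}$: this produces a focal point of $L$ along $\gamma$ at some parameter $t_\ast\leq \pi/\sqrt{C}<b$, i.e.\ strictly in the interior of $\gamma$. However, a classical Lorentzian second-variation/Morse-index argument (see e.g.\ \cite[Thm.~10.34]{oneillbook} or \cite[Prop.~10.51]{beem}) shows that a timelike geodesic realizing $d_g(L,q)$ from a spacelike submanifold $L$ to a point $q\in I^+(L)$ admits no focal points of $L$ strictly before its endpoint, for past such a focal point one may construct a timelike curve from $L$ to $q$ of strictly greater $g$-length than $\gamma$. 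This contradicts the maximality of $\gamma$ established in Step 1, completing the proof.
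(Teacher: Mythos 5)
Your proof is correct and follows essentially the same strategy as the paper's: obtain a maximal, unit-speed, leaf-orthogonal timelike geodesic from $L$ (the paper refers to Treude's thesis for this Avez--Seifert-type step), use Proposition~\ref{bundlelike equivalences}(iii) to conclude it is horizontal and hence has $\ell_\intercal=\ell_g>\pi/\sqrt{C}$, and then invoke Theorem~\ref{theorem: transverse focal points} to produce a focal point of the leaf strictly before the endpoint, contradicting maximality. You only spell out in more detail the limit-curve and first-variation steps that the paper compresses into a reference.
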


\begin{proofcomment}
At this point, the proof is fairly simple. Suppose that there is a timelike curve $\alpha:[a,b]\rightarrow M$ with $\ell_g(\alpha)>\frac{\pi}{\sqrt{C}}$. Employ standard causal-theoretic techniques (see, e.g., \cite[Section 3.2.4]{treude}) to obtain a future-directed (necessarily horizontal) timelike geodesic $L_{\alpha(a)}$-ray $\gamma$ (hence maximal) such that $\ell_g(\gamma)\geq\ell_g(\alpha)$. Then Theorem \ref{theorem: transverse focal points} yields a contradiction, since such a maximal geodesic cannot contain focal points to $L_{\alpha(a)}$.
\end{proofcomment}

We now obtain a timelike diameter theorem in its purely transverse version.

\begin{theorem}\label{transverse-diameter}
Let $(M,\mathcal{F}, g_\intercal)$ be a codimensional $q\geq 2$ transversely globally hyperbolic Lorentzian foliation with compact leaves, such that the leaf space $M/\f$ is Hausdorff. If $\ric_\intercal\geq (q-1)C$ on $g_\intercal$-unit timelike vectors, for some constant $C>0$, then $\diam_\intercal(M,\f, g_\intercal)\leq\frac{\pi}{\sqrt{C}}$.
\end{theorem}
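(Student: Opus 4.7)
The plan is to derive the bound by reducing, via Theorem \ref{teo: global hyperbolic does happen}, to a standard globally hyperbolic spacetime setting, and then applying the $L_0$-ray construction from the proof of Theorem \ref{diameter} together with Theorem \ref{theorem: transverse focal points}. First, by Proposition \ref{prop: time orient crit}, fix a bundle-like Lorentzian metric $g$ on $M$ associated with $g_\intercal$ and carrying the compatible time-orientation; because all leaves are compact, they are $g$-spacelike compact submanifolds, and in particular future causally complete in $(M,g)$. Theorem \ref{teo: global hyperbolic does happen} then guarantees that $(M,g)$ is globally hyperbolic.

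Arguing by contradiction, suppose there exist leaves $L_0,L_1\in\f$ and a future-directed transversely causal piecewise smooth curve $\alpha:[a,b]\to M$ from $L_0$ to $L_1$ with $\ell_\intercal(\alpha)>\pi/\sqrt{C}$. Using openness and density of the leaf space chronological relation (Theorem \ref{teo: structural properties leaf causal}), we may assume $\alpha$ is transversely timelike. The crux of the argument is the following horizontal lifting lemma: there exists a future-directed $g$-timelike piecewise smooth curve $\beta:[a,b]\to M$ from some $p_0'\in L_0$ to some $p_1'\in L_1$ with $\ell_g(\beta)=\ell_\intercal(\alpha)$. To build $\beta$, cover $\alpha([a,b])$ by finitely many simple open sets $U_{i_1},\ldots,U_{i_k}$ with submersions $\pi_{i_j}:U_{i_j}\to S_{i_j}\subset\mathbb{R}^q$, and partition $a=t_0<\cdots<t_k=b$ so that $\alpha([t_{j-1},t_j])\subset U_{i_j}$. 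Because $g$ is bundle-like, each $\pi_{i_j}$ is a Lorentzian submersion onto $(S_{i_j},h_{i_j})$ with $\pi_{i_j}^{\ast}h_{i_j}=g_\intercal|_{U_{i_j}}$; thus each projected piece $c_j:=\pi_{i_j}\circ\alpha|_{[t_{j-1},t_j]}$ is an $h_{i_j}$-timelike curve, and its $g$-horizontal lift $\hat\alpha_j$ to $U_{i_j}$ is a $g$-timelike curve with $\ell_g(\hat\alpha_j)=\ell_{h_{i_j}}(c_j)=\ell_\intercal(\alpha|_{[t_{j-1},t_j]})$ by the semi-Riemannian submersion identity. The lifts are chosen iteratively so that $\hat\alpha_{j+1}$ starts at the endpoint of $\hat\alpha_j$; here the compactness of the leaves together with the Hausdorff property of $M/\f$ are used to refine the Haefliger cocycle via saturated tubular neighborhoods of the compact leaves $L_{\alpha(t_j)}$, so that every endpoint $\hat\alpha_j(t_j)$ lies in $U_{i_{j+1}}$, and the concatenation $\beta=\hat\alpha_1\ast\cdots\ast\hat\alpha_k$ is a well-defined $g$-timelike curve.

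With $\beta$ in hand, $\ell_g(\beta)=\ell_\intercal(\alpha)>\pi/\sqrt{C}$, and since $L_0$ is compact, spacelike and future causally complete in the globally hyperbolic $(M,g)$, the same limit-curve argument used in the proof of Theorem \ref{diameter} produces a maximal future-directed $L_0$-timelike geodesic ray $\gamma$ with $\ell_g(\gamma)\geq \ell_g(\beta)>\pi/\sqrt{C}$. Since $\gamma$ starts perpendicular to $L_0$ and $g$ is bundle-like, Proposition \ref{bundlelike equivalences} implies that $\gamma$ remains $g$-horizontal, so $\ell_\intercal(\gamma)=\ell_g(\gamma)>\pi/\sqrt{C}$. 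Theorem \ref{theorem: transverse focal points} then yields a focal point of $L_0$ along $\gamma$, contradicting the maximality of the $L_0$-ray. The main obstacle is precisely the horizontal lifting lemma: local horizontal lifts across a Haefliger cocycle produce endpoints that generically drift inside the leaves, and arranging that consecutive lifts concatenate exactly---without having to insert $g$-spacelike segments, which are forbidden because spacelike compact leaves are $g$-achronal by Proposition \ref{prop: tranverse chron implies chron}---is where the hypotheses of compact leaves and Hausdorff leaf space really enter.
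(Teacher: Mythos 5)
The essential reduction in the second half of your argument (globally hyperbolic $(M,g)$ via Theorem~\ref{teo: global hyperbolic does happen}, existence of a maximizing $L_0$-perpendicular timelike geodesic, horizontality of that geodesic via Proposition~\ref{bundlelike equivalences}(iii), contradiction from Theorem~\ref{theorem: transverse focal points}) matches the paper's proof. Where you diverge --- and where there is a genuine gap --- is at the intermediate step of producing a $g$-timelike curve between the leaves with $g$-length equal to $\ell_\intercal(\alpha)$.

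You fix a bundle-like metric $g$ once and for all and then attempt a ``horizontal lifting lemma'' by iteratively lifting the projections of $\alpha$ across a Haefliger cocycle. As you yourself note, the lifted endpoints drift inside the leaves, and your proposed remedy --- refining the cocycle with saturated tubular neighborhoods of the compact leaves $L_{\alpha(t_j)}$ so that each $\hat\alpha_j(t_j)$ lands in the next simple set --- does not work. A saturated tubular neighborhood of a compact leaf with nontrivial (finite) holonomy is \emph{not} a simple open set: on it $\f$ is a suspension of the holonomy representation, not the fiber foliation of a submersion onto a manifold, so there is no $\pi_{i_{j+1}}$ to lift across. And the drift cannot be absorbed by inserting leaf-tangent segments, since those are $g$-spacelike and would destroy the $g$-timelike character (the causal waterfall lemma preserves transverse timelikeness, not $g$-timelikeness). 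In short, holonomy is precisely the obstruction you cannot control by choosing charts, and your sketch does not get past it.

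The paper sidesteps this entirely: rather than fixing $g$ first and lifting afterward, it constructs $g$ \emph{adapted to} $\alpha$. Since the transverse causality hypothesis forces $\alpha$ never to return to a leaf, one can choose a complementary distribution $\mathcal{D}$ with $T M = \mathcal{D}\oplus T\f$ and $\alpha'(t)\in\mathcal{D}_{\alpha(t)}$, and then define $g(v,v) := g_\intercal(v_\mathcal{D},v_\mathcal{D}) + h(v_\f,v_\f)$ for an auxiliary Riemannian $h$. This $g$ is bundle-like, $\mathcal{D}$ is its horizontal distribution, and $\alpha$ is automatically $g$-horizontal and $g$-timelike with $\ell_g(\alpha)=\ell_\intercal(\alpha)$ --- no lifting, no holonomy issue. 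I would rework your proof to adopt this adaptive choice of $g$; your own construction is likely not salvageable in the generality required.
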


\begin{proof}
Suppose there is a transversely timelike curve $\alpha:[a,b]\rightarrow M$ with $\ell_\intercal(\alpha)>\frac{\pi}{\sqrt{C}}$. Since the foliation is transversely globally hyperbolic, $\alpha$ never returns to a leaf. It follows easily from a local construction that there is a distribution $\mathcal{D}\subset TM$ such that $\mathbb{R}\dot\alpha^\prime\subset\mathcal{D}$ and $TM=\mathcal{D}\oplus T\mathcal{F}$. So, if $v\in TM$ we write $v=v_\mathcal{D}+v_\mathcal{F}$. Now, fix any Riemannian metric $h$ on $M$ and define $g(v,v):=g_\intercal(v_\mathcal{D},v_\mathcal{D})+h(v_\mathcal{F},v_\mathcal{F})$. It follows that $g$ is a Lorentzian bundle-like metric associated with $g_\intercal$ such that $\mathcal{D}$ is precisely its horizontal distribution. In particular, $\alpha$ is a $g$-horizontal curve. Hence, by Lemma \ref{length-facts} we have that $\ell_g(\alpha)=\ell_\intercal(\alpha)$. Also, since $g(\alpha^\prime,\alpha^\prime)=g_\intercal(\alpha^\prime,\alpha^\prime)<0$, $\alpha$ is a $g$-timelike curve, so if we suppose, without loss of generality, that $\alpha$ is future-directed, we have that $\alpha(b)\in I^+(\mathcal{F}_{\alpha(a)})$. On the other hand, by Theorem \ref{teo: global hyperbolic does happen} we conclude that $(M,g)$ is globally hyperbolic. Also, the compactness of the leaves implies trivially that they are future causally complete, so we have by standard techniques the existence of a timelike geodesic segment $\gamma:[0,1]\rightarrow M$ with $\gamma(0)\in\f_{\alpha(a)}$, $\gamma(1)=\alpha(b)$ maximizing the Lorentzian $g$-arclength between $\mathcal{F}_{\alpha(a)}$ and $\alpha(b)$. We thus have $$
\ell_g(\gamma)\geq \ell_g(\alpha)=\ell_\intercal(\alpha)>\frac{\pi}{\sqrt{C}}.
$$ We also know that $\gamma$ is orthogonal to $\mathcal{F}_{\alpha(a)}$ and by item $(iii)$ of Proposition \ref{bundlelike equivalences} $\gamma$ is a horizontal geodesic. Therefore if we apply Theorem \ref{theorem: transverse focal points} to $\gamma$ we conclude that $\gamma$ has at least one focal point of $\mathcal{F}_{\alpha(a)}$, which contradicts its maximality; thus we must have $\ell_\intercal(\alpha)\leq \frac{\pi}{\sqrt{C}}$.
\end{proof}

\begin{comment}
\begin{corollary}\label{corollary:transverse diameter for compact foliations}
Let $(M, \mathcal{F}, g_\intercal)$ be a transverse globally hyperbolic Lorentzian foliation. If the leaves of $\mathcal{F}$ are all compact and if $\ric_\intercal\geq (k-1)C>0$ on unit timelike vectors, then $\diam(M, g)\leq\frac{\pi}{\sqrt{C}}$.
\end{corollary}
\begin{proof}
    The compactness hypothesis trivially implies the leaves are future causally complete. By Proposition (\ref{teo: global hyperbolic does happen}), any Lorentzian bundle-like metric $g$ on $M$ is globally hyperbolic. Hence, Theorem (\ref{diameter}) applies.
\end{proof}
\end{comment}
\subsection{Lorentzian orbifold causality}

Let $\mathcal{O}$ be a connected $q$-dimensional ($q\geq2$) orbifold. As it was seen in section \ref{subsection: orbifolds}, $\mathcal{O}$ can be seen as the leaf space of a foliation, and if it is equipped with a time-oriented Lorentzian orbifold metric $g_\mathcal{O}$, then the natural --- and crucial --- question to be asked is whether the causal structure it is endowed with as a leaf space coincides with the one arising from $g_\mathcal{O}$. 

It is not our goal here to give a detailed account of causality on orbifolds, nor is it necessary to do so. Firstly, because our whole endeavor in developing the transverse language in this paper has been precisely to obviate that need! Secondly, apart from technicalities, the causality on the Lorentzian orbifold $(\mathcal{O},g_{\mathcal{O}})$ is essentially the same as that of manifolds as outlined at the beginning of Section \ref{section: Lorentz fol1}. Thus, for example, we say that a smooth regular curve $\gamma: I\rightarrow\mathcal{O}$ is causal if $g_\mathcal{O}(\gamma^\prime(s),\gamma^\prime(s))\leq 0, \forall s\in I$, where the the meaning of the latter expression and the ``smooth'' appellation are to be understood in terms of orbifolds charts (see, e.g., \cite{alex4} and \cite[Sect. 1.4]{caramello3}). Thus, the meaning of a Lorentzian orbifold being time-oriented, chronological/causal, strongly causal or globally hyperbolic is clear. 

The (affirmative) answer to the issue raised here can now be formulated as follows.
\begin{proposition}\label{keyfinal}
Let $(\mathcal{O}^q, g_\mathcal{O})$ ($q\geq 2$) be a connected time-oriented Lorentzian orbifold. Then, there exists a time-oriented Lorentzian foliation $(M^n, \f, g_\intercal)$ whose leaves are all compact and with finite holonomy such that $M/\f$ is orbifold-diffeomorphic to $\mathcal{O}$ and $g_\intercal=\pi_\f^*g_\mathcal{O}$ with a compatible time-orientation. Given a future-directed timelike [resp. causal] curve $\gamma:[0,1]\rightarrow \mathcal{O}$, there exists a future-directed transversely timelike [resp, causal] lift $\tilde{\gamma}:[0,1]\rightarrow M$ of $\gamma$ through $\pi_\f$ such that 
$$\ell_\intercal(\tilde{\gamma}) = \ell_{g_{\mathcal{O}}}(\gamma).$$
In particular, given $z_1, z_2\in \mathcal{O}$ corresponding respectively to the leaves $L_1$ and $L_2$ in $M/\f$, one has $z_1\leq_{g_\mathcal{O}}z_2 \iff L_1\leq_{M/\f}L_2$ and $z_1\ll_{g_\mathcal{O}}z_2 \iff L_1\ll_{M/\f}L_2$. 
\end{proposition}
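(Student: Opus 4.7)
The plan has three parts: first construct the foliation $(M,\f,g_\intercal)$ realizing $\mathcal{O}$, then establish the lifting property, and finally derive the causal equivalences.

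For the first part, I invoke Proposition \ref{proposition: every orbifold is an isometric quotient} to realize $\mathcal{O}$ as $M/\f$, where $\f$ is the orbit foliation of a locally free action of a compact connected Lie group $G$ on a smooth manifold $M$, with compact leaves and finite holonomy (the holonomy at an orbit through $x$ being the finite isotropy $G_x$, as in Example \ref{example: when leaf spaces are orbifolds}). For each orbifold chart $(\widetilde{U}_i,H_i,\phi_i)$ of $(\mathcal{O},g_\mathcal{O})$ one has an $H_i$-invariant Lorentzian metric $h_i$ on $\widetilde{U}_i\subset\mathbb{R}^q$, and the realization above lets us view $\widetilde{U}_i$ as a local transversal in $M$ with $\phi_i=\pi_\f|_{\widetilde{U}_i}$, producing a Haefliger cocycle $\{(U_i,\pi_i,\gamma_{ij})\}$ for $\f$ whose transition diffeomorphisms are isometries of the $h_i$. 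Proposition \ref{prop:basic 1}(ii) then glues the $\pi_i^{\ast}h_i$ into an $\f$-basic Lorentzian tensor $g_\intercal$ on $M$ with $g_\intercal=\pi_\f^{\ast}g_\mathcal{O}$ in the natural local sense. The transverse time-orientation is obtained analogously, by lifting $H_i$-invariant future-directed timelike vector fields on each chart and assembling them via a partition of unity (conf.\ Proposition \ref{prop: time orient crit}) to yield a global transversely timelike vector field compatible with the time-orientation of $\mathcal{O}$.

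For the second part, given a future-directed timelike [resp.\ causal] $\gamma:[0,1]\to\mathcal{O}$, I partition $[0,1]$ by $0=s_0<\cdots<s_k=1$ so that each $\gamma([s_{j-1},s_j])$ lies in $\phi_{i_j}(\widetilde{U}_{i_j})$ for some chart $\widetilde{U}_{i_j}\subset M$ realized as a transversal with $\phi_{i_j}=\pi_\f|_{\widetilde{U}_{i_j}}$. Orbifold-smoothness of $\gamma$ produces smooth local lifts $\tilde{\gamma}_j\subset\widetilde{U}_{i_j}$ of $\gamma|_{[s_{j-1},s_j]}$ satisfying $\pi_\f\circ\tilde{\gamma}_j=\gamma|_{[s_{j-1},s_j]}$, and each is future-directed transversely timelike [resp.\ causal] since $\pi_{i_j}^{\ast}h_{i_j}=g_\intercal|_{U_{i_j}}$. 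The endpoints $\tilde{\gamma}_j(s_j)$ and $\tilde{\gamma}_{j+1}(s_j)$ lie on the common leaf $L_{\gamma(s_j)}$, so I inductively apply the transversely causal waterfall lemma (Lemma \ref{lemma: causal waterfall}) to replace each $\tilde{\gamma}_{j+1}$ with a future-directed transversely timelike [resp.\ causal] curve starting where the previous piece ended, \emph{without} altering its $\pi_\f$-projection: the waterfall construction modifies only the leaf-tangent coordinates while leaving the transversal coordinates intact. Concatenating the pieces yields a piecewise smooth lift $\tilde{\gamma}:[0,1]\to M$ with $\pi_\f\circ\tilde{\gamma}=\gamma$, and Proposition \ref{prop: tocomparelengths}(2) combined with the local identity $\pi_{i_j}^{\ast}h_{i_j}=g_\intercal|_{U_{i_j}}$ delivers $\ell_\intercal(\tilde{\gamma})=\ell_{g_\mathcal{O}}(\gamma)$.

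For the third part, the implications $z_1\ll_{g_\mathcal{O}}z_2\Rightarrow L_1\ll_{M/\f}L_2$ and $z_1\leq_{g_\mathcal{O}}z_2\Rightarrow L_1\leq_{M/\f}L_2$ are immediate from the lift, while the converses follow by projecting a transversely timelike [resp.\ causal] connecting curve $\alpha$ in $M$ via $\pi_\f$ to a timelike [resp.\ causal] curve in $\mathcal{O}$ between $z_1$ and $z_2$, thanks to $g_\intercal=\pi_\f^{\ast}g_\mathcal{O}$ and the compatible time-orientation. The main obstacle is the patching in the second part: one must arrange that each waterfall correction stays within a single simple set, that the successive junctions assemble into a genuinely piecewise smooth $\tilde{\gamma}$, and that the transverse causal type is preserved across them. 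This is navigated by a careful choice of the covering $\{\widetilde{U}_{i_j}\}$ and by the explicit coordinate form of the waterfall construction in Lemma \ref{lemma: causal waterfall}; once this is handled, length preservation and the causality equivalences follow formally.
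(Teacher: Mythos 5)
Your proposal is substantively correct, but for the lifting step it follows a genuinely different route from the paper. The paper realizes $\mathcal{O}=M/G$ for a locally free action of a compact connected Lie group $G$, constructs explicit slice orbifold charts via a $G$-invariant auxiliary Riemannian metric, and then invokes the covering-path theorem for $G$-actions (\cite[Thm.~II.6.2]{bredon}) to produce \emph{in a single stroke} a continuous global lift $\tilde{\gamma}$ of $\gamma$ with $\pi_\f\circ\tilde{\gamma}=\gamma$, verified to be piecewise smooth via those same slice charts; the pullback identity $g_\intercal(\tilde{\gamma}',\tilde{\gamma}')=g_\mathcal{O}(\gamma',\gamma')$ then gives all the conclusions at once. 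You instead build the lift chart-by-chart, lifting $\gamma$ into local transversals and splicing successive pieces by repeated applications of the waterfall lemma (Lemma~\ref{lemma: causal waterfall}), crucially observing that in the coordinates of the global trivialization lemma the waterfall correction alters only the leaf-direction components, so each splice preserves the $\pi_\f$-projection. This works, and it has the merit of staying entirely inside the foliation-theoretic machinery developed in Sections~\ref{section: Lorentz fol1}--\ref{section: Lorentzfol2} without re-entering the group-action framework; on the other hand, the paper's direct lift via Bredon's theorem sidesteps the bookkeeping that you yourself flag as the ``main obstacle'' (choosing the chart cover, keeping each waterfall inside a single simple set, verifying piecewise smoothness across junctions), issues that simply do not arise when the lift is supplied globally. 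Your construction of $g_\intercal$ — gluing chart metrics by a Haefliger-cocycle argument via Proposition~\ref{prop:basic 1}(ii) — is a cosmetically different but equivalent description of the paper's pullback $\pi_\f^\ast g_\mathcal{O}$. The final causality equivalences are handled the same way in both proofs.
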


\begin{proof}
We have seen in Prop. \ref{proposition: every orbifold is an isometric quotient} that $\mathcal{O}$ can always be realized as the orbit space of a locally free smooth action of a connected compact Lie group $G$ on a smooth manifold $M^n$, so we take $(M,\f)$ to be such a homogeneous foliation. The projection $\pi_\f:M\rightarrow M/G\cong\mathcal{O}$ is then smooth as a map between orbifolds, so we can consider the pullback metric $g_\intercal := \pi_\f^\ast g_{\mathcal{O}}$.

There are certain convenient orbifold charts on $\mathcal{O}=M/G$, analogous to the ones described in Example \ref{example: when leaf spaces are orbifolds}, which we describe very briefly here (see \cite[Thms. 3.49 and 3.57]{alex} for details). First of all we fix a $G$-invariant Riemannian metric $h$ on $M$ with associated exponential map $\exp^h$. Given any $x\in M$, we pick an open $h_x$-ball $B_{\varepsilon}(0) \subset (T_x(G.x))^{\perp _h}$  so that the $q$-dimensional submanifold $S_x=\exp^h_{\perp}(B_\varepsilon(0))\ni x$ is a $G_x$-invariant slice of the action at $x$. Thus, $\mathrm{Tub}(G.x):= G\dot S_x$ is an open tubular neighbourhood of the orbit $G.x$ and hence $\mathrm{Tub}(G.x)/G\simeq S_x/G_x$. Then, $(B_\varepsilon(0),G_x, \pi_\f \circ \exp^h_{\perp})$ is the desired orbifold chart at $\pi_\f(x)$.

Using these charts, it is straightforward to check that $g_\intercal := \pi_\f^\ast g_{\mathcal{O}}$ is a smooth transverse Lorentzian metric on $(M,\f)$, with transverse time-orientation induced by the time-orientation of $g_{\mathcal{O}}$.

Now, given any timelike/causal curve $\gamma:[0,1] \rightarrow \mathcal{O}$, 
there exists, by standard result on group actions \cite[Thm. II.6.2]{bredon}, a lift $\tilde{\gamma}:[0,1] \rightarrow M$ through $\pi_\f$ of $\gamma$, i.e., such that $\pi_\f\circ \Tilde{\gamma} =\gamma$. Again using the above-described charts we can show that this lift is (piecewise) smooth in the usual sense and, by construction,
$$g_\intercal(\Tilde{\gamma}', \Tilde{\gamma}')= g_{\mathcal{O}}(\gamma ', \gamma '),$$
whence the asserted claims follow. 
%We shall address only the first one since the reasoning is entirely analogous. Note that the $\impliedby$ direction is trivial, since the existence of a future-directed transversely causal curve between $L_1$ and $L_2$ in $M$ implies, by composition with $\pi_\f$ and up to identification, the existence of a future-directed causal curve between $p$ and $q$. For the converse direction, we start with a future-directed causal curve $\gamma:[0,1]\rightarrow \mathcal{O}$ connecting $p$ to $q$. Suppose $\mathcal{O}$ is orientable, then $(M, \f)$ is given by the bundle of oriented normal frames foliated by the orbits of $SO(q)$ (if $\mathcal{O}$ is not orientable, the reasoning follows \textit{mutatis mutandis}). To obtain a lift $\Tilde{\gamma}:[0,1]\rightarrow M$ of $\gamma$ we make use of the $SO(q)-$space structure on $M$ (see \cite{bredon}, Thm II.6.2). Since $g_\intercal(\Tilde{\gamma}^\prime, \Tilde{\gamma}^\prime)=g_\mathcal{O}(\gamma^\prime, \gamma^\prime)\leq 0$, it follows that $\Tilde{\gamma}$ is transversely causal and its transverse length is equal to the $g_\mathcal{O}-$length of $\gamma$.
\end{proof}

%and  by the compacity of $im(\gamma)$ we may assume that it lies entirely inside of an orbifold chart $U\subset\mathcal{O}$, which is homeomorphic to $\Tilde{U}/G$, for some open neighborhood $\Tilde{U}\subset\mathbb{R}^q$ and some finite group $G$. Lift $\gamma$ (see \cite{bredon}, Thm II.6.2) to obtain $\Tilde{\gamma}:[0,1]\rightarrow \Tilde{U}$. On the other hand, the open neighborhood $U$ corresponds to a saturated open neighborhood $U^\prime\subset M$ $$\xymatrix{\Tilde{U}\ar@{-->}[rr]\ar[dr]&&U^\prime\ar[ld]^-{\pi_\f}\\&U}$$

Finally, our main result for orbifolds is now immediate from Theorem \ref{transverse-diameter} and the previous proposition (recall also Remark \ref{remark:basicLCconnectiononquotients}).
%Corollary \ref{corollary:transverse diameter for compact foliations} (and Proposition \ref{proposition: every orbifold is an isometric quotient}).

\begin{corollary}\label{finalorbi}
Let $(\mathcal{O}, g)$ be an $n$-dimensional, globally hyperbolic Lorentzian orbifold. If $\ric_\mathcal{O}\geq (n-1)C>0$ for unit timelike vectors, then $\diam(\mathcal{O}, g)\leq\frac{\pi}{\sqrt{C}}$.
\end{corollary}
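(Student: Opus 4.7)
The plan is to reduce the orbifold statement to the transverse diameter theorem (Theorem \ref{transverse-diameter}) via the realization of $\mathcal{O}$ as a leaf space established in Proposition \ref{keyfinal}. First I would invoke Proposition \ref{proposition: every orbifold is an isometric quotient} together with Proposition \ref{keyfinal} to produce a time-oriented Lorentzian foliation $(M^n, \mathcal{F}, g_\intercal)$ with compact leaves having finite holonomy, such that $M/\mathcal{F}$ is orbifold-diffeomorphic to $\mathcal{O}$ with $g_\intercal = \pi_\mathcal{F}^{\ast} g$ and a compatible transverse time-orientation. Since $\mathcal{O}$ is Hausdorff by definition of orbifold, the leaf space $M/\mathcal{F}$ is Hausdorff.

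Next I would translate the two geometric hypotheses from $\mathcal{O}$ to the foliation. For the Ricci bound, Remark \ref{remark:basicLCconnectiononquotients} asserts that on any simple open set $U\subset M$ of a defining submersion $\pi:U\to S$, the connection $\nabla^\intercal$ pulls back from the Levi-Civita connection of $S$, and since the orbifold charts at points of $\mathcal{O}$ constructed in Proposition \ref{keyfinal} are isometric (after passing to $G_x$-invariants) to such local quotients, the bound $\ric_\mathcal{O}\geq (n-1)C$ on unit timelike vectors transfers to $\ric_\intercal\geq (q-1)C$ on $g_\intercal$-unit transversely timelike vectors, with $q=n$. For transverse global hyperbolicity, I would use the length-preserving causal correspondence in Proposition \ref{keyfinal}: the causal prism $J^+_\intercal(L_1)\cap J^-_\intercal(L_2)$ projects to $J^+_{g_\mathcal{O}}(z_1)\cap J^-_{g_\mathcal{O}}(z_2)$, which is compact in $\mathcal{O}$ by global hyperbolicity; applying Proposition \ref{prop: transversely compact charac} yields transverse compactness of the prism. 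Transverse strong causality follows similarly: using that the quotient topology on $M/\mathcal{F}$ agrees with the orbifold topology of $\mathcal{O}$ and that the Alexandrov topology on $\mathcal{O}$ coincides with its manifold-like topology (a standard consequence of strong causality on orbifolds, which can be checked locally on orbifold charts via the manifold version), together with Proposition \ref{prop: strong causality characterization}.

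Having verified the hypotheses, I would apply Theorem \ref{transverse-diameter} to obtain $\diam_\intercal(M,\mathcal{F},g_\intercal)\leq \pi/\sqrt{C}$. To conclude, I need the identity
\[
\diam(\mathcal{O}, g) = \diam_\intercal(M,\mathcal{F},g_\intercal).
\]
The inequality $\leq$ follows from Proposition \ref{keyfinal}: any future-directed causal curve $\gamma$ on $\mathcal{O}$ lifts to a transversely causal curve $\tilde\gamma$ with $\ell_\intercal(\tilde\gamma)=\ell_{g_\mathcal{O}}(\gamma)$, so suprema of lengths of causal curves between corresponding endpoints agree. The opposite inequality uses item (2) of Proposition \ref{prop: tocomparelengths}: projecting a transversely causal curve on $M$ down to $\mathcal{O}$ yields a causal curve of the same transverse length, since the transverse length only depends on $\pi_\mathcal{F}\circ\alpha$. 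Combining these gives the desired bound on $\diam(\mathcal{O},g)$.

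The main obstacle I anticipate is the careful verification that orbifold global hyperbolicity does translate to transverse global hyperbolicity, because the causal structure on $\mathcal{O}$ is usually defined via orbifold charts rather than intrinsically on the leaf space, and one must check that transverse causal curves in $M$ correspond (via projection/lift) to orbifold causal curves in $\mathcal{O}$ in a way that preserves compactness of causal prisms and the strong causality property. Once the dictionary of Proposition \ref{keyfinal} is in place, this is essentially bookkeeping, but it is the conceptual bridge between the two formalisms and should be spelled out with some care.
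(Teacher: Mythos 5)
Your proposal is correct and reproduces the paper's own argument: realize $\mathcal{O}$ as the leaf space of a Lorentzian foliation with compact leaves via Proposition \ref{keyfinal}, transfer the Ricci bound via Remark \ref{remark:basicLCconnectiononquotients} and the causal/diameter data via the length-preserving correspondence in Proposition \ref{keyfinal}, and apply Theorem \ref{transverse-diameter}. The paper states this as "immediate" from those two ingredients; you merely spell out the hypothesis transfer (Hausdorffness, transverse global hyperbolicity via Proposition \ref{prop: transversely compact charac} and Proposition \ref{prop: strong causality characterization}, and the diameter identity via Proposition \ref{prop: tocomparelengths}(2)) in more detail, which is the same route carried out carefully.
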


\begin{remark}
When a foliation has compact leaves and its leaf space is Hausdorff, then all its leaves have finite holonomy \cite[Thm. 1.4]{millett}. This means that the current formulation of Theorem \ref{transverse-diameter} is a result about Lorentzian orbifolds - precisely given as Corollary \ref{finalorbi} - and nothing else, since the associated leaf spaces are exactly of this kind. Notice, however, that the leaf compactness assumption was used only from Lemma \ref{inextransvinex} onwards, and Lorentzian submersion examples suggest that this is not a necessary condition in all circumstances. Thus, if a future development of this theory manages to circumvent this hypothesis, then it is possible that Theorem \ref{transverse-diameter} may be applied to a class of leaf spaces strictly larger than just orbifolds. 
\end{remark}

\section*{Acknowledgements}
The authors wish to thank Vincent Grandjean for many useful discussions during the preparation of this work. IPCS is partially supported by the project PID2020-118452GBI00 of the Spanish government. This study was financed in part by the Coordenação de Aperfeiçoamento de Pessoal de Nível Superior - Brasil (CAPES) - Finance Code 001.

\end{document}